\newtheorem{theo}{Theorem}[section]
\newtheorem{lemma}[theo]{Lemma}
\newtheorem{prop}[theo]{Proposition}
\newtheorem{cor}[theo]{Corollary}
\newtheorem{defi}[theo]{Definition}
\newtheorem{rem}[theo]{Remark}
\theoremstyle{definition}
\newtheorem{ex}[theo]{Example}
\newcommand{\f}{\phi}
\newcommand{\ida}{\mathfrak{a}}
\newcommand{\p}{\mathfrak{p}}
\newcommand{\spec}{\operatorname{Spec}}
\newcommand{\G}{\mathcal{G}}
\newcommand{\Gl}{\operatorname{GL}}
\newcommand{\Aut}{\operatorname{Aut}}
\newcommand{\Hom}{\operatorname{Hom}}
\newcommand{\A}{\mathbb{A}}
\newcommand{\sdim}{\sigma\text{-}\dim}
\newcommand{\ord}{\operatorname{ord}}
\newcommand{\X}{\mathcal{X}}
\newcommand{\N}{\mathcal{N}}
\newcommand{\s}{\sigma}
\newcommand{\de}{\delta}
\newcommand{\ds}{\delta\sigma}
\newcommand{\I}{\mathbb{I}}
\newcommand{\hs}{{{}^\sigma\!}}
\newcommand{\pis}{\pi_0^\sigma}
\newcommand{\ks}{$k$\=/$\s$}
\renewcommand{\sc}{{\sigma o}}
\newcommand{\Gm}{\mathbb{G}_m}
\newcommand{\hsi}{^{\sigma^i}\!}
\newcommand{\hsn}{^{\sigma^n}\!}
\newcommand{\Ga}{\mathbb{G}_a}
\newcommand{\lcm}{\operatorname{lcm}}
\newcommand{\kn}{k^{[n]}}
\newcommand{\sred}{{\sigma\text{-}\operatorname{red}}}
\newcommand{\ZZ}{\mathbb{Z}}
\newcommand{\rank}{\operatorname{rank}}
\newcommand{\po}{{\rm{(P1$^u$)}}}
\newcommand{\pt}{{\rm{(P3$^u$)}}}
\newcommand{\pw}{{\rm{(P2$^u$)}}}
\newcommand{\poweak}{{\rm{(P1)}}}
\newcommand{\ptweak}{{\rm{(P3)}}}
\newcommand{\pwweak}{{\rm{(P2)}}}
\title{Trivializing Torsors for Difference Algebraic Groups}
\date{\today}
\author{Michael Wibmer}
\address{Michael Wibmer, School of Mathematics, University of Leeds, \url{https://sites.google.com/view/wibmer}}
\email{m.wibmer@leeds.ac.uk}
\author{Annette Bachmayr}
\address{Annette Bachmayr, Department of Mathematics, RWTH Aachen University}
\email{bachmayr@mathematik.rwth-aachen.de}
\thanks{This work was supported by DFG (Deutsche Forschungsgemeinschaft): SFB-TRR 195}
\date{\today}
\subjclass[2020]{14L15, 14L17, 12H05}
\begin{document}
\maketitle

\begin{center} \emph{Dedicated to the memory of Zo\'{e} Chatzidakis} \end{center}

\medskip

\begin{abstract}
We study torsors for groups defined by algebraic difference equations.
Our main result provides necessary and sufficient conditions on the base difference field for all such torsors to be trivial. 
We also present an application to the uniqueness of difference Picard-Vessiot extensions for linear differential equations.
\end{abstract}


\section{Introduction}

It is a classical question in algebraic geometry and field arithmetic to understand conditions on the base field $k$  that guarantee that $H^1(k,G)$ is trivial or small for large classes of linear algebraic groups $G$ over $k$. (See e.g., \cite{Serre:GaloisCohomology}). For example, perfect fields $k$ of cohomological dimension at most one (which include  $C_1$-fields) are characterized by the property that $H^1(k,G)$ is trivial for all connected linear algebraic groups over $k$ and this is equivalent to $H^1(k,G)$ being trivial for all semisimple algebraic groups $G$ over $k$. It is also known that $H^1(k,G)$ is finite for every linear algebraic group $G$ over $k$ if $k$ is perfect and bounded, i.e., $k$ has only finitely many algebraic extensions of any fixed degree. There has been a sustained interest in obtaining analogs of these results for differential algebraic groups (\cite{Pillay:ThePicardVessiotTheoryConstrainedCohomology}, \cite{MinchenkoOvchinnikov:TrivialityOfDifferentialGaloisCohomologyOfLinearDifferentialAlgebraicGroups}, \cite{ChatzidakisPillay:GeneralizedPVExtensionsAndDifferentialGaloisCohomology},
\cite{LeonSanchezPillay:DifferentialGaloisCohomologyAndParameterizedPicardVessiotExtensions}, \cite{LeonSanchezMeretzkyPillay:MoreOnGaloisCohomologyDefinabilityAndDifferentialAlgebraicGroups},  \cite{MeretzkyPillay:PicardVessiotExtensionsLinearDifferentialAlgebraicGroupsAndTheirTorsors}). One of the most profound results in the area is the following theorem due to Anand Pillay.

\begin{theo}[\cite{Pillay:ThePicardVessiotTheoryConstrainedCohomology}]  \label{theo: Pillay}
	For an ordinary differential field $k$ of characteristic zero the following statements are equivalent:
	\begin{enumerate}
		\item For every linear differential algebraic group $G$ over $k$ and every $G$-torsor $X$ we have $X(k)\neq\emptyset$.
		\item We have $H^1_\delta(k,G)=0$ for every linear differential algebraic group $G$ over $k$.
		\item The field $k$ is algebraically closed and for every $m\geq 1$ and every $A\in k^{m\times m}$ the linear differential equation $\de(Y)=AY$ has a solution in $\Gl_m(k)$.
	\end{enumerate}	
\end{theo}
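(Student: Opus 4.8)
The plan is to prove (i) $\Leftrightarrow$ (ii) directly and then to close the cycle (ii) $\Rightarrow$ (iii) $\Rightarrow$ (i). The equivalence of (i) and (ii) will be essentially tautological: $H^1_\delta(k,G)$ is by definition the pointed set of isomorphism classes of $G$-torsors, and a $G$-torsor is isomorphic to the trivial torsor exactly when it has a $k$-rational point. For (ii) $\Rightarrow$ (iii) I would feed two families of groups into the vanishing hypothesis. Every finite group $F$ is a linear differential algebraic group over $k$, and an $F$-torsor over $k$ is just an ordinary \'{e}tale $F$-torsor (a finite differential algebraic variety in characteristic zero is finite \'{e}tale, so the two notions coincide), whence $H^1_\delta(k,F)=H^1(\gal(\overline{k}/k),F)$; requiring this to vanish for all finite $F$ forces $\gal(\overline{k}/k)=1$, i.e.\ $k$ to be algebraically closed. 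For the second assertion of (iii), given $A\in k^{m\times m}$ I would look at $X_A=\{Y\in\Gl_m:\delta(Y)=AY\}$: over a differentially closed extension of $k$ this system has a fundamental solution matrix, so $X_A$ is nonempty there, and any two solutions differ on the right by a constant invertible matrix, so $X_A$ is a torsor over $k$ under the constant linear differential algebraic group $\{Y\in\Gl_m:\delta(Y)=0\}$; by (ii) it has a $k$-point, which is precisely what (iii) demands.

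The real content is (iii) $\Rightarrow$ (i). Let $G$ be a linear differential algebraic group over $k$ and $X$ a $G$-torsor; the goal is $X(k)\neq\emptyset$. Since $G/G^{\circ}$ is finite and $k$ is algebraically closed, the torsor $X/G^{\circ}$ under $G/G^{\circ}$ is trivial, and after replacing $X$ by the preimage of one of its $k$-points I may assume $G$ connected. I would then argue by dévissage: by Cassidy's structure theory, a connected linear differential algebraic group $G$ admits a normal series whose successive quotients are differential algebraic subgroups of $\Ga$ or of $\Gm$, or are simple algebraic groups regarded as constant differential algebraic groups. For a short exact sequence $1\to N\to G\to\overline{G}\to 1$ of linear differential algebraic groups, $X/N$ is a $\overline{G}$-torsor; if it is trivial, then the preimage in $X$ of one of its $k$-points is an $N$-torsor over $k$, and triviality of the latter gives $X(k)\neq\emptyset$. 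An induction along the normal series therefore reduces the theorem to trivializing torsors for groups of the three basic types above.

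Here (iii) does the work. First I would observe that (iii), via the standard block-triangular reformulations, implies solvability over $k$ of every linear differential equation --- homogeneous or inhomogeneous, scalar or matrix --- and, in particular, that $k$ has no nontrivial Picard--Vessiot extensions. Combined with the vanishing $H^1_\delta(k,\Ga)=H^1_\delta(k,\Gm)=0$, which holds over any differential field, this settles the basic cases. A torsor for a differential algebraic subgroup $G\le\Ga$ is, via the isomorphism $\Ga/G\cong\Ga$ induced by the defining linear operator $\ell$, a set $\{y:\ell(y)=b\}$ with $b\in k$, hence has a $k$-point. A torsor for $G\le\Gm$ is handled in the same way using logarithmic-derivative equations $\delta(y)/y=b$ and, for the $\mu_n$-part, extraction of $n$-th roots, available because $k$ is algebraically closed. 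And a torsor for a simple algebraic group $S$, viewed as a constant differential algebraic group, gives rise to a Picard--Vessiot extension of $k$ with differential Galois group a subgroup of $S$, hence is trivial.

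The main obstacle, as I see it, is the implication (iii) $\Rightarrow$ (i), and within it two matters. The first is making the dévissage rigorous: exhibiting a normal series of $G$ with quotients of exactly the three basic types, and verifying that quotients $X/N$ of torsors by normal differential algebraic subgroups exist as differential algebraic varieties and sit in the expected exact sequences of pointed cohomology sets (including the two vanishing statements for $\Ga$ and $\Gm$). The second is confirming that hypothesis (iii), which a priori only asserts that the \emph{full} fundamental system of $\delta(Y)=AY$ lies in $\Gl_m(k)$, is actually strong enough to solve every linear differential equation over $k$ and to collapse all Picard--Vessiot extensions, so that each basic building block contributes nothing to $H^1_\delta(k,-)$.
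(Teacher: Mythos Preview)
The paper does not give its own proof of this theorem: it is quoted, with attribution, from \cite{Pillay:ThePicardVessiotTheoryConstrainedCohomology}, and the only commentary is that (i)$\Leftrightarrow$(ii) is classical (Kolchin) while the substantive implication is (iii)$\Rightarrow$(i). So there is no proof in this paper to compare your attempt against.

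That said, your outline is very much in the spirit both of Pillay's original argument and of this paper's proof of the difference analog (Section~\ref{sec: The proof}): reduce to the connected case, then run a d\'evissage along a subnormal series coming from structure theory, treating each basic quotient type in turn. Your handling of (i)$\Leftrightarrow$(ii) and (ii)$\Rightarrow$(iii) is fine and matches the paper's remarks.

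The one genuine imprecision is in your treatment of the ``simple'' building blocks for (iii)$\Rightarrow$(i). The phrase ``simple algebraic groups regarded as constant differential algebraic groups'' is ambiguous, and your justification (``gives rise to a Picard--Vessiot extension \ldots\ hence is trivial'') does not match either reading cleanly. Cassidy's structure theory in fact produces, as possible simple quotients, Zariski-dense differential algebraic subgroups of simple algebraic groups $\mathcal{S}$; such a subgroup is either all of $\mathcal{S}$ (torsors are then ordinary algebraic torsors, trivial because $k$ is algebraically closed --- no Picard--Vessiot argument needed) or, up to conjugation, the group $\mathcal{S}^\delta$ of $\delta$-constant points (torsors are then solution sets $\{Y\in\mathcal{S}:\delta(Y)=AY\}$, handled by the second clause of (iii)). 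You should separate these two cases explicitly; the paper does exactly this in the difference setting, via Lemma~\ref{lemma: algebraic case} for the first and Proposition~\ref{prop: Zariski dense in simple} together with Lemma~\ref{lemma: group of constants} for the second.
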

Here (ii) refers to Kolchin's constrained cohomology and the equivalence of (i) and (ii) is well known (\cite[Chapter VII]{Kolchin:differentialalgebraicgroups}). So the striking part of Theorem \ref{theo: Pillay} is that (iii) implies (i) or~(ii). As a finite Galois extension defines a torsor for a finite constant group scheme, we see that a field $k$ of characteristic zero is algebraically closed if and only if all torsors for all linear algebraic groups over $k$ are trivial. Moreover, for a differential field $k$ and $A\in k^{m\times m}$, the differential algebraic variety $X=\{Y\in\Gl_m|\ \de(Y)=AY\}$ over $k$ is a torsor for the linear differential algebraic group $\Gl_m^\delta=\{g\in\Gl_m|\ \de(g)=0\}$.  Thus, Theorem~\ref{theo: Pillay} can be reformulated as follows: If for all $m\geq 1$ all torsors for $\Gl_m^\delta$ are trivial and all torsors for linear algebraic groups are trivial, then all torsors for all linear differential algebraic groups are trivial.

Generalizations of Theorem \ref{theo: Pillay} to partial differential fields are discussed in \cite{ChatzidakisPillay:GeneralizedPVExtensionsAndDifferentialGaloisCohomology} and \cite{MinchenkoOvchinnikov:TrivialityOfDifferentialGaloisCohomologyOfLinearDifferentialAlgebraicGroups}. Another generalization of Theorem \ref{theo: Pillay}, that can be seen as a relative version, not requiring any assumptions on the ordinary base differential field, can be found in \cite{MeretzkyPillay:PicardVessiotExtensionsLinearDifferentialAlgebraicGroupsAndTheirTorsors}.

The question of an appropriate difference analog of Theorem \ref{theo: Pillay} was raised in \cite[Section~3]{Pillay:ThePicardVessiotTheoryConstrainedCohomology} and in \cite{ChatzidakisPillay:GeneralizedPVExtensionsAndDifferentialGaloisCohomology} it is shown that the \emph{immediate} analog of Theorem \ref{theo: Pillay} fails for difference algebraic groups. The counterexample provided is related to the difference algebraic subgroup $G=\{g\in\Gm|\ x^{-2}\s(x)=1\}$ of the multiplicative group~$\Gm$ and the $G$-torsor $X=\{x\in\Gm|\  x^{-2}\s(x)=a\}$, where $a\in k^\times$. More generally, for $\alpha_0,\ldots,\alpha_m\in\ZZ$ with $\alpha_m\neq 0$ and $a\in k^\times$, we have a difference algebraic subgroup $G=\{g\in\Gm|\ g^{\alpha_0}\s(g)^{\alpha_1}\ldots\s^m(g)^{\alpha_m}=1\}$ of $\Gm$ and a $G$-torsor $X=\{x\in\Gm|\ x^{\alpha_0}\s(x)^{\alpha_1}\ldots\s^m(x)^{\alpha_m}=a\}$.

The main result of this paper is an analog of Theorem \ref{theo: Pillay} for difference algebraic groups.
In essence, it states that the solvability of these multiplicative equations $y^{\alpha_0}\s(y)^{\alpha_1}\ldots\s^m(y)^{\alpha_m}=a$ is the only obstruction to the validity of the immediate difference analog of Theorem \ref{theo: Pillay}. However, as difference algebraic geometry is somewhat more intricate than differential algebraic geometry, this has to be taken with a grain of salt. Specifically, while differential algebraic geometry is more or less synonymous with the model theory of differential fields, the connections are more subtle in the difference case.

Before precisely stating our main result, we need to explain other obstructions to the validity of the immediate difference analog of Theorem \ref{theo: Pillay}, that are not related to difference algebraic subgroups of $\Gm$ and can be resolved by passing to higher powers of the endomorphism $\s$. We note that the philosophy that things can be smoothed out by passing to higher powers of $\s$ is also present in other work (e.g., \cite{ChatzidakiHrushovskiPeterzil:ModelTheoryofDifferenceFieldsIIPeriodicIdelas} or \cite{Wibmer:Chevalley}).

Some torsors for linear difference algebraic groups do not have a point in any difference field. For example, the difference variety $X=\{x\in \Gm|\ x^2=1,\ \s(x)=-x\}$ is a torsor for the linear difference algebraic group $G=\{g\in\Gm|\ g^2=1,\ \s(g)=g\}$ (under left multiplication) but $X(k)=\emptyset$ for every difference field $k$. In other words, this torsor will never be trivial, no matter over which base difference field it is considered.
From a model theoretic perspective, where one is confined to working inside a monster model of the theory of difference fields, the torsor $X$ does not exist, i.e., it collapses to the empty set. However, this perspective is unsatisfactory from the point of view of difference algebraic geometry, because the system of equations $x^2=1,\ \sigma(x)=-x$ is consistent, in the sense that it has a solution in a difference ring. For example, $(1,-1,1,-1,1,\ldots)$ is a solution in the ring of sequences (equipped with the shift).

The problem that not every torsor has a point in a difference field can be resolved by looking for solutions slightly beyond the base difference field $k$. In the above example, we have $(1,-1)\in X(k^{[2]})$, where, for an integer $n\geq 1$, we let $\kn=k\times\ldots\times k$ denote the product of $n$ copies of $k$, considered as a difference algebra over $k$ via \label{page: kn}
$k\to\kn,\ a\mapsto (\s^{n-1}(a),\ldots,\s(a),a)$ and
$$\s\colon\kn\to\kn,\ (a_1,\ldots,a_n)\mapsto (\s^n(a_n),a_1,\ldots,a_{n-1}).$$ 


A similar obstruction to the immediate difference analog of Theorem \ref{theo: Pillay} occurs if we look at linear difference algebraic groups of the form $G=\{g\in \G|\ \s(g)=\psi(g) \}$, where $\G\leq\Gl_m$ is an algebraic subgroup defined over $\mathbb{Q}$ and $\psi\colon \G\to \G$ is a morphism of algebraic groups. For example, we can take $G=\{g\in\Gl_m|\ \s(g)=(g^T)^{-1}\}$. 
It is known that all $G$-torsors are of the form $X=\{x\in\Gl_m|\ \s(x)= (x^T)^{-1}A\}$ \label{page: X} for some $A\in\Gl_m(k)$. The equation $x^T\s(x)=A$ is non-linear and there appears to be no reason why we should be able to find a solution in the base difference field $k$, just because $k$ is algebraically closed and we can solve linear difference equations in $k$. However, if we pass from $\s$ to $\s^2$, then every $x\in X$ satisfies $\s^2(x)=x(A^T)^{-1}\s(A)$ (a linear difference equation over $(k,\s^2)$) and we have a difference variety
$X_2=\{x\in\Gl_m|\ \s^2(x)=x(A^T)^{-1}\s(A)\}$
over the difference field $(k,\s^2)$. Thus, if every linear difference equation over $(k,\s^2)$ is solvable, then $X_2(k)\neq\emptyset$. Furthermore, $X_2(k)\neq\emptyset$ turns out to be equivalent to $X(k^{[2]})\neq\emptyset$ (Remark \ref{rem: points k vs kn}).
In Section \ref{section: The induction principle} below we will explain systematically how a difference variety $X$  over $(k,\sigma)$ gives rise to a difference variety $X_n$ over $(k,\s^n)$ for every $n\geq 1$. We will also see that for a linear difference algebraic group $G$, the cohomology sets $H^1_{\s^n}(k,G_n)$ 
naturally form a direct system. The main result of this paper is the following difference analog of Theorem \ref{theo: Pillay}.

\enlargethispage{5mm}
\begin{theo} \label{theo: main 2nd version}
	For an ordinary difference field $k$ of characteristic zero the following statements are equivalent:
\begin{enumerate}
	\item[\rm{(P1)}] For every linear difference algebraic group $G$ over $k$ and for every $G$-torsor $X$, there exists an integer $n\geq 1$ such that $X(\kn)\neq\emptyset$. 
	\item[\rm{(P2)}] We have $\lim\limits_{n\to \infty} H^1_{\s^n}(k,G_n)=0$ for every linear difference algebraic group $G$ over $k$.
		\item[\rm{(P3)}]		
	\begin{enumerate}
		\item[\rm{(a)}] The field $k$ is algebraically closed.
		\item[\rm{(b)}] For every $m\geq 1$ and every $A\in\Gl_m(k)$ there exists an integer $n\geq 1$ such that the equation $\s(Y)=AY$ has a solution in $\Gl_m(\kn)$. 	
		\item[\rm{(c)}] 
		For all $\alpha_1,\ldots,\alpha_m\in \mathbb{Z}$ with $\alpha_m\neq 0$ and $a\in k^\times$, there exists an $n\geq 1$ such that the equation $y^{\alpha_0}\s(y)^{\alpha_1}\ldots\s^m(y)^{\alpha_m}=a$
		has a solution in $\kn$.
	\end{enumerate}
\end{enumerate}
\end{theo}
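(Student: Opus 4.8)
The plan is to prove the general equivalence \poweak\ $\Leftrightarrow$ \pwweak, then \poweak\ $\Rightarrow$ \ptweak\ by evaluating \poweak\ on explicit torsors, and finally \ptweak\ $\Rightarrow$ \pwweak, which carries essentially all of the weight. For \poweak\ $\Leftrightarrow$ \pwweak, recall that a $G$-torsor $X$ over $k$ is trivial if and only if $X(k)\neq\emptyset$ if and only if its class in $H^1(k,G)$ vanishes; the induction principle of Section~\ref{section: The induction principle} identifies $X(\kn)$ with $X_n(k)$ (Remark~\ref{rem: points k vs kn}) and organizes the sets $H^1_{\s^n}(k,G_n)$ into a direct system whose transition maps come from further prolongation, so that \poweak\ translates directly into $\lim\limits_{n\to\infty}H^1_{\s^n}(k,G_n)=0$, i.e.\ into \pwweak. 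The passage to the colimit is essential: for a fixed $n$ finite Galois extensions already obstruct the statement.

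For \poweak\ $\Rightarrow$ \ptweak\ one feeds the torsors displayed in the introduction into \poweak. Applied to the $\Gm$-subgroup $\{g\in\Gm\mid g^{\alpha_0}\s(g)^{\alpha_1}\cdots\s^m(g)^{\alpha_m}=1\}$ together with its torsor $\{x\in\Gm\mid x^{\alpha_0}\s(x)^{\alpha_1}\cdots\s^m(x)^{\alpha_m}=a\}$ it yields~(c); applied to $\Gl_m^\s=\{g\in\Gl_m\mid\s(g)=g\}$ together with its torsor $\{Y\in\Gl_m\mid\s(Y)=AY\}$ it yields~(b); and applied to a torsor encoding a nontrivial finite extension $L$ of $k$ it yields~(a), because a difference $k$-algebra homomorphism from the coordinate ring of such a torsor into $\kn=k\times\cdots\times k$, composed with a projection onto one factor, would embed a finite extension of $k$ into $k$. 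Producing such a torsor needs a little care, since $\s$ need not extend to an arbitrary finite extension of $k$; one gets around this, for instance, by taking $L$ to be generated over $k$ by elements algebraic over $\mathbb{Q}$, on which $\s$ automatically acts.

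The substance of the theorem is \ptweak\ $\Rightarrow$ \pwweak. Fix a linear difference algebraic group $G$ over $k$; one must show $\lim\limits_{n\to\infty}H^1_{\s^n}(k,G_n)=0$. I would argue by d\'evissage: the prolongation functors $(-)_n$ are compatible with short exact sequences, so a sequence $1\to N\to G\to Q\to 1$ induces, compatibly in $n$, the usual exact sequences of pointed sets in non-abelian cohomology, and since filtered colimits of pointed sets are exact it suffices to prove the vanishing of $\lim\limits_{n\to\infty}H^1_{\s^n}$ for all twisted forms of $N$ and of $Q$. One is thus reduced to a short list of atomic groups, and the structure theory of linear difference algebraic groups presents $G$ as an iterated extension of three types, each handled by one of (a)--(c). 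First, for a \emph{finite \'etale} group, a torsor over the algebraically closed $k$ has coordinate ring a finite product of algebras $k^{[m_i]}$; with $d=\lcm(m_i)$ the transition map into $H^1_{\s^{nd}}$ kills its class, since over $(k,\s^{nd})$ each factor $k^{[m_i]}$ acquires $k^{[1]}=k$ as a direct factor, so the colimit vanishes using only~(a). Second, for a group $\{g\in H\mid\s(g)=\psi(g)\}$ with $H$ a connected linear algebraic group over $k$ and $\psi$ an algebraic automorphism (the case $\psi=\id$ being $H^\s$), after replacing $\s$ by a power and an internal d\'evissage within $H$ every torsor takes the form $\{Y\in H\mid\s(Y)=AY\}$, which~(b) solves in $\Gl_m(\kn)$ after an embedding $H\iar\Gl_m$, the solution then being pushed back into $H$ using that $k$ is algebraically closed (enlarging $n$ if necessary). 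Third, for a \emph{difference algebraic subgroup of $\Gm$}, the torsors are exactly the $\{x\in\Gm\mid x^{\alpha_0}\s(x)^{\alpha_1}\cdots\s^m(x)^{\alpha_m}=a\}$, trivialized over a suitable $\kn$ by~(c). Reassembling the atoms through the exact sequences yields $\lim\limits_{n\to\infty}H^1_{\s^n}(k,G_n)=0$, which together with \poweak\ $\Leftrightarrow$ \pwweak\ closes the cycle \poweak\ $\Rightarrow$ \ptweak\ $\Rightarrow$ \pwweak\ $\Rightarrow$ \poweak. (Alternatively, the last implication can be run geometrically, lifting a rational point of an ambient algebraic-group torsor through the difference equations defining $X$ by means of (b) and (c).)

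The principal obstacle is the structure-theoretic input — realizing an arbitrary linear difference algebraic group as an iterated extension of the three tractable types — together with the more delicate task of keeping everything compatible with the prolongation functors $(-)_n$ and with the colimit: a quotient may only acquire the desired shape after $\s$ is replaced by a power, the twisted forms arising along the d\'evissage must remain within the controlled classes, and the correction step in the second case itself forces one to enlarge $n$. A secondary but genuine point, which is precisely what Section~\ref{section: The induction principle} is designed to supply, is the compatibility of the non-abelian cohomology exact sequences with the transition maps of the direct system, without which the d\'evissage cannot be propagated through the limit.
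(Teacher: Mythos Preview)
Your outline matches the paper's architecture --- \poweak\ $\Leftrightarrow$ \pwweak\ via the direct system, \poweak\ $\Rightarrow$ \ptweak\ by evaluating on explicit torsors, and \ptweak\ $\Rightarrow$ \pwweak\ by d\'evissage along the induction principle --- but two steps have genuine gaps. First, your argument for \ptweak(a) fails: if $k\supseteq\overline{\mathbb{Q}}$ (as is typical) there are no nontrivial finite extensions of $k$ generated by elements algebraic over $\mathbb{Q}$, so your class of test torsors is empty and proves nothing. The paper avoids extending $\s$ altogether (Proposition~\ref{prop: po implies pt}): for any algebraic group $\G$ over $k$ and any $\G$-torsor $\X$, the $\s$-variety $[\s]_k\X$ is a $[\s]_k\G$-torsor; applying \poweak\ gives a $k$-algebra map $k[\X]\to\kn$, and projecting onto one factor yields $\X(k)\neq\emptyset$.

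Second, and more seriously, ``push the solution back into $H$ using that $k$ is algebraically closed'' conceals the hardest technical step of the paper. A solution $Y\in\Gl_m(\kn)$ of $\s(Y)=AY$ with $A\in H(k)$ need not lie in $H(\kn)$: take $H=\Sl_m$, $A=I_m$, and $Y$ any constant matrix with $\det Y\neq 1$. The paper shows (Proposition~\ref{prop: stronger P3b}) that a solution in $H$ does exist \emph{provided $H$ is defined over $k^\s$}, but the argument is far from elementary: one builds a $\s$-simple quotient of the solution ring (Lemma~\ref{lemma: structure of R}), extracts a solution over $k$ times a Galois cocycle with values in $H(\overline{k^\s})$, and kills that cocycle using finiteness of $H^1(k^\s,H)$ (Lemma~\ref{lemma: H1 trivial}, which needs that $\Gamma_{k^\s}$ is procyclic). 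The d\'evissage is also more elaborate than three atomic types: one needs the ``purely algebraic'' case $[\s]_k\G$ (Lemma~\ref{lemma: algebraic case}) and a separate treatment of $\s$-closed subgroups of $\Ga$ via the companion-matrix trick (Lemma~\ref{lemma: subgroups of Ga}), and the structure theory runs through Babbitt's decomposition for order zero, a Jordan--H\"older theorem for $\s$-dimension zero (Theorem~\ref{theo: Jordan Hoelder for sreduced}), and the strong identity component together with \cite[Cor.~8.12]{Wibmer:almostsimple} for positive $\s$-dimension. Finally, twisted forms of $N$ are unnecessary in the induction: once the image in $H^1_{\s^{nm}}(k,H_{nm})$ is trivial, the class lifts to the \emph{untwisted} $H^1_{\s^{nm}}(k,N_{nm})$ by exactness at the basepoint, exactly as in Proposition~\ref{prop:InductionPrinciple}.
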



We note that we do not make any assumptions on the constants $k^\s=\{a\in k|\ \s(a)=a\}$ of $k$. The constants of an algebraically closed differential field are automatically algebraically closed. However, this need not be the case in the difference world. Condition \ptweak{} (b) is similar to what is called ``strongly PV-closed'' or ``linearly closed'' in \cite{ChatzidakisPillay:GeneralizedPVExtensionsAndDifferentialGaloisCohomology}, \cite{ChalupnikKowalski:DifferenceSheavesAndTorsors} and \cite{KowalskiPillay:OnAlgebraicSigmaGroups}. A difference field $k$ is strongly PV-closed if for every $m\geq 1$ and every $A\in\Gl_m(k)$ the equation $\s(Y)=AY$ has a solution in $\Gl_m(k)$. Our condition \ptweak{} (b) is weaker than being strongly PV-closed and is more in line with the Picard-Vessiot theory of linear difference equations (\cite{SingerPut:difference}), where one allows zero-divisors in the solution rings and usually assumes the constants to be algebraically closed. The constants of a strongly PV-closed difference field have absolute Galois group $\widehat{\ZZ}$ (because for every $m\geq 1$ the equation $\s^m(y)=y$ has to have $m$ solutions in $k$ that are linearly independent over the constants). On the other hand, our condition \ptweak{} (b) does not obviously imply any restrictions on the constants (although it follows from \ptweak{} (a) that the absolute Galois group of the constants is procyclic).

The multiplicative difference equations in \ptweak{} (b) are intimately related to toric difference varieties studied in \cite{GaoHuangWangYuan:ToricDifferenceVariety}, \cite{GaoHuangYuan:BinomialDifferenceIdeal} \cite{Wang:ToricPdifferenceVariety}, \cite{Wang:FiniteBasisForRadicalWell-mixedDifferenceIdealsGeneratedByBinomials}.

Condition {\rm \ptweak{} (b)} is equivalent to the special case of {\rm (P1)} where $G=\Gl_m^\s=\{g\in\Gl_m|\ \s(g)=g\}$, while condition {\rm \ptweak{} (c)} is equivalent to the special case of {\rm (P1)} for one-relator subgroups of $\Gm$, i.e., for difference algebraic subgroups of $\Gm$ of the form $G=\{g\in\Gm|\ g^{\alpha_0}\s(g)^{\alpha_1}\ldots\s^m(g)^{\alpha_m}=1\}$.



For a linear difference algebraic group $G$ over a difference field $k$, we say that $k$ is \emph{$G$\=/trivial} if for all $G$-torsors $X$ there exists an $n\geq 1$ such that $X(k^{[n]})\neq \emptyset$. With this nomenclature our main result can be reformulated as follows: If $k$ is  $G$-trivial for all $G$ where $G$ is a linear algebraic group, a difference algebraic group of the form $G=\Gl_m^\s=\{g\in\Gl_m|\ \s(g)=g\}$ (for some $m\geq 1$), or a one-relator subgroup of $\Gm$, then $k$ is  $G$-trivial for all linear difference algebraic groups $G$ over $k$. In this sense, the one-relator subgroups of $\Gm$ are the only obstruction to the immediate difference analog of Theorem~\ref{theo: Pillay}. 

Note that the integer $n$ in the first condition of Theorem \ref{theo: main 2nd version} depends on both the difference algebraic group $G$ and the $G$-torsors $X$. Similarly, in the third condition, the integer $n$ depends on $A\in \Gl_m(k)$ or $a\in k^\times$. We can replace both conditions by a stronger condition where an $n$ has to exist uniformly for all $G$-torsors, and where $n$ is not allowed to depend on $A$ and $a$. As an additional result we show that these stronger conditions are still equivalent:

\begin{theo} \label{theo: main}
	For an ordinary difference field $k$ of characteristic zero the following statements are equivalent:
\begin{enumerate}
	\item[\rm{(P1$^{u}$)}] For every linear difference algebraic group $G$ over $k$ there exists an integer $n\geq 1$ such that $X(\kn)\neq\emptyset$ for every $G$-torsor $X$. 
	\item[\rm{(P2$^u$)}] For every linear difference algebraic group $G$ over $k$, there exists an integer $n\geq 1$ such that the natural map $H^1_{\s}(k,G)\to H^1_{\s^n}(k,G_n)$ is trivial.
	
	\item[\rm{(P3$^u$)}] 
	\begin{enumerate}
		\item[\rm{(a)}] The field $k$ is algebraically closed.
		\item[\rm{(b)}] For every $m\geq 1$, there exists an integer $n\geq 1$ such that for every $A\in\Gl_m(k)$ the equation $\s(Y)=AY$ has a solution in $\Gl_m(\kn)$. 	
		\item[\rm{(c)}] For all $\alpha_1,\ldots,\alpha_m\in \mathbb{Z}$ with $\alpha_m\neq 0$, there exists an $n\geq 1$ such that for all $a\in k^\times$ the equation $y^{\alpha_0}\s(y)^{\alpha_1}\ldots\s^m(y)^{\alpha_m}=a$
		has a solution in $\kn$.
	\end{enumerate}
\end{enumerate}
\end{theo}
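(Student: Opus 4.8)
The plan is to reduce the uniform statement to the non-uniform Theorem \ref{theo: main 2nd version} by a compactness/quasi-compactness argument at each place where a uniform bound on $n$ is required. First, the equivalence (P1$^u$) $\Leftrightarrow$ (P2$^u$) should be essentially formal, exactly as in the non-uniform case: a $G$-torsor $X$ is trivial over $\kn$ (that is, has a $\kn$-point) if and only if its class dies under the natural map $H^1_\s(k,G)\to H^1_{\s^n}(k,G_n)$, so requiring a single $n$ that works for all torsors simultaneously is the same as requiring the map $H^1_\s(k,G)\to H^1_{\s^n}(k,G_n)$ to be trivial. The implications (P1$^u$) $\Rightarrow$ (P3$^u$) should also be quick: (a) comes from finite Galois extensions viewed as torsors for finite constant group schemes, exactly as for Theorem \ref{theo: Pillay}; (b) is the special case $G=\Gl_m^\s$, where the uniform $n$ for all torsors of this fixed $G$ gives the uniform $n$ for all $A\in\Gl_m(k)$ since every $\Gl_m^\s$-torsor is of the form $\{Y\mid \s(Y)=AY\}$; and (c) is the special case of a fixed one-relator subgroup $G=\{g\in\Gm\mid g^{\alpha_0}\s(g)^{\alpha_1}\cdots\s^m(g)^{\alpha_m}=1\}$, whose torsors are parametrized by $a\in k^\times$.

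The substance is the implication (P3$^u$) $\Rightarrow$ (P1$^u$). Since (P3$^u$) trivially implies (P3), Theorem \ref{theo: main 2nd version} already gives us (P1): for each individual $G$-torsor $X$ there is some $n=n(X)$ with $X(\kn)\neq\emptyset$. What must be upgraded is: for a fixed $G$, a single $n$ works for \emph{all} torsors. I would argue this by examining the proof of (P3) $\Rightarrow$ (P1) and tracking where $n$ is chosen. At two points the proof presumably invokes (P3)(b) or (P3)(c) for a specific matrix $A$ or scalar $a$ built out of the torsor $X$ (e.g.\ via a cocycle representing the class of $X$ in $H^1_\s(k,G)$), and chooses $n$ accordingly. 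The uniform versions (P3$^u$)(b) and (P3$^u$)(c) remove exactly this dependence: the $n$ for equations of size $m$, resp.\ for exponent vectors $(\alpha_0,\dots,\alpha_m)$, is uniform in $A$, resp.\ in $a$. The remaining task is then to bound, for a fixed linear difference algebraic group $G$, the finitely many ``sizes'' ($m$'s and exponent vectors) that can arise from torsors of $G$. This should follow from the structure theory: $G$ has a well-defined order/dimension, it embeds in some fixed $\Gl_{m_0}$, and the abelian/toric part contributing the one-relator subquotients of $\Gm$ is finitely generated as a difference-algebraic datum, so only finitely many exponent vectors of bounded length occur; taking the lcm (or maximum) of the finitely many resulting $n$'s yields the uniform $n$ for $G$.

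The main obstacle I anticipate is precisely this last finiteness/uniformity step: controlling, independently of the torsor, which multiplicative difference equations and which matrix difference equations get invoked, and splicing the finitely many bounds into one. Concretely, one needs a dévissage of $G$ — passing to a connected component or a suitable normal series whose quotients are either linear algebraic groups (handled by algebraic closedness, (P3$^u$)(a)), groups of the form $\Gl_m^\s$ (handled by (P3$^u$)(b)), or one-relator subgroups of $\Gm$ (handled by (P3$^u$)(c)) — together with the fact that $H^1$ is, up to the relevant bound, controlled by these finitely many pieces, for instance via a long exact cohomology sequence and the vanishing/uniform-triviality of the outer terms. If the non-uniform proof of (P3) $\Rightarrow$ (P1) is already organized along such a dévissage, the upgrade is bookkeeping: replace each ``choose $n$ for this $A$'' by ``choose the uniform $n$ for this $m$'' and each ``choose $n$ for this $a$'' by ``choose the uniform $n$ for this $(\alpha_0,\dots,\alpha_m)$'', verify that only finitely many such $m$'s and exponent vectors occur for a fixed $G$, and take the lcm. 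The delicate point is ensuring the connecting maps in the dévissage do not reintroduce torsor-dependence in the choice of $n$, which is where I would spend the most care.
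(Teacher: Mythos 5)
Your plan matches the paper's: the equivalence (P1$^u$) $\Leftrightarrow$ (P2$^u$) and the implication (P1$^u$) $\Rightarrow$ (P3$^u$) are formal/easy, and the substance (P3$^u$) $\Rightarrow$ (P1$^u$) is obtained by re-running the (P3) $\Rightarrow$ (P1) dévissage while carrying uniform bounds through each step (the paper in fact proves both theorems in one pass, with the uniform variant in parentheses throughout Section 5). The worry you flag at the end — that the connecting maps might reintroduce torsor-dependence — is real and is exactly what the paper's notion of ``(uniformly) strongly $G$-trivial'' (Definition \ref{defi: Gtrivial}) and the uniform clause of the induction principle (Proposition \ref{prop:InductionPrinciple}) are designed to control; since the subnormal series depends only on $G$ and has finite length, one nests the finitely many uniform bounds rather than taking an lcm over sizes arising from torsors.
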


There appears to be no previous difference analogs of Theorem \ref{theo: Pillay} in the literature. However, the connection between $H^1(k,\Gl_m^\s)$ and solutions of linear difference equations was already noted in \cite[Section 7.3]{ChalupnikKowalski:DifferenceSheavesAndTorsors}, where also the counterexample from \cite{ChatzidakisPillay:GeneralizedPVExtensionsAndDifferentialGaloisCohomology} is interpreted in cohomological terms.

In Section \ref{sec: Torsors over sclosed sfields} we show that every $\s$-closed difference field satisfies the equivalent conditions of Theorem~\ref{theo: main}. While we establish our main result (\ptweak{}$\Rightarrow$\poweak{}) only in characteristic zero, we think that it is interesting to note that an algebraically closed field $k$ of positive characteristic $p$, equipped with a Frobenius endomorphism $\s_q\colon k\to k,\ a\mapsto a^q$ (with $q$ a power of $p$), satisfies property \pt{}. Indeed, that $(k,\s_q)$ satisfies \pt{} (b) with $n=1$ is a special case of the Lang-Steinberg Theorem and that $(k,\s_q)$ satisfies \pt{} (c) with $n=1$ follows from $k$ being algebraically closed.

\medskip

Our main result has applications in the difference Galois theory of linear differential equations (\cite{DiVizioHardouinWibmer:DifferenceGaloisTheoryOfLinearDifferentialEquations}), a special case of the parameterized Picard-Vessiot theory, where one considers a linear differential equation and an auxiliary operator such as a derivation or an endomorphism that is acting on the coefficients of the linear differential equation. A typically example is a linear differential equation with coefficients that depend on a parameter and the auxiliary operator is the derivation with respect to the parameter or a shift operator on the parameter.
Our application concerns the uniqueness of the solution rings in the context of this Galois theory, known as $\s$\=/Picard-Vessiot rings.
This is similar to the application in \cite[Cor. 1.4]{Pillay:ThePicardVessiotTheoryConstrainedCohomology} of Theorem \ref{theo: Pillay} to the uniqueness of the solution rings in the case when the auxiliary operator is a derivation. However, our situation is slightly different because $\s$-Picard-Vessiot rings are generally not unique, even when the constants are $\s$-closed. In general, two $\s$-Picard-Vessiot rings for the same equation only become isomorphic when passing to higher powers of $\s$. More precisely, in \cite[Cor. 1.17]{DiVizioHardouinWibmer:DifferenceGaloisTheoryOfLinearDifferentialEquations} it is shown that if $K$ is a $\de\s$-field such that the $\de$-constants $K^\de$ are a $\s$-closed $\s$-field and $R_1,R_2$ are two $\s$-Picard-Vessiot rings for the equation $\de(y)=Ay$ (for some $A\in K^{m\times m}$), then there exists an integer $n\geq 1$ such that $R_1$ and $R_2$ are isomorphic as $K$-$\de\s^n$-algebras. We note that the integer $n$ is allowed to depend on $R_1$ and $R_2$. In particular, the need to pass to higher powers of $\s$ also naturally arises in the parameterized Picard-Vessiot theory and this phenomenon is well-aligned with our philosophy underlying the statements of Theorems \ref{theo: main} and \ref{theo: main 2nd version}. 

We improve the above uniqueness result in two ways. Firstly, we show that the same uniqueness result also holds if we only assume that $K^\de$ satisfies the equivalent conditions of Theorem~\ref{theo: main 2nd version}. Secondly, we show that if $K^\de$ satisfies the equivalent conditions of Theorem \ref{theo: main} (e.g., $K^\de$ is $\s$\=/closed), then the integer $n$ can be chosen uniformly for all pairs $R_1,R_2$ of $\s$-Picard-Vessiot rings.

\medskip

We conclude the introduction with an outline of the content. In Section \ref{sec: Basic definition} we set the stage by recalling definitions and introducing our notation. In Section \ref{section: The induction principle} we formally introduce, for every difference variety $X$ over $(k,\s)$, the difference varieties $X_n$ over $(k,\s^n)$ and we establish the important induction principle. It states that if $N$ is a normal difference algebraic subgroup of a difference algebraic group $G$, such that $\lim\limits_{n\to \infty} H^1_{\s^n}(k,N)=0$ and $\lim\limits_{n\to \infty} H^1_{\s^n}(k,(G/N)_n)=0$, then $\lim\limits_{n\to \infty} H^1_{\s^n}(k,G_n)=0$. This principle allows us to use decomposition theorems for difference algebraic groups to reduce the proof to certain types of mostly quite explicitly described classes of difference algebraic groups. In Section \ref{section: The induction principle} we also show that \poweak{} and \pwweak{} are equivalent, which is fairly straight forward once the appropriate definitions are in place. The fact that \poweak{} implies \ptweak{} is also fairly immediate and established in Section \ref{sec: first observations} along with some reformulations of property \ptweak{}.
The most difficult part of the proof of Theorem \ref{theo: main} is the implication \ptweak{}$\Rightarrow$\poweak{}. This relies on several structure and decomposition theorems for difference algebraic groups. We first treat special classes of difference algebraic groups (e.g., order zero, diagonalizable, difference dimension theory zero) and build up to the general case. In Section \ref{sec: Torsors over sclosed sfields} we show that a $\s$-closed difference field satisfies the equivalent conditions of Theorem \ref{theo: main}. Finally, in Section \ref{sec: Applicationt to PV} we present the application to the parameterized Picard-Vessiot theory.

\medskip

The authors are grateful to Zo\'{e} Chatzidakis and Anand Pillay for helpful discussions during the preparation of the article.


\section{Basic definitions and constructions}
\label{sec: Basic definition}


In this section we recall the basic definitions from difference algebra and some results about difference algebraic groups. The reader with a solid grounding in difference algebra may prefer to skip this section.


We start by recalling some definitions from difference algebra. Standard references for difference algebra are \cite{Cohn:difference} and \cite{Levin:difference}. Our main references for difference algebraic groups are \cite{Wibmer:FinitenessPropertiesOfAffineDifferenceAlgebraicGroups} and \cite{Wibmer:almostsimple}. All rings are assumed to be commutative and unital. A \emph{difference ring} (or \emph{$\s$-ring} for short) $R$ is a ring together with a ring endomorphism $\s\colon R\to R$. A morphism of $\s$-rings is a morphism of rings that commutes with $\s$.

Let $k$ be a $\s$-ring. A \emph{\ks-algebra} is a $\s$-ring $R$ together with a $k$-algebra structure such that $k\to R$ is a morphism of $\s$-rings. A morphism of \ks-algebras $R\to S$ is a morphism of $\s$-rings that also is a morphism of $k$-algebras. We will write $\Hom_{k,\s}(R,S)$ for the set of morphisms of \ks-algebras.

 If $R$ and $S$ are \ks-algebras, then $R\otimes_k S$ is a \ks-algebra via $\s(r\otimes s)=\s(r)\otimes \s(s)$. A \ks\=/algebra $R$ is \emph{finitely $\s$-generated} if there exists a finite subset $B\subseteq R$ such that $R=k[B,\s(B),\ldots]$. We write $R=k\{B\}$ for a $k$-$\s$-algebra $R$ if $B\subseteq R$ is such that $R=k[B,\s(B),\ldots]$.

An ideal $\ida$ of a $\s$-ring $R$ is a \emph{$\s$-ideal} if $\s(\ida)\subseteq\ida$. If $\s^{-1}(\ida)=\ida$, then $\ida$ is a \emph{reflexive} $\s$-ideal. If $f\s(f)\in\ida$ implies that $f\in\ida$, then $\ida$ is a \emph{perfect} $\s$-ideal. For a subset $B$ of $R$, the $\s$-ideal $[B]=(B,\s(B),\s^2(B),\ldots)\subseteq R$ is the $\s$-ideal \emph{$\s$-generated by $B$}.

Let $k$ be a $\s$-field. A \emph{$\s$-polynomial} over $k$ in the $\s$-variables $y_1,\ldots,y_n$ is a polynomial in the variables $y_1,\ldots,y_n,\s(y_1),\ldots,\s(y_n),\s^2(y_1),\ldots$ over $k$. The \ks-algebra of all $\s$-polynomials is denoted by $k\{y\}=k\{y_1,\ldots,y_n\}$.

Informally speaking, a \emph{$\s$-variety} $X$ over $k$ (or a \emph{\ks-variety}) is the set of solutions of a set $F\subseteq k\{y\}$ of $\s$-polynomials. To make this precise, we can define $X$ to be the functor from the category of \ks-algebras to the category of sets that assigns to every \ks-algebra $R$ the $R$-valued solutions of $F$. A morphism of \ks-varieties is then simply a natural transformation of functors. Alternatively, we can fix a sufficiently large\footnote{The precise requirement here is that $\mathcal{U}$ contains an isomorphic copy of every finitely $\s$-generated \ks-algebra.} \ks-algebra $\mathcal{U}$ and define a \ks-variety to be the set of $\mathcal{U}$-valued solutions of $F$. Then a morphism of \ks-varieties is simply a map that is given by $\s$-polynomials in the coordinates. A \emph{$\s$-algebraic group\footnote{It may seems more accurate to add the word affine or linear here. However, since we will only consider affine $\s$-varieties we refrain from doing so.}} (over $k$) is a group object in the category of $\s$-varieties (over $k$). A morphism of $\s$-algebraic groups (over $k$) is a morphism of $\s$-varieties (over $k$) that respects the group structure.

In practice it does not matter if we work with functors or inside $\mathcal{U}$. In fact, we will suppress the $R$ or the $\mathcal{U}$ from the notation most of the time. For example,
we may write $\f\colon \Gm\to \Gm,\ g\mapsto\s(g)g$ to specify a morphism $\f$ of $\s$-algebraic group from the multiplicative group to the multiplicative group. This can either be read as the transformation of functors such that $\f_R\colon \Gm(R)\to\Gm(R),\ g\mapsto\s(g)g$ for every \ks-algebra $R$ or as the map $\Gm(\mathcal{U})\to\Gm(\mathcal{U}),\ g\mapsto\s(g)g$.

 For a \ks-variety $X$
$$\I(X)=\{f\in k\{y\}|\ f(x)=0 \ \forall \ x\in X\}$$
is a $\s$-ideal of $k\{y\}$. The \ks-algebra
$k\{X\}=k\{y\}/\I(X)$ is called the \emph{$\s$-coordinate ring} of $X$.
The functor $X\rightsquigarrow k\{X\}$ induces an equivalence of categories between the category of \ks-varieties and the category of finitely $\s$-generated \ks-algebras. Moreover $X\simeq \Hom_{k,\s}(k\{X\},-)$ as functors. (See \cite[Section 1.2]{Wibmer:FinitenessPropertiesOfAffineDifferenceAlgebraicGroups}.) Allowing ourselves the freedom of calling any functor isomorphic to a \ks-variety (as defined above) a \ks-variety, it follows that a functor from the category of \ks\=/algebras to the category of sets is a \ks-variety if and only if it is representable by a finitely $\s$-generated \ks-algebra. 
The category of $\s$-algebraic groups over $k$ is equivalent to the category of finitely $\s$-generated \ks-Hopf algebras, i.e., Hopf algebras over $k$, that are finitely $\s$-generated \ks-algebras such that the Hopf algebra structure maps are morphisms of \ks-algebras.

A $\s$-ring $R$ is called \emph{$\s$-reduced} if $\s\colon R\to R$ is injective. The quotient $R_\sred=R/(0)^*$ of $R$ by the reflexive closure $(0)^*=\{f\in R\ |\ \exists \ n\geq 1:\  \s^n(f)=0\}$ of the zero ideal is $\s$-reduced. A \ks-variety $X$ is \emph{$\s$-reduced} if $k\{X\}$ is $\s$-reduced. If the zero ideal of $k\{X\}$ is perfect, then $X$ is called \emph{perfectly $\s$-reduced}. If $k\{X\}$ is $\s$-reduced and an integral domain, then $X$ is called \emph{$\s$-integral}.

A subfunctor $Y$ of a \ks-variety $X$ is a \emph{$\s$-closed $\s$-subvariety} if it is defined by a $\s$-ideal $\I(Y)\subseteq k\{X\}$, i.e., 
$$
\xymatrix{
Y(R) \ar@{^(->}[r] \ar_-\simeq[d] & X(R) \ar[d]^\simeq \\
\Hom_{k,\s}(k\{X\}/\I(Y),R) \ar@{^(->}[r] & \Hom_{k,\s}(k\{X\},R)	
}
$$ 
commutes for every \ks-algebra $R$. A morphism $\f\colon X\to Y$ of \ks-varieties is a \emph{$\s$-closed embedding} if it induces an isomorphism between $X$ and a $\s$-closed $\s$-subvariety of $Y$; equivalently, the dual map $\f^*\colon k\{Y\}\to k\{X\}$ is surjective (\cite[Lemma 1.6]{Wibmer:FinitenessPropertiesOfAffineDifferenceAlgebraicGroups}).

If $\f\colon X\to Y$ is a morphism of \ks-varieties, we denote with $\f(X)$ the smallest $\s$-closed $\s$\=/subvariety of $Y$ such that $\f$ factors through the inclusion $\f(X)\hookrightarrow Y$ (cf. \cite[Lemma 1.5]{Wibmer:FinitenessPropertiesOfAffineDifferenceAlgebraicGroups}).

If $K$ is a $\s$-field extension of $k$ and $X$ a $\s$-variety over $k$, we denote the base change of $X$ from $k$ to $K$ by $X_K$. So $X_K(R)=X(R)$ for every $K$-$\s$-algebra $R$. Then $X_K$ is a $\s$-variety over $K$ with $\s$-coordinate ring $K\{X_K\}=k\{X\}\otimes_k K$. 

If $G$ is a $\s$-algebraic group and $H$ a $\s$-closed $\s$-subvariety such that $H(R)$ is a subgroup of $G(R)$ for every \ks-algebra $R$, then $H$ is a called a \emph{$\s$-closed subgroup} of $G$ and we will write $H\leq G$. If moreover $H(R)$ is a normal subgroup of $G(R)$ for every \ks-algebra $R$, then $H$ is \emph{normal} in $G$ and we indicate this by $H\unlhd G$.

The forgetful functor from \ks-algebras to $k$-algebras has a left adjoint $S\rightsquigarrow [\s]_k S$ which we need to describe explicitly. For a $k$-algebra $S$ and $i\geq 1$ we set ${\hsi S}=S\otimes_k k$ where the tensor product is formed by using $\s^i\colon k\to k$. More generally, if $X$ is an object over $k$, we denote by ${\hsi X}$ the base change of $X$ via $\s^i\colon k\to k$. We set $S[i]=S\otimes_k{\hs S}\otimes_k\ldots\otimes_k {\hsi S}$ and 
let $[\s]_k S$ denote the direct limit (i.e., the union) of the $S[i]$'s. We define the structure of a \ks-algebra on $[\s]_k S$ by setting 
$$\s(f_0\otimes(f_1\otimes\lambda_1)\otimes\ldots\otimes(f_i\otimes\lambda_i))=1\otimes (f_0\otimes 1)\otimes(f_1\otimes\s(\lambda_1))\otimes\ldots\otimes (f_i\otimes\s(\lambda_i))\in S[i+1]$$
for $f_0,\ldots,f_i\in S$ and $\lambda_1,\ldots,\lambda_i\in k$.

\begin{ex}
Consider the univariate polynomial ring $S=k[y]$. Then $$S[i]=k[y, 1\otimes(y\otimes 1), \dots, 1\otimes \dots \otimes (y\otimes 1)]$$ and $\s\colon S[i]\to S[i+1]$ shifts each generator to the next one. Hence $[\s]_kS=k\{y\}$, the difference polynomial ring in one variable.
\end{ex}

For the sake of brevity and to be consistent with our nomenclature for difference algebraic geometry, we use the term \emph{variety over $k$} to mean affine scheme of finite type over $k$.
If $\X$ is a variety over $k$, then the functor $R\rightsquigarrow[\s]_k\X(R)=\X(R)$ is a $\s$-variety over $k$. Indeed, $k\{[\s]_k\X\}=[\s]_k k[\X]$ where $k[\X]$ is the coordinate ring of $\X$. (Cf. \cite[Section 1.3]{Wibmer:FinitenessPropertiesOfAffineDifferenceAlgebraicGroups}.)

By an \emph{algebraic group} over $k$ we mean an affine group scheme of finite type over $k$. By a \emph{closed subgroup} of an algebraic group we mean a closed subgroup scheme. 
Let $\G$ be an algebraic group over $k$. 
Then $[\s]_k\G$ is a $\s$-algebraic group over $k$. If confusion is unlikely we will simply write $\G$ for $[\s]_k\G$. In particular, by a $\s$-closed subgroup of $\G$, we mean a $\s$-closed subgroup of $[\s]_k \G$.

Let $G$ be a $\s$-closed subgroup of an algebraic group $\G$. Then $G$ is defined by a $\s$-ideal $\I(G)\subseteq [\s]_k k[\G]$. For $i\geq 0$ the intersection $\I(G)\cap k[\G][i]\subseteq k[\G][i]=k[\G\times\ldots\times {\hsi\G}]$ defines a closed subgroup $G[i]$ of $\G\times\ldots\times {\hsi\G}$. We call $G[i]$ the \emph{$i$-th order Zariski closure} of $G$ in $\G$. For $i=0$ we simply speak of the \emph{Zariski closure}. If $G[0]=\G$, we say that $G$ is \emph{Zariski-dense} in $\G$.

In \cite[Theorem 3.7]{Wibmer:FinitenessPropertiesOfAffineDifferenceAlgebraicGroups} it is shown that there exist integers $d,e\geq 0$ such that
$\dim(G[i])=d(i+1)+e$ for $i\gg 0$. Moreover every $\s$-algebraic group $G$ is isomorphic to a $\s$-closed subgroup of an algebraic group $\G$ (\cite[Prop. 2.16]{Wibmer:FinitenessPropertiesOfAffineDifferenceAlgebraicGroups}), e.g., $\G$ can be chosen as $\Gl_m$ for some $m$,
and the integer $d$ does not depend on the choice of $\G$ or the choice of such an embedding. We call $d$ the \emph{$\s$-dimension} of $G$ and denote it by $\sdim(G)$. Furthermore, if $\sdim(G)=0$, then the integer $\ord(G)=e\geq 0$ only depends on $G$ and is called the \emph{order} of $G$. If $\sdim(G)>0$, we set $\ord(G)=\infty$.

\begin{ex}
  As an example, consider $\G=\mathbb{G}_a$ and $G=\{g \in \G \mid \s^2(g)-\s(g)-1=0\}$. Then $G[0]=\G$ and $G[1]=\G\times {^\s \G}$ as they do not see the defining equation. But $G[2]$ is the closed subgroup of $\G \times {^\s \G} \times {^{\s^2}\G}$ defined by $\s^2(x)-\s(x)-1=0$, so it has dimension $2$. The dimension of $G[3]$ equals $2$, since it is defined by the two equations $\s^2(x)-\s(x)-1=0$ and $\s^3(x)-\s^2(x)-1=0$ inside  $\G \times {^\s \G} \times {^{\s^2}\G}\times {^{\s^3}\G} $. Similarly, it follows that $d_i=2$ for all $i\geq 2$. Hence $\sdim(G)=0$ and $\ord(G)=2$. 
\end{ex}

Let $G$ be a $\s$-algebraic group and let $N\unlhd G$ be a normal $\s$-closed subgroup. The quotient $G/N$ can be defined by a universal property (\cite[Def. 3.1]{Wibmer:almostsimple}). It exists (Ibid., Theorem~3.3) and is well-behaved with respect to the $\s$-dimension and the order (Ibid. Corollary 3.13): Namely, $\sdim(G)=\sdim(N)+\sdim(G/N)$ and $\ord(G)=\ord(N)+\ord(G/N)$. Furthermore, the isomorphism theorems hold for $\s$-algebraic groups (Ibid., Section 5).

A morphism $\f\colon G\to H$ of $\s$-algebraic groups is called a \emph{quotient map} if $\f(G)=H$. (For other equivalent characterizations of quotient maps see Proposition 4.10 in \cite{Wibmer:almostsimple}.) In particular, $\f$ is a quotient map if and only if $H\simeq G/N$ for a normal $\s$-closed subgroup $N$ of $G$. Any morphism of $\s$\=/algebraic groups factors uniquely as a quotient map followed by a $\s$-closed embedding (\cite[Theorem~4.14]{Wibmer:almostsimple}).

The kernel $\ker(\f)$ of a morphism $\f\colon G\to H$ of $\s$-algebraic groups is given by $\ker(\f)(R)=\ker(\f_R)$ for every \ks-algebra $R$. It is a normal $\s$-closed subgroup of $G$. Indeed, $\I(\ker(\f))\subseteq k\{G\}$ is the ideal of $k\{G\}$ generated by $\f^*(\mathfrak{m}_H)$, where $\mathfrak{m}_H$ is the kernel of the counit $k\{H\}\to k$.
A sequence $1 \to N \xrightarrow{\alpha} G\xrightarrow{\beta} H\to 1$ of $\s$-algebraic groups is called \emph{exact} if $\beta$ is a quotient map and $\alpha$ is a $\s$-closed embedding that identifies $N$ with the kernel of $\beta$. 

A $\s$-algebraic group $G$ is called \emph{$\s$-\'{e}tale} if every element of $k\{G\}$ satisfies a separable polynomial over $k$. 
Recall that a finite dimensional $k$-algebra $S$ is \'{e}tale if every element of $S$ satisfies a separable polynomial over $k$. For a $k$-algebra $R$, let $\pi_0(R)$ denote the union of all \'{e}tale $k$-subalgebras of $R$. 

Let $G$ be a $\s$-algebraic group. Among all morphisms from $G$ to a $\s$-\'{e}tale $\s$-algebraic group, there exists a universal one $G\to \pi_0(G)$. See \cite[Prop. 6.13]{Wibmer:FinitenessPropertiesOfAffineDifferenceAlgebraicGroups}. Indeed $k\{\pi_0(G)\}=\pi_0(k\{G\})$. The kernel $G^o$ of $G\to\pi_0(G)$ is called the \emph{identity component} of $G$. If $G=G^o$, then $G$ is called \emph{connected}. 
See \cite[Lemma 6.7]{Wibmer:almostsimple} for other characterizations of connectedness.

Let $G$ be a $\s$-algebraic group over $k$. A \emph{$G$-torsor} is a \ks-variety $X$ together with a morphism $G\times X\to X$ of \ks-varieties such that $G(R)\times X(R)\to X(R)$ is a simply transitive group action of the group $G(R)$ on the set $X(R)$ for every \ks-algebra $R$. A morphism of $G$-torsors is a $G$\=/equivariant morphism of \ks-varieties. A $G$-torsor isomorphic to $G$ with left-translation is called \emph{trivial}. A $G$-torsor $X$ is trivial if and only if $X(k)\neq\emptyset$ (\cite[Remark 1.3]{BachmayrWibmer:TorsorsForDifferenceAlgebraicGroups}).   

\medskip  

{\bf Throughout this article $k$ denotes a $\s$-field.} All varieties, $\s$-varieties, algebraic groups and $\s$-algebraic groups are assumed to be over $k$ unless the contrary is indicated.

\section{The induction principle} \label{section: The induction principle}

Let $1\to N\to G\to H\to 1$ be an exact sequence of $\s$-algebraic groups over $k$. The principle alluded to in the title of this section is the statement that  $\lim_{n\to\infty}H^1_{\s^n}(k,G_n)=0$ if $\lim_{n\to\infty}H^1_{\s^n}(k,N_n)=0$ and  $\lim_{n\to\infty}H^1_{\s^n}(k,H_n)=0$. The induction principle is crucial for the proof of \ptweak{}$\Rightarrow$\poweak{} because, after decomposing a general $\s$-algebraic group into more manageable pieces, it allows us to restrict our considerations to those more manageable pieces. In this section we also show that \poweak{} and \pwweak{} are equivalent and similarly, their uniform variants \po{} and \pw{} are equivalent. 

\medskip

{\bf Throughout Section \ref{section: The induction principle} we denote with $k$ a $\s$-field (of arbitrary characteristic).}

 \subsection{From $\s$ to $\s^n$} \label{subsection: From s to sn}
In this subsection we introduce the $\s^n$-varieties $X_n$ for every $\s$-variety $X$. Throughout this subsection we fix an integer $n\geq 1$.
We use the prefix $k$-$\s^n$ or $\s^n$ instead of $k$-$\s$ or $\s$ to indicate that we are working over the difference field $(k, \s^n)$ instead of $(k,\s)$.

 We need to be able to pass from \ks-varieties to $k$-$\s^n$-varieties and vice versa.
 The most conceptual way to do this is to consider the left adjoint and the right adjoint of the functor $(R,\s)\rightsquigarrow (R,\s^n)$ from \ks-algebras to $k$-$\s^n$-algebras which we will now construct. 
 Let $(S,\tau)$ be a $k$-$\s^n$-algebra. We define
 $${_1S}=S\otimes_k{\hs S}\otimes_k\ldots\otimes_k{^{\s^{n-1}\!}S}$$ and $\s\colon {_1S}\to {_1S}$ by $$\s(a_0\otimes(a_1\otimes\lambda_1)\otimes\ldots\otimes(a_{n-1}\otimes\lambda_{n-1}))=\tau(a_{n-1})\s(\lambda_{n-1})\otimes(a_0\otimes 1)\otimes (a_1\otimes\s(\lambda_1))\otimes\ldots\otimes (a_{n-2}\otimes\s(\lambda_{n-2})),$$ where $a_0,\ldots,a_{n-1}\in S$ and $\lambda_1,\ldots,\lambda_{n-1}\in k$. Then ${_1S}$ is a \ks-algebra and the inclusion into the first factor $S\to {_1S}$ is a morphism of $k$-$\s^n$-algebras. If $R$ is a \ks-algebra
 and ${_1S}\to R$ is a morphism of \ks-algebras, then $S\to {_1S} \to R$ is a morphism of $k$-$\s^n$-algebras. Conversely, if $\psi\colon S\to R$ is a morphism of $k$-$\s^n$-algebras, then $$ {_1S}\to R,\ a_0\otimes(a_1\otimes\lambda_1)\otimes\ldots\otimes(a_{n-1}\otimes\lambda_{n-1})\mapsto \psi(a_0)\s(\psi(a_1))\lambda_1\ldots\s^{n-1}(\psi(a_{n-1}))\lambda_{n-1}$$ is a morphism of \ks-algebras. It is straight forward to check that these two constructions are inverse to each other. So	  
 $$\Hom_{k,\s^n}(S,R)\simeq\Hom_{k,\s}({_1S},R)$$ for every \ks-algebra $R$.
 Thus the functor $S\rightsquigarrow{_1 S}$ (with the obvious definition on morphisms) is the left adjoint of $(R,\s)\rightsquigarrow (R,\s^n)$. (This is why the $1$ is on the left side.)
 
 \begin{ex} \label{ex: left adjoint}
   Consider the difference polynomial ring $S=k\{y\}_{\s^n}=k[y_1,\ldots,y_m,\s^n(y_1),\ldots,\s^n(y_m),\ldots]$ over the difference field $(k,\s^n)$ in the $\s^n$-variables $y_1,\ldots,y_m$. Then $_1(k\{y\}_{\s^n})$ can be identified with $k\{y\}_\s$, the difference polynomial ring over $(k,\s)$ in the $\s$-variables $y_1,\ldots,y_m$. The identification is such that for $1\leq i\leq n-1$ and $1\leq j\leq m$, the variable $\s^i(y_j)\in k\{y\}_\s$ corresponds to the image of $y_j\otimes 1\in {\hsi}S$ under the inclusion ${\hsi}S\to {_1S}$.
   
   More generally, if $S=k\{y\}_{\s^n}/[F]_{\s^n}$ is a quotient of $k\{y\}_{\s^n}$ by the $\s^n$-ideal $[F]_{\s^n}$ of $k\{y\}_{\s^n}$ $\s^n$\=/generated by $F\subseteq k\{y\}_{\s^n}$, then $_1S$ can be identified with $k\{y\}_\s/[F]_\s$, the quotient of $k\{y\}_\s$ by the $\s$-ideal generated by $F\subseteq k\{y\}_{\s^n}\subseteq k\{y\}_\s$.
   %
 \end{ex}

 Next we will construct the right adjoint of $(R,\s)\rightsquigarrow (R,\s^n)$. If $S$ is any $k$-algebra, then we write ${_{\s^{i}}S}$ for the $k$-algebra that equals $S$ as a ring and where we multiply with elements in $k$ via $\s^i \colon k \to k$. In other words, ${_{\s^{i}}S}$ is obtained from $S$ by restriction of scalars via $\s^i \colon k \to k$.

 
 Let $(S,\tau)$ be a $k$-$\s^n$-algebra. We define
 $$S_1={_{\s^{n-1}}S}\times{_{\s^{n-2}}S}\times\ldots\times{_\s S}\times S$$ as a $k$-algebra and we consider $S_1$ as a \ks-algebra via
 $$\s(a_1,a_2,\ldots,a_n)=(\tau(a_n),a_1,\ldots,a_{n-1}).$$ The projection $S_1\to S$ onto the last factor is a morphism of $k$-$\s^n$-algebras. So if $R\to S_1$ is a morphism of \ks-algebras, then $R\to S_1\to S$ is a morphism of $k$-$\s^n$-algebras. Conversely, if $\psi\colon R\to S$ is a morphism of $k$-$\s^n$-algebras, then $$R\to S_1,\ a\mapsto (\psi(\s^{n-1}(a)),\ldots,\psi(\s(a)),\psi(a))$$ is a morphism of \ks-algebras. One can check that these two constructions are inverse to each other. So	 
 $$\Hom_{k,\s^n}(R,S)\simeq\Hom_{k,\s}(R,S_1)$$ for every \ks-algebra $R$.
 In other words, $S\rightsquigarrow S_1$ is the right adjoint of $(R,\sigma)\rightsquigarrow (R,\s^n)$. (This is why the one is on the right.) Note that with $\kn$ defined as on page \pageref{page: kn}, we have $\kn=(k,\s^n)_1$. More generally, for a \ks-algebra $R$ we define the \ks-algebra $R^{[n]}$ by
 $$R^{[n]}=(R,\s^n)_1={_{\s^{n-1}}R}\times \ldots\times  R.$$ One can verify directly that for $l\geq 1$ the map
  $$R^{[n]}\to R^{[nl]},\ (a_1,\ldots,a_n)\mapsto (\s^{n(l-1)}(a_1),\ldots,\s^{n(l-1)}(a_n),\ldots,\s^n(a_1),\ldots,\s^n(a_n),a_1,\ldots,a_n)$$
  is a morphism of \ks-algebras.

 
 If $X$ is a \ks-variety, then we can define a functor $X_n$ from $k$-$\s^n$-algebras to sets by $$X_n(S)=X(S_1)$$
 for any $k$-$\s^n$-algebra $S$. 
 Conversely, if $Z$ is a $k$-$\s^n$-variety, then we can define a functor $Z_1$ from \ks\=/algebras to sets by
 $$Z_1(R)=Z(R,\s^n)$$
 for any \ks-algebra $R$.
 
 \begin{lemma} \mbox{ } \label{lem: X1 Xn}
 	\begin{enumerate}
 		\item If $X$ is a \ks-variety, then $X_n$ is a $k$-$\s^n$-variety. Indeed, the $\s^n$-coordinate ring $k\{X_n\}$ of $X_n$ is $(k\{X\},\s^n)$. 
 		\item If $Z$ is a $k$-$\s^n$-variety, then $Z_1$ is a \ks-variety. Indeed, $k\{Z_1\}={_1 (k\{Z\})}$.
 		
 	\end{enumerate}
 \end{lemma}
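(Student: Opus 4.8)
The plan is to deduce both parts from the representability criterion for $\s$-varieties recalled in Section~\ref{sec: Basic definition}: a functor on \ks-algebras is a \ks-variety exactly when it is representable by a finitely $\s$-generated \ks-algebra, and under the resulting anti-equivalence a \ks-variety corresponds to its $\s$-coordinate ring; the same statement holds verbatim over $(k,\s^n)$. Granting this, each part becomes a formal chaining of one of the two adjunctions constructed above with the Yoneda lemma, so I do not expect a serious obstacle — only the bookkeeping of naturality, which was in effect already discharged when the adjoints were built.

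For part~(i), I would start from $X\simeq\Hom_{k,\s}(k\{X\},-)$. For a $k$-$\s^n$-algebra $S$, applying the right-adjoint adjunction with $R=k\{X\}$ gives, naturally in $S$,
\[
X_n(S)=X(S_1)\;\simeq\;\Hom_{k,\s}(k\{X\},S_1)\;\simeq\;\Hom_{k,\s^n}\big((k\{X\},\s^n),S\big).
\]
Hence $X_n$ is represented by the $k$-$\s^n$-algebra $(k\{X\},\s^n)$. It then remains to check that $(k\{X\},\s^n)$ is finitely $\s^n$-generated: if $k\{X\}=k\{B\}$ with $B$ finite, then $B\cup\s(B)\cup\cdots\cup\s^{n-1}(B)$ is a finite set of $\s^n$-generators of $k\{X\}$. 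Therefore $X_n$ is a $k$-$\s^n$-variety, and the anti-equivalence forces $k\{X_n\}=(k\{X\},\s^n)$.

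Part~(ii) is the mirror image, using the left-adjoint adjunction. From $Z\simeq\Hom_{k,\s^n}(k\{Z\},-)$ I get, naturally in a \ks-algebra $R$,
\[
Z_1(R)=Z(R,\s^n)\;\simeq\;\Hom_{k,\s^n}\big(k\{Z\},(R,\s^n)\big)\;\simeq\;\Hom_{k,\s}\big({_1(k\{Z\})},R\big),
\]
so $Z_1$ is represented by ${_1(k\{Z\})}$. By Example~\ref{ex: left adjoint}, writing $k\{Z\}$ as a quotient of some $k\{y\}_{\s^n}$ exhibits ${_1(k\{Z\})}$ as a quotient of the corresponding $k\{y\}_\s$, hence it is finitely $\s$-generated. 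Thus $Z_1$ is a \ks-variety with $k\{Z_1\}={_1(k\{Z\})}$. If anything counts as the hard part, it is merely making sure the adjunction bijections and the identification in Example~\ref{ex: left adjoint} respect the $\s$- and $\s^n$-algebra structures; but that verification is routine and essentially contained in the constructions preceding the lemma.
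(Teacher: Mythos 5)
Your proof is correct and follows essentially the same route as the paper: both parts rely on chaining the functor-of-points description with the adjunctions constructed just before the lemma, and then verifying finite ($\s^n$- resp.\ $\s$-) generation of the representing algebra. Your version merely spells out the generating set $B\cup\s(B)\cup\cdots\cup\s^{n-1}(B)$ and invokes Example~\ref{ex: left adjoint} explicitly, whereas the paper leaves these finite-generation checks as one-line remarks.
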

 \begin{proof}  	
 	(i) For a $k$-$\s^n$-algebra $S$ we have $X_n(S)=\Hom_{k,\s}(k\{X\},S_1)=\Hom_{k,\s^n}(k\{X\},S)$. 
 	 As $(k\{X\},\s)$ is finitely $\s$-generated, $(k\{X\},\s^n)$ is finitely $\s^n$-generated. So $X_n$ is indeed a $k$\=/$\s^n$\=/variety.
 	 
 	(ii) For a \ks-algebra $R$ we have $Z_1(R)=\Hom_{k,\s^n}(k\{Z\},R)=\Hom_{k,\s}({_1 (k\{Z\})},R)$. As $k\{Z\}$ is finitely $\s^n$-generated, $_1(k\{Z\})$ is finitely $\s$-generated. So $Z_1$ is indeed a \ks-variety.
 \end{proof}
 
 So $X\rightsquigarrow X_n$ is a functor from the category of \ks-varieties to the category of $k$-$\s^n$-varieties and $Z\rightsquigarrow Z_1$ is a functor from the category of $k$-$\s^n$-varieties to the category of \ks-varieties. 
 For a \ks-variety $X$ and a $k$-$\s^n$-variety $Z$ we have
 $$\Hom(X,Z_1)=\Hom_{k,\s}({_1(k\{Z\})}, k\{X\})=\Hom_{k,\s^n}(k\{Z\},k\{X\})=\Hom(X_n,Z).$$ 
 So the functors $X\rightsquigarrow X_n$ and $Z\rightsquigarrow Z_1$ are adjoint. Note that we can identify $(X_n)_m$ with $X_{nm}$, a fact that we will use frequently.

\begin{ex} \label{ex: Xn}
	Note that the $\s$-polynomial ring $k\{y\}_\s$ over $k$ in the $m$ $\s$-variables $y_1,\ldots,y_m$ is, as a $k$-$\s^n$-algebra, the $\s^n$-polynomial ring over $(k,\s^n)$ in the $nm$ $\s^n$-variables $$(z_1,\ldots,z_{nm})=(y_1,\ldots,y_m,\s(y_1),\ldots,\s(y_m),\ldots,\s^{n-1}(y_1),\ldots,\s^{n-1}(y_m)).$$ 
	We may therefore write $k\{y\}_\s=k\{z\}_{\s^n}$.	
	Furthermore, if $F\subseteq k\{y\}$ $\s$-generates the $\s$-ideal $I$ of $k\{y\}_\s$, then $F,\s(F),\ldots,\s^{n-1}(F)\subseteq k\{y\}_\s=k\{z\}_{\s^n}$ $\s^n$-generates the $\s^n$-ideal $I$ of $k\{z\}_{\s^n}$. Thus, if $X$ is the \ks-variety defined by $F\subseteq k\{y\}_\s$, then $X_n$ is the $k$-$\s^n$-variety defined by $F,\s(F),\ldots,\s^{n-1}(F)\subseteq k\{z\}_{\s^n}$.
	
	For a concrete example, let us consider, as on page \pageref{page: X}, the \ks-variety $X=\{x\in\Gl_m|\ \s(x)=(x^T)^{-1}A\}$, where $A\in\Gl_m(k)$. As $X$ is defined by $\s(x)-(x^T)^{-1}A=0$, $X_2$ is defined by $\s(x)-(x^T)^{-1}A=0$ and $\s^2(x)-(\s(x)^T)^{-1}\s(A)=0$, where $x$ and $y=\s(x)$ are to be considered as $\s^2$-variables.
	Thus	
	\begin{align*}
	X_2&=\{(x,y)\in\Gl_m\times\Gl_m|\ y=(x^T)^{-1}A,\ \s^2(x)=(y^T)^{-1}\s(A) \} \\
	&\simeq\{x\in\Gl_m|\ \s^2(x)=x(A^T)^{-1}\s(A)\}
	\end{align*}
	as claimed on page \pageref{page: X}.	
%
%
%
%
%
\end{ex}

\begin{ex} \label{ex: varieties and Xn}
	If $X$ comes from an affine variety $\X$ over $k$, then the $\s^n$-variety $X_n$ corresponds to $\X\times{\hs \X}\times\ldots\times{^{\s^{n-1}}\!\X}$. In other words,
	$$([\s]_k\X)_n\simeq[\s^n]_k(\X\times\ldots\times{^{\s^{n-1}}\!\X}).$$
\end{ex}
\begin{proof}
	This follows from Example \ref{ex: Xn} but can also be seen in a more functorial fashion as follows: For every $k$-$\s^n$-algebra $S$ we have
\begin{align*}
	([\s]_k\X)_n(S) & =\X({_{\s^{n-1}}}S\times\ldots\times S)\simeq\Hom_k(k[\X],{_{\s^{n-1}}}S\times\ldots\times S) \\
	& \simeq\Hom_k(k[\X],{_{\s^{n-1}}}S)\times\ldots\times \Hom_k(k[\X],S) \\
	&\simeq\Hom_k({^{\s^{n-1}}}(k[\X]),S)\times\ldots\times \Hom_k(k[\X],S) \\
	& = \Hom_k({^{\s^{n-1}}}(k[\X])\otimes_k\ldots\otimes_k k[\X],S)\simeq (\X\times\ldots\times{^{\s^{n-1}}\!\X})(S).
	\end{align*}
\end{proof}

\begin{ex}
	If $Z$ is the $k$-$\s^n$-variety defined by $F\subseteq k\{y\}_{\s^n}$. Then $Z_1$ is the \ks-variety defined by $F\subseteq k\{y\}_{\s^n}\subseteq k\{y\}_\s$. Cf. Example \ref{ex: left adjoint}. Thus, concretely, to compute $Z_1$ from $Z$ we simply have to interpret the defining $\s^n$-equations of $Z$ as $\s$-equations.
\end{ex}	

 \begin{rem} \label{rem: points k vs kn}
 	Let $X$ be a \ks-variety. We have $X(\kn)\neq\emptyset$ if and only if $X_n(k)\neq\emptyset$.
 \end{rem}
 \begin{proof}
 	In fact, $X_n(k)=X(\kn)$.   
 \end{proof}

If $G$ is a $\s$-algebraic group and $X$ a $G$-torsor, then $G_n$ is a $\s^n$-algebraic group and $X_n$ is a $G_n$-torsor. Remark \ref{rem: points k vs kn} then shows that $X(\kn)\neq\emptyset$ if and only if $X_n$ is trivial.

%

The following three lemmas relate properties of the $\s$-algebraic group $G$ to properties of the $\s^n$\=/algebraic group $G_n$.

 \begin{lemma} \label{lem: connectedness preserved}
 	Let $G$ be a $\s$-algebraic group over $k$. If $G$ is connected, then $G_n$ is connected.	
 \end{lemma}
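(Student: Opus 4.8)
The statement to establish is that if a $\s$-algebraic group $G$ over $k$ is connected, then the $\s^n$-algebraic group $G_n$ is connected. The strategy is to work with the component group functor $\pi_0$ and the coordinate-ring description of $G_n$ from Lemma~\ref{lem: X1 Xn}. Recall that connectedness of a $\s$-algebraic group $H$ is equivalent to $\pi_0(k\{H\})=k$, where $\pi_0(R)$ denotes the union of all étale $k$-subalgebras of $R$. So the goal reduces to showing: if $\pi_0(k\{G\})=k$, then $\pi_0(k\{G_n\})=k$.

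First I would invoke Lemma~\ref{lem: X1 Xn}(i), which identifies the $\s^n$-coordinate ring: $k\{G_n\}=(k\{G\},\s^n)$, i.e., as a $k$-algebra $k\{G_n\}$ is literally $k\{G\}$, only the distinguished endomorphism has changed from $\s$ to $\s^n$. Crucially, the notion of being an étale $k$-subalgebra is purely a $k$-algebra-theoretic notion — it does not reference $\s$ at all. Therefore $\pi_0(k\{G_n\})$, computed as a $k$-algebra, is the same subset of the underlying ring as $\pi_0(k\{G\})$. If $\pi_0(k\{G\})=k$, we immediately get $\pi_0(k\{G_n\})=k$, whence $G_n$ is connected. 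The one point requiring a word of care is that $\pi_0(k\{G\})$ and $\pi_0(k\{G_n\})$ need to agree \emph{as Hopf-theoretic objects} so that connectedness in the sense of $\s^n$-algebraic groups (via the universal morphism to a $\s^n$-étale group) coincides with the ring-level statement; but by \cite[Prop. 6.13]{Wibmer:FinitenessPropertiesOfAffineDifferenceAlgebraicGroups} (and \cite[Lemma 6.7]{Wibmer:almostsimple}), connectedness is exactly $\pi_0(k\{H\})=k$ for any $\s'$-algebraic group $H$ over $k$, regardless of the chosen endomorphism, so this is automatic.

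Alternatively — and perhaps more transparently for the reader — one can argue via Example~\ref{ex: varieties and Xn}-style reasoning together with the Zariski-closure description. If $G\leq\G$ is a $\s$-closed subgroup of an algebraic group, then $G_n\leq\G\times{\hs\G}\times\ldots\times{^{\s^{n-1}}\!\G}$, and the order-$i$ Zariski closures $G_n[i]$ are obtained from the Zariski closures $G[j]$ by an obvious reindexing; since connectedness of $G$ can be detected on the $G[i]$ (the $G[i]$ are eventually connected algebraic groups, equivalently $\pi_0(G[i])$ stabilizes to the trivial group), the same holds for $G_n$. However, the clean route is the coordinate-ring one above.

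\textbf{Main obstacle.} There is essentially no analytic difficulty here; the only thing to be careful about is bookkeeping — making sure that ``$\pi_0$'' and ``étale'' are invoked as $k$-algebra notions independent of $\s$, and citing the right characterization of connectedness so that passing from $\s$ to $\s^n$ genuinely changes nothing at the level that matters. The proof will be a short paragraph.
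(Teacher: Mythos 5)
Your proposal is correct and follows essentially the same route as the paper: the paper's proof also invokes the characterization that $G$ is connected iff $\pi_0(k\{G\})=k$ (citing \cite[Lemma 6.7]{Wibmer:almostsimple}) and then observes that $\pi_0(k\{G\})$, being a purely $k$-algebra-theoretic construction, does not depend on $\s$, so the claim is immediate from $k\{G_n\}=(k\{G\},\s^n)$. Your write-up is a slightly more detailed rendering of the same two-line argument, with the alternative Zariski-closure route offered as a supplementary sanity check.
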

 \begin{proof}
 	Recall (e.g., from \cite[Lemma 6.7]{Wibmer:almostsimple}) that $G$ is connected if and only if $\pi_0(k\{G\})=k$. As $\pi_0(k\{G\})$ does not depend on $\s$ the claim follows.
 \end{proof}
 
 \begin{lemma} \label{lem: sdim and order preserved}
 	Let $G$ be a $\s$-algebraic group over $k$. Then $\s^n\text{-}\dim(G_n)=\sdim(G)\cdot n$ and $\ord(G_n)=\ord(G)$. 
 \end{lemma}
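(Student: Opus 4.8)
The plan is to compute the higher Zariski closures of $G_n$ in terms of those of $G$ and then invoke the asymptotic dimension formula. Recall from the discussion around \cite[Theorem 3.7]{Wibmer:FinitenessPropertiesOfAffineDifferenceAlgebraicGroups} that, for any $\s$-closed embedding $G\leq\G$ of $G$ into an algebraic group over $k$, one has $\dim(G[i])=d(i+1)+e$ for $i\gg 0$, with $d=\sdim(G)$ and, when $d=0$, $e=\ord(G)$, independently of the embedding. So first I would fix such an embedding $G\leq\G$ (possible by \cite[Prop.~2.16]{Wibmer:FinitenessPropertiesOfAffineDifferenceAlgebraicGroups}) and set $\widetilde\G=\G\times{\hs\G}\times\cdots\times{^{\s^{n-1}}\!\G}$, an algebraic group over $k$. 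By Example~\ref{ex: varieties and Xn} we have $([\s]_k\G)_n\simeq[\s^n]_k\widetilde\G$, and since the functor $(-)_n$ acts on $\s$-coordinate rings by $k\{(-)_n\}=(k\{-\},\s^n)$ (Lemma~\ref{lem: X1 Xn}), it carries the $\s$-closed embedding $G\hookrightarrow[\s]_k\G$ to a $\s^n$-closed embedding $G_n\hookrightarrow[\s^n]_k\widetilde\G$ of groups. Thus $G_n$ is realized as a $\s^n$-closed subgroup of the algebraic group $\widetilde\G$, and $\s^n\text{-}\dim(G_n)$ and $\ord(G_n)$ can be read off from the asymptotics of $\dim\big(G_n[j]\big)$, where $G_n[j]$ is the $j$-th order Zariski closure of $G_n$ in $\widetilde\G$.

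The crucial step is the identity $G_n[j]=G[\,n(j+1)-1\,]$. To establish it one unwinds the definitions of $S[i]$ and of the restriction-of-scalars twists ${^{\s^l}}(-)$. The ambient algebraic group of $G_n[j]$ is $\widetilde\G\times{^{\s^n}}\widetilde\G\times\cdots\times{^{\s^{nj}}}\widetilde\G$; regrouping the tensor factors of its coordinate ring (each ${^{\s^{nl}}}k[\widetilde\G]$ being ${^{\s^{nl}}}k[\G]\otimes_k\cdots\otimes_k{^{\s^{nl+n-1}}}k[\G]$) identifies this canonically with $\G\times{\hs\G}\times\cdots\times{^{\s^{n(j+1)-1}}\!\G}$, the ambient group of $G[\,n(j+1)-1\,]$, and correspondingly $k[\widetilde\G][j]_{\s^n}=k[\G][\,n(j+1)-1\,]$ as subrings of $[\s]_kk[\G]=[\s^n]_kk[\widetilde\G]$. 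Since the defining ideal of $G_n$ inside $[\s^n]_kk[\widetilde\G]$ is literally $\I(G)\subseteq[\s]_kk[\G]$ (this is the content of $k\{G_n\}=(k\{G\},\s^n)$), intersecting it with these nested polynomial subrings yields the same closed subgroup in each case, giving $G_n[j]=G[\,n(j+1)-1\,]$.

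It then remains to substitute. For $j\gg 0$ we have $n(j+1)-1\gg 0$, so $\dim\big(G_n[j]\big)=\dim\big(G[\,n(j+1)-1\,]\big)=d\big(n(j+1)-1+1\big)+e=(dn)(j+1)+e$. Applying the definition of $\s^n$-dimension and order to $G_n\leq\widetilde\G$ gives $\s^n\text{-}\dim(G_n)=dn=\sdim(G)\cdot n$, and when $d=0$ also $\ord(G_n)=e=\ord(G)$; when $d>0$ both $\ord(G)$ and $\ord(G_n)$ equal $\infty$, so the order identity holds in that case as well. The only genuine obstacle is the bookkeeping in the middle paragraph: one must carefully track the tensor factors and the $\s$-twists so that the ambient algebraic groups and the defining ideals of $G_n[j]$ and $G[\,n(j+1)-1\,]$ are seen to coincide; everything else is immediate from the cited results.
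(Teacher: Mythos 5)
Your proof is correct and takes essentially the same approach as the paper's: both fix a $\s$-closed embedding $G\hookrightarrow\G$, realize $G_n$ as a $\s^n$-closed subgroup of $\G\times{\hs\G}\times\cdots\times{^{\s^{n-1}}\!\G}$, observe that the $j$-th order Zariski closure of $G_n$ coincides with the $(n(j+1)-1)$-th order Zariski closure of $G$, and then substitute into the asymptotic formula $\dim(G[i])=d(i+1)+e$. The paper phrases the key identification dually in terms of the filtration by subalgebras $k[B,\s(B),\ldots,\s^i(B)]\subseteq k\{G\}$ (where $k[\G]=k[B]$), which streamlines the tensor-factor bookkeeping you flag as the only real obstacle.
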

 \begin{proof}
 	We consider $G$ as a Zariski-dense, $\s$-closed subgroup of some algebraic group $\G$ via a $\s$\=/closed embedding $G \hookrightarrow\G$.  
    Algebraically this means that we have an inclusion of Hopf algebras $k[\G]\subseteq k\{G\}$ such that $k[\G]$ generates $k\{G\}$ as a \ks-algebra. In particular, if $B$ is a finite subset of $k[\G]$ such that $k[\G]=k[B]$, then $k\{G\}=k\{B\}$.
 	
 	The $\s$-closed embedding $G\hookrightarrow[\s]_k\G$ yields a $\s^n$-closed embedding $G_n\hookrightarrow([\s]_k\G)_n=[\s^n]_k(\G\times\ldots\times {^{\s^{n-1}}\!\G}$ (Example \ref{ex: varieties and Xn}). So we can consider $G_n$ as a $\s^n$-closed subgroup of the algebraic group $\G\times {\hs\G}\times\ldots\times{^{\s^{n-1}}\!\G}$. Algebraically, this corresponds to the inclusion of Hopf algebras $k[B,\ldots,\s^{n-1}(B)]\subseteq k\{G\}$.
 	For $i\geq 0$ let $d_i=\dim(k[B,\ldots,\s^{i}(B)])$. Then, for $i\gg 0$, we have $d_i=\sdim(G)(i+1)+e$ for some integer $e\geq 0$.
 	
 	Similarly, for $i\gg 0$, we have $d_{(i+1)n-1}=\s^n\text{-}\dim(G_n)(i+1)+e'$ for some integer $e'\geq 0$. Therefore
 	$$\sdim(G)((i+1)n)+e=\s^n\text{-}\dim(G_n)(i+1)+e'$$
 	and thus $\s^n$-$\dim(G_n)=\sdim(G)\cdot n$. Moreover, $\sdim(G)=0$ if and only if  $\s^n$-$\dim(G_n)=0$ and in this case $e=e'$, i.e., $\ord(G)=\ord(G_n)$.	 
 \end{proof}

\begin{lemma} \label{lemma: exactness preserved}
	If $1\to N\to G\to H\to 1$ is an exact sequence of $\s$-algebaic groups, then $1\to N_n\to G_n\to H_n\to 1$ is an exact sequence of $\s^n$-algebraic groups for every $n\geq 1$.	
\end{lemma}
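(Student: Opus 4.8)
The plan is to prove exactness of $1\to N_n\to G_n\to H_n\to 1$ by checking the two defining conditions separately: that $N_n\to G_n$ is a $\s^n$-closed embedding identifying $N_n$ with $\ker(G_n\to H_n)$, and that $G_n\to H_n$ is a quotient map. The guiding principle is that the functor $X\rightsquigarrow X_n$ has an explicit description on coordinate rings, namely $k\{X_n\}=(k\{X\},\s^n)$ by Lemma \ref{lem: X1 Xn}, so all three statements can be translated into statements about the morphisms of Hopf algebras underlying the original sequence, now regarded as maps of $k$-$\s^n$-algebras rather than \ks-algebras.

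First I would treat the $\s^n$-closed embedding $N_n\hookrightarrow G_n$. Since $\alpha\colon N\to G$ is a $\s$-closed embedding, the dual map $\alpha^*\colon k\{G\}\to k\{N\}$ is surjective (\cite[Lemma 1.6]{Wibmer:FinitenessPropertiesOfAffineDifferenceAlgebraicGroups}). Surjectivity is a purely ring-theoretic condition and is unaffected by replacing $\s$ with $\s^n$, so $\alpha^*\colon (k\{G\},\s^n)\to(k\{N\},\s^n)$, i.e.\ the dual of $\alpha_n\colon N_n\to G_n$, is still surjective; hence $\alpha_n$ is a $\s^n$-closed embedding. Next I would verify that this $\s^n$-closed subgroup is the kernel of $\beta_n\colon G_n\to H_n$. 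By the description of the kernel recalled in the excerpt, $\I(\ker(\beta_n))\subseteq k\{G_n\}=(k\{G\},\s^n)$ is the ideal generated by $\beta^*(\m_H)$, where $\m_H\subseteq k\{H\}$ is the augmentation ideal — note $\m_H$ does not depend on whether we use $\s$ or $\s^n$. But $\ker(\beta)=N$ already says that the $\s$-ideal of $k\{G\}$ $\s$-generated by $\beta^*(\m_H)$ equals $\I(N)$, and the latter is in fact the ordinary ideal generated by $\beta^*(\m_H)$ since $\I(\ker(\beta))$ is described as an ordinary ideal, not merely a $\s$-ideal. Therefore the ordinary ideal generated by $\beta^*(\m_H)$ in $(k\{G\},\s^n)$ is again $\I(N)=\I(N_n)$ (as sets these coincide, only the difference structure is relabelled), which shows $\ker(\beta_n)=N_n$ as $\s^n$-closed subgroups of $G_n$.

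It remains to show $\beta_n\colon G_n\to H_n$ is a quotient map, equivalently that $\beta_n(G_n)=H_n$, equivalently (by \cite[Theorem 4.14]{Wibmer:almostsimple}, or directly) that the dual map $\beta^*\colon k\{H\}\to k\{G\}$ is injective after relabelling $\s\rightsquigarrow\s^n$. Since $\beta$ is a quotient map, $\beta^*$ is injective; injectivity of a ring map is again insensitive to the choice of endomorphism, so $\beta^*\colon (k\{H\},\s^n)\to (k\{G\},\s^n)$ is injective and $\beta_n$ is a quotient map. (Alternatively, one can argue functorially: $\beta_n(G_n)$ corresponds to the image $\beta^*(k\{H\})\subseteq k\{G\}$, which equals $k\{H\}$ precisely when $\beta^*$ is injective.) Assembling the three pieces gives exactness of $1\to N_n\to G_n\to H_n\to 1$.

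The main obstacle, such as it is, lies in being careful about which ideals are ordinary ideals versus $\s$-ideals in the characterizations of kernels and quotient maps, and in making sure that the identification $k\{X_n\}=(k\{X\},\s^n)$ is applied consistently so that the relabelling $\s\rightsquigarrow\s^n$ genuinely leaves the underlying rings and ring maps untouched. Once that bookkeeping is set up, each of the three conditions reduces to the observation that surjectivity/injectivity of the relevant structure map, and the generation of the relevant ideal, are properties of the underlying commutative algebra that do not see the difference operator. No decomposition theorems or structural input about $\s$-algebraic groups are needed beyond the dual characterizations of $\s$-closed embeddings, kernels, and quotient maps already recalled in Section \ref{sec: Basic definition}.
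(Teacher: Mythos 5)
Your proof is correct and follows essentially the same approach as the paper's: translate the three exactness conditions into surjectivity of $k\{G\}\to k\{N\}$, injectivity of $k\{H\}\to k\{G\}$, and the statement that the ordinary ideal of $k\{G\}$ generated by $\beta^*(\m_H)$ equals $\I(N)$, then observe that none of these conditions see the difference operator, so they persist under the relabelling $\s\rightsquigarrow\s^n$. You spell out the bookkeeping in more detail — in particular making explicit that the kernel ideal is an ordinary ideal rather than merely a $\s$-generated one, which is precisely the point that makes the argument watertight — but the underlying idea is the same.
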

\begin{proof}
The exactness of $1\to N\to G\to H\to 1$ can be reformulated in terms of the corresponding morphisms $k\{H\}\to k\{G\}\to k\{N\}$ of \ks-Hopf algebras. Exactness at $N$ means that $k\{G\}\to k\{N\}$ is surjective, exactness at $H$ means that $k\{H\}\to k\{G\}$ is injective and exactness at $G$ means that the kernel of $k\{G\}\to k\{N\}$ agrees with the ideal of $k\{G\}$ generated by the image of $\mathfrak{m}_H$ in $k\{G\}$, where $\mathfrak{m}_H$ is the kernel of the counit $k\{H\}\to k$. These properties do not depend on the power of $\s$ considered. Thus $1\to N_n\to G_n\to H_n\to 1$ is exact. 
\end{proof}	

\subsection{Exact sequences and $H^1$}

In this subsection we establish the induction principle. We also observe that \poweak{}$\Leftrightarrow$\pwweak{} and \po{}$\Leftrightarrow$\pw{}.

Let $G$ be a $\s$-algebraic group over $k$. A cohomology set $H^1(k,G)$ that classifies $G$-torsors was introduced in \cite{BachmayrWibmer:TorsorsForDifferenceAlgebraicGroups}. Far more general and elaborate approaches to cohomology in difference algebraic geometry can be found in \cite{Tomasic:AToposTheoreticViewOfDifferenceAlgebra} and \cite{ChalupnikKowalski:DifferenceSheavesAndTorsors}. For our purposes, the elementary approach from \cite{BachmayrWibmer:TorsorsForDifferenceAlgebraicGroups} is sufficient.

We begin by recalling the definition of the cohomology set $H^1(A/k,G)$, which, for a \ks-algebra $A$, classifies $G$\=/torsors $X$ with $X(A)\neq\emptyset$.
The morphisms of \ks-algebras
$$
\xymatrix{
	A \ar@<0.5ex>[r]  \ar@<-0.5ex>[r]  & A\otimes_k A  \ar@<1ex>[r] \ar[r] \ar@<-1ex>[r]  & A\otimes_k A\otimes_k A \\
}
$$
induce morphisms of groups
$$\xymatrix{
	G(A) \ar@<-0.5ex>_-{\de_2}[r] \ar@<0.5ex>^-{\de_1}[r] & G(A\otimes_k A) \ar@<1ex>^-{\partial_1}[r] \ar[r]|-{\partial_2}  \ar@<-1ex>_-{\partial_3}[r] & G(A\otimes_k A\otimes_k A),
}
$$
where $\delta_1$ is induced from $a\mapsto 1\otimes a$, $\delta_2$ is induced from $a\mapsto a\otimes 1$, $\partial_1$ is induced from $a\otimes b\mapsto 1\otimes a\otimes b$ and so on.

An element $\chi\in G(A\otimes_k A)$ is called a \emph{cocycle} if $\partial_2(\chi)=\partial_1(\chi)\partial_3(\chi)$. Two cocycles $\chi_1$ and $\chi_2$ are called equivalent if there exists a $g\in G(A)$ such that $\chi_1=\delta_1(g)\chi_2\delta_2(g)^{-1}$. The set of all equivalence classes of cocycles is denoted by $H^1_\s(A/k,G)$. It is a pointed set with the usual functorial properties. The distinguished element is the equivalence class of $1\in  G(A\otimes_k A)$.

We set $H^1_\s(k,G)=\lim\limits_{\longrightarrow}H^1_\s(A/k,G)$,
where the limit is taken over the set of isomorphism classes of finitely $\s$-generated \ks-algebras. (See \cite[Section 5]{BachmayrWibmer:TorsorsForDifferenceAlgebraicGroups} for more details.)
Let $\Sigma_\s(G)$ denote the pointed set of all isomorphism classes of $G$-torsors.
The distinguished element in $\Sigma_\s(G)$ is the class of the trivial torsor. According to \cite[Theorem 4.2]{BachmayrWibmer:TorsorsForDifferenceAlgebraicGroups} we have an isomorphism of pointed sets 
$$\Sigma_\s(G)\simeq H^1_\s(k,G).$$
Fix $n\geq 1$. If $X$ is a $G$-torsor, then $X_n$ is a $G_n$-torsor. As $X(k)\subseteq X(\kn)=X_n(k)$, we see that $X_n$ is trivial if $X$ is trivial. Thus we have a map of pointed sets
$$\Sigma_\s(G)\to \Sigma_{\s^n}(G_n).$$

\begin{ex}
Let $G=\{g\in\Gl_m|\ \s(g)=(g^T)^{-1}\}$. For every $A\in\Gl_m(k)$ the \ks-variety $X=\{x\in\Gl_m|\ \s(x)=(x^T)^{-1}A\}$ is a $G$-torsor under left multiplication. We have $G_2=\{g\in\Gl_m|\ \s^2(g)=g\}$ and 	$X_2=\{x\in\Gl_m|\ \s^2(x)=x(A^T)^{-1}\s(A)\}$. See Example \ref{ex: Xn}.

\end{ex}	
We also have a canonical map 
$$H^1_\s(k,G)\to H^1_{\s^n}(k,G_n),$$
which we will describe next. 
Let $X$ be a \ks-variety and $A$ a \ks-algebra. 
The morphism $A\to A^{[n]},\ a \mapsto (\s^{n-1}(a),\ldots,a)$ of \ks-algebras induces a map $X(A)\to X(A^{[n]})=X((A,\s^n)_1)=X_n(A)$.
Under the identifications $X(A)=\Hom_{k,\s}(k\{X\},A)$ and $X_n(A)=\Hom_{k,\s^n}(k\{X\},A)$, this map is simply interpreting a morphism $k\{X\}\to A$ of \ks-algebras as a morphism of $k$\=/$\s^n$\=/algebras.


If $A\to B$ is a morphism of \ks-algebras, then
$$
\xymatrix{
X(A) \ar[r] \ar[d] & X_n(A) \ar[d] \\
X(B) \ar[r] & X_n(B)	
}
$$
commutes. Similarly, if $X\to Y$ is a morphism of \ks-varieties, then
\begin{equation} \label{eqn: X_n commutes}
\xymatrix{
X(A) \ar[r] \ar[d] & X_n(A) \ar[d] \\
Y(A) \ar[r] & Y_n(A)	
}
\end{equation}
commutes.
 Moreover, if $G$ is a $\s$-algebraic group over $k$, then $G(A)\to G_n(A)$ is a morphism of groups. The commutative diagram
$$
\xymatrix{
	G(A) \ar@<0.5ex>[r]  \ar@<-0.5ex>[r] \ar[d] & G(A\otimes_k A)  \ar@<1ex>[r] \ar[r] \ar@<-1ex>[r] \ar[d] & G(A\otimes_k A\otimes_k A) \ar[d] \\
		G_n(A) \ar@<0.5ex>[r] \ar@<-0.5ex>[r] & G_n(A\otimes_k A)  \ar@<1ex>[r] \ar[r] \ar@<-1ex>[r] & G_n(A\otimes_k A\otimes_k A)
}
$$
shows that $G(A\otimes_k A)\to G_n(A\otimes_k A)$ maps cocycles to cocycles and equivalent cocycles to equivalent cocycles. We therefore have a well-defined map $H^1_\s(A/k,G)\to H^1_{\s^n}(A/k,G_n)$, which we may compose with the inclusion $H^1_{\s^n}(A/k,G_n)\to H^1_{\s^n}(k,G_n)$.
If $A\to B$ is a morphism of \ks-algebras, then 
$$
\xymatrix{
	H^1_{\s}(A/k,G) \ar[dd] \ar[rd] &  \\
	& H^1_{\s^n}(k,G_n) \\
		H^1_{\s}(B/k,G) \ar[ru] &
}
$$
commutes. Thus we have an induced map of pointed sets $H^1_\s(k,G)\to H^1_{\s^n}(k,G_n)$. The construction of this map also applies with $(k,\s)$ replaced by $(k,\s^d)$ (for some $d\geq 1$) and so we have maps $H^1_{\s^d}(k,G_d)\to H^1_{(\s^d)^n}(k,(G_{d})_n)=H^1_{\s^{dn}}(k,G_{dn})$. If $X$ is a $k$\=/$\s^n$\=/variety, $(A,\tau)$ a $k$-$\s^n$-algebra and $d,d'\geq 1$, then the composition $X(A,\tau)\to X_{d}(A,\tau^{d})\to (X_d)_{d'}(A,(\tau^d)^{d'})=X_{dd'}(A,\tau^{dd'})$ agrees with $X(A,\tau)\to X_{dd'}(A,\tau^{dd'})$ because all maps are just reinterpreting the same morphism from $X(A,\tau)=\Hom_{k,\s^n}(k\{X\},A)$.
From this it follows that for integers $n,n',n''\geq 1$ with $n|n'$ and $n'|n''$, the composition $H^1_{\s^n}(k,G_n)\to H^1_{\s^{n'}}(k,G_{n'})\to H^1_{\s^{n''}}(k,G_{n''})$ agress with $H^1_{\s^n}(k,G_n)\to H^1_{\s^{n''}}(k,G_{n''})$. The maps $H^1_{\s^n}(k,G_n)\to H^1_{\s^{n'}}(k,G_{n'})$ for $n|n'$ thus form a directed system over the directed set of all integers greater or equal to one, partially ordered by $n\leq n'$ if and only if $n|n'$. We can therefore consider the direct limit $\lim\limits_{n\to \infty} H^1_{\s^n}(k,G_n)$.

\begin{ex} \label{ex: direct limit}
	Let $\G$ be an algebraic group over $k$ and $G=[\s]_k\G$. By \cite[Prop. 3.5]{BachmayrWibmer:TorsorsForDifferenceAlgebraicGroups} we have $H^1_\s(k,G)=H^1(k,\G)$. Furthermore $G_n=[\s^n]_k(\G\times\ldots\times{^{\s^{n-1}}\!\G})$ (Example \ref{ex: varieties and Xn}) and so $H^1_{\s^n}(k,G_n)=H^1(k,\G\times\ldots\times{^{\s^{n-1}}\!\G})=H^1(k,\G)\times\ldots\times H^1(k,{^{\s^{n-1}}\!\G})$. To understand the maps 
	$H^1_{\s^n}(k,G_n)\to H^1_{\s^{n'}}(k,G_{n'})$ for $n|n'$, we first need an explicit description of the map $X_n(A,\tau)\to (X_n)_d(A,\tau^d)$ for $d\geq 1$, $X$ a $k$-$\s^n$-variety and $(A,\tau)$ a $k$-$\s^n$-algebra. Now $X_n(A,\tau)\to (X_n)_d(A,\tau^d)=X_n((A,\tau)^{[d]})$
	is obtained by applying the functor $X_n$ to the morphism $(A,\tau)\to (A,\tau)^{[d]}$ of $k$-$\s^n$-algebras.
	As $X_n(A,\tau)=X((A,\tau)_1)$, this means that we need to apply the functor $X$ to the morphism $(A,\tau)_1\to ((A,\tau)^{[d]})_1$ of \ks-algebras. As $$(A,\tau)\to (A,\tau)^{[d]}={_{(\s^n)^{d-1}}}A\times\ldots\times{_{\s^n}}A\times A,\quad a\mapsto (\tau^{d-1}(a),\ldots,\tau(a),a),$$ 
	we see that 
	$$(A,\tau)_1={_{\s^{n-1}}}A\times\ldots\times A\longrightarrow ((A,\tau)^{[d]})_1={_{\s^n}}({_{(\s^{n(d-1)}}}A\times\ldots\times A)\times\ldots\times {_{\s^{n(d-1)}}}A\times\ldots\times A $$
	is given by $$(a_1,\ldots,a_n)\longmapsto(\tau^{d-1}(a_1),\ldots,a_1,\ldots,\tau^{d-1}(a_n)\ldots,a_n).$$ Thus, after reordering the factors as $_{\s^{nd-1}}A\times \ldots\times {_{\s}A}\times A$, the map is given by $$(a_1,\ldots,a_n)\mapsto (\tau^{d-1}(a_1),\ldots,\tau^{d-1}(a_n),\ldots,a_1,\ldots,a_n).$$ Applying $X=[\s]_k\G$ to this (as in Example \ref{ex: varieties and Xn}), we find the map	$$\G(A)\times\ldots\times{^{\s^{n-1}}\!\G}(A)\longrightarrow \G(A)\times\ldots\times{^{\s^{nd-1}}\!\G}(A),$$ \begin{equation}\label{eq: for H}(g_1,\ldots,g_n)\longmapsto (g_1,\ldots,g_n,\ldots,\tau^{d-1}(g_1),\ldots,\tau^{d-1}(g_n)),
		\end{equation}
	where $\tau^j\colon {\hsi}\G(A)\to {^{\s^{i+n}}\!\G}(A)$ (for $0\leq i\leq n-1$ and $1\leq j\leq d-1$) is given by applying $\tau$ to the coordinates. Without coordinates, if $g\colon {\hsi}(k[\G])\to A$, then $\tau^j(g)\colon^{\s^{n}}\!({^{\s^{i}}\!(k[G])})\to A,\ f\otimes\lambda\mapsto \tau^j(g(f))\lambda$.
	
	If $k\to k'$ is a morphism of fields, we have a map $H^1(k,\G)\to H^1(k',\G_{k'})$, which on isomorphism classes of torsors corresponds to base change. Applying this with $\s^n\colon k\to k$, we obtain maps $\s^n\colon H^1(k,{\hsi\G})\to H^1(k,{^{\s^{i+n}}\!\G})$. 
	
	Base change via $\s^n\colon k\to k$ takes $g\colon {\hsi(k[\G])}\to A$ and returns $\hsn g\colon{^{\s^{i+n}}\!{(k[\G])}}\to{\hsn A}$.
	The map $\tau^j(g)\colon {^{\s^{i+n}}\!{(k[\G])}}\to A$ is then obtained from $\hsn g$ by composing with the morphism ${\hsn A}\to A,\ a\otimes\lambda\mapsto \tau(a)\lambda$ of $k$-algebras. Since a morphism of $k$-algebras does not matter when we pass to cohomology sets, we see that $\tau^j\colon {\hsi}\G(A)\to {^{\s^{i+n}}\!\G}(A)$ induces $\s^n\colon H^1(k,{\hsi\G})\to H^1(k,{^{\s^{i+n}}\!\G})$.
	
	Based on (\ref{eq: for H}), we thus find that for $n|n'$, the map	
	$$H^1_{\s^n}(k,G_n)=H^1(k,\G)\times\ldots\times H^1(k,{^{\s^{n-1}}\!\G})\longrightarrow H^1_{\s^{n'}}(k,G_{n'})=H^1(k,\G)\times\ldots\times H^1(k,{^{\s^{n'-1}}\!\G})$$
	is given by $$(a_1,\ldots,a_n)\longmapsto (a_1,\ldots,a_n,\s^n(a_1),\ldots,\s^n(a_n),\ldots,\s^{n'-n}(a_1),\ldots,\s^{n'-n}(a_n)).$$
	Therefore the limit $\lim\limits_{n\to \infty} H^1_{\s^n}(k,G_n)$ can be identified with the set of all ``periodic'' elements of $\prod_{n\geq 1}H^1(k,{\hsn\G})$, i.e., all elements of the form
	$(a_1,\ldots,a_n,\s^n(a_1),\ldots,\s^n(a_n),\s^{2n}(a_1),\ldots,\s^{2n}(a_n),\ldots)$ for some $n\geq 1$.

\end{ex}

 We will need two more commutative diagrams.

\begin{lemma} \label{lemma: commutes on H1}
	Let $G$ be a $\s$-algebraic group over $k$. Then
	$$
	\xymatrix{
	\Sigma_\s(G) \ar[r] \ar[d] & \Sigma_{\s^n}(G_n) \ar[d] \\
	H^1_\s(k,G) \ar[r] & H^1_{\s^n}(k, G_n)	
	}
	$$
	commutes.	
\end{lemma}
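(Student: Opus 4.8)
The plan is to unwind both composites in the square and check that they agree on the class of an arbitrary $G$-torsor $X$, using the explicit descriptions of all four maps already given in the excerpt. Recall that the isomorphism $\Sigma_\s(G)\simeq H^1_\s(k,G)$ from \cite[Theorem 4.2]{BachmayrWibmer:TorsorsForDifferenceAlgebraicGroups} sends a $G$-torsor $X$ to the class of a cocycle obtained as follows: pick a finitely $\s$-generated \ks-algebra $A$ with $X(A)\neq\emptyset$, choose $x\in X(A)$, and let $\chi\in G(A\otimes_k A)$ be the unique element with $\delta_1(x)=\chi\cdot\delta_2(x)$, where the $\delta_i$ are induced from the two coprojections $A\to A\otimes_k A$; then $[X]\mapsto[\chi]$. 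The left-then-bottom composite sends $[X]$ to the class of the image of $\chi$ under $G(A\otimes_k A)\to G_n(A\otimes_k A)$, and the top-then-right composite sends $[X]$ to the cocycle attached to the $G_n$-torsor $X_n$.

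First I would note that $X(A)\neq\emptyset$ forces $X_n(A)\neq\emptyset$ via the canonical map $X(A)\to X_n(A)$ described in the excerpt (interpreting a morphism $k\{X\}\to A$ of \ks-algebras as a morphism of $k$-$\s^n$-algebras, i.e.\ $X(A)=\Hom_{k,\s}(k\{X\},A)\to \Hom_{k,\s^n}(k\{X\},A)=X_n(A)$). So the same $A$ may be used to compute the cocycle for $X_n$, and the chosen point $x\in X(A)$ maps to a point $x_n\in X_n(A)$. The cocycle for $X_n$ is then the unique $\chi'\in G_n(A\otimes_k A)$ with $\delta_1(x_n)=\chi'\cdot\delta_2(x_n)$, where now the $\delta_i$ are the maps $G_n(A)\to G_n(A\otimes_k A)$ and $X_n(A)\to X_n(A\otimes_k A)$. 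The point is that passing from $\s$ to $\s^n$ via $X\rightsquigarrow X_n$ is functorial and compatible with the structure maps $A\to A\otimes_k A$: the square
\begin{equation*}
\xymatrix{
X(A) \ar[r] \ar[d] & X_n(A) \ar[d] \\
X(A\otimes_k A) \ar[r] & X_n(A\otimes_k A)
}
\end{equation*}
commutes, as recorded in the excerpt for any morphism of \ks-algebras, and likewise the action map $G\times X\to X$ is a morphism of \ks-varieties, so by \eqref{eqn: X_n commutes} the formation of $X_n$ respects the $G_n$-action on $X_n$. Hence the image of $\chi$ in $G_n(A\otimes_k A)$ satisfies $\delta_1(x_n)=(\text{image of }\chi)\cdot\delta_2(x_n)$, and by uniqueness of $\chi'$ we get that the image of $\chi$ equals $\chi'$. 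Therefore both composites send $[X]$ to $[\chi']$, which is exactly commutativity of the square.

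The step I expect to be the main (though still modest) obstacle is bookkeeping: making sure that the identification $\Sigma_{\s^n}(G_n)\simeq H^1_{\s^n}(k,G_n)$ is set up with the same conventions as $\Sigma_\s(G)\simeq H^1_\s(k,G)$ (so that ``the cocycle attached to $X_n$'' literally means ``choose $A$, choose $x_n\in X_n(A)$, solve $\delta_1(x_n)=\chi'\delta_2(x_n)$''), and that the map $\Sigma_\s(G)\to\Sigma_{\s^n}(G_n)$ in the top row is indeed $[X]\mapsto[X_n]$ with $X_n$ equipped with its canonical $G_n$-torsor structure — all of which is already in place in the excerpt. Once one grants that $X_n$ is a $G_n$-torsor (stated above Remark~\ref{rem: points k vs kn}), that the choice of $A$ and $x$ for $X$ transports to a valid choice for $X_n$, and that independence of these choices makes the limit construction well-defined (so the resulting classes in $H^1_{\s^n}(k,G_n)$ do not depend on the choices), the diagram chase closes with no further input. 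I would present this as: fix $X$, fix $(A,x)$, apply $X\rightsquigarrow X_n$, invoke functoriality and the two commuting squares, and conclude by uniqueness of the defining cocycle.
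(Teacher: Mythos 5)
Your proposal is correct and follows essentially the same route as the paper's proof: choose $A$ with $X(A)\neq\emptyset$ and $x\in X(A)$, observe that $x$ transports to $\widetilde x\in X_n(A)$, use the commutativity of the square \eqref{eqn: X_n commutes} together with the fact that the action map is a morphism of $k$-$\s$-varieties, and conclude by uniqueness of the defining cocycle. The only cosmetic difference is notation ($\delta_i$ vs.\ $f_i$) and that you spell out the functoriality points a bit more explicitly.
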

\begin{proof}
	Let $X$ be a $G$-torsor and $A$ a finitely $\s$-generated \ks-algebra such that $X(A)\neq\emptyset$. Choose $x\in X(A)$. The corresponding cocycle $\chi\in G(A\otimes_k A)$ is determined by $f_1(x)=\chi.f_2(x)$, where $f_1,f_2\colon X(A)\to X(A\otimes_k A)$. (See the proof of \cite[Theorem 4.1]{BachmayrWibmer:TorsorsForDifferenceAlgebraicGroups}.)
	Under $G(A\otimes_k A)\to G_n(A\otimes_k A)$ the cocycle $\chi$ maps to the unique cocycle $\widetilde{\chi}\in G_n(A\otimes_k A)$ that satisfies $f_1(\widetilde{x})=\widetilde{\chi}.f_2(\widetilde{x})$, where $\widetilde{x}$ is the image of $x$ under $X(A)\to X_n(A)$.
	
	On the other hand, for the $G_n$-torsor $X_n$ we have $\widetilde{x}\in X_n(A)$ and so the corresponding cocycle in $G_n(A\otimes_k A)$ is exactly $\widetilde{\chi}$. Now the claim follows by taking equivalence classes and passing to the limit.
\end{proof}

\begin{ex}
	The map $H^1_\s(k,G)\to H^1_{\s^n}(k,G_n)$ is in general neither injective nor surjective, i.e., there may exist a $G_n$-torsor that does not come from a $G$-torsor and there may be two non-isomorphic $G$-torsors that become isomorphic as $G_n$-torsors. For a non-surjective example see Example \ref{ex: direct limit}. For a non-injective example consider the $\s$-algebraic group $G=\{g\in\Gm|\ g^2=1,\ \s(g)=g\}$ and the $G$-torsor $X=\{x\in \Gm|\ x^2=1,\ \s(x)=-x\}$ (under left multiplication). Then $X$ is non-trivial but $X_2$ is trivial because $X_2(k)=X(k^{[2]})\neq\emptyset$.
\end{ex}

\begin{lemma} \label{lemma: commutativity of X_n and H}
	Let $G\to H$ be a morphism of $\s$-algebraic groups over $k$. Then
	$$ 
	\xymatrix{
	 H^1_\s(k,G) \ar[r] \ar[d] & H^1_\s(k,H) \ar[d] \\
	 H^1_{\s^n}(k,G_n) \ar[r] & H^1_{\s^n}(k, H_n)
	}
		$$
		commutes.
\end{lemma}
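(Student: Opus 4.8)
The plan is to reduce the square to the level of cocycles, where all four arrows are instances of functoriality, and then to invoke the naturality diagram \eqref{eqn: X_n commutes}.

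Fix a finitely $\s$-generated \ks-algebra $A$. Recall that $H^1_\s(A/k,G)$ is functorial in $G$: a morphism $\phi\colon G\to H$ of $\s$-algebraic groups induces $H^1_\s(A/k,G)\to H^1_\s(A/k,H)$, $[\chi]\mapsto[\phi_{A\otimes_k A}(\chi)]$, which is well defined because $\phi$ is compatible with the simplicial maps attached to $A\rightrightarrows A\otimes_k A\rightrightarrows A\otimes_k A\otimes_k A$ and because $\phi_A\colon G(A)\to H(A)$ is a homomorphism. The vertical arrow $H^1_\s(A/k,G)\to H^1_{\s^n}(A/k,G_n)$ sends $[\chi]$ to $[\widetilde\chi]$, where $\widetilde\chi$ is the image of $\chi$ under $G(A\otimes_k A)\to G_n(A\otimes_k A)$, the map discussed just before Lemma \ref{lemma: commutes on H1}; this is well defined by the commutative diagram in that same paragraph. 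The bottom and right arrows of the square are the corresponding constructions for $\phi_n\colon G_n\to H_n$ and for $H$ in place of $G$.

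First I would chase a cocycle $\chi\in G(A\otimes_k A)$ around the square. Going right then down sends $[\chi]$ to the class of the image of $\chi$ under
$$G(A\otimes_k A)\xrightarrow{\ \phi\ }H(A\otimes_k A)\longrightarrow H_n(A\otimes_k A),$$
while going down then right sends it to the class of the image of $\chi$ under
$$G(A\otimes_k A)\longrightarrow G_n(A\otimes_k A)\xrightarrow{\ \phi\ }H_n(A\otimes_k A).$$
These two composites of group homomorphisms coincide: apply the commutative diagram \eqref{eqn: X_n commutes} with the morphism of \ks-varieties $\phi\colon G\to H$ and with $A\otimes_k A$ in place of $A$. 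Hence the two images of $[\chi]$ in $H^1_{\s^n}(A/k,H_n)$, and a fortiori in $H^1_{\s^n}(k,H_n)$, agree.

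Finally I would pass to the direct limit over $A$. All of the maps above are natural in $A$: for a morphism $A\to B$ of \ks-algebras the relevant squares commute by functoriality, exactly as recorded before Lemma \ref{lemma: commutes on H1}. Therefore the maps on $H^1_\s(A/k,-)$ assemble into the maps on $H^1_\s(k,-)=\lim\limits_{\longrightarrow}H^1_\s(A/k,-)$ and on $H^1_{\s^n}(k,-)$ appearing in the statement, and commutativity in the limit follows from commutativity at each finite stage. I do not expect a genuine obstacle here: the content is pure functoriality, and the only care needed is bookkeeping — namely checking that the group homomorphism $G(A\otimes_k A)\to G_n(A\otimes_k A)$ used to define the vertical arrows is literally the instance of the functor $X\rightsquigarrow X_n$ occurring in \eqref{eqn: X_n commutes}, which it is, since $X_n$ was defined functorially on \ks-varieties and $G$, $H$, $\phi$ are such.
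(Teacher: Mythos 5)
Your proof is correct and takes exactly the same route as the paper's: the paper's entire proof is ``This follows from the commutativity of (\ref{eqn: X_n commutes}),'' and your argument simply spells out the cocycle-level chase and the passage to the direct limit that that one-line citation compresses.
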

\begin{proof}
	This follows from the commutativity of (\ref{eqn: X_n commutes}).
\end{proof}

\begin{defi} \label{defi: Gtrivial}
	Let $G$ be a $\s$-algebraic group over $k$. To simplify our statements we call $k$ 
	\begin{itemize}
		\item \emph{$G$-trivial} if for every $G$-torsor $X$, there exists an integer $n\geq 1$ such that $X(\kn)\neq\emptyset$. If in addition the integer $n$ can be chosen uniformly for all $X$, then we call $k$ \emph{uniformly $G$-trivial}.
		\item (uniformly) \emph{strongly $G$-trivial} if $(k,\s^n)$ is (uniformly) $G_n$-trivial for all $n\geq 1$.
	\end{itemize}
\end{defi}

Thus $k$ is uniformly strongly $G$-trivial if for every $n\geq 1$ and every $\s$-algebraic group $G$ over $k$, there exists a $d\geq 1$ such that for every $G_n$-torsor we have $X((k,\s^n)^{[d]})=X((k,(\s^n)^d)_1)=X_d(k,(\s^n)^d)\neq\emptyset$. The notion of being (uniformly) strongly $G$-trivial is important for making the induction principle work.

 Property property \poweak{} means that $k$ is $G$-trivial for every $\s$-algebraic group $G$ over $k$, while \po{} means that $k$ is uniformly $G$-trivial for every $\s$-algebraic group $G$ over $k$. 
The following lemma is needed for \poweak{}$\Rightarrow$\pwweak.
 
\begin{lemma}\label{lemma: trivial implies strongly trivial}
	If $k$ is $G$-trivial for every $\s$-algebraic group $G$ over $k$, then $k$ is strongly $G$-trivial for every $\s$-algebraic group $G$ over $k$.
\end{lemma}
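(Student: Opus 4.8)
The plan is to unwind the definition of ``strongly $G$-trivial'' and reduce it to the hypothesis that $k$ is $G$-trivial for every $\s$-algebraic group $G$ over $k$. Fix a $\s$-algebraic group $G$ over $k$ and an integer $n\geq 1$; we must show that $(k,\s^n)$ is $G_n$-trivial, i.e., that for every $G_n$-torsor $Y$ over $(k,\s^n)$ there exists a $d\geq 1$ with $Y((k,\s^n)^{[d]})\neq\emptyset$. The key observation is that the functor $Z\rightsquigarrow Z_1$ from $k$-$\s^n$-varieties to \ks-varieties, established in Lemma~\ref{lem: X1 Xn}, turns the $G_n$-torsor $Y$ into something controlled by a $\s$-algebraic group over $(k,\s)$.

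First I would identify the relevant $\s$-algebraic group. Since $G_n$ is a $\s^n$-algebraic group, $(G_n)_1$ is a \ks-variety by Lemma~\ref{lem: X1 Xn}, and because $Z\rightsquigarrow Z_1$ is a right adjoint (it preserves products and the terminal object), $(G_n)_1$ is in fact a $\s$-algebraic group over $k$; call it $G'$. Applying $(-)_1$ to the torsor structure map $G_n\times Y\to Y$ and using that $(-)_1$ commutes with products yields a morphism $G'\times Y_1\to Y_1$, and evaluating on \ks-algebras $R$ gives $G'(R)\times Y_1(R)=G_n(R,\s^n)\times Y(R,\s^n)\to Y(R,\s^n)=Y_1(R)$, which is simply transitive because the original action was. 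Hence $Y_1$ is a $G'$-torsor over $(k,\s)$. By hypothesis $k$ is $G'$-trivial, so there is an $m\geq 1$ with $Y_1(k^{[m]})\neq\emptyset$.

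It remains to translate a point of $Y_1(k^{[m]})$ back into a point of $Y$ over a suitable $k$-$\s^n$-algebra. By definition $Y_1(k^{[m]})=Y(k^{[m]},\s^n)$, so we obtain a point of $Y$ in the $k$-$\s^n$-algebra $(k^{[m]},\s^n)$. The final step is a bookkeeping identification: I claim $(k^{[m]},\s^n)\cong (k,\s^n)^{[m']}$ for an appropriate $m'$, so that the point lies in $Y((k,\s^n)^{[m']})$ and we may take $d=m'$. Concretely, $k^{[m]}$ is the product of $m$ copies of $k$ with the cyclic shift twisted by $\s^m$ on wrap-around, and passing to the $\s^n$-structure on this product amounts to grouping the factors into $\gcd(m,n)$-many orbits or, more simply, one checks directly that $(k^{[m]},\s^n)$ is a finite product of copies of $(k,\s^{?})$ with a cyclic-shift structure that is of the form $(k,\s^n)^{[m']}$ — the cleanest route is to observe that $(k^{[m]},\s^n)_1 = (k^{[m]})^{[n]}$ (by the very definition of $(-)^{[n]}$ as $(-,\s^n)_1$) and that $(k^{[m]})^{[n]}\cong k^{[mn]}$ as \ks-algebras via the explicit iterated-shift isomorphism recorded on page~\pageref{page: kn} and in the displayed formula for $R^{[n]}\to R^{[nl]}$, whence $Y_1(k^{[m]})=Y_n(k^{[mn]})$ reading things as $\s^n$-varieties; so $d=m$ already works after re-expressing $k^{[mn]}$ with its $\s^n$-structure.

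The main obstacle is precisely this last combinatorial identification of $(k^{[m]},\s^n)$ with a ``$[\,\cdot\,]$-power'' of $(k,\s^n)$: one has to be careful that the product-and-shift bookkeeping matches up, and that the integer $d$ produced is the same for all $G_n$-torsors arising from $G$ — but since we are proving only the non-uniform statement, $d$ is allowed to depend on the torsor, so no uniformity argument is needed here and the identification can be done one torsor at a time. Everything else is a formal consequence of the adjunction $Z\rightsquigarrow Z_1$ being a right adjoint (hence product-preserving, so torsor-preserving) together with the already-established equality $Z_1(R)=Z(R,\s^n)$ and $X_n(k)=X(k^{[n]})$ of Remark~\ref{rem: points k vs kn}.
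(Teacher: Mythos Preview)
Your overall strategy matches the paper's exactly: apply $Z\rightsquigarrow Z_1$ to the $G_n$-torsor $Y$ to obtain a $(G_n)_1$-torsor $Y_1$ over $(k,\s)$, invoke the hypothesis to get $Y_1(k^{[m]})=Y(k^{[m]},\s^n)\neq\emptyset$, and then translate this back to a point in $Y((k,\s^n)^{[d]})$. The first two steps are correct and are what the paper does.

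The gap is in the translation step. Your claimed identification $(k^{[m]})^{[n]}\cong k^{[mn]}$ (and hence $(k^{[m]},\s^n)\cong (k,\s^n)^{[m']}$) is false whenever $\gcd(m,n)>1$. For instance, take $m=n=2$: in $(k^{[2]})^{[2]}$ the endomorphism $\s$ acts on the four primitive idempotents with two orbits of length $2$, whereas in $k^{[4]}$ it acts as a single $4$-cycle, so they are not isomorphic as $k$-$\s$-algebras. More generally, $\s^n$ on $k^{[m]}$ decomposes the idempotents into $\gcd(m,n)$ orbits of length $m/\gcd(m,n)$, and the $k$-algebra structure map $a\mapsto(\s^{m-1}(a),\dots,a)$ uses all powers of $\s$, not just powers of $\s^n$; so the factors in this decomposition are not of the form $(k,\s^n)^{[m']}$ either.

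The paper sidesteps this bookkeeping entirely. From the morphism of $k$-$\s^n$-algebras $k\{Y\}\to k^{[m]}$ one simply composes with the projection $k^{[m]}\to k$ onto the last factor, which is a morphism of $k$-$\s^m$-algebras. Since the first map commutes with $\s^n$ and the second with $\s^m$, the composite $k\{Y\}\to k$ commutes with $\s^{nm}$, i.e., it is a morphism of $k$-$\s^{nm}$-algebras. This is exactly a point of $Y_m(k,(\s^n)^m)=Y((k,\s^n)^{[m]})$, so $d=m$ works. No isomorphism of products is needed, only the observation that the last-factor projection is $\s^m$-equivariant.
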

\begin{proof}
	Let $G$ be a $\s$-algebraic group over $k$ and let $X$ be a $G_n$-torsor for some $n\geq 1$. Then $X_1$ is a $(G_n)_1$-torsor and by assumption there exists a $d\geq 1$ such that $X_1(k^{[d]})\neq\emptyset$. As $X_1(k^{[d]})=X(k^{[d]},\s^n)$ there exists a morphism $k\{X\}\to k^{[d]}$ of $k$-$\s^n$-algebras. The projection $k^{[d]}\to k$ to the last factor is a morphism of $k$-$\s^d$-algebras. Thus the composition $k\{X\}\to k^{[d]}\to k$ is a morphism of $k$-$\s^{nd}$-algebras. Therefore
	$X_d(k,(\s^n)^d)=X((k,(\s^n)^d)_1)=X((k,\s^n)^{[d]})\neq\emptyset$
	as desired.
\end{proof}	
The following lemma will allow us to pass to higher powers of $\s$ in the proof of \ptweak{}$\Rightarrow$\poweak{}.

\begin{lemma}\label{lem Gn trivial implies G trivial}
Let $G$ be a $\s$-algebraic group over $k$. If $(k,\s^n)$ is (strongly) $G_n$\=/trivial, then $k$ is (strongly) $G$-trivial.  
\end{lemma}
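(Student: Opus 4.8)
The statement to prove is Lemma~\ref{lem Gn trivial implies G trivial}: if $(k,\s^n)$ is (strongly) $G_n$-trivial, then $k$ is (strongly) $G$-trivial. The key observation is that a $G$-torsor $X$ over $(k,\s)$ gives rise, via the functor of Subsection~\ref{subsection: From s to sn}, to the $G_n$-torsor $X_n$ over $(k,\s^n)$, and by Remark~\ref{rem: points k vs kn} we have the identity $X_n(k)=X(\kn)$, and more generally $X(k^{[nm]})=X_n(k^{[m]})$ after identifying $(X_n)_m$ with $X_{nm}$. So the plan is to translate the hypothesis about $G_n$-torsors over $(k,\s^n)$ into a statement about $G$-torsors over $(k,\s)$ using precisely this dictionary.

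First I would treat the non-strong case. Let $X$ be a $G$-torsor over $k$. Then $X_n$ is a $G_n$-torsor over $(k,\s^n)$, so by the $G_n$-triviality of $(k,\s^n)$ there is an integer $m\geq 1$ with $X_n((k,\s^n)^{[m]})\neq\emptyset$. But $(k,\s^n)^{[m]}=(k,(\s^n)^m)_1=(k,\s^{nm})_1=k^{[nm]}$ as a \ks-algebra, and $X_n((k,\s^n)^{[m]})=X_n(k^{[m]},\s^n)=(X_n)_m(k,\s^{nm})=X_{nm}(k)=X(k^{[nm]})$ by Remark~\ref{rem: points k vs kn} and the identification $(X_n)_m\simeq X_{nm}$. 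Hence $X(k^{[nm]})\neq\emptyset$ with $nm\geq 1$, which is exactly what is needed for $k$ to be $G$-trivial. If $(k,\s^n)$ is moreover \emph{uniformly} $G_n$-trivial, then the integer $m$ above can be chosen independently of $X$, hence $nm$ works uniformly for all $G$-torsors $X$, so $k$ is uniformly $G$-trivial.

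Next I would handle the strong case. Recall that $(k,\s^n)$ being strongly $G_n$-trivial means that $(k,(\s^n)^e)=(k,\s^{ne})$ is $(G_n)_e=G_{ne}$-trivial for every $e\geq 1$; and $k$ being strongly $G$-trivial means $(k,\s^d)$ is $G_d$-trivial for every $d\geq 1$. So fix $d\geq 1$; I want to show $(k,\s^d)$ is $G_d$-trivial. Here the mild subtlety is that $n$ need not divide $d$. I would pass to $(k,\s^{nd})$: applying the strong $G_n$-triviality hypothesis with $e=d$ gives that $(k,\s^{nd})$ is $G_{nd}$-trivial, and applying the already-proven non-strong case to the $\s^d$-algebraic group $G_d$ over $(k,\s^d)$ — noting $(G_d)_n=G_{nd}$ and that $(k,\s^d)$ has $(k,(\s^d)^n)=(k,\s^{nd})$ being $(G_d)_n$-trivial by what we just said — yields that $(k,\s^d)$ is $G_d$-trivial. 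Since $d\geq 1$ was arbitrary, $k$ is strongly $G$-trivial; and the uniform refinement follows the same way, tracking that all the chosen integers can be taken independently of the torsor. The main thing to be careful about is bookkeeping of the various base changes $(\cdot)_n$ and the compatibility $(X_n)_m\simeq X_{nm}$, $(G_n)_m\simeq G_{nm}$, and $(k,\s^a)^{[b]}=k^{[ab]}$; there is no genuine obstacle, only a need to keep the indices straight.
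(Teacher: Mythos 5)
Your proof takes essentially the same route as the paper: in the non-strong case, pass from $X$ to the $G_n$-torsor $X_n$, apply the hypothesis, and translate back via the identification $(X_n)_m\simeq X_{nm}$ and Remark~\ref{rem: points k vs kn}; in the strong case, reinterpret $(k,(\s^n)^d)$ as $(k,(\s^d)^n)$ and $(G_n)_d$ as $(G_d)_n$ and reduce to the non-strong case applied over $(k,\s^d)$. This matches the paper's argument line for line.

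One small correction: the intermediate identifications $(k,\s^n)^{[m]}=k^{[nm]}$ ``as a \ks-algebra'' and $X_n((k,\s^n)^{[m]})=X_n(k^{[m]},\s^n)$ are not right. The object $(k,\s^n)^{[m]}$ is a $k$-$\s^n$-algebra with $m$ factors and $\s^n$ acting as the cyclic shift with an $\s^{nm}$-twist, whereas $k^{[nm]}$ is a $(k,\s)$-algebra with $nm$ factors, and there is no natural identification of $(k,\s^n)^{[m]}$ with $(k^{[m]},\s^n)$. These steps are not needed, however: the correct (and shorter) chain is $X_n((k,\s^n)^{[m]})\neq\emptyset\iff (X_n)_m(k)\neq\emptyset$ by Remark~\ref{rem: points k vs kn} over $(k,\s^n)$, and then $(X_n)_m\simeq X_{nm}$ and Remark~\ref{rem: points k vs kn} over $(k,\s)$ give $X(k^{[nm]})\neq\emptyset$, which is what you use anyway. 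So the argument is correct modulo that non--load-bearing slip.
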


\begin{proof}
Assume that $(k,\s^n)$ is $G_n$-trivial and let $X$ be a $G$-torsor. Then $X_n$ is a $G_n$-torsor, so there exists an integer $d\geq 1$ with $(X_n)_d(k)\neq \emptyset$ (Remark \ref{rem: points k vs kn}). Define $m=nd$. As $(X_n)_d=X_m$ we have $X_m(k)\neq \emptyset$ and thus $X(k^{[m]})\neq \emptyset$ (Remark \ref{rem: points k vs kn}). We conclude that $k$ is $G$-trivial. 

If $(k,\s^n)$ is strongly $G_n$-trivial, then $(k,(\s^n)^d)$ is $(G_n)_d$-trivial for every integer $d\geq 1$. In other words, $(k,(\s^d)^n)$ is $(G_d)_n$-trivial for every integer $d\geq 1$.  By the statement proven in the first paragraph, $(k,\s^d)$ is $G_d$-trivial for every $d\geq 1$. Hence $k$ is strongly $G$-trivial.
\end{proof}

%

\begin{lemma} \label{lemma: Gtriviality and H1}
Let $G$ be a $\s$-algebraic group over $k$, $X$ a $G_n$-torsor and $d\geq 1$. Then $X((k,\s^n)^{[d]})\neq\emptyset$ if and only if the image of $X$ in $H^1_{\s^n}(k,G_n)$ maps to the distinguished element under  $H^1_{\s^{n}}(k,G)\to H^1_{\s^{nd}}(k,G_{nd})$.
\end{lemma}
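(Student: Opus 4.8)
The plan is to chain together the torsor--rational-point--cohomology dictionary of \cite{BachmayrWibmer:TorsorsForDifferenceAlgebraicGroups}, read over the base difference field $(k,\s^n)$ in place of $(k,\s)$, together with the compatibility square of Lemma \ref{lemma: commutes on H1}. First I would apply Remark \ref{rem: points k vs kn}, with $(k,\s)$ replaced by $(k,\s^n)$ and $n$ replaced by $d$: for the $\s^n$-variety $X$ one has $X((k,\s^n)^{[d]})=X_d(k)$, where $X_d$ is the $\s^{nd}$-variety produced by the construction of Subsection \ref{subsection: From s to sn} (using the identification $(\s^n)^d=\s^{nd}$). Since $X$ is a $G_n$-torsor, $X_d$ is a $(G_n)_d$-torsor, and $(G_n)_d=G_{nd}$. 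Because a torsor for a $\s$-algebraic group over a $\s$-field is trivial exactly when it has a rational point (\cite[Remark 1.3]{BachmayrWibmer:TorsorsForDifferenceAlgebraicGroups}), the condition $X((k,\s^n)^{[d]})\neq\emptyset$ is therefore equivalent to $X_d$ being the trivial $G_{nd}$-torsor.

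Next I would pass to cohomology. By \cite[Theorem 4.2]{BachmayrWibmer:TorsorsForDifferenceAlgebraicGroups}, applied over $(k,\s^{nd})$, the $G_{nd}$-torsor $X_d$ is trivial if and only if its class in $H^1_{\s^{nd}}(k,G_{nd})$ is the distinguished element. It then remains to identify that class with the image of the class of $X$ under the natural map $H^1_{\s^n}(k,G_n)\to H^1_{\s^{nd}}(k,G_{nd})$ of the directed system. This is precisely Lemma \ref{lemma: commutes on H1}, applied over $(k,\s^n)$ with $n$ replaced by $d$: the square relating the map $\Sigma_{\s^n}(G_n)\to\Sigma_{\s^{nd}}(G_{nd})$, which sends the class of $X$ to the class of $X_d$, to the map $H^1_{\s^n}(k,G_n)\to H^1_{\s^{nd}}(k,G_{nd})$ commutes, and both vertical identifications $\Sigma\simeq H^1$ match distinguished elements with distinguished elements. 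Combining the two steps gives the claimed equivalence.

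Every ingredient is already in place, so the argument is essentially bookkeeping; there is no serious obstacle. The only point requiring a little care is unwinding the identifications $(\s^n)^d=\s^{nd}$, $(X_n)_d=X_{nd}$, $(G_n)_d=G_{nd}$ and $((k,\s^n),(\s^n)^d)_1=(k,\s^n)^{[d]}$ collected in Subsection \ref{subsection: From s to sn}, which is exactly what licenses invoking Remark \ref{rem: points k vs kn} and Lemma \ref{lemma: commutes on H1} with $(k,\s)$ replaced by $(k,\s^n)$; since $G_n$ is a $\s^n$-algebraic group by Lemma \ref{lem: X1 Xn}, all of these constructions apply verbatim.
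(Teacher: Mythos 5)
Your proof is correct and follows exactly the paper's route: first apply Remark \ref{rem: points k vs kn} over the base $(k,\s^n)$ to translate $X((k,\s^n)^{[d]})\neq\emptyset$ into triviality of $X_d$ as a $G_{nd}$-torsor, then invoke the commutative square of Lemma \ref{lemma: commutes on H1} to identify that with the vanishing of the image of the class of $X$ under $H^1_{\s^n}(k,G_n)\to H^1_{\s^{nd}}(k,G_{nd})$. You spell out the bookkeeping identifications $(\s^n)^d=\s^{nd}$, $(G_n)_d=G_{nd}$, etc.\ more explicitly than the paper's two-line proof, but the argument is the same.
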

\begin{proof}
	We have $X((k,\s^n)^{[d]})\neq\emptyset$ if and only if $X_d(k,(\s^n)^d)\neq\emptyset$ by Remark \ref{rem: points k vs kn}. So the claim follows from Lemma \ref{lemma: commutes on H1}. 
\end{proof}

At this point the equivalence of \poweak{} and \pwweak{} is self-evident:

\begin{prop}\label{prop: P2 implies P1}
Properties \poweak{} and \pwweak{} are equivalent. Similarly, properties \po{} and \pw{} are equivalent.
\end{prop}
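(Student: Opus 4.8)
The plan is to set up a dictionary between torsors and cohomology and then read off both equivalences from it. The two key facts are the identification $\Sigma_\s(G)\simeq H^1_\s(k,G)$ of pointed sets and the compatibility square of Lemma~\ref{lemma: commutes on H1}: the map $\Sigma_\s(G)\to\Sigma_{\s^n}(G_n)$, $X\mapsto X_n$, corresponds under this identification to the natural map $H^1_\s(k,G)\to H^1_{\s^n}(k,G_n)$. Combined with Remark~\ref{rem: points k vs kn}, which says that $X(\kn)\neq\emptyset$ is equivalent to $X_n$ being trivial, i.e.\ to the class of $X$ becoming the distinguished element in $H^1_{\s^n}(k,G_n)$, every assertion of the form ``$X(\kn)\neq\emptyset$'' translates directly into a statement about $H^1$. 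Throughout I will also use the identifications $(X_n)_d=X_{nd}$ and $(G_n)_d=G_{nd}$ and the fact that the same dictionary is available over $(k,\s^m)$ for any $m\geq 1$.

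\emph{The uniform equivalence \po{}$\Leftrightarrow$\pw{}.} Fix a $\s$-algebraic group $G$ over $k$. Property \po{} asserts the existence of an $n\geq 1$ with $X(\kn)\neq\emptyset$ for \emph{every} $G$-torsor $X$; running through all $G$-torsors and using $\Sigma_\s(G)\simeq H^1_\s(k,G)$, the dictionary turns this into the statement that the natural map $H^1_\s(k,G)\to H^1_{\s^n}(k,G_n)$ sends every class to the distinguished element, i.e.\ is trivial, which is exactly \pw{} for $G$. The implication reverses verbatim, so \po{} and \pw{} are equivalent. Note that no passage to a higher power of $\s$ on the base is needed here.

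\emph{The equivalence \poweak{}$\Leftrightarrow$\pwweak{}.} For \pwweak{}$\Rightarrow$\poweak{}, let $X$ be a $G$-torsor. Its class $[X]\in H^1_\s(k,G)$ has distinguished image in the colimit $\lim_{n\to\infty}H^1_{\s^n}(k,G_n)=0$, so $[X]$ becomes distinguished in $H^1_{\s^n}(k,G_n)$ for some $n$; by the dictionary $X_n$ is trivial and hence $X(\kn)\neq\emptyset$. For the converse \poweak{}$\Rightarrow$\pwweak{} the subtlety is that an arbitrary element of the directed colimit is represented by a class $\xi\in H^1_{\s^m}(k,G_m)$ for some $m\geq 1$, i.e.\ by a $G_m$-torsor over $(k,\s^m)$, and such torsors need not arise by base change from $G$-torsors over $(k,\s)$ (the map $H^1_\s(k,G)\to H^1_{\s^m}(k,G_m)$ is in general not surjective). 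The remedy is Lemma~\ref{lemma: trivial implies strongly trivial}: \poweak{} implies that $k$ is strongly $G$-trivial for every $\s$-algebraic group $G$, so $(k,\s^m)$ is $G_m$-trivial. Applying this to the $G_m$-torsor corresponding to $\xi$, and then invoking Lemma~\ref{lemma: Gtriviality and H1} over the base $(k,\s^m)$, one obtains $d\geq 1$ with $\xi$ mapping to the distinguished element of $H^1_{\s^{md}}(k,G_{md})$; thus $\xi$ is zero in the colimit, and since $\xi$ was arbitrary, $\lim_{n\to\infty}H^1_{\s^n}(k,G_n)=0$, which is \pwweak{}.

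\emph{Main obstacle.} Everything except the last step is a mechanical unwinding of the commutative diagrams in Lemmas~\ref{lemma: commutes on H1} and~\ref{lemma: commutativity of X_n and H} together with Remark~\ref{rem: points k vs kn}. The one substantive point is the passage from $(k,\s)$ to $(k,\s^m)$ in \poweak{}$\Rightarrow$\pwweak{}: unwinding the colimit forces one to work with $G_m$-torsors over $(k,\s^m)$ rather than only with $G$-torsors over $(k,\s)$, and the bridge is the strong triviality statement of Lemma~\ref{lemma: trivial implies strongly trivial} (which itself rests on the projection $\kn\to k$ onto the last factor being a morphism of $k$-$\s^n$-algebras). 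I expect this to be the only place where any care is required.
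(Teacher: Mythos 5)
Your proof is correct and follows essentially the same route as the paper: the uniform equivalence is read off from the torsor--cohomology dictionary (the paper packages this as Lemma~\ref{lemma: Gtriviality and H1}), and the non-uniform equivalence is obtained by observing that \pwweak{} is exactly strong $G$-triviality for all $G$, with Lemma~\ref{lemma: trivial implies strongly trivial} supplying the bridge from \poweak{}. You have also correctly isolated the one non-mechanical point, namely that elements of the colimit live over $(k,\s^m)$ and do not all arise from $G$-torsors over $(k,\s)$.
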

\begin{proof}
	The equivalence \po{}$\Leftrightarrow$\pw{} follows from Lemma \ref{lemma: Gtriviality and H1}. According to that lemma, property \pwweak{} means that $k$ is strongly $G$-trivial for every $\s$-algebraic group $G$ over $k$. Thus \poweak{}$\Leftrightarrow$\pwweak{} follows from Lemma \ref{lemma: trivial implies strongly trivial}.
\end{proof}

We are now prepared to establish the induction principle. Essentially this is an application of the exact cohomology sequence. Recall that a sequence $M'\to M\to M''$ of pointed sets is exact at $M$ if the kernel of $M\to M''$ (i.e., the inverse image of the distinguished element of $M''$) agrees with the image of $M'\to M$.

\begin{prop}[Induction principle] \label{prop:InductionPrinciple}
Let $1\to N\to G\to H\to 1$ be an exact sequence of $\s$-algebraic groups over $k$.
\begin{enumerate}
    \item If $k$ is (uniformly) $H$-trivial and (uniformly) strongly $N$-trivial, then $k$ is (uniformly) $G$\=/trivial. 
    \item 	If $k$ is (uniformly) strongly $N$-trivial and (uniformly) strongly $H$-trivial, then $k$ is (uniformly) strongly $G$-trivial.
\end{enumerate}
\end{prop}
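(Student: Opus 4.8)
The plan is to deduce both statements from the exact cohomology sequence associated to $1\to N\to G\to H\to 1$ together with the compatibility of this sequence with the operations $(-)_n$ and the transition maps in the directed system $\{H^1_{\s^n}(k,G_n)\}_n$. First I would record the precise tool: for an exact sequence $1\to N\to G\to H\to 1$ of $\s$-algebraic groups over a $\s$-field $F$, there is an exact sequence of pointed sets $H^1_\s(F,N)\to H^1_\s(F,G)\to H^1_\s(F,H)$, and moreover a $G$-torsor $X$ maps to the trivial class in $H^1_\s(F,H)$ precisely when the pushout $H$-torsor $X/N$ is trivial, in which case $X$ itself is (isomorphic to) a torsor coming from an $N$-torsor via the inclusion $N\le G$ — more precisely, $X$ admits an $N$-stable structure so that it is the image of an $N$-torsor under $H^1_\s(F,N)\to H^1_\s(F,G)$. (This is the standard twisting argument; I would cite the cohomology-sequence results from \cite{BachmayrWibmer:TorsorsForDifferenceAlgebraicGroups}, noting that everything goes through unchanged with $\s$ replaced by $\s^n$.) By Lemma \ref{lemma: exactness preserved}, $1\to N_n\to G_n\to H_n\to 1$ is exact for all $n$, so this machinery is available at every level of the directed system, and by Lemma \ref{lemma: commutativity of X_n and H} all the relevant squares commute with the transition maps.

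For part (i), let $X$ be a $G$-torsor. Consider its image $\bar X := X/N$, an $H$-torsor. Since $k$ is (uniformly) $H$-trivial, there is an $n\ge 1$ (uniform in $\bar X$ in the uniform case, hence — crucially — uniform in $X$, since $n$ depends only on the bound for $H$-torsors) such that $\bar X(\kn)\ne\emptyset$, i.e., $\bar X_n$ is a trivial $H_n$-torsor (Remark \ref{rem: points k vs kn}). By the exactness of the cohomology sequence for $1\to N_n\to G_n\to H_n\to 1$ over $(k,\s^n)$, the class of $X_n$ in $H^1_{\s^n}(k,G_n)$ lies in the image of $H^1_{\s^n}(k,N_n)$; explicitly, after the twisting argument $X_n$ is isomorphic (as a $G_n$-torsor) to the torsor obtained from some $N_n$-torsor $Y$ via $N_n\hookrightarrow G_n$. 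Now apply the hypothesis that $k$ is (uniformly) strongly $N$-trivial: $(k,\s^n)$ is (uniformly) $N_n$-trivial, so there is a $d\ge 1$ (uniform in $Y$, hence uniform over all $X$ in the uniform case) with $Y((k,\s^n)^{[d]})\ne\emptyset$. Pushing forward, $X_n((k,\s^n)^{[d]})\ne\emptyset$, i.e., $(X_n)_d(k)\ne\emptyset$, i.e., $X_{nd}(k)\ne\emptyset$, so $X(k^{[nd]})\ne\emptyset$ by Remark \ref{rem: points k vs kn}. This is exactly (uniform) $G$-triviality of $k$, with bound $nd$ in the uniform case.

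For part (ii), I need to show that $(k,\s^m)$ is (uniformly) $G_m$-trivial for every $m\ge 1$. Fix $m$. By Lemma \ref{lemma: exactness preserved}, $1\to N_m\to G_m\to H_m\to 1$ is exact over $(k,\s^m)$. The hypothesis that $k$ is (uniformly) strongly $N$-trivial says precisely that $(k,\s^m)$ is (uniformly) $N_m$-trivial and moreover (uniformly) strongly $N_m$-trivial, since $((N_m)_d) = N_{md}$ and $(k,(\s^m)^d)=(k,\s^{md})$ (the identification $(X_n)_d = X_{nd}$ noted after Example \ref{ex: Xn}); likewise $(k,\s^m)$ is (uniformly) $H_m$-trivial. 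Thus part (i), applied over the base $\s$-field $(k,\s^m)$ to the exact sequence $1\to N_m\to G_m\to H_m\to 1$, shows $(k,\s^m)$ is (uniformly) $G_m$-trivial. Since $m$ was arbitrary, $k$ is (uniformly) strongly $G$-trivial.

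The main obstacle is part (i)'s middle step: turning "the image in $H^1$ of the $H$-torsor is trivial" into a usable statement about the $G$-torsor $X$ itself. Exactness of the pointed cohomology sequence only says the class of $X_n$ comes from $H^1_{\s^n}(k,N_n)$ as a \emph{class}, so one must run the twisting argument carefully to produce an actual $N_n$-torsor $Y$ whose associated $G_n$-torsor is isomorphic to $X_n$ — and then check that triviality of $Y_d$ really does force a point of $(X_n)_d$ over $k$ (this is just functoriality of torsor pushforward along $N_n\hookrightarrow G_n$, but it must be spelled out). In the uniform case one additionally has to be attentive that the bound $n$ for $H$-torsors and the bound $d$ for $N_{n}$-torsors are each uniform, so that $nd$ works for all $G$-torsors $X$ simultaneously; this is fine because $n$ depends only on $H$ (not on $X$) and, once $n$ is fixed, $d$ depends only on $N_n$ over $(k,\s^n)$ (not on which $Y$ arises).
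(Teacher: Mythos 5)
Your proof is correct and uses essentially the same machinery as the paper: the exact cohomology sequence of pointed sets attached to $1\to N_n\to G_n\to H_n\to 1$ (Lemma \ref{lemma: exactness preserved} plus \cite[Prop.\ 5.6]{BachmayrWibmer:TorsorsForDifferenceAlgebraicGroups}), the identification $(X_n)_d=X_{nd}$, and the translation between $X(\kn)\neq\emptyset$ and triviality in $H^1$ (Lemma \ref{lemma: Gtriviality and H1}). The only organizational difference is that you prove (i) first and then deduce (ii) by applying (i) over each base $(k,\s^m)$, whereas the paper runs (i) and (ii) simultaneously by carrying a parameter $n$ (set to $1$ in case (i)); both are sound, and your reduction of (ii) to (i) is a mild tidying of the same argument. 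One small remark: your worry about the ``twisting argument'' in part (i) is unnecessary if you stay at the level of cohomology classes as the paper does — exactness of pointed sets gives you a class in $H^1_{\s^n}(k,N_n)$ mapping to $[X_n]$, and applying (uniform) strong $N$-triviality to that class together with Lemma \ref{lemma: Gtriviality and H1} and Lemma \ref{lemma: commutativity of X_n and H} yields the triviality of $[X_n]$ in $H^1_{\s^{nd}}(k,G_{nd})$ without ever producing an explicit $N_n$-torsor $Y$ or a pushforward morphism $Y\to X_n$.
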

\begin{proof}
	According to Lemma \ref{lemma: commutativity of X_n and H} we have a commutative diagram for all integers $n,m,l \geq 1$:
	$$
	\xymatrix{
				   & H^1_{\s^n}(k,G_n) \ar[r] \ar^\alpha[d] &  H^1_{\s^n}(k,H_n) \ar[d]   \\
					 H^1_{\s^{nm}}(k,N_{nm})\ar[r] \ar[d] & H^1_{\s^{nm}}(k,G_{nm}) \ar[r] \ar^\beta[d] &  H^1_{\s^{nm}}(k,H_{nm}) \\
					H^1_{\s^{nml}}(k,N_{nml}) \ar[r] & 	H^1_{\s^{nml}}(k,G_{nml}) &
								}
						$$
We prove (i) and (ii) simultaneously and set $n=1$ in case (i) and $n\geq 1$ arbitrary in case (ii). 
Because $1\to N_{nm}\to G_{nm}\to H_{nm}\to 1$ is exact by Lemma \ref{lemma: exactness preserved}, the exact cohomology sequence (\cite[Prop. 5.6]{BachmayrWibmer:TorsorsForDifferenceAlgebraicGroups}) shows that the middle row in the above diagram is exact.

Start with an element in $\gamma \in H^1_{\s^n}(k,G_n)$. As $k$ is (strongly) $H$-trivial, there exists an $m\geq 1$ such that its image in $H^1_{\s^n}(k,H_n)$ lies in the kernel of the vertical map $H^1_{\s^{n}}(k,H_{n})\to H^1_{\s^{nm}}(k,H_{nm})$. Moreover, $m$ can be chosen uniformly for all elements in $H^1_{\s^n}(k,G_n)$ if $k$ is uniformly (strongly) $H$-trivial. By the exactness of the middle row, $\alpha(\gamma)$ is contained in the image of $H^1_{\s^{nm}}(k,N_{nm})\to H^1_{\s^{nm}}(k,G_{nm})$. Since $k$ is strongly $N$-trivial, there exists an $l\geq 1$ such that its preimage in $H^1_{\s^{nm}}(k,N_{nm})$ is mapped to the trivial element in $H^1_{\s^{nml}}(k,N_{nml})$ and we conclude that $\beta(\alpha(\gamma))$ is trivial. Moreover, if $N$ is uniformly strongly $N$-trivial, then $l$ does not depend on the choice of $\gamma$. All claims now follow with Lemma \ref{lemma: Gtriviality and H1}.		
\end{proof}

By induction we obtain from Proposition \ref{prop:InductionPrinciple} (ii):

\begin{cor} \label{cor: strong induction for tower} 
	If a $\s$-algebraic group $G$ has a subnormal series
	$$G=G_0\supseteq G_1\supseteq\ldots\supseteq G_n=1$$ such that $k$ is (uniformly) strongly $G_i/G_{i+1}$-trivial for $i=0,\ldots,n-1$, then $k$ is (uniformly) strongly $G$-trivial. \qed 
\end{cor}

\section{First observations regarding property \ptweak{}}
\label{sec: first observations}

In this section we discuss some equivalent reformulations of property \ptweak{} (b) (and \pt{} (b)). We also show that \poweak{}$\Rightarrow$\ptweak{} and similarly \po{}$\Rightarrow$\pt{}. 

\medskip

{\bf Throughout Section \ref{sec: first observations} we denote with $k$ a $\s$-field (of arbitrary characteristic).}

\medskip
%
%

%

\begin{lemma}\label{lemma: on pt (b)}
	For given integers $m,d\geq 1$ and an algebraic group $\G\leq \Gl_m$ defined over $\mathbb{Q}$  the following are equivalent for every $A\in \G(k)$:
	\begin{enumerate}
		\item There exists an integer $n\geq 1$ and an element $Y\in \G(\kn)$ such that $\s^d(Y)=AY$.
		\item There exists an integer $l\geq 1$ and an element $Y\in \G(k)$ such that $\s^{dl}(Y)=\s^{(l-1)d}(A)\ldots\s^d(A)AY$.
	\end{enumerate}
	Moreover, $n$ can be chosen uniformly for all $A\in \G(k)$ if and only if $l$ can be chosen uniformly for all $A \in \G(k)$. 
\end{lemma}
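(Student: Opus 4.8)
\textbf{Proof plan for Lemma~\ref{lemma: on pt (b)}.}
The key observation is the relation between the difference field $\kn$ and the difference field $(k,\s^n)$, together with the fact that $\kn$ is a product of $n$ copies of $k$ on which $\s$ acts by a cyclic shift (composed with $\s^n$ on the ``wrap-around'' factor). The plan is to unwind what the equation $\s^d(Y)=AY$ means for an element $Y=(Y_1,\dots,Y_n)\in \G(\kn)=\G(k)^n$, using that $\G$ is defined over $\mathbb{Q}$ so that $\G(\kn)$ genuinely is the product $\G(k)\times\dots\times\G(k)$ and that $A\in \G(k)$ sits inside $\G(\kn)$ diagonally via the structure map $k\to\kn$, i.e.\ as $(\s^{n-1}(A),\dots,\s(A),A)$.

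First I would treat the case $\gcd(n,d)$ separately or, more cleanly, reduce to it: writing out $\s^d(Y)=AY$ componentwise, the shift by $d$ in the product $\G(k)^n$ permutes the $n$ coordinates cyclically by $d$ steps (with appropriate applications of $\s^n$ each time one ``wraps around''), so the system decomposes into $\gcd(n,d)$ independent cyclic subsystems, each of length $n/\gcd(n,d)$. Chasing one such cycle and composing the equations around it, one finds that each coordinate $Y_j$ satisfies an equation of the shape $\s^{d\cdot(n/\gcd(n,d))\cdot r}(Y_j)=(\text{product of }\s\text{-twists of }A)\,Y_j$ for a suitable multiplicity $r$, and conversely a solution $Y_1\in\G(k)$ of such an equation can be propagated around the cycle to produce all the other coordinates. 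This gives (i)$\Rightarrow$(ii) with $l$ a multiple of $n/\gcd(n,d)$ determined by $n$ and $d$; and for (ii)$\Rightarrow$(i) one runs the construction backwards, taking $n=dl$ (or $n=\lcm$ of $d$ with $l$, whichever makes the bookkeeping cleanest) and defining $Y_j=\s^{(j-1)d}(A)\cdots\s^d(A)A\cdot Y$ appropriately so that the cyclic relations and the wrap-around $\s^n$ all match up. One should double-check that the telescoping product $\s^{(l-1)d}(A)\cdots\s^d(A)A$ in (ii) is exactly what emerges from composing the cyclic system, which it is, by a direct induction on $l$.

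For the ``moreover'' clause, the point is that the passage $n\rightsquigarrow l$ and $l\rightsquigarrow n$ exhibited above is given by an explicit function of $n$ (resp.\ $l$) and $d$ alone, \emph{not} depending on $A$: in one direction $l$ can be taken to be $n/\gcd(n,d)$ (or any multiple thereof), and in the other $n$ can be taken to be $dl$. Hence a bound on $n$ valid for all $A\in\G(k)$ translates into a bound on $l$ valid for all $A\in\G(k)$, and vice versa, simply by substituting these formulas. So the uniform statement follows immediately from the constructions used in the non-uniform equivalence, with no extra work.

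\textbf{Main obstacle.} I expect the bookkeeping around the $\s$-twists to be the delicate part: one must keep careful track of which power of $\s$ is applied to $A$ in each coordinate, because the structure map $k\to\kn$ already introduces twists $\s^{n-1},\dots,\s,\mathrm{id}$, the shift $\s$ on $\kn$ applies $\s^n$ on the wrap-around factor, and iterating $\s^d$ accumulates these in a way that interacts with $\gcd(n,d)$. Getting the indices to line up so that the emergent product is precisely $\s^{(l-1)d}(A)\cdots\s^d(A)A$ — rather than some reindexed variant — is where the proof needs to be written out with care; everything else is formal manipulation of the adjunctions and product decompositions already set up in Section~\ref{section: The induction principle}.
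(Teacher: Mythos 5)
Your plan is correct in spirit and lands on essentially the same equivalence; the main difference is in the \emph{(i)}$\Rightarrow$\emph{(ii)} direction, where the paper avoids the $\gcd(n,d)$ cycle decomposition altogether. The paper's trick is to first enlarge $n$ to a multiple of $d$: since $n\mid\tilde n$ gives a morphism of $k$-$\s$-algebras $k^{[n]}\to k^{[\tilde n]}$, a solution in $\G(k^{[n]})$ yields one in $\G(k^{[\tilde n]})$, so one may assume $n=dl$. After that, the iterated equation $\s^{dl}(Y)=\s^{(l-1)d}(A)\cdots\s^d(A)AY$ holds tautologically in $\G(k^{[n]})$, and projecting onto the last factor $k^{[n]}\to k$ (a $k$-$\s^n$-algebra map, with $\s^n=\s^{dl}$) produces the desired element of $\G(k)$. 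Your approach instead analyzes the componentwise system directly, which does work (the chase around one cycle of length $n/\gcd(n,d)$ really does produce the telescoping product $\s^{(l-1)d}(A)\cdots A$ with $l=n/\gcd(n,d)$, as you can verify for small cases like $n=3$, $d=2$), but it forces you to track the $\gcd$ and the wrap-around $\s^n$ twists across $d/\gcd(n,d)$ traversals of the cycle — exactly the bookkeeping you flag as the obstacle. For \emph{(ii)}$\Rightarrow$\emph{(i)} both arguments coincide: take $n=dl$ and propagate $Y$ around the cycle. One concrete warning on your sketched formula $Y_j=\s^{(j-1)d}(A)\cdots A\cdot Y$: within each block of $d$ consecutive coordinates the entries must additionally carry $\s^j$-twists for $j=0,\dots,d-1$ (the paper's element is $(\s^{d-1}(\s^{(l-2)d}(A)\cdots AZ),\dots,\s^{(l-2)d}(A)\cdots AZ,\;\dots,\;\s^{d-1}(Z),\dots,Z)$), so your ``appropriately'' is doing real work there. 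The ``moreover'' clause is handled the same way in both arguments: $l$ and $n$ are linked by formulas depending only on $d$, not on $A$.
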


\begin{proof}
	Let us first show that (i) implies (ii).
	Finding a solution in $\kn$ for some set of difference equations is equivalent to finding a morphism of \ks-algebras $R\to \kn$ for some appropriate \ks-algebra $R$. If $n|\tilde n$, say $\tilde n=nl$, then we have a morphism of \ks-algebras $$\kn\to k^{[\tilde n]},\ (a_1,\ldots,a_n)\mapsto (\s^{n(l-1)}(a_1),\ldots,\s^{n(l-1)}(a_n),\ldots,\s^n(a_1),\ldots,\s^n(a_n),a_1,\ldots,a_n).$$ We can therefore assume that $d|n$, say $n=dl$. By assumption, there exists a $Y\in \G(k^{[n]})$ with $\s^d(Y)=AY$, and thus $\s^{dl}(Y)=\s^{(l-1)d}(A)\ldots\s^d(A)AY$.
	The projection $\kn\to k$ onto the last factor is a morphism of $k$-$\s^n$-algebras. Therefore the equation 
	$\s^{dl}(Y)=\s^{(l-1)d}(A)\ldots\s^d(A)AY$ has a solution in $\G(k)$. 
	
	Next we will show that (ii) implies (i). Let $Z\in\G(k)$ be a solution to $\s^{dl}(Y)=\s^{(l-1)d}(A)\ldots\s^d(A)AY$. Then
	$$Z'=(\s^{d-1}(\s^{(l-2)d}(A)\ldots AZ),\ldots,\s^{(l-2)d}(A)\ldots AZ,\ldots\ldots,\s^{d-1}(AZ),\ldots,AZ,\s^{d-1}(Z),\ldots,Z)\in\G(\kn)$$
	is a solution to $\s^d(Y)=AY$, where $n=dl$.
	
	Note that if $n$ does not depend on $A$, then $l$ does not depend on $A$ and vice versa, so the second statement also holds. 
\end{proof}

\begin{lemma} \label{lemma: on pt (b) was remark}
Condition \ptweak{} (b) is equivalent to each of the following statements:
	
	\begin{enumerate}
		\item[\ptweak{} \rm{(b')}] For every $m\geq 1$ and for every $A\in\Gl_m(k)$ there exists an $l\geq 1$ such that  the equation $\s^{l}(Y)=\s^{(l-1)}(A)\cdots\s(A)AY$ has a solution in $\Gl_m(k)$. 	
		\item[\ptweak{}  \rm{(b$_d$)}]  For every $m\geq 1$ and every $d\geq 1$ and for every $A\in\Gl_m(k)$  there exists an integer $n\geq 1$ such that the equation $\s^d(Y)=AY$ has a solution in $\Gl_m(\kn)$. 		
		\item[\ptweak{} \rm{(b$_d$')}] For every $m\geq 1$ and $d\geq 1$ and for every $A\in\Gl_m(k)$  there exists an $l\geq 1$ such that the equation $\s^{dl}(Y)=\s^{(l-1)d}(A)\ldots\s^d(A)AY$ has a solution in $\Gl_m(k)$. 
	\end{enumerate}	
    
	Similarly, Condition \pt{} \rm{(b)} is equivalent to each of the following statements:
	
	\begin{enumerate}
		\item[ \pt{} (b')] For every $m\geq 1$ there exists an $l\geq 1$ such that for every $A\in\Gl_m(k)$ the equation $\s^{l}(Y)=\s^{(l-1)}(A)\cdots\s(A)AY$ has a solution in $\Gl_m(k)$. 	
		\item[ \pt{}  (b$_d$)]  For every $m\geq 1$ and every $d\geq 1$, there exists an $n\geq 1$ such that for every $A\in\Gl_m(k)$ the equation $\s^d(Y)=AY$ has a solution in $\Gl_m(\kn)$. 
		
		\item[ \pt{} (b$_d$')] For every $m\geq 1$ and $d\geq 1$ there exists an $l\geq 1$ such that for every $A\in\Gl_m(k)$ the equation $\s^{dl}(Y)=\s^{(l-1)d}(A)\ldots\s^d(A)AY$ has a solution in $\Gl_m(k)$. 
	\end{enumerate}	
	
\end{lemma}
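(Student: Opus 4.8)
The plan is to derive the whole statement from Lemma~\ref{lemma: on pt (b)}, applied with $\G=\Gl_m$ (which is defined over $\mathbb{Q}$), plus one additional ingredient: a block-companion construction converting an equation for $\s^d$ into one for $\s$. Applying Lemma~\ref{lemma: on pt (b)} with $\G=\Gl_m$ and $d$ fixed gives, for all $m,d\geq1$, the equivalence of \ptweak{}~(b$_d$) with \ptweak{}~(b$_d$'); the special case $d=1$ is \ptweak{}~(b)$\Leftrightarrow$\ptweak{}~(b'). As that lemma also transports uniformity in $A$, the analogous equivalences \pt{}~(b$_d$)$\Leftrightarrow$\pt{}~(b$_d$') and \pt{}~(b)$\Leftrightarrow$\pt{}~(b') follow as well. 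Moreover \ptweak{}~(b$_d$) implies \ptweak{}~(b) and \ptweak{}~(b$_d$') implies \ptweak{}~(b'), and likewise for the uniform versions, simply by specializing to $d=1$. Hence the whole statement follows once we prove the single implication \ptweak{}~(b')$\Rightarrow$\ptweak{}~(b$_d$') together with its uniform analogue \pt{}~(b')$\Rightarrow$\pt{}~(b$_d$').

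To prove this, fix $m,d\geq1$ and $A\in\Gl_m(k)$, and let $B\in\Gl_{dm}(k)$ be the block matrix acting on $(k^m)^d$ by $(v_1,\dots,v_d)\mapsto(v_2,\dots,v_d,Av_1)$; it is invertible because $A$ is. A direct block computation shows $\s^{d-1}(B)\cdots\s(B)B=\operatorname{diag}(A,\s(A),\dots,\s^{d-1}(A))$, and, grouping factors into blocks of $d$ consecutive terms, that for every $l\geq1$ the matrix $\s^{dl-1}(B)\cdots\s(B)B$ is block-diagonal with first diagonal block $E_1:=\s^{(l-1)d}(A)\cdots\s^d(A)A$. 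I would then apply \ptweak{}~(b') to $B\in\Gl_{dm}(k)$: there are $l\geq1$ and $V\in\Gl_{dm}(k)$ with $\s^l(V)=\s^{l-1}(B)\cdots\s(B)B\cdot V$, and iterating this relation yields $\s^{dl}(V)=\s^{dl-1}(B)\cdots\s(B)B\cdot V$. Writing $V=(V_{rs})_{1\le r,s\le d}$ in $m\times m$ blocks, the block-diagonal form above gives in particular $\s^{dl}(V_{1s})=E_1V_{1s}$ for every $s$.

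The remaining, and most delicate, point is to extract a genuinely \emph{invertible} $Y\in\Gl_m(k)$ from the first block row $(V_{11}\mid\cdots\mid V_{1d})$, rather than merely some block $V_{1s}$ satisfying the equation (which need not be invertible). Since $V$ is invertible over the field $k$, its first $m$ rows are linearly independent, so the $m\times dm$ matrix $(V_{11}\mid\cdots\mid V_{1d})$ has rank $m$; hence some $m$ of its columns form a basis of $k^m$, i.e.\ there is a $dm\times m$ column-selection matrix with entries in $\{0,1\}$, with horizontal $m\times m$ blocks $P_1,\dots,P_d$, such that $Y:=\sum_{s=1}^{d}V_{1s}P_s\in\Gl_m(k)$. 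Since the entries of each $P_s$ are fixed by $\s$, we obtain $\s^{dl}(Y)=\sum_s\s^{dl}(V_{1s})P_s=\sum_sE_1V_{1s}P_s=E_1Y=\s^{(l-1)d}(A)\cdots\s^d(A)A\cdot Y$, which is exactly \ptweak{}~(b$_d$') for the integer $l$. For the uniform implication one instead invokes \pt{}~(b') with the parameter $m$ replaced by $dm$, so that the resulting $l$ depends only on $m$ and $d$ and not on $A$; the extraction of $Y$ introduces no further dependence, giving \pt{}~(b')$\Rightarrow$\pt{}~(b$_d$'). The main obstacle is precisely this invertibility extraction (and, in the uniform case, keeping track that $l$ stays independent of $A$); the routine part is the block computation identifying $\s^{dl-1}(B)\cdots\s(B)B$ with a block-diagonal matrix.
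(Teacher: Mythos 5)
Your proposal is correct and follows essentially the same route as the paper: it reduces everything to the single implication \ptweak{}~(b')$\Rightarrow$\ptweak{}~(b$_d$') (with its uniform analogue) via Lemma~\ref{lemma: on pt (b)} applied to $\G=\Gl_m$, and then proves that implication with the same block-companion matrix $B\in\Gl_{dm}(k)$ and the same rank/column-selection argument to extract an invertible $m\times m$ solution from the first block row. The only cosmetic difference is that you iterate the $\s^l$-relation $d$ times to land on $\s^{dl}$, whereas the paper first replaces $l_d$ by $dl_d$ so that $d\mid l_d$; these are interchangeable and both preserve uniformity.
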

\begin{proof} 
	The equivalence of (b) and  (b') follows (in both cases \ptweak{} and \pt{}) from Lemma \ref{lemma: on pt (b)} with $\G=\Gl_m$ and $d=1$. Similarly, the equivalence of (b$_d$) and  (b$_d$') follows from Lemma \ref{lemma: on pt (b)} with $\G=\Gl_m$ in both cases. So it remains to show that (b') implies  (b$_d$') in both cases. We start with the \pt{} case and assume that \pt{} (b') holds and fix integers $m,d\geq 1$. Then by \pt{} (b'), there exists an integer $l_d$ such that for every $B\in \Gl_{md}(k)$, there exists a $Z\in \Gl_{md}(k)$ with $\s^{l_d}(Z)=\s^{(l_d-1)}(B)\cdots \s(B) B Z$. By possibly replacing $l_d$ with $dl_d$, we may assume that $d$ divides $l_d$ and define $l=\frac{l_d}{d}$. We claim that \pt{} (b$_d$') holds for this $l$. Fix a matrix $A \in \Gl_m(k)$. It remains to show that there exists a matrix $Y\in \Gl_m(k)$ with $\s^{dl}(Y)=\hat AY$ where $\hat A=\s^{(l-1)d}(A)\ldots\s^d(A)A$. By assumption, there exists a $Z\in \Gl_{md}(k)$ with $\s^{l_d}(Z)=\hat B Z$, where we define $B$ as block matrix 
	$$ B=\begin{pmatrix} 
		0 & I_m & 0 &  & \\
		\vdots & & \ddots  &  & 0 \\
		0 & &  & & I_m  \\
		 A & 0& \dots  & & 0 \\
	\end{pmatrix} \in \Gl_{md}(k)$$
	and set $\hat B=\s^{(l_d-1)}(B)\cdots \s(B) B$. Recall that $d$ divides $l_d$, so the product $\hat B$ with $l_d$ factors is the block diagonal matrix:
	$$\hat B= \begin{pmatrix}
		\hat A & & & \\
		& \s(\hat A) & & \\
		& & \ddots & \\
		& & & \s^{d-1}(\hat A)
	\end{pmatrix}.$$
	Now the $md$ columns of $Z\in \Gl_{md}(k)$ are $k$-linearly independent, so we can choose $m$ columns $z_1,\dots,z_m$ of $Z$ such that their projections $y_1,\dots,y_m \in k^m$ to the first $m$ coordinates are still $k$\=/linearly independent. Define $Y\in \Gl_m(k)$ as the matrix with columns $y_1,\dots,y_m$. By construction, $\s^{dl}(z_i)=\s^{l_d}(z_i)=\hat B z_i$, so $\s^{dl}(y_i)=\hat A y_i$ for all $i$ and thus $\s^{dl}(Y)=\hat A Y$ as claimed. The proof in the \ptweak{} case is analogous, but here we start with a fixed matrix $A\in \Gl_m(k)$, define $B\in \Gl_{md}(k)$ as above and obtain an integer $l_d$ from \ptweak{}(b') (depending on $B$ and thus on $A$) and a matrix $Z\in \Gl_{md}(k)$ with $\s^{l_d}(Z)=\hat B Z$ (with $\hat B$ defined as above). Again, we may assume that $d$ divides $l_d$ and set $l=l_d$ (which now depends on $A$)  and the same computations as  above yield a matrix $Y\in \Gl_m(k)$ with $\s^{dl}(Y)=\hat A Y$ as claimed in \ptweak{}(b$_d$').
\end{proof}

\begin{lemma} \label{lemma: inversive}
	A $\s$-field satisfying \ptweak{} \rm{(c)} is inversive. 
\end{lemma}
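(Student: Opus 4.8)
The plan is to show that $\s\colon k\to k$ is surjective; injectivity is automatic, since $\s$ is a ring homomorphism out of a field. As $0=\s(0)$, it suffices to treat $a\in k^\times$. I would apply \ptweak{} (c) in its simplest instance, with $m=1$, $\alpha_0=0$ and $\alpha_1=1$, so that the equation $y^{\alpha_0}\s(y)^{\alpha_1}=a$ is just $\s(y)=a$ and the hypothesis $\alpha_m\ne 0$ is met. This yields an integer $n\ge 1$ and an element $y\in\kn$ with $\s(y)=a$ in the $\s$-ring $\kn$.

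The only step with any content is to read off this equality coordinate by coordinate. Writing $y=(y_1,\dots,y_n)$ with $y_i\in k$, and recalling from page~\pageref{page: kn} that $a\in k$ corresponds to $(\s^{n-1}(a),\dots,\s(a),a)\in\kn$ while $\s(y)=(\s^n(y_n),y_1,\dots,y_{n-1})$, comparison of the first coordinates in $\s(y)=a$ gives $\s^n(y_n)=\s^{n-1}(a)$, i.e., $\s^{n-1}(\s(y_n))=\s^{n-1}(a)$. Since $\s$, and hence $\s^{n-1}$, is injective, this forces $\s(y_n)=a$, so $a$ lies in the image of $\s$ and $k$ is inversive.

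I expect no real obstacle here: the argument is essentially a one-line coordinate computation, the only care being with the \ks-algebra structure on $\kn$ (when $n=1$ it collapses to $\s(y_1)=a$). One could alternatively apply the last-coordinate projection $\kn\to k$, which is a morphism of $k$-$\s^n$-algebras, to the equation $\s(y)=a$; but inspecting the first coordinate is the most direct route.
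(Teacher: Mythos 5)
Your argument is correct and is essentially identical to the paper's proof: both apply \ptweak{}~(c) to the equation $\s(y)=a$, read off a single coordinate of the resulting solution in $\kn$, and cancel a power of the injective map $\s$ to conclude $a\in\s(k)$. The only minor difference is that you explicitly note the reduction to $a\in k^\times$ (since (c) is stated only for units), which is a small point the paper glosses over.
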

\begin{proof}
	For $a\in k$, consider the multiplicative equation $\s(y)=a$. By \ptweak{}(c), there exists an $n\geq 1$ and a solution $(a_1,\dots,a_n)\in\kn$, i.e., $(\s^n(a_n), a_1,\dots,a_{n-1})=(\s^{n-1}(a),\dots,\s(a),a)$. Thus $\s^{n-1}(\s(a_n)-a)=0$ and so $\s(a_n)=a$, hence $\s\colon k \to k$ is surjective. 
\end{proof}

%


The following lemma shows that property \ptweak{} is preserved when passing to higher powers of~$\s$.

\begin{lemma} \label{lem: pt for powers of s}
	If the difference field $(k,\s)$ satisfies \ptweak{}, then also the difference field $(k,\s^d)$ satisfies \ptweak{} for every $d\geq 1$. Similarly, if  $(k,\s)$ satisfies \pt{}, then $(k,\s^d)$ satisfies \pt{} for every $d\geq 1$. 
\end{lemma}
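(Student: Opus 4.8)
The plan is to verify properties \ptweak{} (a), (b) and (c) for the difference field $(k,\s^d)$ separately, using that all three hold for $(k,\s)$. Part (a) requires nothing: the assertion ``$k$ is algebraically closed'' does not involve the endomorphism, so it holds for $(k,\s^d)$ precisely when it holds for $(k,\s)$.

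For (b) the plan is to pass through the reformulations of Lemma \ref{lemma: on pt (b) was remark}, applied once to the difference field $(k,\s^d)$ and once to $(k,\s)$. Applying Lemma \ref{lemma: on pt (b) was remark} with $\s$ replaced by $\s^d$, property \ptweak{} (b) for $(k,\s^d)$ is equivalent to its version (b$'$), that is, to the statement that for every $m\geq 1$ and every $A\in\Gl_m(k)$ there is an $l\geq 1$ with $\s^{dl}(Y)=\s^{(l-1)d}(A)\cdots\s^d(A)A\,Y$ solvable in $\Gl_m(k)$. This is verbatim condition (b$_d'$) of Lemma \ref{lemma: on pt (b) was remark} for $(k,\s)$ (with the given $d$), which by that same lemma is equivalent to \ptweak{} (b) for $(k,\s)$. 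Hence \ptweak{} (b) for $(k,\s)$ implies \ptweak{} (b) for $(k,\s^d)$. Replacing each reformulation by its uniform counterpart in Lemma \ref{lemma: on pt (b) was remark}, the same chain gives that \pt{} (b) for $(k,\s)$ implies \pt{} (b) for $(k,\s^d)$.

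For (c), fix $\alpha_0,\ldots,\alpha_m\in\ZZ$ with $\alpha_m\neq 0$ and $a\in k^\times$. Define $\beta_0,\ldots,\beta_{dm}\in\ZZ$ by $\beta_{di}=\alpha_i$ for $0\leq i\leq m$ and $\beta_j=0$ otherwise; then $\beta_{dm}=\alpha_m\neq 0$, so \ptweak{} (c) for $(k,\s)$, applied in order $dm$, yields an $n\geq 1$ and a solution of $y^{\beta_0}\s(y)^{\beta_1}\cdots\s^{dm}(y)^{\beta_{dm}}=a$ in $\kn$. By Remark \ref{rem: points k vs kn} together with the explicit description of the $\s^n$-variety $X_n$ in Example \ref{ex: Xn}, such a solution is the same datum as a tuple $(e_0,\ldots,e_{n-1})\in(k^\times)^n$ which, after being extended by $e_{i+n}:=\s^n(e_i)$ for $i\geq 0$, satisfies $\prod_{i'=0}^m e_{i+di'}^{\alpha_{i'}}=\s^i(a)$ for all $i\geq 0$ (the relations for $i=0,\ldots,n-1$ cut out $X_n$, the rest following by periodicity). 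Now put $w_l:=e_{dl}$. Iterating $e_{i+n}=\s^n(e_i)$ gives $e_{i+dn}=\s^{dn}(e_i)$, hence $w_{l+n}=\s^{dn}(w_l)$, and evaluating the relations above at $i=dl$ gives $\prod_{i'=0}^m w_{l+i'}^{\alpha_{i'}}=\s^{dl}(a)$ for all $l\geq 0$. Reading these conditions backwards through Example \ref{ex: Xn} and Remark \ref{rem: points k vs kn}, now for the difference field $(k,\s^d)$, exhibits $(w_0,\ldots,w_{n-1})$ as a solution of $y^{\alpha_0}\s^d(y)^{\alpha_1}\cdots\s^{dm}(y)^{\alpha_m}=a$ in $(k,\s^d)^{[n]}$. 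This establishes \ptweak{} (c) for $(k,\s^d)$. For the uniform statement, note that the $\beta_j$ depend only on the $\alpha_i$, so if \pt{} (c) holds for $(k,\s)$ the integer $n$ may be chosen independently of $a$, and the same $n$ then works uniformly for the $\s^d$-equations.

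The only step that needs genuine care is (c). One is tempted to identify the $\s^d$-algebra $(\kn,\s^d)$ with $(k,\s^d)^{[n']}$ for a suitable $n'$; but $\s^d$ acts on $\kn$ by a cyclic shift of $d$ positions (twisted by $\s^n$ on wrap-around), and the orbit decomposition of that shift depends on $\gcd(d,n)$, so no such identification is available in general. Passing instead to the subsequence $w_l=e_{dl}$ of an already constructed solution circumvents this issue completely, and only the routine verification that the periodicity and the defining equations are inherited remains.
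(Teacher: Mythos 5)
Your proof is correct, and (a) and (b) follow the paper's own argument essentially verbatim. The only meaningful divergence is in (c). The paper first enlarges $n$ to a multiple of $d$ (using the natural transition map $\kn\to k^{[nl]}$, as in Lemma~\ref{lemma: on pt (b)}) and then applies the projection $(\kn,\s^d)\to(k,\s^d)^{[l]}$, $(a_1,\ldots,a_n)\mapsto(a_d,a_{2d},\ldots,a_n)$, which is a morphism of $k$-$\s^d$-algebras when $d\mid n$. You instead unwind a $\kn$-point into the twisted-periodic sequence $(e_i)$ and sample every $d$-th term to land directly in $(k,\s^d)^{[n]}$, which sidesteps the need to pass to a multiple of $d$. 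Both work; your version trades the one-line ``WLOG $d\mid n$'' for an explicit component calculation, and the price is a larger exponent ($n$ rather than $n/d$), which is immaterial here. Your concluding caveat is fair as far as it goes — $(\kn,\s^d)$ and $(k,\s^d)^{[n']}$ are not isomorphic in general — but it slightly overstates the obstacle: no isomorphism is needed, only a morphism, and the paper gets one by a short preparatory step you chose to avoid. The uniformity argument you give for \pt{}(c) (that $n$ is determined by the $\beta_j$, which do not involve $a$) is also correct and matches the paper's.
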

\begin{proof}
	No need to worry about \ptweak{} (a) and \pt{} (a). 	
	For property (b), we use Lemma \ref{lemma: on pt (b) was remark} to conclude that it suffices to show that (b') holds for the difference field $(k,\s^d)$ in both cases. But that condition equals  (b$_d$') for the difference field $(k,\s)$ which does hold by Lemma \ref{lemma: on pt (b) was remark} in both cases \ptweak{} and \pt{}. 
	
	For (c) let $\alpha_0,\ldots,\alpha_m\in\mathbb{Z}$ with $\alpha\neq 0$ and for $a\in k^\times$ consider the multiplicative $\s^d$-equation
	\begin{equation} \label{eqn: multiplicative system}
	y^{\alpha_0}\s^d(y)^{\alpha_1}\ldots\s^{dm}(y)^{\alpha_m}=a.
	\end{equation}
%
 If \ptweak{}(c) holds, there exists an $n\geq 1$ such that (\ref{eqn: multiplicative system}), when considered as a $\s$-equations has a solution in $\kn$.
	So (\ref{eqn: multiplicative system}), when considered as a system of $\s^d$-equations, has a solution in the $k$-$\s^d$-algebra $(\kn,\s^d)$.
	As in the proof of Lemma \ref{lemma: on pt (b)}, we can assume that $d|n$, say $n=dl$. Then we have a morphism
	$$(\kn,\s^d)\to (k,\s^d)^{[l]},\ (a_1,\ldots,a_n)\mapsto(a_d,a_{2d},\ldots,a_n)$$
	of $k$-$\s^d$-algebras. Thus (\ref{eqn: multiplicative system}), when considered as a system of $\s^d$-equations, has a solution in $(k,\s^d)^{[l]}$. We conclude that $(k,\s^d)$ satisfies \ptweak{} (c). Moreover, if $(k,\s)$ satisfies the stronger condition \pt{} (c), then $n$ does not depend on the choice of $a$ and thus $l$ does not depend on it either and thus $(k,\s^d)$ satisfies the stronger condition \pt{} (c).	
\end{proof}

As indicated in the introduction, to show that \poweak{} implies \ptweak{} it suffices to consider \poweak{} for a few particular $\s$-algebraic groups, namely, algebraic groups, $\Gl_m^\s$ and one-relator subgroups of $\Gm$.

\begin{prop} \label{prop: po implies pt}
	If a $\s$-field $k$ satisfies \poweak{}, then it satisfies \ptweak{}. Similarly, \po{} implies \pt{}.
\end{prop}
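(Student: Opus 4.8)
The plan is to deduce each of \ptweak{}(a), \ptweak{}(b) and \ptweak{}(c) from \poweak{} by applying \poweak{} to a single, explicitly chosen $\s$-algebraic group together with a naturally parametrized family of torsors for it; the uniform implication \po{}$\Rightarrow$\pt{} then follows at once, because \po{} supplies an integer $n$ that works uniformly for \emph{all} torsors of the chosen group, hence in particular for the chosen family.

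For \ptweak{}(a), recall that a finite Galois extension $L/k$ gives rise to a $G$-torsor $X=[\s]_k\spec L$ for the finite constant group $G=[\s]_k\gal(L/k)$. By \poweak{} there is an $n\geq 1$ with $X(\kn)\neq\emptyset$. Since $k\{X\}=[\s]_kL$ and $[\s]_k$ is left adjoint to the forgetful functor, a point of $X(\kn)$ is the same thing as a morphism of $k$-algebras $L\to\kn$; composing it with the projection $\kn\to k$ onto the last factor, which is a morphism of $k$-algebras, yields a morphism of $k$-algebras $L\to k$ and thus $L=k$. Hence $k$ has no proper finite separable extension. In characteristic zero this already says that $k$ is algebraically closed, i.e.\ \ptweak{}(a); in characteristic $p$ one obtains in addition that $k$ is perfect by running the same argument on the $\alpha_p$-torsor attached to $k[x]/(x^p-a)$, $a\in k$, so that $k$ is again algebraically closed. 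As no integer $n$ intervenes here, this also gives \pt{}(a).

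For \ptweak{}(b), fix $m\geq 1$ and take $G=\Gl_m^\s$. For each $A\in\Gl_m(k)$ the $\s$-closed subvariety $X_A=\{x\in\Gl_m\mid\s(x)=Ax\}$ of $\Gl_m$ is a $G$-torsor: any two points of $X_A(R)$ differ by a unique element of $G(R)$, and $X_A$ is nonempty since its $\s$-coordinate ring is $k[x_{ij},\det(x)^{-1}]$ with $\s$ determined by $\s(x_{ij})=(Ax)_{ij}$, a nonzero finitely $\s$-generated $k$-$\s$-algebra. Applying \poweak{} to $G$ and $X_A$ produces $n$ with $X_A(\kn)\neq\emptyset$, i.e.\ with $\s(Y)=AY$ solvable in $\Gl_m(\kn)$; this is exactly \ptweak{}(b). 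For \ptweak{}(c), fix $\alpha_0,\ldots,\alpha_m\in\ZZ$ with $\alpha_m\neq 0$ and let $G=\{g\in\Gm\mid g^{\alpha_0}\s(g)^{\alpha_1}\cdots\s^m(g)^{\alpha_m}=1\}$; for $a\in k^\times$ the $\s$-closed subvariety $X_a=\{x\in\Gm\mid x^{\alpha_0}\s(x)^{\alpha_1}\cdots\s^m(x)^{\alpha_m}=a\}$ of $\Gm$ is a $G$-torsor, as noted in the introduction, and \poweak{} gives $n$ with $X_a(\kn)\neq\emptyset$, i.e.\ with $y^{\alpha_0}\s(y)^{\alpha_1}\cdots\s^m(y)^{\alpha_m}=a$ solvable in $\kn$; this is \ptweak{}(c). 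In both cases, \po{} applied to the single group $G$ (which depends only on $m$, resp.\ only on $\alpha_0,\ldots,\alpha_m$) delivers an $n$ uniform over all $G$-torsors, hence over all $A\in\Gl_m(k)$, resp.\ all $a\in k^\times$, which is \pt{}(b), resp.\ \pt{}(c).

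I do not anticipate a real obstacle: the proof is entirely a matter of choosing the correct test torsors and verifying that they genuinely are torsors. The step that needs the most care is \ptweak{}(a) --- passing from a $\kn$-point to a $k$-point via the last-coordinate projection, and, in positive characteristic, strengthening ``separably closed'' to ``algebraically closed'' by also treating an infinitesimal torsor such as the one for $\alpha_p$.
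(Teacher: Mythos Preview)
Your proof is correct and follows the same approach as the paper: apply \poweak{} (resp.\ \po{}) to the $\s$-algebraic groups $[\s]_k\G$ for $\G$ an algebraic group, to $\Gl_m^\s$, and to the one-relator subgroups of $\Gm$, with the natural families of torsors, and project $\kn\to k$ for part (a). The only inessential difference is that in positive characteristic you invoke an $\alpha_p$-torsor to obtain perfection, whereas the paper uses the $\mu_p$-torsor $\{x\in\Gm\mid x^p=a\}$; either works.
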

\begin{proof}
	To prove (a) in both cases, we need to show that \poweak{} implies that $k$ is algebraically closed. As every finite Galois extension of $k$ defines a torsor for a finite (constant) algebraic group over $k$, it is clear that a field with the property that every torsor for every algebraic group over $k$ is trivial, has to be separably closed.
	Moreover, if $k$ has positive characteristic $p$, then $\G=\{g\in\Gm|\ g^p=1\}$ is a (non-reduced) algebraic group and for every $a\in k^\times$ we have a $\G$-torsor $\X=\{x\in\Gm|\ x^p=a\}$. Thus if all these torsors are trivial, $k$ must be algebraically closed.
	
	So let $\G$ be an algebraic group over $k$ and let $\X$ be a $\G$-torsor. We have to show that $\X(k)\neq\emptyset$.
	Set $G=[\s]_k\G$ and $X=[\s]_k\X$. Then $X$ is a $G$-torsor. So, by assumption, there exists $n\geq 1$ such that $X(\kn)=\X(\kn)\neq\emptyset$. In other words, there exists a morphism of $k$-algebras $k[\X]\to\kn$. Composing with the last projection $\kn\to k$ yields a morphism of $k$-algebras $k[\X]\to k$. So $\X(k)\neq\emptyset$ and $k$ is algebraically closed.
	
	To prove (b), consider the $\s$-algebraic group $G=\Gl_m^\s=\{g\in\Gl_m|\ \s(g)=g\}$.
	For $A\in\Gl_m(k)$ consider the $\s$-variety $X$ given by
	$X=\{x\in\Gl_m|\ \s(x)=Ax\}$. It is straight forward to check that $X$ is a $G$-torsor under the action $(g,x)\mapsto xg^{-1}$. We conclude that \poweak{} implies \ptweak{}(b) and \po{} implies \pt{}(b). 		
	
	To prove (c) consider the $\s$-algebraic group
	$G=\{g\in \Gm |\ g^{\alpha_0}\s(g)^{\alpha_1}\ldots\s^m(g)^{\alpha_m}=1\}.$
	Then the $\s$-variety
	$X=\{x\in \Gm |\ x^{\alpha_0}\s(x)^{\alpha_1}\ldots\s^m(x)^{\alpha_m}=a\}$
	is a $G$-torsor. We conclude that \poweak{} implies \ptweak{}(c) and \po{} implies \pt{}(c). 		
\end{proof}

\section{The proof of \ptweak{}$\Rightarrow$\poweak{}}
\label{sec: The proof}

By now we have seen that \poweak{} and \pwweak{} are equivalent (Proposition \ref{prop: P2 implies P1}) and that \poweak{} implies \ptweak{} (Propsition \ref{prop: po implies pt}) and similarly for their uniform variants.
Thus to complete the proof of our main result, it suffices to prove \ptweak{}$\Rightarrow$\poweak{} (and \pt{}$\Rightarrow$\po{}). This however is the most difficult part.

\medskip

{\bf From now on we assume that $k$ is a $\s$-field of characteristic zero.} 

\medskip


It is an interesting question to ask for a generalization of Theorem \ref{theo: main} to positive characteristic. It may seem plausible to expect that the theorem also holds for a difference field $k$ of positive characteristic $p$ if \pt{} (b) is replaced by 

\medskip

\pt{} (\rm{$b_p$}) \emph{For every $m\geq 1$ and every power $q$ of $p$ there exists an integer $n\geq 1$ such that for every $A\in\Gl_m(k)$ the equation $\s_q(Y)=AY$ has a solution in $\Gl_m(\kn)$.}

\medskip

Here $\s_q$ is defined by $\s_q(y)=\s(y)^q$. Note that the equation $\s_q(Y)=AY$ defines a torsor for the $\s$-algebraic group $G=\Gl_m^{\s_q}=\{g\in\Gl_m|\ \s_q(g)=g\}$. Moreover, any Frobenius difference field, i.e., an algebraically closed field of positive characteristic equipped with a Frobenius endomorphism, satisfies \pt{} (\rm{$b_p$}) with $n=1$ (due to the Lang-Steinberg Theorem).

Many of our intermediate results (and proofs) in this section also work in arbitrary characteristic. However, there are also several steps where we make use of the characteristic zero assumption. At those steps, in positive characteristic, one usually would have to contend with non-reduced algebraic or $\s$\=/algebraic groups and the Frobenius endomorphism. We hope to establish a variant of Theorem \ref{theo: main} in positive characteristic in a future project dedicated to the study of torsors for $\s$-algebraic groups over Frobenius difference fields.

One of the advantages of working in characteristic zero is that every Hopf algebra is reduced. Therefore, a $\s$-algebraic group $G$ is connected if and only if it is integral (\cite[Lemma 6.7]{Wibmer:almostsimple}).

\medskip

In this section we show that if $k$ satisfies \ptweak{} (or \pt{}), then $k$ is (uniformly) strongly $G$-trivial for any $\s$\=/algebraic group $G$ over $k$. 
The proof heavily relies on the induction principle from Section~\ref{section: The induction principle} and the structure theory of $\s$-algebraic groups. Any $\s$-algebraic group can be decomposed into $\s$-algebraic groups belonging to certain more manageable classes of $\s$-algebraic groups and we first show that if $k$ satisfies \ptweak{}, then $k$ is uniformly strongly $G$-trivial for any $G$ belonging to one of those more manageable classes. We start from classes of small (e.g., finite, order zero, $\s$-dimension zero) $\s$-algebraic groups and then build up to the general case. 

\subsection{The proof for $\s$-algebraic groups of order zero} \label{section: order zero}

In this subsection we show that an inversive, algebraically closed $\s$-field is uniformly strongly $G$-trivial for every $\s$-algebraic group $G$ of order zero. The key ingredient for the proof is a decomposition theorem for $\s$-\'{e}tale $\s$-algebraic groups (\cite[Theorem 6.38]{Wibmer:etale}). 
We first show that an algebraically closed $\s$-field is uniformly strongly $G$-trivial for every $\s$-algebraic group $G$ coming from an algebraic group and for finite $\s$-algebraic groups.

\begin{lemma} \label{lemma: algebraic case}
	Let $k$ be an algebraically closed $\s$-field.
	 If $G=[\s]_k\G$ for some algebraic group $\G$ over $k$, then $k$ is uniformly strongly $G$-trivial.
\end{lemma}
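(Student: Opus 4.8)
The plan is to observe that, because $k$ is algebraically closed, all the torsors involved are already trivial, so the statement is essentially immediate from the known description of $H^1$ for $\s$-algebraic groups coming from algebraic groups.

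By Definition \ref{defi: Gtrivial}, it suffices to prove that for every integer $m\geq 1$ the $\s$-field $(k,\s^m)$ is uniformly $G_m$-trivial. First I would identify $G_m$: by Example \ref{ex: varieties and Xn} we have $G_m=([\s]_k\G)_m\simeq[\s^m]_k\big(\G\times{}^{\s}\!\G\times\cdots\times{}^{\s^{m-1}}\!\G\big)$, so $G_m$ arises from the algebraic group $\G\times{}^{\s}\!\G\times\cdots\times{}^{\s^{m-1}}\!\G$ over $k$. Hence, by \cite[Prop.~3.5]{BachmayrWibmer:TorsorsForDifferenceAlgebraicGroups} (compare Example \ref{ex: direct limit}),
$$H^1_{\s^m}(k,G_m)=H^1(k,\G)\times H^1(k,{}^{\s}\!\G)\times\cdots\times H^1(k,{}^{\s^{m-1}}\!\G).$$

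Now I would use that the underlying field of $(k,\s^m)$ is $k$, which is algebraically closed: every torsor for an algebraic group over an algebraically closed field has a rational point and is therefore trivial, so each factor on the right-hand side vanishes and $H^1_{\s^m}(k,G_m)=0$. By the classification of torsors $\Sigma_{\s^m}(G_m)\simeq H^1_{\s^m}(k,G_m)$ of \cite[Theorem~4.2]{BachmayrWibmer:TorsorsForDifferenceAlgebraicGroups}, every $G_m$-torsor $X$ is trivial, hence $X(k)\neq\emptyset$ by \cite[Remark~1.3]{BachmayrWibmer:TorsorsForDifferenceAlgebraicGroups}. Since the $[\,\cdot\,]$-construction with exponent $1$ is the identity, i.e. $(k,\s^m)^{[1]}=(k,\s^m)$, this means $X\big((k,\s^m)^{[1]}\big)\neq\emptyset$ for every $G_m$-torsor $X$, so $(k,\s^m)$ is uniformly $G_m$-trivial, with uniform integer $1$. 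As $m\geq 1$ was arbitrary, $k$ is uniformly strongly $G$-trivial.

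I do not expect any genuine obstacle in this argument; the only points needing a little care are keeping the bookkeeping between $\s$ and $\s^m$ consistent and noting that the uniform integer can be taken to be $1$. In fact the argument shows slightly more: $k$ is already uniformly $G$-trivial with $n=1$, that is, every $[\s]_k\G$-torsor over $k$ is trivial.
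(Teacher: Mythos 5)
Your argument is correct and is essentially the same as the paper's: both reduce to the observation that, because $G_n$ comes from an algebraic group (Example~\ref{ex: varieties and Xn}), the problem becomes triviality of torsors for algebraic groups over the algebraically closed field $k$, which holds. The only cosmetic difference is that you pass through the cohomological identification $H^1_{\s^m}(k,G_m)=H^1(k,\G)\times\cdots\times H^1(k,{}^{\s^{m-1}}\!\G)$ from \cite[Prop.~3.5]{BachmayrWibmer:TorsorsForDifferenceAlgebraicGroups}, whereas the paper invokes \cite[Cor.~3.3]{BachmayrWibmer:TorsorsForDifferenceAlgebraicGroups} to say directly that every $[\s]_k\G$-torsor is $[\s]_k\X$ for a $\G$-torsor $\X$ and concludes $X(k)=\X(k)\neq\emptyset$; these are two phrasings of the same fact.
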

\begin{proof}
	Because $G_n$ comes from an algebraic group (Example \ref{ex: varieties and Xn}), it suffices to show that $k$ is uniformly $G$-trivial. 
By \cite[Cor. 3.3]{BachmayrWibmer:TorsorsForDifferenceAlgebraicGroups} every $G$-torsor $X$ is of the form $X=[\s]_k\X$ for some $\G$-torsor $\X$. Thus $X(k)=\X(k)\neq\emptyset$ because $k$ is algebraically closed.
\end{proof}
A $\s$-algebraic group $G$ is called \emph{finite} if $k\{G\}$ is a finite dimensional $k$-vector space.
 
%
%

\begin{lemma} \label{lemma: finite case}
	Let $G$ be a finite $\s$-algebraic group over an algebraically closed $\s$-field $k$. Then $k$ is uniformly strongly $G$-trivial.
\end{lemma}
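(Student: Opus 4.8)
The plan is to reduce to two cases according to the structure of a finite $\s$-algebraic group $G$, using the induction principle (Proposition \ref{prop:InductionPrinciple}) together with Corollary \ref{cor: strong induction for tower}. Since $G$ is finite, $k\{G\}$ is a finite-dimensional $k$-algebra, and in characteristic zero it is automatically reduced, hence \'{e}tale; so $G$ is $\s$-\'{e}tale. Thus it suffices to establish uniform strong $G$-triviality for $\s$-\'{e}tale finite $G$. The natural tool is the decomposition theorem for $\s$-\'{e}tale $\s$-algebraic groups (\cite[Theorem 6.38]{Wibmer:etale}), which (after possibly passing to a higher power of $\s$, which is harmless by Lemma \ref{lem Gn trivial implies G trivial}) should present $G$ as built from two kinds of pieces: on the one hand $\s$-algebraic groups of the form $[\s]_k\Gamma$ for $\Gamma$ a finite (constant, since $k$ is algebraically closed) algebraic group, and on the other hand ``genuinely difference'' pieces where $\s$ acts as a cyclic permutation, i.e. groups of the form $G=[\s]_k\Gamma$ with the difference structure twisted, whose torsors are governed by a purely combinatorial/Galois-cohomological computation.

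First I would handle the constant algebraic-group case: if $G=[\s]_k\Gamma$ with $\Gamma$ finite over the algebraically closed field $k$, then Lemma \ref{lemma: algebraic case} already gives that $k$ is uniformly strongly $G$-trivial. Next I would treat the case of a finite $\s$-\'{e}tale group whose Zariski closures stabilize in a way that forces, after passing to $\s^n$, the difference structure to become ``split'', so that $G_n\cong[\s^n]_k\Gamma'$ for some finite algebraic group $\Gamma'$ over $k$; then again Lemma \ref{lemma: algebraic case} applies to $(k,\s^n)$ and $G_n$, and Lemma \ref{lem Gn trivial implies G trivial} transports this back to $k$ and $G$. Concretely, for a torsor $X$ of such a $G$, the point is that any finitely $\s$-generated \ks-algebra over which $X$ acquires a point is, up to the reflexive closure, a finite product of copies of finite extensions of $k$; since $k$ is algebraically closed these extensions are trivial, and the periodicity/product structure of $\kn$ (see the description of $\kn$ on page \pageref{page: kn} and Example \ref{ex: direct limit}) lets us pull a point into some $X(\kn)$ with $n$ bounded in terms of the ``period'' of $G$, which depends only on $G$ and not on $X$ — giving the uniformity.

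The key steps in order: (1) observe $G$ is $\s$-\'{e}tale (characteristic zero); (2) invoke the decomposition theorem \cite[Theorem 6.38]{Wibmer:etale} for $\s$-\'{e}tale $\s$-algebraic groups to reduce, via Corollary \ref{cor: strong induction for tower}, to the irreducible/indecomposable pieces; (3) for each such piece, pass to a suitable power $\s^n$ (Lemma \ref{lem Gn trivial implies G trivial}) so that it becomes of the form $[\s^n]_k\Gamma$ for a finite algebraic group $\Gamma$ over $k$; (4) apply Lemma \ref{lemma: algebraic case} to conclude uniform strong triviality, noting that the bound on $n$ depends only on the combinatorial data of $G$; (5) reassemble via the induction principle. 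The uniformity in $n$ over all $G$-torsors propagates through each of these steps because Proposition \ref{prop:InductionPrinciple} and Corollary \ref{cor: strong induction for tower} are stated in the uniform form, and Lemma \ref{lemma: algebraic case} is already uniform.

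The main obstacle I anticipate is step (3): making precise exactly how a finite $\s$-\'{e}tale $\s$-algebraic group, after passing to a power of $\s$, ``splits'' into a group coming from an algebraic group over $k$ — i.e. controlling the twisting by $\s$ in the decomposition theorem and verifying that the required power $n$ can be bounded solely in terms of $G$ (so that it is independent of the torsor $X$). This is essentially a bookkeeping argument about the structure of $k\{G\}$ as a finite \ks-algebra: its idempotents, the action of $\s$ on $\pi_0(k\{G\})=k\{G\}$, and the resulting decomposition of $\spec$ into $\s$-orbits. I expect the rest — applying the induction principle and the algebraic case — to be essentially formal once this structural reduction is in place.
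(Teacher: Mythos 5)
Your proposal takes the wrong route and runs into circularity. The decomposition theorem for $\s$-\'{e}tale $\s$-algebraic groups (\cite[Theorem 6.38]{Wibmer:etale}) is exactly the tool the paper uses in Proposition~\ref{prop: order zero}, one step \emph{after} the finite case. That decomposition produces a subnormal series whose middle quotients are of the form $[\s]_k\G$ (handled by Lemma~\ref{lemma: algebraic case}), but whose top piece $G/G^{\sc}=\pi_0^\s(G)$ and bottom piece ($\s$-infinitesimal) are themselves \emph{finite} $\s$-algebraic groups. In the paper these are disposed of by citing the very Lemma~\ref{lemma: finite case} you are trying to prove. For a finite $G$ the situation is even worse: any subquotient of the form $[\s]_k\G$ with $\G$ a non-trivial \'{e}tale algebraic group has $k\{[\s]_k\G\}=[\s]_kk[\G]$ of infinite $k$-dimension, so a finite $G$ admits no such non-trivial middle quotients at all, and the decomposition just hands you back finite pieces. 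Your step (3) is also not available: a finite $\s$-algebraic group does not in general become of the form $[\s^n]_k\Gamma$ after passing to $\s^n$; the correct (and weaker) statement is only that some torsor acquires a point over $\kn$.

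The paper's actual proof is direct and avoids the structure theory entirely. From $k\{X\}\otimes_k k\{X\}\simeq k\{G\}\otimes_k k\{X\}$ one reads off $\dim_k k\{X\}=m:=\dim_k k\{G\}$, so $\spec(k\{X\})$ has at most $m$ points. The map $\p\mapsto\s^{-1}(\p)$ on this finite set has a cycle of some length $l\leq m$; taking $n=\lcm(1,\dots,m)$ (which depends only on $G$, hence is uniform in $X$) gives $\s^{-n}(\p)=\p$, and since $k$ is algebraically closed the residue field at $\p$ is $k$, producing a morphism $k\{X\}\to k$ of $k$-$\s^n$-algebras, i.e. $X_n(k)\neq\emptyset$. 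The ``uniformly strongly'' part is immediate because $G_n$ is again finite for all $n$. Your final intuitive remark about idempotents, $\s$-orbits on $\spec$, and periodicity is essentially this argument in disguise — if you develop that thread directly instead of routing through Theorem 6.38, you recover the correct proof.
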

\begin{proof}
	Because $G_n$ is a finite $\s^n$-algebraic group for every $n\geq 1$, it suffices to show that $k$ is uniformly $G$-trivial.
	
	Let $m=\dim_k(k\{G\})$ denote the dimension of $k\{G\}$ as a $k$-vector space. We claim that $X_n(k)\neq\emptyset$ for every $G$-torsor $X$, where $n=\lcm(1,\ldots,m)$. As $k\{X\}\otimes_k k\{X\}\simeq k\{G\}\otimes_k k\{X\}$, we see that $\dim_k (k\{X\})=m$. So there are at most $m$ prime ideals in $k\{X\}$. Considering the map $\p\mapsto\s^{-1}(\p)$ on $\spec(k\{X\})$ shows that there exists a prime ideal $\p$ in $k\{X\}$ such that $\s^{-l}(\p)=\p$ for some $l$ with $1\leq l\leq m$. As $l$ divides $n=\lcm(1,\ldots,m)$, we also have $\s^{-n}(\p)=\p$. Since $k$ is algebraically closed, the residue field $k(\p)$ of $\p$ equals $k$. The canonical map $k\{X\}\to k(\p)=k$ is a morphism of $k$-$\s^n$-algebras. So $X_n(k)\neq\emptyset$.
\end{proof}

%
%
%

\begin{prop} \label{prop: order zero}
Let $G$ be a $\s$-algebraic group of order zero over an algebraically closed $\s$-field $k$. 
Then $k$ is uniformly strongly $G$-trivial. 
\end{prop}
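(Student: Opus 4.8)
The plan is to reduce the statement to the $\s$-\'etale case and then apply the decomposition theorem for $\s$-\'etale $\s$-algebraic groups cited in the subsection preamble (\cite[Theorem 6.38]{Wibmer:etale}), together with the induction principle (Corollary \ref{cor: strong induction for tower}). First I would use the identity component: for a $\s$-algebraic group $G$ of order zero, consider the exact sequence $1\to G^o\to G\to \pi_0(G)\to 1$. Since $\ord(G)=\ord(G^o)+\ord(\pi_0(G))=0$ and orders are non-negative, we get $\ord(G^o)=0$ and $\ord(\pi_0(G))=0$. Now a connected $\s$-algebraic group of order zero has $\sdim=0$ and $\ord=0$, so $G^o[i]$ has dimension zero for all $i$, hence $G^o$ is finite; by Lemma \ref{lemma: finite case}, $k$ is uniformly strongly $G^o$-trivial. (Actually one must be slightly careful: $\pi_0$ passes through $\pi_0(k\{G\})$, which is $\s$-\'etale, so $\pi_0(G)$ is a $\s$-\'etale $\s$-algebraic group — but it need not be finite, so this is exactly where the real work lies.) By the induction principle it then suffices to prove the statement for $G$ that is $\s$-\'etale (and of order zero).

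For the $\s$-\'etale case, the plan is to invoke the structure theorem \cite[Theorem 6.38]{Wibmer:etale}. Over an algebraically closed $\s$-field, a $\s$-\'etale $\s$-algebraic group should be describable — after passing to a suitable power $\s^n$ of $\s$, which is harmless for strong triviality by Lemma \ref{lem Gn trivial implies G trivial} — in terms of combinatorial/profinite data: concretely, its ``points over the universal domain'' form a (pro-)finite set with a $\s$-action, and the order-zero hypothesis controls the relevant $\s$-dimension data so that one gets a subnormal series whose successive quotients are either finite $\s$-algebraic groups or $\s$-algebraic groups coming from (finite, hence trivial-torsor) algebraic groups over the algebraically closed field $k$. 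For finite quotients we apply Lemma \ref{lemma: finite case}, and for quotients of the form $[\s]_k\G$ with $\G$ a finite algebraic group we apply Lemma \ref{lemma: algebraic case}; in both cases $k$ is uniformly strongly trivial for that quotient. Then Corollary \ref{cor: strong induction for tower} assembles these into uniform strong $G$-triviality for $G$ itself, and combined with the first paragraph (the extension $1\to G^o\to G\to\pi_0(G)\to 1$) we conclude for arbitrary order-zero $G$ via Proposition \ref{prop:InductionPrinciple}(ii).

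The main obstacle I anticipate is extracting exactly the right consequence from the $\s$-\'etale decomposition theorem: one needs to check that the order-zero hypothesis forces the building blocks appearing in \cite[Theorem 6.38]{Wibmer:etale} to be of the two benign types handled by Lemmas \ref{lemma: algebraic case} and \ref{lemma: finite case}, and that the ``passing to $\s^n$'' steps implicit in that structure theory interact correctly with the notion of (uniform) strong triviality. In particular, since strong $G$-triviality for $(k,\s)$ is equivalent to strong $G_n$-triviality for $(k,\s^n)$ (Lemma \ref{lem Gn trivial implies G trivial}), one should first replace $\s$ by an appropriate power to put $G$ in a normal form, then run the induction. A secondary bookkeeping point is that all quotients in the subnormal series must again have order zero — but this follows from additivity of the order, $\ord(G)=\ord(N)+\ord(G/N)$, together with non-negativity, so no extra hypothesis is needed. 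Note also that $k$ need not be assumed inversive here a priori, but the subsection preamble flags that the order-zero result is stated for inversive algebraically closed $\s$-fields; if inversivity is genuinely needed for the $\s$-\'etale decomposition, it can be arranged by Lemma \ref{lemma: inversive} once we are in the setting of the main theorem, or invoked directly as a hypothesis.
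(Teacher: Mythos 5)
Your proposal is essentially the paper's proof: apply the Babbitt decomposition of \cite[Theorem 6.38]{Wibmer:etale} to the $\s$-\'etale case, handle the three kinds of building blocks (a finite quotient $\pi_0^\s(G)$ at the top, middle quotients of the form $[\s]_k\G$ with $\G$ \'etale, and a finite $\s$-infinitesimal group at the bottom) via Lemmas \ref{lemma: finite case} and \ref{lemma: algebraic case}, and assemble with Corollary \ref{cor: strong induction for tower}. The one streamlining you missed is that in characteristic zero, order zero already implies $\s$-\'etale (the paper cites \cite[Cor. 4.9]{Wibmer:etale}), so your detour through the sequence $1\to G^o\to G\to\pi_0(G)\to 1$ is vacuous: $G^o$ is in fact trivial, not merely finite (it is connected with all Zariski closures connected of dimension zero in characteristic zero), and $G=\pi_0(G)$ from the start. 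Your worry about needing to pass to powers of $\s$ is also unfounded --- the Babbitt decomposition is stated directly over $(k,\s)$ and produces a subnormal series there, so no normalization by $\s^n$ is required, and the order-zero hypothesis plays no further role beyond establishing $\s$-\'etaleness (which, conversely, forces order zero by \cite[Prop. 4.8]{Wibmer:etale}).
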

\begin{proof}

Since we work in characteristic zero, a $\s$-algebraic group of order zero is $\s$-\'{e}tale (\cite[Cor. 4.9]{Wibmer:etale}). According to \cite[Theorem 6.38]{Wibmer:etale} any $\s$-\'{e}tale $\s$-algebraic group has a Babbitt decomposition, i.e., there exists a subnormal series
	$$G\supseteq G_1\supseteq\ldots\supseteq G_n\supseteq 1$$
	such that
	\begin{enumerate}
		\item $G_1=G^\sc$ is the \emph{$\s$-identity component} of $G$ as defined in \cite[Def. 3.16]{Wibmer:etale}. The exact definition of 
		$G^\sc$ is not relevant for our purpose. All we need to know is that the quotient $G/G^\sc=\pis(G)$ is finite (cf. \cite[Prop. 3.13]{Wibmer:etale}).		
		\item $G_i/G_{i+1}$ is isomorphic to a $\s$-algebraic group of the form $[\s]_k\G$ where $\G$ is an \'{e}tale algebraic group for $i=1,\dots,n-1$ and
		\item $G_n$ is \emph{$\s$-infinitesimal}. Again, for our purpose, the exact meaning of $\s$-infinitesimal is not relevant. It suffices to know that $G_n$ is finite and this is the case because a $\s$-closed subgroup of $\s$-\'{e}tale $\s$\=/algebraic group is $\s$-\'{e}tale (\cite[Lemma 4.6]{Wibmer:etale}) and a $\s$-infinitesimal $\s$-\'{e}tale $\s$-algebraic group is finite (\cite[Cor. 6.6]{Wibmer:etale}).
%
	\end{enumerate}
It follows from Lemma\ref{lemma: finite case} that $k$ is uniformly strongly $G/G_1$-trivial and uniformly strongly $G_n$\=/trivial.
Furthermore, $k$ is uniformly strongly $G_i/G_{i+1}$-trivial for $i=1,\ldots,n-1$ by Lemma \ref{lemma: algebraic case}. Thus, according to the induction principle (Corollary \ref{cor: strong induction for tower}), $k$ is uniformly strongly $G$-trivial. 	
\end{proof}
%
The following corollary will allow us to reduce to the case of connected $\s$-algebraic groups in the sequel.
	
\begin{cor} \label{cor: reduce to connected} Assume that $k$ is algebraically closed and let $G$ be a $\s$-algebraic group. If $k$ is (uniformly) strongly $G^o$-trivial, then $k$ is (uniformly) strongly $G$-trivial. 
\end{cor}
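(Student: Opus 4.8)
The plan is to apply the induction principle (Proposition~\ref{prop:InductionPrinciple}) to the exact sequence
$$1\to G^o\to G\to \pi_0(G)\to 1$$
coming from the definition of the identity component $G^o$ as the kernel of the universal map $G\to\pi_0(G)$. By construction $\pi_0(G)$ is $\s$-\'{e}tale, hence (since we are in characteristic zero, so that $\pi_0(G)$ has order zero) Proposition~\ref{prop: order zero} applies: because $k$ is algebraically closed, $k$ is uniformly strongly $\pi_0(G)$-trivial, and in particular strongly $\pi_0(G)$-trivial. We are given that $k$ is (uniformly) strongly $G^o$-trivial. Part~(i) of the induction principle then yields that $k$ is (uniformly) $G$-trivial.

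To upgrade ``$G$-trivial'' to ``strongly $G$-trivial'' one can instead invoke part~(ii) of the induction principle, which requires $k$ to be (uniformly) strongly $\pi_0(G)$-trivial and (uniformly) strongly $G^o$-trivial; the first of these is exactly what Proposition~\ref{prop: order zero} gives and the second is the hypothesis, so the conclusion is that $k$ is (uniformly) strongly $G$-trivial. One small point to verify is that $\pi_0(G)$ really has order zero: being $\s$-\'{e}tale it has $\s$-dimension zero, and a $\s$-\'{e}tale $\s$-algebraic group over a field of characteristic zero is of order zero by the cited results in \cite{Wibmer:etale} (indeed $\pi_0(G)$ is finite only when $G$ has order zero to begin with, but $\s$-\'{e}taleness alone suffices for Proposition~\ref{prop: order zero}). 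I also need the exactness of $1\to G^o\to G\to\pi_0(G)\to 1$ in the sense of Section~\ref{section: The induction principle}: $G\to\pi_0(G)$ is a quotient map since $k\{\pi_0(G)\}=\pi_0(k\{G\})\hookrightarrow k\{G\}$ realizes $\pi_0(G)$ as $G/G^o$, and $G^o$ is by definition the kernel.

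There is essentially no obstacle here; the corollary is a direct bookkeeping application of the induction principle once one notes that the relevant quotient $\pi_0(G)$ is handled by Proposition~\ref{prop: order zero}. The only thing to be careful about is tracking the ``uniform'' adjective consistently through both hypotheses of part~(ii) of Proposition~\ref{prop:InductionPrinciple}, which is immediate since Proposition~\ref{prop: order zero} delivers the strongest (uniform, strong) conclusion unconditionally for $\pi_0(G)$.
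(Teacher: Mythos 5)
Your proof is correct and follows exactly the paper's argument: apply the induction principle (Proposition~\ref{prop:InductionPrinciple}(ii)) to the exact sequence $1\to G^o\to G\to G/G^o\to 1$, using Proposition~\ref{prop: order zero} for the $\s$-\'{e}tale (hence order-zero) quotient. One minor slip: the parenthetical claim that $\pi_0(G)$ is finite only when $G$ has order zero is not accurate (e.g.\ $\Gm^\s$ is connected of order one, so $\pi_0(\Gm^\s)=1$ is finite), but this aside plays no role in your argument.
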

\begin{proof}
As $G/G^o$ is $\s$-\'{e}tale it has order zero (\cite[Prop. 4.8]{Wibmer:etale}). So the assertion follows from Proposition \ref{prop: order zero} and Proposition~\ref{prop:InductionPrinciple}(ii) applied to the exact sequence
$1\to G^o\to G\to G/G^o\to 1$.
\end{proof}


\subsection{A Jordan-H\"{o}lder type theorem}
\label{subsec: Jordan Hoelder}

With the order zero case out of the way, the next step is to look at $\s$-algebraic groups of finite positive order, i.e., $\s$-algebraic groups of $\s$-dimension zero. This is the most elaborate part of the proof.
%

We first treat some special cases (diagonalizable $\s$-algebraic groups, abelian $\s$-algebraic groups and Zariski dense $\s$-closed subgroups of simple algebraic groups) and then combine these via a Jordan-H\"{o}lder type decomposition. Since the Jordan-H\"{o}lder type decomposition is also helpful for the diagonalizable case we establish it first.

\begin{defi} \label{defi: almost-simple sdim zero}
	Let $G$ be a $\s$-algebraic group of finite positive order. Then $G$ is \emph{almost-simple} if $\ord(N)=0$ for every proper normal $\s$-closed subgroup $N$ of $G$. 
\end{defi}

As $G^o$ is a normal $\s$-closed subgroup of $G$ with $\ord(G^o)=\ord(G)$ (\cite[Cor. 6.13]{Wibmer:almostsimple}), we see that an almost-simple $\s$-algebraic group is connected.

\begin{lemma} \label{lemma: order drops}
	Let $G$ be a connected $\s$-algebraic of finite positive order. If $H$ is a proper $\s$-closed subgroup of $G$, then $\ord(H)<\ord(G)$.
\end{lemma}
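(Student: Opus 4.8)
The plan is to compare $H$ and $G$ through their $i$-th order Zariski closures. Fix a $\s$-closed embedding $G\hookrightarrow\G$ into an algebraic group $\G$ (e.g.\ $\G=\Gl_m$) and regard $H\hookrightarrow G\hookrightarrow\G$. For every $i\geq 0$ we then have closed subgroups $H[i]\leq G[i]$ of $\G\times\ldots\times{}^{\s^i}\!\G$, defined by the ideals $\I(H)\cap k[\G][i]$ and $\I(G)\cap k[\G][i]$ of $k[\G][i]$. Since $\dim H[i]\leq\dim G[i]$ for all $i$ and $\dim G[i]=\ord(G)$ for $i\gg 0$ (because $\sdim(G)=0$), the sequence $(\dim H[i])_i$ is bounded; hence $\sdim(H)=0$, so $\ord(H)$ is a well-defined integer with $\ord(H)=\dim H[i]\leq\ord(G)$ for $i\gg 0$. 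It therefore remains to rule out the possibility $\ord(H)=\ord(G)$.

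The key observation is that, in characteristic zero, $G$ being connected forces each $G[i]$ to be an integral algebraic group. Indeed, $k\{G\}$ is an integral domain by \cite[Lemma 6.7]{Wibmer:almostsimple}, and the surjection $[\s]_k k[\G]\twoheadrightarrow k\{G\}$ restricts on the subring $k[\G][i]$ to a map whose kernel is exactly $\I(G)\cap k[\G][i]$; thus $k[G[i]]=k[\G][i]/(\I(G)\cap k[\G][i])$ is a $k$-subalgebra of the domain $k\{G\}$, hence itself a domain. On the other hand, $k[H[i]]$ is a commutative Hopf algebra over a field of characteristic zero and therefore reduced, so $H[i]$ is a reduced closed subscheme of the integral scheme $G[i]$.

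Now suppose $\ord(H)=\ord(G)$. Then for all sufficiently large $i$ we have $\dim H[i]=\ord(H)=\ord(G)=\dim G[i]$. For such $i$, $H[i]$ is a reduced closed subscheme of the irreducible variety $G[i]$ of the same dimension; since a nonzero ideal of the finitely generated domain $k[G[i]]$ strictly decreases the Krull dimension of the quotient, the ideal defining $H[i]$ inside $k[G[i]]$ must be zero, i.e.\ $H[i]=G[i]$, equivalently $\I(H)\cap k[\G][i]=\I(G)\cap k[\G][i]$. The chains $(\I(H)\cap k[\G][i])_i$ and $(\I(G)\cap k[\G][i])_i$ are increasing with unions $\I(H)$ and $\I(G)$ respectively (because $\I(H)$ and $\I(G)$ are $\s$-ideals of $[\s]_k k[\G]=\bigcup_i k[\G][i]$), so agreeing from some index on they have equal unions: $\I(H)=\I(G)$, i.e.\ $H=G$, contradicting the properness of $H$. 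Hence $\ord(H)<\ord(G)$.

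The only points needing care are the bookkeeping identifications — that $k[G[i]]$ genuinely embeds into $k\{G\}$, and that equality of the reduced closed subschemes $H[i]$ and $G[i]$ amounts to equality of their defining ideals in $k[\G][i]$ — but I do not expect any serious obstacle here: once one passes to $i$-th order Zariski closures, the statement reduces to the elementary fact that a full-dimensional reduced closed subscheme of an irreducible variety is the whole variety, together with the two standard characteristic-zero inputs (connected $\Leftrightarrow$ integral, and reducedness of affine group schemes) already recorded above.
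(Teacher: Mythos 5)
Your proof is correct and follows essentially the same route as the paper's: pass to the $i$-th order Zariski closures $H[i]\leq G[i]$, use characteristic zero to get reducedness of $H[i]$ and irreducibility of $G[i]$, and conclude from the equality of dimensions for $i\gg 0$ that $H[i]=G[i]$, hence $H=G$. The only cosmetic difference is that where the paper cites \cite[Lemma~6.12]{Wibmer:almostsimple} for connectedness of $G[i]$, you re-derive the integrality of $k[G[i]]$ directly by exhibiting it as a subring of the domain $k\{G\}$; and you are a bit more explicit about why $\sdim(H)=0$ and why equality of the finite-level ideals propagates to $\I(H)=\I(G)$, both of which the paper leaves implicit.
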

\begin{proof}
Fix an embedding of $G$ into some algebraic group and let $H[i]\leq G[i]$ denote the corresponding Zariski closures. Because $G$ is connected, all the $G[i]$'s are connected (\cite[Lemma~6.12]{Wibmer:almostsimple}). We have $\ord(G)=\dim(G[i])$ and $\ord(H)=\dim(H[i])$ for all sufficiently large $i$. Suppose $\ord(H)=\ord(G)$ so that $\dim(H[i])=\dim(G[i])$. Because we are in characteristic zero and $G[i]$ is connected, this implies $H[i]=G[i]$ for all $i$. Thus $H=G$; a contradiction.
\end{proof}

The classical Jordan-H\"{o}lder Theorem decomposes a finite group into simple groups. Our variant decomposes a connected $\s$-algebraic group of finite positive order into almost-simple $\s$-algebraic groups.

\begin{theo} \label{theo: Jordan Hoelder for sreduced}
	Let $G$ be a connected $\s$-algebraic group of finite positive order. Then there exists a subnormal series
	$$G=G_0\supseteq G_1\supseteq\ldots\supseteq G_n=1$$
	such that $G_i$ is connected and $G_i/G_{i+1}$ is almost-simple for $i=0,\ldots,n-1$. 
\end{theo}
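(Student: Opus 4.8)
The plan is to argue by induction on the order $d=\ord(G)\geq 1$, with the base of the induction built into the following observation: if $G$ is almost-simple, the two-term series $G=G_0\supseteq G_1=1$ already works, since $G_0=G$ is connected by hypothesis and $G_0/G_1\cong G$ is almost-simple. (In particular every $G$ with $d=1$ is almost-simple, because by Lemma \ref{lemma: order drops} any proper $\s$-closed subgroup of $G$ then has order $0$.) So assume $G$ is not almost-simple.

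The crucial step is to split off a single almost-simple quotient by a maximality argument. I would put $e=\max\{\ord(N)\mid N\unlhd G,\ N\subsetneq G\}$; this maximum is attained, since by Lemma \ref{lemma: order drops} every proper $\s$-closed subgroup of the connected group $G$ has order in $\{0,\dots,d-1\}$, and it is positive because $G$ is not almost-simple. Choose a proper $N\unlhd G$ with $\ord(N)=e$. Passing to the identity component $N^o$ — which is still normal in $G$ (it is preserved by every inner automorphism of $G$, being functorially attached to $N$), still proper, and still of order $e$, since $N/N^o$ is $\s$-\'etale and hence of order zero (\cite[Prop. 4.8]{Wibmer:etale}) — I may assume $N$ itself is connected. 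Then $\ord(G/N)=d-e$ is finite and positive, and $G/N$ is almost-simple: by the correspondence theorem (part of the isomorphism theorems for $\s$-algebraic groups, \cite[Section 5]{Wibmer:almostsimple}), any proper normal $\s$-closed subgroup of $G/N$ is $M/N$ with $N\subseteq M\subsetneq G$ normal in $G$, and then $e\geq\ord(M)=\ord(N)+\ord(M/N)\geq\ord(N)=e$ forces $\ord(M/N)=0$.

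To conclude I would apply the inductive hypothesis to $N$, which is a connected $\s$-algebraic group of finite positive order $e<d$: it admits a subnormal series $N=N_0\supseteq N_1\supseteq\dots\supseteq N_m=1$ with each $N_i$ connected and each $N_i/N_{i+1}$ almost-simple. Prepending $G$ gives $G\supseteq N=N_0\supseteq N_1\supseteq\dots\supseteq N_m=1$, which is subnormal because $N\unlhd G$ and $N_{i+1}\unlhd N_i$, has connected terms, and has successive quotients $G/N,\ N_0/N_1,\dots,N_{m-1}/N_m$, all almost-simple. This finishes the induction.

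The main obstacle is the middle paragraph: arranging simultaneously that $G/N$ be almost-simple and that $N$ be connected. The first is exactly what the maximality of $e$ buys, via additivity $\ord(G)=\ord(N)+\ord(G/N)$ and the correspondence theorem; the second is handled by the harmless passage to $N^o$, the key point being that $\s$-\'etale $\s$-algebraic groups have order zero, so this replacement leaves $e$ untouched. The remaining bookkeeping — normality of $N^o$ in $G$, behaviour of orders under the isomorphism theorems, and the base case — is routine.
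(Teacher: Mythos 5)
Your proof is correct and follows essentially the same strategy as the paper's: induct on the order, pick a proper normal $\s$-closed subgroup of maximal order, replace it by its identity component (noting this preserves normality in $G$ and the order, since the $\s$-\'etale quotient has order zero), use the correspondence theorem together with additivity of order to see that the quotient of $G$ by this subgroup is almost-simple, and then apply the inductive hypothesis to the connected subgroup. Your handling of the base case—folding the almost-simple case and the $\ord(G)=1$ case together—is a minor streamlining, but the argument is the same one.
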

\begin{proof}
	We proceed by induction on $\ord(G)$. If $\ord(G)=1$, then any proper $\s$-closed subgroup of $G$ has order $0$ by Lemma \ref{lemma: order drops}. Thus the theorem holds with $n=1$.
	
Assume $\ord(G)>1$.
	If every proper normal $\s$-closed subgroup of $G$ has order $0$ the theorem also holds with $n=1$. So we may assume that there exists a proper normal $\s$-closed subgroup of $G$ of positive order. Among all such subgroups let $N$ be of maximal order. Because $N^o$ is a characteristic subgroup of $N$ (\cite[Prop. 6.17]{Wibmer:almostsimple}) and $N$ is normal in $G$, it follows that $N^o$ is normal in $G$.	
	
	We claim that $G/N^o$ is almost-simple. A proper normal $\s$-closed subgroup of $G/N^o$ is of the form $N'/N^o$ for some proper normal $\s$-closed subgroup $N'$ of $G$ containing $N^o$ (\cite[Theorem~5.9]{Wibmer:almostsimple}). By the choice of $N$ we have $\ord(N')\leq\ord(N)=\ord(N^o)$, where the last equality uses \cite[Cor. 6.13]{Wibmer:almostsimple}. Thus $\ord(N')=\ord(N^o)$ and $\ord(N'/N^o)=\ord(N')-\ord(N^o)=0$. This shows that $G/N^o$ is almost-simple.
	
	Set $G_1=N^o$. Then $G_1$ is connected, normal in $G$ and $G_0/G_1$ is almost-simple. As $\ord(G_1)<\ord(G)$ (Lemma \ref{lemma: order drops}) it follows from the induction hypothesis there exists a subnormal series $G_1\supseteq G_2\supseteq\ldots\supseteq G_n=1$ with $G_i$ connected and $G_i/G_{i+1}$ almost-simple for $i=1,\ldots, n-1$.
	So the subnormal series $G=G_0\supseteq G_1\supseteq\ldots\supseteq G_n=1$ has the desired properties.	
%
\end{proof}

\subsection{The proof for diagonalizable $\s$-algebraic groups of $\s$-dimension zero}

A $\s$-algebraic group is \emph{diagonalizable} if it is isomorphic to a $\s$-closed subgroup of $\Gm^n$ for some $n\geq 1$. These groups and their torsors are well understood. Abbreviating $g^\alpha=g^{\alpha_0}\s(g)^{\alpha_1}\ldots\s^m(g)^{\alpha_m}$ for  $\alpha=\alpha_0+\alpha_1\s+\ldots+\alpha^m\s^m\in \ZZ[\s]$ and $g^\beta=g_1^{\beta_1}\ldots g_n^{\beta_n}$ for $\beta=(\beta_1,\ldots,\beta_n) \in \ZZ[\s]^n$, every $\s$-closed subgroup of $\Gm^n$ is of the form
$$G=\{g\in\Gm^n|\ g^{\beta_1}=1,\ldots,g^{\beta_m}=1 \},$$
for some $\beta_1,\ldots,\beta_m\in\ZZ[\s]^n$. Every $G$-torsor is of the form $$X=\{x\in\Gm^n |\ x^{\beta_1}=a_1,\ldots,x^{\beta_m}=a_m\}$$ for some $a_1,\ldots,a_m\in k^\times$. So $X$ is described by a system of $m$ multiplicative difference equations in $n$ difference variables. We need to show that this system has a solution in some $k^{[d]}$ using \pt{} (b), which only states that \emph{one} multiplicative difference equation in \emph{one} difference variable has a solution in some $k^{[d]}$. In other words, we need to reduce the general case to the case of one-relator subgroups of $\Gm$. Before going into the details of the reduction proof, we need to provide the foundational properties of diagonalizable $\s$-algebraic groups.

\subsubsection{Diagonalizable $\s$-algebraic groups}

In this section we discuss the basic properties of diagonalizable $\s$-algebraic groups, as relevant for the proof of our main results.
Diagonalizable algebraic groups are equivalent to finitely generated abelian groups. Diagonalizable $\s$-algebraic groups are equivalent to abelian groups $M$ equipped with an endomorphism $\s\colon M\to M$ such that $M$ is finitely $\s$-generated, i.e., there exists a finite subset $B$ of $M$ such that $B,\s(B),\s^2(B),\ldots$ generates $M$ as an abelian group. Let $\ZZ[\s]$ denote the univariate polynomial ring in the variable $\s$ aver the integers. To specify an endomorphism of an abelian group $M$ is equivalent to defining a $\ZZ[\s]$-module structure on $M$. Moreover, $M$ is finitely $\s$-generated if and only $M$ is a finitely generated as $\ZZ[\s]$-module. As a matter of convenience, we mostly use the language of $\ZZ[\s]$-modules, rather than classical difference algebra jargon.

For $\alpha=\alpha_0+\alpha_1\s+\ldots+\alpha^m\s^m\in \ZZ[\s]$ and $g\in R^\times$, where $R$ is a \ks-algebra, we set
$$g^\alpha=g^{\alpha_0}\s(g)^{\alpha_1}\ldots\s^m(g)^{\alpha_m}.$$
Then $(gh)^\alpha=g^\alpha h^\alpha$ for $g,h\in R^\times$ and $(g^\alpha)^\beta=g^{\alpha\beta}$ for $\alpha,\beta\in \ZZ[\s]$. Thus $R^\times$ is a $\ZZ[\s]$-module under $(\alpha, g)\mapsto g^\alpha$. For a $\ZZ[\s]$-module $M$ we denote with $\Hom_{\ZZ[\s]}(M,R^\times)$ the abelian group of morphisms of $\ZZ[\s]$-modules from $M$ to $R^\times$.
\begin{lemma}
	Let $M$ be a finitely generated $\ZZ[\s]$-module. The functor $D_\s(M)$ from the category of \ks\=/algebras to the category of groups, given by $D_\s(M)(R)=\Hom_{\ZZ[\s]}(M,R^\times)$, is a $\s$-algebraic group over $k$. It is represented by $k\{D(M)\}=k[M]$, the group algebra of $M$ over $k$, equipped with the natural extension of $\s$. 
\end{lemma}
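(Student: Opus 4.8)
The plan is to endow the group algebra $k[M]$ with a natural $\s$-structure, check that it is finitely $\s$-generated, and then identify $D_\s(M)$ with $\Hom_{k,\s}(k[M],-)$; by the representability criterion recalled in Section~\ref{sec: Basic definition} this will show that $D_\s(M)$ is a $\s$-algebraic group.

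First I would fix notation: write $k[M]=\bigoplus_{m\in M}k\,e_m$ with $e_me_{m'}=e_{m+m'}$ and $e_0=1$. Since $\s\colon M\to M$ is a homomorphism of abelian groups, the $\s$-semilinear rule $\s(\sum_m\lambda_m e_m)=\sum_m\s(\lambda_m)e_{\s(m)}$ defines a ring endomorphism of $k[M]$ restricting to $\s$ on $k=k\,e_0$ (the only thing to verify is multiplicativity, which follows from $e_{\s(m)}e_{\s(m')}=e_{\s(m)+\s(m')}=e_{\s(m+m')}$), so $k[M]$ becomes a \ks-algebra; the semilinearity is in fact forced by requiring $k\to k[M]$ to be a morphism of $\s$-rings. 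For finite $\s$-generation I would present $M$: as $\ZZ[\s]$ is Noetherian there is a surjection $\ZZ[\s]^r\twoheadrightarrow M$, which on group algebras induces a surjection of \ks-algebras from $k\{y_1,y_1^{-1},\dots,y_r,y_r^{-1}\}=k\{\Gm^r\}$ onto $k[M]$; since the former is finitely $\s$-generated, so is $k[M]$. Concretely, if $m_1,\dots,m_r$ generate $M$ over $\ZZ[\s]$, then, using $\s^j(e_{m_i})=e_{\s^j(m_i)}$, the units $e_{m_1},\dots,e_{m_r}$ together with their inverses $\s$-generate $k[M]$ over $k$. This presentation also exhibits $D_\s(M)$ as a $\s$-closed subgroup of $\Gm^r$.

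The heart of the argument is the natural bijection $\Hom_{k,\s}(k[M],R)\simeq\Hom_{\ZZ[\s]}(M,R^\times)$ for every \ks-algebra $R$. I would start from the classical universal property of the group algebra, which identifies $k$-algebra homomorphisms $\f\colon k[M]\to R$ with group homomorphisms $\psi\colon M\to R^\times$ via $\psi(m)=\f(e_m)$, and then note that $\f$ commutes with $\s$ exactly when $\f(e_{\s(m)})=\s(\f(e_m))$ for all $m\in M$, i.e.\ when $\psi(\s m)=\s(\psi(m))=\psi(m)^\s$; since $R^\times$ carries the $\ZZ[\s]$-module structure $(\alpha,g)\mapsto g^\alpha$ with $g^\s=\s(g)$, this is precisely the condition that $\psi$ be $\ZZ[\s]$-linear. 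Naturality in $R$ is clear, so $D_\s(M)\simeq\Hom_{k,\s}(k[M],-)$ is represented by the finitely $\s$-generated \ks-algebra $k[M]$ and hence, being a group-valued functor, is a $\s$-algebraic group; its Hopf algebra structure is the usual one ($e_m\mapsto e_m\otimes e_m$, $e_m\mapsto 1$, $e_m\mapsto e_{-m}$), and these maps are visibly morphisms of \ks-algebras. I do not anticipate a real obstacle here, as the argument is essentially bookkeeping; the one point needing care is the compatibility, in the last step, between the semilinear $\s$ put on $k[M]$ and the $\ZZ[\s]$-action $g\mapsto g^\s=\s(g)$ on units used to define $D_\s(M)$ --- once these are set up consistently, ``$\f$ is $\s$-equivariant'' and ``$\psi$ is $\ZZ[\s]$-linear'' become the same condition.
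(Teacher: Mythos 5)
Your proof is correct and follows essentially the same route as the paper: define the semilinear $\s$ on $k[M]$, check finite $\s$-generation from a $\ZZ[\s]$-generating set of $M$, and establish the natural bijection $\Hom_{k,\s}(k[M],R)\simeq\Hom_{\ZZ[\s]}(M,R^\times)$. (If anything you are slightly more careful than the paper, e.g.\ in remembering that the inverses of the generators are needed for $\s$-generation; the appeal to Noetherianity of $\ZZ[\s]$ is harmless but unnecessary --- finite generation of $M$ already gives the presentation.)
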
	
\begin{proof}
	Let $k[M]$ denote the group algebra of the abelian group $M$ over $k$. As $M$ is abelian, $k[M]$ is a commutative $k$-algebra. We define $\s\colon k[M]\to k[M]$ by $\s(\sum_i{\lambda_im_i})=\sum_i\s(\lambda_i)\s(m_i)$ for $\lambda_i\in k$ and $m_i\in M$. This makes $k[M]$ a \ks-algebra. If $B\subseteq M$ generates $M$ as a $\ZZ[\s]$-module, then $B\subseteq M\subseteq k[M]$ generated $k[M]$ as a \ks-algebra. Thus $k[M]$ is finitely $\s$-generated over $k$. Moreover,
	$$\Hom_{k,\s}(k[M],R)\simeq\Hom_{\ZZ[\s]}(M,R^\times)$$
	functorially in $R$.	
\end{proof}

\begin{defi}
	A $\s$-algebraic group is \emph{diagonalizable} if it is isomorphic to $D_\s(M)$ for some finitely generated $\ZZ[\s]$-module $M$.	
\end{defi}

\begin{ex}
	Specifying a morphism $\ZZ[\s]^m\to R^\times$ of $\ZZ[\s]$-modules, is equivalent to choosing $n$ elements from $R^\times$ (the images of the $n$ basis vectors). It follows that $D_\s(\ZZ[\s]^n)=[\s]_k\Gm^n$. 
\end{ex}	

\begin{ex}
	Let $\alpha\in\ZZ[\s]$ and let $G$ be the $\s$-closed subgroup of $\Gm$ given by $G=\{g\in\Gm|\ g^\alpha=1\}$. To specify an element of $\Gm(R)=R^\times$ is equivalent to specifying a morphism $\ZZ[\s]\to R^\times$ of $\ZZ[\s]$-modules. The element of $R^\times$ lies in $G(R)$ if and only if $\alpha$ lies in the kernel of the corresponding morphism. Thus $G\simeq D_\s(\ZZ[\s]/(\alpha))$.
\end{ex}

Let $M$ be a finitely generated $\ZZ[\s]$-module. We define $\rank_{\ZZ[\s]}(M)$ as the dimension of $M\otimes_{\ZZ[\s]}\mathbb{Q}(\s)$ as a $\mathbb{Q}(\s)$-vector space. This agrees with the maximal number of $\ZZ[\s]$-linearly independent elements in $M$. We define $\rank_\ZZ(M)$ as the dimension of $M\otimes_\ZZ\mathbb{Q}$ as a $\mathbb{Q}$-vector space. This is understood to be simply $\infty$ in case it is not finite. We say that $M$ has no $\ZZ$-torsion if $nm=0$ implies $n=0$ or $m=0$ for $n\in\ZZ$ and $m\in M$. We will need the following basic lemma connecting the two different ranks of $M$.

\begin{lemma} \label{lemma: ranks}
	Let $M$ be a finitely generated $\ZZ[\s]$-module. If $\rank_{\ZZ[\s]}(M)=0$, then $\rank_{\ZZ}(M)<\infty$.
\end{lemma}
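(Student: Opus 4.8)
The plan is to reduce to a statement purely about finitely generated $\ZZ[\s]$-modules with vanishing $\ZZ[\s]$-rank, and then to exploit the fact that $\ZZ[\s]$ is Noetherian. First I would pick generators $m_1,\ldots,m_r$ of $M$ as a $\ZZ[\s]$-module and consider the submodules $M_j = \ZZ[\s]m_1 + \cdots + \ZZ[\s]m_j$, so that $0 = M_0 \subseteq M_1 \subseteq \cdots \subseteq M_r = M$ with each successive quotient $M_j/M_{j-1}$ cyclic, hence of the form $\ZZ[\s]/\ida_j$ for an ideal $\ida_j$ of $\ZZ[\s]$. Since $\rank_{\ZZ[\s]}$ is additive on short exact sequences (tensoring with the field $\mathbb{Q}(\s)$ is exact) and $\rank_\ZZ$ is also additive (tensoring with $\mathbb{Q}$ over $\ZZ$ is exact, and a finite sum of finite numbers is finite), it suffices to prove the claim for a cyclic module $M = \ZZ[\s]/\ida$.

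For the cyclic case, the hypothesis $\rank_{\ZZ[\s]}(\ZZ[\s]/\ida) = 0$ means $(\ZZ[\s]/\ida)\otimes_{\ZZ[\s]}\mathbb{Q}(\s) = 0$, i.e. $\ida$ is not contained in the kernel of $\ZZ[\s]\to\mathbb{Q}(\s)$, which is $(0)$; concretely $\ida$ contains a nonzero polynomial $f = \sum_{i=0}^m \alpha_i\s^i \in \ZZ[\s]$ with, say, $\alpha_m \neq 0$. Then $M$ is a quotient of $\ZZ[\s]/(f)$, so again by additivity (or just surjectivity and $\rank_\ZZ$ being monotone under quotients) it is enough to bound $\rank_\ZZ(\ZZ[\s]/(f))$. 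Now I would argue that $\ZZ[\s]/(f)$ is a finitely generated abelian group: after inverting the leading coefficient, $\ZZ[1/\alpha_m][\s]/(f)$ is free of rank $m$ over $\ZZ[1/\alpha_m]$, so $\ZZ[\s]/(f)\otimes_\ZZ\mathbb{Q}$ is an $m$-dimensional $\mathbb{Q}$-vector space (the classes of $1,\s,\ldots,\s^{m-1}$ span it, since modulo $f$ every power $\s^k$ with $k\geq m$ can be rewritten, over $\mathbb{Q}$, as a $\mathbb{Q}$-linear combination of lower powers by dividing by $\alpha_m$). Hence $\rank_\ZZ(\ZZ[\s]/(f)) \leq m < \infty$, and the same bound transfers to $M$.

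The only mild obstacle I anticipate is bookkeeping around the two different exactness/additivity properties — making sure that passing from $M$ to its cyclic subquotients, and from a cyclic module $\ZZ[\s]/\ida$ to the quotient $\ZZ[\s]/(f)$ of $\ZZ[\s]/(f)\twoheadrightarrow\ZZ[\s]/\ida$ wait, the arrow goes the other way: $\ZZ[\s]/(f)\twoheadrightarrow\ZZ[\s]/\ida$ — both preserve what we need. Since $\rank_\ZZ$ of a quotient is at most that of the original and $\rank_{\ZZ[\s]}$ is unchanged or can only drop under quotients, the direction works out: finiteness of $\rank_\ZZ(\ZZ[\s]/(f))$ forces finiteness of $\rank_\ZZ(M)$. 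No deep input is needed beyond the Noetherianity of $\ZZ[\s]$ (used implicitly to know $\ida$ is finitely generated, though for the argument above I only need one nonzero element of $\ida$) and elementary commutative algebra.
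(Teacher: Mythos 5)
Your proof is correct, and the core mechanism is the same as the paper's: in both arguments the heart of the matter is that $\mathbb{Q}[\s]/(f)$ is a finite-dimensional $\mathbb{Q}$-vector space for any nonzero $f\in\ZZ[\s]$, and $\mathbb{Q}(\s)$ (resp.\ $\mathbb{Q}$) is flat over $\ZZ[\s]$ (resp.\ $\ZZ$) so that the two ranks behave well under tensoring. Where you differ from the paper is in the reduction. You filter $M$ by cyclic submodules, check that both ranks are additive across the filtration, and then handle each cyclic subquotient $\ZZ[\s]/\ida$ separately by finding a nonzero $f\in\ida$. The paper instead observes at the outset that, because $\rank_{\ZZ[\s]}(M)=0$ and $M$ is finitely generated, there is a single nonzero $\alpha\in\ZZ[\s]$ with $\alpha M=0$ (take the product of nonzero annihilators of a finite generating set); then $M\otimes_\ZZ\mathbb{Q}$ is a finitely generated module over $\mathbb{Q}[\s]/(\alpha)$, which is finite-dimensional over $\mathbb{Q}$, and one is done in one step with no filtration and no appeal to additivity of $\rank_\ZZ$. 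Both are elementary; yours requires a bit more bookkeeping (and, as you noticed yourself, some care about the direction of the surjection $\ZZ[\s]/(f)\twoheadrightarrow\ZZ[\s]/\ida$), while the single-annihilator observation gets to the same place more quickly. One small wording issue: you say ``$\ida$ is not contained in the kernel of $\ZZ[\s]\to\mathbb{Q}(\s)$, which is $(0)$'' — since every ideal contains $(0)$, what you mean is that $\ida$ is not \emph{equal} to $(0)$ (equivalently, its image generates the unit ideal in $\mathbb{Q}(\s)$); the conclusion you draw, namely that $\ida$ contains a nonzero polynomial, is of course the right one.
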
	
\begin{proof}
	Because $\rank_{\ZZ[\s]}(M)=0$ and $M$ is finitely generated, we see that there exists a non-zero $\alpha\in\ZZ[\s]$ such that $\alpha m=0$ for all $m\in M$. So $M$ is a finitely generated $\ZZ[\s]/(\alpha)$-module. It follows that $M\otimes_\ZZ\mathbb{Q}$ is a finitely generated module over $(\ZZ[\s]/(\alpha))\otimes_\ZZ\mathbb{Q}=\mathbb{Q}[\s]/(\alpha)$. As $\mathbb{Q}[\s]/(\alpha)$ is a finite dimensional $\mathbb{Q}$-vector space, it follows that also $M\otimes_\ZZ\mathbb{Q}$ is a finite dimensional $\mathbb{Q}$-vector space. Thus $\rank_{\ZZ}(M)<\infty$.
\end{proof}	 

The following proposition collects all the properties of diagonalizable $\s$-algebraic groups that we will need.

\begin{prop} \label{prop: all about diagonalizable}\mbox{}
	\begin{enumerate}
		\item The (contravariant) functor $M\rightsquigarrow D_\s(M)$ from the category of finitely generated $\ZZ[\s]$\=/modules to the category of diagonalizable $\s$-algebraic groups is an equivalence of categories.	
		\item A sequence $0\to M'\to M\to M''\to 0$ of finitely generated $\ZZ[\s]$-modules is exact if and only if the sequence $1\to D_\s(M'')\to D_\s(M)\to D_\s(M')\to 1$ of $\s$-algebraic groups is exact. 
		\item A $\s$-algebraic group is diagonalizable if and only if it is isomorphic to a $\s$-closed subgroup of $\Gm^n$ for some $n\geq 1$.
		\item Quotients and $\s$-closed subgroups of diagonalizable $\s$-algebraic groups are diagonalizable.
		\item For a finitely generated $\ZZ[\s]$-module $M$ we have $\sdim(D_\s(M))=\rank_{\ZZ[\s]}(M)$ and $\ord(D_\s(M))=\rank_\ZZ(M)$.
		\item The $\s$-algebraic group $D_\s(M)$ is connected if and only if $M$ has no $\ZZ$-torsion. 
	\end{enumerate}
\end{prop}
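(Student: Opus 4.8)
The plan is to prove the six items in an order that lets each one feed the next, essentially transporting the classical theory of diagonalizable algebraic groups (equivalently, finitely generated abelian groups) to the $\s$-setting via the polynomial ring $\ZZ[\s]$. First I would establish (i): the functor $M\rightsquigarrow D_\s(M)$ is faithful because a morphism $k[M]\to k[M']$ of \ks-Hopf algebras is determined by where it sends the grouplike elements, i.e.\ by a map $M'\to M$ of abelian groups, and this map must be $\ZZ[\s]$-linear since the Hopf morphism commutes with $\s$; conversely every $\ZZ[\s]$-linear map induces such a Hopf morphism, giving fullness. Essential surjectivity onto the category of diagonalizable $\s$-algebraic groups is the definition. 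For (ii), exactness of $0\to M'\to M\to M''\to 0$ translates, after applying $D_\s$, into: $k[M'']\to k[M]$ injective (this is exactness of $M\to M''$ onto, since $k[-]$ is exact on the level of abelian groups as $k[M]$ is free with basis $M$), $k[M]\to k[M']$ surjective (from $M'\hookrightarrow M$), and the kernel condition at $D_\s(M)$ matches the middle exactness $M'=\ker(M\to M'')$; one checks the ideal generated by the image of $\mathfrak{m}_{D_\s(M')}$ in $k[M]$ is exactly $\ker(k[M]\to k[M''])$ using that $\mathfrak m_{D_\s(M')}$ is spanned by $\{m'-1 : m'\in M'\}$. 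This is routine Hopf-algebra bookkeeping.

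Next I would do (iii) and (iv) together. One direction of (iii) is an example already given: $D_\s(\ZZ[\s]^n)=[\s]_k\Gm^n$, so any $\s$-closed subgroup of $\Gm^n$ is, by (i) applied to the surjection $\ZZ[\s]^n\twoheadrightarrow M$ corresponding (via (ii)) to the $\s$-closed embedding, of the form $D_\s(M)$ for $M$ a quotient of $\ZZ[\s]^n$, hence finitely generated; conversely, given a finitely generated $\ZZ[\s]$-module $M$, choose a surjection $\ZZ[\s]^n\twoheadrightarrow M$, apply (ii) to get a $\s$-closed embedding $D_\s(M)\hookrightarrow\Gm^n$. For (iv): a $\s$-closed subgroup $H$ of $D_\s(M)$ corresponds, by the argument just given plus (i), to a quotient $M\twoheadrightarrow M/M_0$, and a quotient $D_\s(M)/N$ corresponds to a submodule $M_0=M'\subseteq M$ with $N=D_\s(M/M')$ via (ii); both $M/M_0$ and $M'$ are finitely generated ($\ZZ[\s]$ is Noetherian), so their $D_\s$'s are diagonalizable.

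For (v) I would use the Zariski-closure/order machinery from Section \ref{sec: Basic definition}. Pick generators giving an embedding $D_\s(M)\hookrightarrow\Gm^n$ as in (iii); the $i$-th order Zariski closure $D_\s(M)[i]$ is the diagonalizable algebraic subgroup of $\Gm^{n(i+1)}$ corresponding to the abelian group $M_i:=M/(\text{the submodule $\ZZ$-generated by elements of $\s$-degree $>i$})$ — more precisely $M_i$ is the image of $\ZZ\langle B,\s B,\ldots,\s^i B\rangle$ in $M$, an ordinary finitely generated abelian group, and $\dim D_\s(M)[i]=\rank_\ZZ(M_i)$. Since $M$ is a finitely generated $\ZZ[\s]$-module, for $i\gg 0$ one has $\rank_\ZZ(M_i)=\rank_{\ZZ[\s]}(M)\cdot(i+1)+c$ for a constant $c$, by choosing a $\ZZ[\s]$-submodule $F\cong\ZZ[\s]^d$ ($d=\rank_{\ZZ[\s]}(M)$) with $M/F$ of $\ZZ[\s]$-rank zero, hence of finite $\ZZ$-rank by Lemma \ref{lemma: ranks}; then $\rank_\ZZ(F_i)=d(i+1)$ and $\rank_\ZZ((M/F)_i)$ stabilizes. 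Comparing with $\dim(D_\s(M)[i])=\sdim(D_\s(M))(i+1)+\ord(D_\s(M))$ (valid for $\sdim=0$, where $\ord$ is defined; and the linear-growth formula in general) forces $\sdim(D_\s(M))=d=\rank_{\ZZ[\s]}(M)$ and, when this is zero, $\ord(D_\s(M))=c=\rank_\ZZ(M)$. Finally (vi): by \cite[Lemma 6.7]{Wibmer:almostsimple}, in characteristic zero $D_\s(M)$ is connected iff $k[M]$ is a domain; and the group algebra $k[M]$ of an abelian group over a field of characteristic zero is a domain iff $M$ is torsion-free as a $\ZZ$-module (the torsion-free case gives an ordered group, hence a domain; a $\ZZ$-torsion element of order $r$ produces a nontrivial idempotent-type zero divisor via $1-m^{r-1}-\cdots$ after adjoining an $r$-th root of unity, which $k$ contains). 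The $\s$-action plays no role in this last equivalence, so it follows directly.

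The main obstacle I anticipate is step (v): one must be careful that the "order $\leq i$ part" $M_i$ of $M$ is correctly identified with the abelian group underlying the $i$-th Zariski closure $D_\s(M)[i]$, and that the stabilization of $\rank_\ZZ$ of the $\s$-filtration quotients is handled cleanly — this is where Lemma \ref{lemma: ranks} and the structure of finitely generated $\ZZ[\s]$-modules (choosing a free submodule of full $\ZZ[\s]$-rank with rank-zero quotient) do the real work. Everything else is a transcription of classical Cartier-duality-style arguments into the $\ZZ[\s]$-module language, made legitimate by (i) and (ii).
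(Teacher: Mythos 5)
There is a genuine gap in your treatment of (iii) and, by inheritance, (iv). To show that a $\s$-closed subgroup $H\leq\Gm^n$ is diagonalizable, you appeal to (i) and (ii): you want to say the $\s$-closed embedding $H\hookrightarrow D_\s(\ZZ[\s]^n)$ ``corresponds via (ii) to a surjection $\ZZ[\s]^n\twoheadrightarrow M$.'' But (i) and (ii) are statements about the full subcategory of \emph{already diagonalizable} $\s$-algebraic groups; to invoke them you must already know that $H$ lies in the essential image of $D_\s$, which is exactly the claim to be proved. The argument is circular. What is actually needed is a direct Hopf-algebraic fact: a surjection $k[M]\to k\{H\}$ of $k$-$\s$-Hopf algebras sends group-like elements to group-like elements, so $k\{H\}$ is $k$-spanned by group-likes, and therefore $k\{H\}=k[M'']$ where $M''$ is the $\ZZ[\s]$-module of group-likes of $k\{H\}$. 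This is the step the paper does and that you omit; without it neither ``$\s$-closed subgroups of $\Gm^n$ are diagonalizable'' (one direction of (iii)) nor ``$\s$-closed subgroups of $D_\s(M)$ are diagonalizable'' (half of (iv)) is established.

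Two smaller points. For (ii) you only sketch the forward implication (module-exact $\Rightarrow$ group-exact) and call the rest ``routine bookkeeping,'' but the converse (group-exact $\Rightarrow$ $\ker(M\to M'')\subseteq M'$) genuinely requires a pointed choice of test object: one evaluates the functorial exactness at $R=k[M/M']$ with the tautological $g\colon M\to R^\times$ to force any $m_0$ in $\ker(M\to M'')$ into $M'$. For (v), your plan to read off the asymptotics of $\rank_\ZZ(M_i)$ from a free submodule $F\cong\ZZ[\s]^d$ and the quotient $M/F$ requires relating the $\ZZ$-filtration on $M$ (generated by a chosen $B$) to the filtrations on $F$ and $M/F$ (generated by possibly unrelated sets); the identity $\rank_\ZZ(M_i)=\rank_\ZZ(F_i)+\rank_\ZZ((M/F)_i)$ does not hold on the nose and needs an Artin--Rees-type comparison. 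The paper avoids this by first settling $\rank_{\ZZ[\s]}(M)=0$ directly from the bound $\dim G[i]\leq\rank_\ZZ(M)$, then using the additivity of $\sdim$ and $\ord$ along the exact sequence $1\to D_\s(N)\to D_\s(M)\to\Gm^d\to 1$, which already follows from (ii) and the general theory. Your route to (vi) (connected iff $k[M]$ is a domain iff $M$ is $\ZZ$-torsion-free, via orderability of torsion-free abelian groups and the idempotent-type zero divisor from a torsion element) is a legitimate and arguably more self-contained alternative to the paper's reduction to the Zariski closures $G[i]$.
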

\begin{proof}
	For a $\s$-algebraic group $G$ let $X(G)$ denote the abelian group of all morphisms $G\to \Gm$ of $\s$\=/algebraic groups. Then $X(G)$ is a $\ZZ[\s]$-module with action of $\s$ obtained by composition with $\s\colon \Gm\to \Gm,\ g\mapsto \s(g)$. We will show that $X$ defines a quasi-inverse to $D_\s$.
	
	Recall that an element $a$ in a $k$-Hopf algebra is group-like if $\Delta(a)=a\otimes a$. For example, the variable $y\in k\{y,y^{-1}\}=k\{\Gm\}$ is group-like. A morphism $G\to \Gm$ of $\s$-algebraic groups is determined by the image of $y$ under the dual map $k\{y,y^{-1}\}=k\{\Gm\}\to k\{G\}$ and this is a group-like element of $k\{G\}$. Conversely, a group-like element $a\in k\{G\}$ defines a morphism $\chi\colon G\to \Gm$ of $\s$-algebraic groups via $\chi(g)=g(a)$ for $g\in G(R)=\Hom_{k,\s}(k\{G\},R)$ for any \ks-algebra $R$. The group-like elements of $k\{G\}$ are stable under $\s$ and this action of $\s$ corresponds to the action of $\s$ on $X(G)$. We can therefore identify $X(G)$ (as a $\ZZ[\s]$-module) with the set of all group-like elements of $k\{G\}$. 
	
	If $M$ is a finitely generated $\ZZ[\s]$-module, the elements of $M$ are group-like elements of $k[M]$. Moreover, since group-like elements are $k$-linearly independent (\cite[Lemma 2.2]{Waterhouse:IntroductiontoAffineGroupSchemes}), it follows that $M$ agrees with the set of group-like elements of $k[M]$.
	Thus $X(D_\s(M))\simeq M$ and it follows that $X$ is a quasi-inverse to $D_\s$. This implies (i).
	
	Let $0\to M'\to M\to M''\to 0$ be a sequence of finitely generated $\ZZ[\s]$-modules. First assume it is exact. 
	The dual map $k[M'']\to k[M]$ to $D_\s(M'')\to D_\s(M)$ is just the $k$-linear extension of $M''\to M$. So, since $M\to M''$ is surjective, also $k[M]\to k[M'']$ is surjective. Therefore, $D_\s(M'')\to D_\s(M)$ is a $\s$-closed embedding. Similarly, since $M'\to M$ is injective, also $k[M']\to k[M]$ is injective. So $D_\s(M)\to D_\s(M')$ is a quotient map. For a \ks-algebra $R$ we have
	\begin{align*}
		\ker(D_\s(M)\to D_\s(M'))(R)& =\ker\big(\Hom_{\ZZ[\s]}(M,R^\times)\to\Hom_{\ZZ[\s]}(M',R^\times)\big) \\
		&=\{g\in \Hom_{\ZZ[\s]}(M,R^\times)|\ g|_{M'}=1\}\simeq \Hom_{\ZZ[\s]}(M'',R^\times)=D_\s(M'')
	\end{align*}
	Thus the sequence of $\s$-algebraic groups is exact.
	
	Conversely, if $1\to D_\s(M'')\to D_\s(M)\to D_\s(M')\to 1$ is exact, then, as $k[M']\to k[M]$ is injective and $k[M]\to k[M'']$ is surjective, it follows that $M'\to M$ is injective and $M\to M''$ is surjective. Thus  
	$D_\s(M'')\to D_\s(M)$ is a $\s$-closed embedding and $D_\s(M)\to D_\s(M')$ is a quotient map. The exactness at $D_\s(M)$ means that for every \ks-algebra $R$, the map 
	\begin{equation} \label{eq: bij for exact}
		\Hom_{\ZZ[\s]}(M'',R^\times)\to \{g\in \Hom_{\ZZ[\s]}(M,R^\times)|\ g|_{M'}=1\}
	\end{equation} is bijective. Let $m_0$ be in the kernel of $M\to M''$. 
	Then, using (\ref{eq: bij for exact}), we see that $m_0$ gets mapped to $1\in R^\times$ under every $g\in \Hom_{\ZZ[\s]}(M,R^\times)$ with $g|_{M'}=1$ for any \ks-algebra $R$. Choosing $R=k[M/M']$ and $g\colon M\to R^\times$ the map that sends $m$ to the equivalence class of $m$ in $M/M'$, it follows that $m_0$ lies in $M'$. This shows that  $0\to M'\to M\to M''\to 0$ is exact and completes the proof of (ii).
	
	If $H$ is a $\s$-closed subgroup of a diagonalizable $\s$-algebraic group $D_\s(M)$, then we have a surjective morphism $k[M]\to k\{H\}$ of \ks-Hopf algebras. As $k[M]$ is generated by group-like elements as a $k$-vector space, it follows that also $k\{H\}$ is generated by group-like elements as a $k$-vector space. From the discussion in the proof of (i), we see that $k\{H\}=k[M'']$, where $M''$ is the $\ZZ[\s]$-module of all group-like elements of $k\{H\}$. Thus $H$ is diagonalizable.
	
	If $G=D_\s(M)$ is a diagonalizable $\s$-algebraic group, we can find an $n\geq 1$ and a surjective morphism $\ZZ[\s]^n\to M$ of $\ZZ[\s]$-modules. According to (ii) this corresponds to a $\s$-closed embedding $G=D_\s(M)\to D_\s(\ZZ[\s]^n)=\Gm^n$. Conversely, if $G$ is a $\s$-closed subgroup of $\Gm^n$, then $G$ is diagonalizable because, as seen above, $\s$-closed subgroups of diagonalizable $\s$-algebraic groups are diagonalizable. This proves (iii).
	
	A quotient of a diagonalizable $\s$-algebraic group, is a quotient by a diagonalizable $\s$-closed subgroup, so the quotient is diagonalizable by (ii). This completes the proof of (iv). 
	
	Let $M$ be a finitely generated $\ZZ[\s]$-module and fix a surjective morphism $\ZZ[\s]^n\to M$ with kernel $N$. As discussed above, this corresponds to a $\s$-closed embedding of $G=D_\s(M)$ into $\Gm^n$. Explicitly, we have $$D_\s(M)=\{(g_1,\ldots,g_n)\in\Gm^n|\ g_1^{\alpha_1}\ldots g_n^{\alpha_n}=1 \ \forall \ (\alpha_1,\ldots,\alpha_n)\in N\}. $$ 
	For $i\geq 0$, the $i$-th order Zariski closure $G[i]$ of $G$ in $\Gm^n$ is then the closed subgroup of $(\Gm^n)^{i+1}=\Gm^{n(i+1)}$ defined by all equations corresponding to tuples in $N$ whose components are polynomials of degree at most $i$. So if $\ZZ[\s]_{\leq i}$ denotes the $\ZZ$-submodule of $\ZZ[\s]$ of all polynomials of degree at most $i$, then $G[i]$ is the diagonalizable algebraic group corresponding to the $\ZZ$-module $\ZZ[\s]_{\leq i}^n/\ZZ[\s]_{\leq i}^n\cap N$. Note that if $B\subseteq M$ is the image of the standard basis of $\ZZ[\s]^n$ in $M$, then $\ZZ[\s]_{\leq i}^n/\ZZ[\s]_{\leq i}^n\cap N$ identifies with $\langle B,\s(B),\ldots,\s^i(B)\rangle$, the $\ZZ$-submodule of $M$ generated by $B,\s(B),\ldots,\s^i(B)$. In particular, $\dim(G[i])=\rank_{\ZZ}(\langle B,\ldots,\s^i(B)\rangle)$.
	
	First assume that $\rank_{\ZZ[\s]}(M)=0$. Then $\rank_\ZZ(M)$ is finite by Lemma \ref{lemma: ranks} and bounds $\dim(G[i])=\rank_{\ZZ}(\langle B,\ldots,\s^i(B)\rangle)$ for all $i$. Thus $\sdim(G)=0$ and $$\ord(G)=\max_i\dim(G[i])=\max_i\rank_{\ZZ}(\langle B,\ldots,\s^i(B)\rangle)=\rank_\ZZ(M).$$
	
	If $d=\rank_{\ZZ[\s]}(M)>0$, then we can find an exact sequence $0\to \ZZ[\s]^d\to M\to N\to 0$ with $\rank_{\ZZ[\s]}(N)=0$. By (ii) this corresponds to an exact sequence $1\to D_\s(N)\to D_\s(M)\to \Gm^d\to 1$. Therefore $D_\s(M)/D_\s(N)=\Gm^d$. As $\rank_{\ZZ[\s]}(N)=0$, we have $\sdim(D_\s(N))=0$ (as argued above). Therefore $\sdim(D_\s(M))=\sdim(\Gm^d)=d=\rank_{\ZZ[\s]}(M)$. Moreover, we have $\rank_\ZZ(M)=\infty$ so that $\ord(D_\s(M))=\rank_\ZZ(M)$ also in this case. This proves (v).
	
	We know that $G$ is connected if and only if $G[i]$ is connected for all $i\geq 0$ (\cite[Lemma~6.12]{Wibmer:almostsimple}). But a diagonalizable algebraic group corresponding to a finitely generated $\ZZ$-module is connected if and only if the $\ZZ$-module has no ($\ZZ$-)torsion (using our characteristic zero assumption). Thus $G$ is connected if and only $\langle B,\ldots,\s^i(B)\rangle$ has no torsion for all $i\geq 0$ and this is equivalent to $M$ having no $\ZZ$-torsion.
\end{proof}	

\begin{ex}
	For $G=\{g\in\Gm|\ g^\alpha=1\}$ with $\alpha\in\ZZ[\s]$ non-zero, we have $\ord(G)=\deg(\alpha)$ because $(\ZZ[\s]/(\alpha))\otimes_\ZZ\mathbb{Q}=\mathbb{Q}[\s]/(\alpha)$ is a $\mathbb{Q}$-vector space of dimension $\deg(\alpha)$.
\end{ex}

\begin{lemma} \label{lemma: Gn diagonalizable}
	If $G$ is a diagonalizable $\s$-algebraic group, then $G_n$ is a diagonalizable $\s^n$-algebraic group for every $n\geq 1$.
\end{lemma}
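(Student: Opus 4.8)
The plan is to reduce to the case of a torus, where we already know how to compute the base change $(-)_n$. By Proposition~\ref{prop: all about diagonalizable}(iii), $G$ is isomorphic to a $\s$-closed subgroup of $\Gm^m$ for some $m\geq 1$, so I would fix such an embedding $G\hookrightarrow\Gm^m$. Applying the functor $X\rightsquigarrow X_n$ yields a morphism $G_n\to(\Gm^m)_n$ of $\s^n$-algebraic groups, and the first step is to argue that this exhibits $G_n$ as a $\s^n$-closed subgroup of $(\Gm^m)_n$. On the level of $\s^n$-coordinate rings this morphism is, by Lemma~\ref{lem: X1 Xn}(i), the ring map $(k\{\Gm^m\},\s^n)\to(k\{G\},\s^n)$ underlying the surjection $k\{\Gm^m\}\to k\{G\}$; being still surjective, it is a $\s^n$-closed embedding. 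The subgroup condition is automatic: for every $k$-$\s^n$-algebra $R$ we have $G_n(R)=G(R_1)\leq\Gm^m(R_1)=(\Gm^m)_n(R)$.

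Next I would identify $(\Gm^m)_n$. Since $\Gm^m$ is defined over $\mathbb{Q}$, we have ${}^{\s^i}\!\Gm^m\simeq\Gm^m$ as algebraic groups over $k$ for every $i$, so Example~\ref{ex: varieties and Xn} gives $(\Gm^m)_n=([\s]_k\Gm^m)_n\simeq[\s^n]_k(\Gm^m\times\cdots\times\Gm^m)=[\s^n]_k\Gm^{mn}$, i.e.\ $(\Gm^m)_n\simeq\Gm^{mn}$ as $\s^n$-algebraic groups over $k$ (the identification of functors of points in Example~\ref{ex: varieties and Xn} respects the group structures, which is routine to check). Combining the two steps, $G_n$ is isomorphic to a $\s^n$-closed subgroup of $\Gm^{mn}$, hence diagonalizable by Proposition~\ref{prop: all about diagonalizable}(iii), now applied over the $\s^n$-field $(k,\s^n)$.

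There is no real obstacle here; the only points needing care are that $X\rightsquigarrow X_n$ sends $\s$-closed subgroups to $\s^n$-closed subgroups (immediate from $k\{X_n\}=(k\{X\},\s^n)$) and the identification of $(\Gm^m)_n$ with a torus over $(k,\s^n)$. As a remark, one can instead give a purely module-theoretic argument that additionally identifies the module of $G_n$: writing $G=D_\s(M)$ with $M$ a finitely generated $\ZZ[\s]$-module and restricting scalars along $\ZZ[\s^n]\hookrightarrow\ZZ[\s]$ (under which $\ZZ[\s]$ is free of rank $n$, so $M$ remains finitely generated), one unwinds $R_1^\times\simeq(R^\times)^n$ with its twisted $\s$-action to obtain, naturally in the $k$-$\s^n$-algebra $R$, an isomorphism $(D_\s(M))_n(R)=\Hom_{\ZZ[\s]}(M,R_1^\times)\simeq\Hom_{\ZZ[\s^n]}(M,R^\times)$, i.e.\ $G_n\simeq D_{\s^n}(M)$ with $M$ regarded as a $\ZZ[\s^n]$-module.
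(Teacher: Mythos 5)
Your proposal is correct and matches the paper's proof: the paper likewise fixes an embedding $G\hookrightarrow\Gm^m$ via Proposition~\ref{prop: all about diagonalizable}(iii), observes that $G_n$ is then a $\s^n$-closed subgroup of $(\Gm^m)_n\simeq\Gm^{mn}$, and invokes Proposition~\ref{prop: all about diagonalizable}(iii) again. Your module-theoretic remark identifying $G_n\simeq D_{\s^n}(M)$ with $M$ restricted to a $\ZZ[\s^n]$-module is a nice extra not in the paper, but the main argument is the same.
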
	
\begin{proof}
	If $G$ is a $\s$-closed subgroup of $\Gm^m$, then $G_n$ is a $\s^n$-closed subgroup of $(\Gm^m)^n$. Now use Proposition~\ref{prop: all about diagonalizable} (iii).
\end{proof}

\subsubsection{Reduction to one-relator subgroups of $\Gm$}

We will need the following result about ideals in $\ZZ[\s]$.

\begin{lemma} \label{lemma: exists onerelator ideal}
	Let $I$ be an ideal of $\ZZ[\s]$ such that $I\cap \ZZ=\{0\}$. Then there exists an $\alpha\in \ZZ[\s]$ of positive degree such that $I\subseteq (\alpha)$.
\end{lemma}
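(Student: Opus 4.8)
The plan is to reduce to the principal ideal theory of $\mathbb{Q}[\s]$ and then descend back to $\ZZ[\s]$ via Gauss's lemma. If $I=\{0\}$ we may simply take $\alpha=\s$, so from now on assume $I\neq\{0\}$.

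First I would pass to $\mathbb{Q}[\s]$. Let $J=I\cdot\mathbb{Q}[\s]$ be the ideal of $\mathbb{Q}[\s]$ generated by $I$. Since $\mathbb{Q}[\s]$ is a principal ideal domain and $J\neq 0$, we have $J=(\alpha)$ for some nonzero $\alpha\in\mathbb{Q}[\s]$, and after multiplying by a suitable nonzero rational number we may assume $\alpha\in\ZZ[\s]$ is primitive (content $1$). Next I would check that $\deg(\alpha)>0$: if not, $\alpha$ is a unit of $\mathbb{Q}[\s]$, so $J=\mathbb{Q}[\s]$ and there are $f_1,\ldots,f_r\in I$ and $g_1,\ldots,g_r\in\mathbb{Q}[\s]$ with $\sum_i g_if_i=1$. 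Clearing denominators yields a positive integer $N$ and $h_1,\ldots,h_r\in\ZZ[\s]$ with $\sum_i h_if_i=N$; but the left-hand side lies in the ideal $I$, so $N\in I\cap\ZZ$, contradicting the hypothesis. Hence $\deg(\alpha)\geq 1$.

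Finally I would show $I\subseteq(\alpha)$ inside $\ZZ[\s]$. Take $f\in I\subseteq J=(\alpha)$, so $f=\alpha q$ with $q\in\mathbb{Q}[\s]$. Write $q=\tfrac{a}{b}q_0$ with $q_0\in\ZZ[\s]$ primitive, $a,b\in\ZZ$, $\gcd(a,b)=1$, $b\geq 1$. Then $bf=a\,\alpha q_0$, and since $\alpha q_0$ is primitive by Gauss's lemma, comparing contents gives $b\cdot\operatorname{content}(f)=\pm a$, so $b\mid a$ and therefore $b=1$. Thus $q=aq_0\in\ZZ[\s]$ and $f=\alpha q\in(\alpha)\subseteq\ZZ[\s]$. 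As $f\in I$ was arbitrary, $I\subseteq(\alpha)$, which completes the proof.

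The whole argument is essentially routine commutative algebra of polynomial rings over $\ZZ$; the only place requiring a little care is the last step, namely ensuring that the factor $q$ obtained over $\mathbb{Q}[\s]$ actually has integer coefficients, which is exactly where primitivity of $\alpha$ and Gauss's lemma enter. I do not expect any serious obstacle here.
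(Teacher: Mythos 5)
Your proof is correct but follows a genuinely different route from the paper's. The paper argues via the explicit classification of prime ideals of $\ZZ[\s]$: it lists them (the zero ideal, $(p)$, $(\alpha)$ with $\alpha$ irreducible of positive degree, and $(p,\alpha)$), observes that if every minimal prime over $I$ were of the form $(p)$ or $(p,\alpha)$ then the radical of $I$ (the finite intersection of those minimal primes) would contain the product of the relevant primes $p$ and hence a nonzero integer, forcing $I\cap\ZZ\neq\{0\}$; so some minimal prime is principal of the form $(\alpha)$, giving $I\subseteq(\alpha)$. You instead localize: extend to $J=I\cdot\mathbb{Q}[\s]$, use that $\mathbb{Q}[\s]$ is a PID to get a primitive generator $\alpha\in\ZZ[\s]$, rule out $\deg\alpha=0$ by clearing denominators in $1\in J$, and descend via Gauss's lemma to show $(\alpha)_{\mathbb{Q}[\s]}\cap\ZZ[\s]=(\alpha)_{\ZZ[\s]}\supseteq I$. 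Your approach is more elementary and self-contained (no appeal to the spectrum of $\ZZ[\s]$ or the Noetherian radical-is-intersection-of-minimal-primes fact) and actually yields slightly more, namely a canonical choice of $\alpha$ as the primitive generator of the extended ideal, whereas the paper's argument just asserts existence. The paper's version is shorter given that the list of primes of $\ZZ[\s]$ is already being cited from the literature. Both are valid.
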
	
\begin{proof}
	We may assume that $I$ is not the zero ideal.
	The prime ideals of $\ZZ[\s]$ are 
	\begin{itemize}
		\item the zero ideal,
		\item ideals of the form $(p)$ with $p$ a prime number,
		\item ideals of the form $(\alpha)$, where $\alpha\in\ZZ[\s]$ is irreducible and of positive degree,
		\item ideals of the form $(p,\alpha)$, where $p$ is a prime number and $\alpha\in\ZZ[\s]$ is a monic polynomial, irreducible mod $p$ (\cite[Chapter II, \S 1, Example H]{Mumford:TheRedBookfOfVarietiesAndSchemes}).
	\end{itemize}	
	Suppose that all prime ideals minimal above $I$ are of the form $(p)$ or $(p,\alpha)$. Then their intersection, which agrees with the radical of $I$, contains a non-zero integer (the product of all the relevant $p$'s). Thus also $I$ contains a non-zero integer; a contradiction.
	
	Therefore, at least one of the prime ideals minimal above $I$ is of the form $(\alpha)$. In particular, $I\subseteq (\alpha)$.
\end{proof}

\begin{lemma} \label{lemma: ex of onerelator subgroup}
	Let $\alpha\in \ZZ[\s]$ have positive degree. Then $G=\{g\in \Gm|\ g^\alpha=1\}$ is almost-simple if and only if $\alpha\in \ZZ[\s]$ is irreducible.
\end{lemma}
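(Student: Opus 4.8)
The plan is to transport everything to $\ZZ[\s]$-modules via Proposition~\ref{prop: all about diagonalizable}. As observed in the example just above, $G\simeq D_\s(\ZZ[\s]/(\alpha))$, and by part~(v) of that proposition $\ord(G)=\rank_\ZZ(\ZZ[\s]/(\alpha))=\deg(\alpha)$, which is a positive integer since $(\ZZ[\s]/(\alpha))\otimes_\ZZ\mathbb{Q}=\mathbb{Q}[\s]/(\alpha)$ has $\mathbb{Q}$-dimension $\deg(\alpha)$; so $G$ genuinely has finite positive order, as the notion "almost-simple" presupposes. Since $G\leq\Gm$ is commutative, every $\s$-closed subgroup of $G$ is normal, so $G$ is almost-simple precisely when every \emph{proper} $\s$-closed subgroup has order $0$. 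Using Proposition~\ref{prop: all about diagonalizable}(ii) and~(iv), I would record that the $\s$-closed subgroups of $G$ are exactly the subgroups $D_\s(\ZZ[\s]/I)$ for ideals $I\supseteq(\alpha)$, that such a subgroup is proper iff $I\supsetneq(\alpha)$ (using the equivalence in part~(i)), and that its order is $\rank_\ZZ(\ZZ[\s]/I)$ by part~(v). Thus the lemma reduces to: $\alpha$ is irreducible in $\ZZ[\s]$ if and only if $\rank_\ZZ(\ZZ[\s]/I)=0$ for every ideal $I$ with $(\alpha)\subsetneq I$.

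For the direction "$\alpha$ reducible $\Rightarrow$ $G$ not almost-simple", I would factor $\alpha=\beta\gamma$ with $\beta,\gamma$ non-units and, swapping if necessary (possible since $\deg\alpha\geq1$), arrange $\deg\beta\geq1$; then with $I=(\beta)$ one has $(\alpha)\subsetneq(\beta)$ because $\gamma$ is not a unit, while $\rank_\ZZ(\ZZ[\s]/(\beta))=\deg\beta\geq1$ (same dimension count over $\mathbb{Q}$ as above), so $D_\s(\ZZ[\s]/(\beta))$ is a proper $\s$-closed subgroup of positive order. Conversely, suppose $\alpha$ is irreducible and $I\supseteq(\alpha)$ with $\rank_\ZZ(\ZZ[\s]/I)>0$; I claim $I=(\alpha)$. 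Any nonzero integer in $I$ would annihilate $\ZZ[\s]/I$ and force its $\ZZ$-rank to be $0$, so $I\cap\ZZ=\{0\}$; then Lemma~\ref{lemma: exists onerelator ideal} yields a $\beta\in\ZZ[\s]$ of positive degree with $I\subseteq(\beta)$. Since $\beta\mid\alpha$, $\beta$ is a non-unit, and $\alpha$ is irreducible, $\beta$ must be an associate of $\alpha$, whence $(\alpha)\subseteq I\subseteq(\beta)=(\alpha)$ and $I=(\alpha)$. This settles both directions.

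I do not expect a real obstacle here: after the reduction, the argument is a dictionary translation together with Lemma~\ref{lemma: exists onerelator ideal} and the irreducibility of $\alpha$ in the domain $\ZZ[\s]$. The points deserving a little care are the bookkeeping of the contravariant correspondence between $\s$-closed subgroups of $D_\s(\ZZ[\s]/(\alpha))$ and ideals of $\ZZ[\s]$ containing $(\alpha)$, and the fact that a proper $\s$-closed subgroup may have the \emph{same} order as $G$ (for instance when $\alpha=c\beta$ with $|c|\geq 2$); so in the reducible case one really does have to exhibit a proper subgroup of positive — but not necessarily strictly smaller — order, which the choice $I=(\beta)$ above achieves.
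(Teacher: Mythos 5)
Your proof is correct and follows essentially the same route as the paper's: pass to $\ZZ[\s]$-modules via Proposition \ref{prop: all about diagonalizable}, compute orders as $\rank_\ZZ$, and invoke Lemma \ref{lemma: exists onerelator ideal} in the irreducible direction. The only stylistic difference is in the forward direction, where you argue contrapositively (reducible $\Rightarrow$ exhibit a proper subgroup of positive order) while the paper takes $G$ almost-simple and shows any degree-positive factor $\beta$ of $\alpha$ forces $(\alpha)=(\beta)$; these are the same argument run in opposite directions. Your closing remark — that the proper subgroup one exhibits (e.g.\ from $\alpha=c\beta$, $|c|\geq 2$) may have the \emph{same} order as $G$, so that Lemma \ref{lemma: order drops} cannot be the mechanism here — is a genuinely useful clarification of why the almost-simple definition is phrased in terms of proper subgroups rather than order dropping.
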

\begin{proof}
	Assume that $G$ is almost-simple and that $\alpha$ factors as $\alpha=\beta\gamma$. Without loss of generality, we assume that $\beta$ has positive degree. As $\{g\in\Gm|\ g^\beta=1\}$ is a $\s$-closed subgroup of $G$ of order $\deg(\beta)>0$, it follows that the morphism $\ZZ[\s]/(\alpha)\to \ZZ[\s]/(\beta)$ is an isomorphism, i.e., $(\alpha)=(\beta)$. This is only possible if $\alpha=\pm\beta$. Thus $\alpha$ is irreducible.
	
	Assume that $\alpha$ is irreducible and let $N$ be a proper $\s$-closed subgroup of $G$. By Proposition~\ref{prop: all about diagonalizable} the inclusion $N\leq G$ corresponds to a surjection $\ZZ[\s]/(\alpha)\to \ZZ[\s]/I$ for some ideal $I$ of $\ZZ[\s]$ with $(\alpha)\subsetneqq I$. Suppose $I\cap\ZZ=\{0\}$. Then, by Lemma \ref{lemma: exists onerelator ideal}, there exists a polynomial $\beta\in \ZZ[\s]$ with $I\subseteq (\beta)$. So $(\alpha)\subsetneqq(\beta)$. This contradicts the irreducibility of $\alpha$. Thus $I\cap\ZZ\neq\{0\}$ and therefore $\rank_{\ZZ}(\ZZ[\s]/I)=0$. Hence $\ord(N)=0$ as desired. 
\end{proof}

We now work towards showing that in fact every almost-simple $\s$-closed subgroup of $\Gm$ is of the form described in Lemma \ref{lemma: ex of onerelator subgroup}. We call a $\s$-closed subgroup $G$ of $\Gm$ a \emph{one-relator subgroup} if there exists a non-zero $\alpha\in \ZZ[\s]$ such that $G=\{g\in\Gm|\ g^\alpha=1\}$.

\begin{lemma} \label{lemma: exists one-relator}
	Let $G$ be a $\s$-closed subgroup of $\Gm$ of finite positive order. Then there exists a $\s$-closed subgroup $H$ of $G$ such that $H$ is a one-relator subgroup of $\Gm$ and $\ord(H)>0$.  
\end{lemma}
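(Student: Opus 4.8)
The plan is to reformulate the statement via the equivalence of categories between diagonalizable $\s$-algebraic groups and finitely generated $\ZZ[\s]$-modules (Proposition~\ref{prop: all about diagonalizable}). Since $G$ is a $\s$-closed subgroup of $\Gm=D_\s(\ZZ[\s])$, parts (iii) and (iv) of that proposition give $G\simeq D_\s(M)$ with $M=\ZZ[\s]/I$ for a suitable ideal $I\subseteq\ZZ[\s]$; concretely, the surjection $\ZZ[\s]\twoheadrightarrow M$ is the one dual to the inclusion $G\hookrightarrow\Gm$. By part~(v) we have $\ord(G)=\rank_\ZZ(M)$, so the hypothesis $0<\ord(G)<\infty$ becomes $0<\rank_\ZZ(\ZZ[\s]/I)<\infty$.

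The first real step is to check that $I\cap\ZZ=\{0\}$. Indeed, if $0\neq n\in I\cap\ZZ$, then $M$ would be a quotient of $\ZZ[\s]/(n)=(\ZZ/n\ZZ)[\s]$, hence a torsion abelian group, contradicting $\rank_\ZZ(M)>0$. With $I\cap\ZZ=\{0\}$ in hand, Lemma~\ref{lemma: exists onerelator ideal} immediately furnishes a polynomial $\alpha\in\ZZ[\s]$ of positive degree with $I\subseteq(\alpha)$.

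Finally, I would set $H=\{g\in\Gm\mid g^\alpha=1\}\simeq D_\s(\ZZ[\s]/(\alpha))$. The inclusion $I\subseteq(\alpha)$ yields a surjection $M=\ZZ[\s]/I\twoheadrightarrow\ZZ[\s]/(\alpha)$, equivalently an exact sequence $0\to(\alpha)/I\to\ZZ[\s]/I\to\ZZ[\s]/(\alpha)\to 0$, which by Proposition~\ref{prop: all about diagonalizable}(ii) corresponds to a $\s$-closed embedding $H\hookrightarrow G$. Thus $H$ is a $\s$-closed subgroup of $G$; it is a one-relator subgroup of $\Gm$ by construction; and $\ord(H)=\rank_\ZZ(\ZZ[\s]/(\alpha))=\deg(\alpha)>0$ by Proposition~\ref{prop: all about diagonalizable}(v). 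There is no serious obstacle in this argument: all the arithmetic of $\ZZ[\s]$ has been isolated in Lemma~\ref{lemma: exists onerelator ideal}, and what remains is bookkeeping through the $D_\s$ dictionary. The one point requiring care is the contravariance of $D_\s$: a $\s$-closed subgroup of $G$ corresponds to a \emph{quotient} of $M$, which is precisely why we need $I\subseteq(\alpha)$ and not the opposite inclusion.
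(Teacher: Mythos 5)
Your proof is correct and follows essentially the same route as the paper: translate $G$ to $D_\s(\ZZ[\s]/I)$, observe $I\cap\ZZ=\{0\}$ (which in the paper is stated tersely as ``otherwise $\ord(G)=0$'' and which you justify by noting that $\ZZ[\s]/I$ would then be a torsion abelian group), invoke Lemma~\ref{lemma: exists onerelator ideal} to produce $\alpha$, and translate the surjection $\ZZ[\s]/I\twoheadrightarrow\ZZ[\s]/(\alpha)$ back to the inclusion $H\hookrightarrow G$. The only difference is that you spell out a few steps the paper leaves implicit.
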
	
\begin{proof}
	By Proposition \ref{prop: all about diagonalizable} we have $G=D_\s(\ZZ[\s]/I)$ for some ideal $I$ of $\ZZ[\s]$. Moreover, $I\cap \ZZ=\{0\}$ because otherwise we would have $\ord(G)=\rank_{\ZZ}(\ZZ[\s]/I)=0$. We know from Lemma \ref{lemma: exists onerelator ideal} that there exists an $\alpha\in \ZZ[\s]$ of positive degree such that $I\subseteq (\alpha)$. The surjection $\ZZ[\s]/I\to \ZZ[\s]/(\alpha)$ corresponds to the inclusion of $H=\{h\in\Gm|\ h^\alpha=1\}$ into $G$.
\end{proof}

\begin{cor} \label{cor: is one-relator}
	Let $G$ be an almost-simple $\s$-closed subgroup of $\Gm$ of finite positive order. Then $G$ is a one-relator subgroup of $\Gm$.  
\end{cor}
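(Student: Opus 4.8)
The plan is to obtain the conclusion as an immediate consequence of Lemma~\ref{lemma: exists one-relator} together with the definition of almost-simplicity. First I would apply Lemma~\ref{lemma: exists one-relator} to $G$: since $G$ has finite positive order, there exists a $\s$-closed subgroup $H\leq G$ which is a one-relator subgroup of $\Gm$, say $H=\{h\in\Gm|\ h^\alpha=1\}$ for some nonzero $\alpha\in\ZZ[\s]$, and which satisfies $\ord(H)>0$.

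Next I would invoke the fact that $G$, being a $\s$-closed subgroup of $\Gm$, is diagonalizable, hence abelian; consequently $H\unlhd G$. Now the definition of almost-simple (Definition~\ref{defi: almost-simple sdim zero}) forces $\ord(N)=0$ for every \emph{proper} normal $\s$-closed subgroup $N$ of $G$. Since $\ord(H)>0$, the subgroup $H$ cannot be proper, so $H=G$. Therefore $G=\{g\in\Gm|\ g^\alpha=1\}$ is a one-relator subgroup of $\Gm$, as claimed.

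There is essentially no genuine obstacle here: the work has already been done in Lemma~\ref{lemma: exists one-relator} (which in turn rests on the classification of primes of $\ZZ[\s]$ via Lemma~\ref{lemma: exists onerelator ideal} and the dictionary of Proposition~\ref{prop: all about diagonalizable}), and the only additional input is the trivial observation that subgroups of an abelian group are normal. The one point worth stating explicitly in the write-up is that $H$ is literally produced in the form $\{h\in\Gm|\ h^\alpha=1\}$, so once $H=G$ is established there is nothing left to verify.
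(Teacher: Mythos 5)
Your argument is correct and is exactly the reasoning the paper leaves implicit when it says the corollary is ``clear from Lemma~\ref{lemma: exists one-relator}'': the one-relator subgroup $H\leq G$ produced there has positive order, is automatically normal since $G$ is abelian, and almost-simplicity then forces $H=G$. No further comment is needed.
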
	
\begin{proof}
	This is clear from Lemma \ref{lemma: exists one-relator}.
\end{proof}

%

Our next goal is to show that a $\s$-field satisfying \pt{} is uniformly strongly $G$-trivial for every almost-simple diagonalizable $\s$-algebraic group of order one. To this end, we have to take a closer look at the almost-simple $\s$-closed subgroups of $\Gm$ of order one.

\begin{lemma} \label{lemma: almost simple sclosed subgr of Gm of order 1}
	Let $G$ be an almost-simple $\s$-closed subgroup of $\Gm$ of order one. Then $G=\{g\in\Gm|\ g^{\alpha}\s(g)^{\beta}=1\}$ for some integers $\alpha,\beta$ with $\beta\neq 0$ and $\gcd(\alpha,\beta)=1$. 
\end{lemma}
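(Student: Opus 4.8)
The plan is to deduce the statement almost immediately from the results already established for one-relator subgroups of $\Gm$. Since $G$ is almost-simple, it has finite positive order (indeed order one) and is connected, as remarked right after Definition \ref{defi: almost-simple sdim zero}. Hence Corollary \ref{cor: is one-relator} applies and $G$ is a one-relator subgroup of $\Gm$, so $G=\{g\in\Gm\mid g^\gamma=1\}$ for some non-zero $\gamma\in\ZZ[\s]$. By the example computing the order of such a group, $\ord(G)=\deg(\gamma)$; since $\ord(G)=1$ this forces $\deg(\gamma)=1$, so we may write $\gamma=\alpha+\beta\s$ with integers $\alpha,\beta$ and $\beta\neq 0$.

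It then remains only to arrange $\gcd(\alpha,\beta)=1$. Here I would invoke Lemma \ref{lemma: ex of onerelator subgroup}: since $G$ is almost-simple and $\deg(\gamma)>0$, the polynomial $\gamma$ must be irreducible in $\ZZ[\s]$. Finally, an irreducible polynomial of positive degree in $\ZZ[\s]$ is necessarily primitive: if $d=\gcd(\alpha,\beta)>1$ then $\gamma=d\gamma'$ would be a product of the two non-units $d$ and $\gamma'$ (the latter of degree one, hence not a unit), contradicting irreducibility. Therefore $\gcd(\alpha,\beta)=1$, as required. (Alternatively, one can argue via Proposition \ref{prop: all about diagonalizable}(vi): if $d>1$, then $\gamma'=\gamma/d$ represents a non-zero $\ZZ$-torsion element of $\ZZ[\s]/(\gamma)\cong X(G)$, so $G$ would not be connected, a contradiction.)

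I do not anticipate any real obstacle: every ingredient is already in place. The only point requiring a moment's care is that the presentation $G=\{g\mid g^\gamma=1\}$ is not unique, so one should not assume the $\gamma$ produced by Corollary \ref{cor: is one-relator} is primitive — it is precisely the irreducibility supplied by Lemma \ref{lemma: ex of onerelator subgroup} (equivalently, the connectedness of $G$) that upgrades it to a primitive, and hence $\gcd$-one, representative.
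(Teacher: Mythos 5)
Your proof is correct and follows essentially the same route as the paper: Corollary \ref{cor: is one-relator} to get the one-relator presentation, the order-equals-degree computation to pin the degree at one, and Lemma \ref{lemma: ex of onerelator subgroup} to supply irreducibility (hence primitivity, hence $\gcd(\alpha,\beta)=1$). Your cautionary remark about non-uniqueness of the presentation is actually unnecessary — by the equivalence of categories in Proposition \ref{prop: all about diagonalizable}(i), the ideal $(\gamma)$ is determined by $G$, so $\gamma$ is unique up to sign — but this does not affect the argument.
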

\begin{proof}
	We know from Corollary \ref{cor: is one-relator} that $G$ is a one-relator subgroup and from Lemma \ref{lemma: ex of onerelator subgroup} we know that $G$ is defined by an irreducible polynomial $\beta\s+\alpha$ of degree one. Thus $\beta\neq 0$ and $\gcd(\alpha,\beta)=1$. 
\end{proof}	

We now work towards showing that if $k$ satisfies \pt{}, then $k$ is uniformly strongly $G$-trivial for $G$ a $\s$-algebraic group of the form described in Lemma \ref{lemma: almost simple sclosed subgr of Gm of order 1}. 

\begin{lemma} \label{lemma: special long group}
	Let $\alpha,\beta$ be integers with $\beta\neq 0$ and $\gcd(\alpha,\beta)=1$ and let $G$ be the $\s$-closed subgroup of $\Gm^n$ defined by the equations $$y_1^{\alpha}y_2^{\beta}=1,\ y_2^{\alpha}y_3^{\beta}=1,\ldots, y_{n-1}^\alpha y_n^\beta=1,\ y_n^\alpha\s(y_1)^\beta=1.$$ If $k$ satisfies \ptweak{} (or \pt{}), then $k$ is (uniformly) $G$-trivial.	
\end{lemma}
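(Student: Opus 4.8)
The plan is to identify $G$, up to isomorphism of $\s$-algebraic groups, with a one-relator subgroup of $\Gm$ of order one, and then apply \ptweak{}(c). Since $G$ is a $\s$-closed subgroup of $\Gm^n$, it is diagonalizable, so by Proposition \ref{prop: all about diagonalizable}(i) we may write $G\cong D_\s(M)$ with $M=\ZZ[\s]^n/N$, where $N$ is the $\ZZ[\s]$-submodule of $\ZZ[\s]^n$ generated by the exponent vectors of the defining equations, i.e. by $\alpha e_i+\beta e_{i+1}$ for $1\le i\le n-1$ together with $\alpha e_n+\beta\s e_1$. Thus $N=A\cdot\ZZ[\s]^n$, where $A$ is the $n\times n$ matrix over $\ZZ[\s]$ with $\alpha$ on the main diagonal, $\beta$ on the first subdiagonal, $\beta\s$ in the upper-right corner and zeros elsewhere. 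A direct expansion gives $\det(A)=\gamma$ with $\gamma=\alpha^n+(-1)^{n-1}\beta^n\s\in\ZZ[\s]$, a polynomial of degree one whose leading coefficient $(-1)^{n-1}\beta^n$ is nonzero because $\beta\neq 0$.

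The heart of the argument is the claim that $M\cong\ZZ[\s]/(\gamma)$, and this is precisely where $\gcd(\alpha,\beta)=1$ is needed. Even though $\ZZ[\s]$ is not a principal ideal domain, coprimality of $\alpha,\beta$ permits a Smith-normal-form-style reduction of $A$: picking $u,v\in\ZZ$ with $u\alpha+v\beta=1$ and replacing the first two rows $(r_1,r_2)$ by $(ur_1+vr_2,\,-\beta r_1+\alpha r_2)$ is a unimodular operation producing a $1$ in the upper-left corner, which can then be used to clear the rest of the first row and column by elementary row and column operations over $\ZZ[\s]$. The remaining $(n-1)\times(n-1)$ matrix again has the same cyclic bidiagonal shape, with $\alpha$ and $\beta$ replaced in the two distinguished corners by the still coprime $\alpha^2$ and $\beta^2$; so, by induction on $n$, $A$ is equivalent over $\ZZ[\s]$ to $\operatorname{diag}(1,\dots,1,\gamma)$. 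Hence $M=\operatorname{coker}(A)\cong\ZZ[\s]/(\gamma)$, and by Proposition \ref{prop: all about diagonalizable}(i) we obtain an isomorphism of $\s$-algebraic groups $G\cong G'$, where $G':=D_\s(\ZZ[\s]/(\gamma))=\{g\in\Gm\mid g^{\alpha^n}\s(g)^{(-1)^{n-1}\beta^n}=1\}$.

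Triviality of torsors depends only on the underlying $\s$-varieties, so it is enough to show that $k$ is (uniformly) $G'$-trivial. Recall that every $G'$-torsor is of the form $X'=\{x\in\Gm\mid x^\gamma=a\}$ for some $a\in k^\times$, that is, $x^{\alpha^n}\s(x)^{(-1)^{n-1}\beta^n}=a$; this is a multiplicative $\s$-equation of the kind appearing in \ptweak{}(c) with $m=1$ and nonzero leading coefficient $(-1)^{n-1}\beta^n$. Hence there is a $d\ge 1$ with $X'(k^{[d]})\neq\emptyset$, so $k$ is $G'$-trivial and therefore $G$-trivial. If $k$ satisfies the stronger condition \pt{}, then \pt{}(c) supplies such a $d$ uniformly in $a$ (the exponents $\alpha^n$ and $(-1)^{n-1}\beta^n$ being fixed once $\alpha,\beta,n$ are), whence $k$ is uniformly $G'$-trivial, hence uniformly $G$-trivial.

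The main obstacle is the diagonalization in the second paragraph: bringing the cyclic bidiagonal matrix $A$ to Smith-type diagonal form over the non-principal ring $\ZZ[\s]$. This step is elementary but genuinely exploits $\gcd(\alpha,\beta)=1$ (the conclusion fails without it), so it must be carried out with some care; the inductive corner-clearing sketched above is one clean route, and alternatively one can exhibit the isomorphism $M\cong\ZZ[\s]/(\gamma)$ by hand starting from a Bézout relation $u\alpha+v\beta=1$.
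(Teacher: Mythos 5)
Your proof is correct but takes a genuinely different route from the paper's. The paper proceeds by explicit computation: it uses \cite[Example 4.2]{BachmayrWibmer:TorsorsForDifferenceAlgebraicGroups} to write any $G$-torsor as the system $y_i^\alpha y_{i+1}^\beta=a_i$ ($i=1,\dots,n$, cyclically with a $\sigma$), treats $\alpha=0$ separately (then $\beta=\pm 1$ and inversiveness gives a solution), uses \ptweak{}(c) to solve a single ``eliminated'' equation in $b_1$, and then carefully chooses roots of unity to fill in $b_2,\dots,b_n$ so that the original system holds. You instead argue structurally: using the diagonalizable correspondence $D_\s$ you show $G\cong D_\s(\ZZ[\s]/(\gamma))$ with $\gamma=\alpha^n+(-1)^{n-1}\beta^n\s$, i.e.\ $G$ is \emph{isomorphic} as a $\s$-algebraic group to a one-relator subgroup of $\Gm$, after which \ptweak{}(c) (or \pt{}(c)) applies directly to $X'=\{x\in\Gm\mid x^\gamma=a\}$. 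Your route is conceptually cleaner, avoids the delicate root-of-unity bookkeeping, and absorbs the $\alpha=0$ case automatically (since $\alpha_1=(-1)^{n-1}\beta^n\neq 0$ is all \ptweak{}(c) needs). Its only cost is that the module isomorphism $M\cong\ZZ[\s]/(\gamma)$ must actually be proved. Your determinant computation is correct, and coprimality of $\alpha,\beta$ is indeed what makes the Smith-type reduction over the non-PID $\ZZ[\s]$ go through; however, the induction as you phrase it is slightly off: after clearing the first row and column, the remaining $(n-1)\times(n-1)$ block is \emph{not} the same cyclic bidiagonal shape with $\alpha,\beta$ uniformly replaced by $\alpha^2,\beta^2$ --- only the $(1,1)$ and upper-right corner entries change (to $\alpha^2$ and $\mp\beta^2\s$), while the remaining diagonal and subdiagonal keep $\alpha$ and $\beta$. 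The reduction still iterates (with Bézout relations $u_k\alpha^{k+1}+v_k\beta=1$ at each step), or, as you note, one can verify $M\cong\ZZ[\s]/(\gamma)$ directly: $e_1$ generates $M$ because $\beta e_{i+1}=-\alpha e_i$ and $\alpha^{n-i}e_{i+1}=\pm\beta^{n-i}\s e_1$ together with $\gcd(\alpha^{n-i},\beta)=1$, and any $f$ with $fe_1\in N$ is forced to lie in $(\gamma)$ by successively peeling off factors of $\alpha$ from the coefficients (using $\gcd(\alpha,\beta)=1$). Either way the gap is cosmetic rather than substantive, and the proof stands.
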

\begin{proof}
	By \cite[Example 4.2]{BachmayrWibmer:TorsorsForDifferenceAlgebraicGroups}, every $G$-torsor is isomorphic to a $\s$-closed $\s$-subvariety $X$ of $\Gm^n$ defined by equations
	\begin{equation} \label{eq: defining torsor}
		y_1^{\alpha}y_2^{\beta}=a_1,\ y_2^{\alpha}y_3^{\beta}=a_2,\ldots, y_{n-1}^\alpha y_n^\beta=a_{n-1},\ y_n^\alpha\s(y_1)^\beta=a_n
	\end{equation}
	for some $a_1,\ldots,a_n\in k^\times$. If $\alpha=0$, then $\beta=\pm1$ and (\ref{eq: defining torsor}) has a (unique) solution in $k^n$ by Lemma \ref{lemma: inversive}. We can thus assume that $\alpha\neq 0$.
	By assumption, the equation
	\begin{equation} \label{eq: eq for b1}
		y_1^{(-1)^{n+1}\alpha^n}\s(y_1)^{\beta^n}=a_n^{\beta^{n-1}}a_{n-1}^{-\alpha\beta^{n-2}}a_{n-2}^{\alpha^2\beta^{n-3}}\ldots a_2^{(-1)^{n-2}\alpha^{n-2}\beta} a_1^{(-1)^{n-1}\alpha^{n-1}}
	\end{equation}
	has a solution $b_1\in k^{[d]}$ for some $d\geq 1$.  Moreover, if $k$ satisfies \pt{}, then $[d]$ can be chosen uniformly for all $a_1,\ldots,a_n$. Note that $b_1$ is automatically invertible. For $i=2,\ldots,n$ choose $b_i\in k^{[d]}$ such that
	\begin{equation} \label{eq: for bi}
		b_i^{\alpha^{n-i+1}}=a_i^{\alpha^{n-i}}a_{i+1}^{-\alpha^{n-i-1}\beta}a_{i+2}^{\alpha^{n-i-2}\beta^2}\ldots a_n^{(-1)^{n-i}\beta^{n-i}}\s(b_1)^{(-1)^{n-i+1}\beta^{n-i+1}}.
	\end{equation}
	This is possible because $k$ is algebraically closed. The choice of $b_i$ is not unique but any other possible choice is of the form $\zeta b_i$, where $\zeta=(\zeta_1,\ldots,\zeta_d)\in k^{[d]}$ with $\zeta_i\in k$ an $\alpha^{n-i+1}$-th root of unity, i.e., $\zeta_i^{\alpha^{n-i+1}}=1$. 
	Note that formula (\ref{eq: for bi}) also holds for $i=1$ by (\ref{eq: eq for b1}).
	Then 
	$$(b_i^\alpha b_{i+1}^\beta)^{\alpha^{n-i}}=a_i^{\alpha^{n-i}}$$
	for $i=1,\ldots,n-1$. Thus $b_i^\alpha b_{i+1}^\beta$ and $a_i$ agree up to multiplication with an $\alpha^{n-i}$-th root of unity in every component. Note that the choice of $b_{i+1}$ is only unique up to multiplication with an $\alpha^{n-i}$-th root of unity in every component. Moreover, as $\gcd(\alpha^{n-i},\beta)=1$, the map $\zeta\mapsto \zeta^\beta$ is a bijection on the set of $\alpha^{n-i}$-th roots of unity in $k$. This shows that we can adjust all the roots of unity in the choice of $b_2,\ldots,b_n$ such that $b_i^\alpha b_{i+1}^\beta=a_i$ for $i=1,\ldots,n-1$. More precisely, $b_1$ is already fixed and we adjust $b_2$ such that $b_1^\alpha b_2^\beta=a_1$. Then we adjust $b_3$ such that $b_2^\alpha b_3^\beta=a_2$. We continue like this until we adjust $b_n$ such that $b_{n-1}^\alpha b_n^\beta=a_n$. Thus $b=(b_1,\ldots,b_n)$ satisfies the first $n-1$ equations in (\ref{eq: defining torsor}). Evaluating (\ref{eq: for bi}) for $i=n$ shows that $b$ also satisfies the last equation. Therefore $b\in X(k^{[d]})\neq\emptyset$ as desired.
\end{proof}

\begin{lemma} \label{lemma: special short group}
	Let $\alpha,\beta$ be integers with $\beta\neq 0$ and $\gcd(\alpha,\beta)=1$ and  set $G=\{g\in\Gm|\ g^{\alpha}\s(g)^{\beta}=1\}$. If $k$ satisfies \ptweak{} (or \pt{}), then $k$ is (uniformly) strongly $G$-trivial.
\end{lemma}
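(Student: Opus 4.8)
The plan is to unwind the definition of (uniformly) strongly $G$-trivial (Definition \ref{defi: Gtrivial}) and reduce to Lemma \ref{lemma: special long group}. Recall that $k$ is (uniformly) strongly $G$-trivial precisely when $(k,\s^n)$ is (uniformly) $G_n$-trivial for every $n\geq 1$. So I would fix $n\geq 1$ and compute $G_n$ explicitly as a $\s^n$-algebraic group over $(k,\s^n)$.

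The key step is this computation. Using Example \ref{ex: Xn}, I would identify $k\{y\}_\s$ with the $\s^n$-polynomial ring $k\{z_1,\ldots,z_n\}_{\s^n}$ via $z_i=\s^{i-1}(y)$. Since $G$ is the $\s$-closed subgroup of $\Gm$ defined by the single relation $y^\alpha\s(y)^\beta=1$, the group $G_n$ is the $\s^n$-closed subgroup of $\Gm^n$ defined by that relation together with its $\s$-transforms $\s^j(y^\alpha\s(y)^\beta)=1$ for $j=1,\ldots,n-1$. Rewriting these in terms of the $z_i$ (and using $\s^{n-1}(\s(y))=\s^n(y)=\s^n(z_1)$ for the last one) gives
$$z_1^\alpha z_2^\beta=1,\quad z_2^\alpha z_3^\beta=1,\quad\ldots,\quad z_{n-1}^\alpha z_n^\beta=1,\quad z_n^\alpha\s^n(z_1)^\beta=1.$$
In other words, $G_n$ is exactly the $\s^n$-closed subgroup of $\Gm^n$ appearing in Lemma \ref{lemma: special long group}, but with the base difference field $(k,\s)$ replaced by $(k,\s^n)$; and the hypotheses $\beta\neq 0$, $\gcd(\alpha,\beta)=1$ carry over verbatim.

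With this identification in place, the conclusion is immediate: by Lemma \ref{lem: pt for powers of s} the difference field $(k,\s^n)$ satisfies \ptweak{} (resp.\ \pt{}) since $(k,\s)$ does, so Lemma \ref{lemma: special long group} applied over $(k,\s^n)$ shows that $(k,\s^n)$ is (uniformly) $G_n$-trivial; since $n\geq 1$ was arbitrary, $k$ is (uniformly) strongly $G$-trivial. I do not expect a genuine obstacle here: the substantive work has already been done in Lemmas \ref{lemma: special long group} and \ref{lem: pt for powers of s}, and the only point requiring care is getting the explicit shape of $G_n$ right — in particular the ``wrap-around'' equation $z_n^\alpha\s^n(z_1)^\beta=1$ — and verifying that it matches the group in Lemma \ref{lemma: special long group} on the nose.
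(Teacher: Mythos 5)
Your proof is correct and follows essentially the same route as the paper: compute $G_n$ explicitly via the $\s$-transforms of the defining relation, observe it is exactly the group of Lemma \ref{lemma: special long group} over $(k,\s^n)$, and conclude using Lemma \ref{lem: pt for powers of s}. The only difference is that you spell out the computation of $G_n$ in more detail, but the key ideas are identical.
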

\begin{proof}
	Let $n\geq 1$. We have to show that $(k,\s^n)$ is (uniformly) $G_n$-trivial for every $n\geq 1$. Note that $G_n$ is the $\s^n$-closed subgroup of $\Gm^n$ defined by the equations
	$$y_1^{\alpha}y_2^{\beta}=1,\ y_2^{\alpha}y_3^{\beta}=1,\ldots, y_{n-1}^\alpha y_n^\beta=1,\ y_n^\alpha\s^n(y_1)^\beta=1.$$
	Because $(k,\s^n)$ satisfies \ptweak{} (or \pt{}) by Lemma \ref{lem: pt for powers of s}, the claim follows from Lemma \ref{lemma: special long group}.
\end{proof}

\begin{lemma} \label{lemma: get into Gm}
	Let $G$ be a non-trivial diagonalizable $\s$-algebraic group. Then there exists a proper $\s$-closed subgroup $N$ of $G$ such that $G/N$ is isomorphic to a $\s$-closed subgroup of $\Gm$.
\end{lemma}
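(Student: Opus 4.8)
The plan is to translate the statement into the language of $\ZZ[\s]$-modules via the anti-equivalence of Proposition \ref{prop: all about diagonalizable}(i) and then produce the required subgroup essentially by inspection. Write $G = D_\s(M)$ for a finitely generated $\ZZ[\s]$-module $M$. Since $G$ is non-trivial, $M \neq 0$ (as $D_\s(0)$ is the trivial group, again by Proposition \ref{prop: all about diagonalizable}(i)). Throughout I will use the dictionary supplied by Proposition \ref{prop: all about diagonalizable}: submodules of $M$ correspond to quotients of $G$, and quotient modules of $M$ correspond to $\s$-closed subgroups of $G$.

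First I would choose any nonzero element $m \in M$ and form the cyclic submodule $M' = \ZZ[\s]m \subseteq M$, which is nonzero; since $\ZZ[\s]$ is Noetherian and $M$ is finitely generated, both $M'$ and $M/M'$ are again finitely generated. The short exact sequence $0 \to M' \to M \to M/M' \to 0$ yields, via Proposition \ref{prop: all about diagonalizable}(ii), an exact sequence $1 \to D_\s(M/M') \to D_\s(M) \to D_\s(M') \to 1$ of $\s$-algebraic groups. Setting $N = D_\s(M/M')$, this identifies $N$ with a normal $\s$-closed subgroup of $G$ with $G/N \cong D_\s(M')$.

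Next I would observe that $M'$ is cyclic, so there is a surjection $\ZZ[\s] \twoheadrightarrow M'$; applying Proposition \ref{prop: all about diagonalizable}(ii) once more turns this surjection into a $\s$-closed embedding $D_\s(M') \hookrightarrow D_\s(\ZZ[\s])$, and $D_\s(\ZZ[\s]) = [\s]_k\Gm = \Gm$ by the example computing $D_\s(\ZZ[\s]^n)$. Hence $G/N \cong D_\s(M')$ is isomorphic to a $\s$-closed subgroup of $\Gm$, as required. Finally, $N$ is a proper subgroup of $G$: if $N = G$ then $G/N$ would be trivial, forcing $M' = 0$ by Proposition \ref{prop: all about diagonalizable}(i), contradicting $m \neq 0$.

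There is essentially no obstacle here; the one point requiring care is keeping the contravariance of $D_\s$ straight, namely that a \emph{nonzero cyclic submodule} $M' \subseteq M$ is exactly what produces a nontrivial quotient $G/N$ embeddable in $\Gm$, while properness of $N$ is precisely the condition $M' \neq 0$. Everything else follows directly from the properties collected in Proposition \ref{prop: all about diagonalizable}.
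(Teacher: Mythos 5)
Your proof is correct, and it is essentially the paper's argument seen through the anti-equivalence $M \rightsquigarrow D_\s(M)$: the paper embeds $G$ into $\Gm^n$ and takes the kernel of a coordinate projection with non-trivial image, while you dually pick a nonzero cyclic submodule $M' = \ZZ[\s]m \subseteq M$ and set $N = D_\s(M/M')$, which is the same construction. Both rest on the same facts from Proposition~\ref{prop: all about diagonalizable}, and your care over contravariance and properness is accurate.
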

\begin{proof}
	As $G$ is diagonalizable, we can assume that $G$ is a $\s$-closed subgroup of some $\Gm^n$. As $G$ is non-trivial, at least one of the projections $G\to\Gm$ will have non-trivial image. The kernel $N$ has the required properties.
	%
	%
\end{proof}

\begin{lemma} \label{lemma: proof for almostsimple diagonalizable order one}
	Let $G$ be an almost-simple diagonalizable $\s$-algebraic group of order one. If $k$ satisfies \ptweak{} (or \pt{}), then 
	$k$ is (uniformly) strongly $G$-trivial.
\end{lemma}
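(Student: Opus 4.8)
The plan is to reduce the statement to the case of a one-relator subgroup of $\Gm$ of order one, which was handled in Lemma \ref{lemma: special short group}. Let $G$ be an almost-simple diagonalizable $\s$-algebraic group of order one. Since $G$ is almost-simple, it is connected; being diagonalizable and connected, Proposition \ref{prop: all about diagonalizable} gives $G=D_\s(M)$ for a finitely generated $\ZZ[\s]$-module $M$ with no $\ZZ$-torsion and $\rank_\ZZ(M)=\ord(G)=1$ (so also $\rank_{\ZZ[\s]}(M)=0$ since the order is finite). By Lemma \ref{lemma: get into Gm} there is a proper $\s$-closed subgroup $N$ of $G$ with $G/N$ isomorphic to a $\s$-closed subgroup $H$ of $\Gm$. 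Since $G$ is almost-simple and $N$ is a proper normal $\s$-closed subgroup (all $\s$-closed subgroups of a diagonalizable group are normal, being again diagonalizable hence abelian), we have $\ord(N)=0$, and therefore $\ord(H)=\ord(G/N)=\ord(G)-\ord(N)=1$.

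Next I would observe that $H$ itself is almost-simple: a diagonalizable $\s$-algebraic group of order one whose every proper $\s$-closed subgroup has order strictly less than one, hence order zero (here one can either argue directly that a proper $\s$-closed subgroup of the connected order-one group $H$ has smaller order by Lemma \ref{lemma: order drops}, after first noting $H$ is connected as a quotient of the connected group $G$ by Lemma \ref{lem: connectedness preserved}-type reasoning, or simply invoke Lemma \ref{lemma: order drops} directly since $H\leq\Gm$ is connected of finite positive order). Thus Lemma \ref{lemma: almost simple sclosed subgr of Gm of order 1} applies and $H=\{g\in\Gm\mid g^\alpha\s(g)^\beta=1\}$ for integers $\alpha,\beta$ with $\beta\neq 0$ and $\gcd(\alpha,\beta)=1$. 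By Lemma \ref{lemma: special short group}, $k$ is (uniformly) strongly $H$-trivial.

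It then remains to transfer triviality from the quotient $H\simeq G/N$ and the kernel $N$ back to $G$ via the induction principle, Proposition \ref{prop:InductionPrinciple}(ii), applied to the exact sequence $1\to N\to G\to G/N\to 1$: if $k$ is (uniformly) strongly $N$-trivial and (uniformly) strongly $G/N$-trivial, then $k$ is (uniformly) strongly $G$-trivial. Triviality for $G/N\simeq H$ is exactly what we just established. For $N$, note $\ord(N)=0$, and $N$ is algebraically closed's $\s$-field being inversive (Lemma \ref{lemma: inversive}, using \ptweak{}(c)) and algebraically closed (\ptweak{}(a)); hence by Proposition \ref{prop: order zero} (an inversive — in fact we only need algebraically closed — $\s$-field is uniformly strongly $G$-trivial for every $\s$-algebraic group of order zero) $k$ is uniformly strongly $N$-trivial. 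Combining, $k$ is (uniformly) strongly $G$-trivial.

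The main obstacle is the bookkeeping needed to legitimately reduce to the order-one one-relator case: one must be careful that $N$ really is a \emph{proper} $\s$-closed subgroup so that $G/N$ is non-trivial, that $G/N$ really has order one (which uses additivity of the order in exact sequences together with almost-simplicity forcing $\ord(N)=0$), and that $G/N$, as a $\s$-closed subgroup of $\Gm$, is again almost-simple so that Lemma \ref{lemma: almost simple sclosed subgr of Gm of order 1} is applicable. Once these structural points are in place, everything is a direct application of the induction principle and the already-established special cases, so no hard computation is involved.
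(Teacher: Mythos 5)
Your proof is correct and follows essentially the same route as the paper's: reduce via Lemma \ref{lemma: get into Gm} and the induction principle (Proposition \ref{prop:InductionPrinciple}(ii)) to the case of an almost-simple $\s$-closed subgroup of $\Gm$ of order one, then apply Lemmas \ref{lemma: almost simple sclosed subgr of Gm of order 1} and \ref{lemma: special short group}. The only difference is that you spell out the bookkeeping (that $\ord(N)=0$ by almost-simplicity and commutativity, that $\ord(G/N)=1$ by additivity of the order, and that $G/N$ is again almost-simple via Lemma \ref{lemma: order drops} applied to the connected quotient) which the paper leaves implicit.
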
	
\begin{proof}
	By Lemma \ref{lemma: get into Gm}, there exists an exact sequence
	$1\to N\to G\to H\to 1$ with $N$ a proper $\s$\=/closed subgroup of $G$ and $H$ a $\s$-closed subgroup of $\Gm$. Because $N$ has order zero, we know that $k$ is uniformly strongly $N$-trivial by Proposition \ref{prop: order zero}. Moreover, $H$ is an almost-simple diagonalizable $\s$-algebraic group of order one. Thus, by the induction principle (Proposition \ref{prop:InductionPrinciple} (ii)), we can assume that $G$ is a $\s$-closed subgroup of $\Gm$. So $G=\{g\in\Gm|\ g^{\alpha}\s(g)^{\beta}=1\}$ with $\beta\neq 0$ $\gcd(\alpha,\beta)=1$ by Lemma \ref{lemma: almost simple sclosed subgr of Gm of order 1} and therefore $k$ is (uniformly) strongly $G$-trivial by Lemma \ref{lemma: special short group}.
\end{proof}

With the order one case completed, we need two more lemmas to get to the general case of a diagonalizable $\s$-algebraic group of $\s$-dimension zero. We need the strong $G$-triviality but first we record the $G$-triviality for almost-simple diagonalizable $\s$-algebraic groups.

\begin{lemma} \label{lemma: almost-simple diagonalizable}
	Let $G$ be an almost-simple diagonalizable $\s$-algebraic group of finite positive order. If $k$ satisfies \ptweak{} (or \pt{}), then 
	$k$ is (uniformly) $G$-trivial.
\end{lemma}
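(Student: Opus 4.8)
The plan is to reduce, via a Jordan–Hölder type argument, to the almost-simple case of order one, which was already handled in Lemma~\ref{lemma: proof for almostsimple diagonalizable order one}. Let $G$ be an almost-simple diagonalizable $\s$-algebraic group of finite positive order $e = \ord(G) \geq 1$. I would proceed by induction on $e$. If $e = 1$, then $G$ is $G$-trivial (indeed uniformly strongly $G$-trivial) by Lemma~\ref{lemma: proof for almostsimple diagonalizable order one}, so there is nothing to do. So assume $e \geq 2$.

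First I would use Lemma~\ref{lemma: get into Gm} to produce an exact sequence $1 \to N \to G \to H \to 1$ with $N$ a proper $\s$-closed subgroup of $G$ and $H \leq \Gm$. Since $G$ is diagonalizable, $N$ and $H$ are diagonalizable by Proposition~\ref{prop: all about diagonalizable}(iv). Because $G$ is almost-simple and $N$ is a \emph{proper} normal $\s$-closed subgroup, $\ord(N) = 0$; hence by Proposition~\ref{prop: order zero} the field $k$ is uniformly strongly $N$-trivial (here I use that $k$ is algebraically closed, which follows from \ptweak{}(a), and inversive, which follows from \ptweak{}(c) via Lemma~\ref{lemma: inversive}). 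Since $\ord(G) = \ord(N) + \ord(H) = \ord(H)$, we have $\ord(H) = e \geq 1$. Now the point is that $H$, being an almost-simple \emph{quotient}—wait, one must check $H$ is almost-simple: a normal $\s$-closed subgroup of $H = G/N$ pulls back to a normal $\s$-closed subgroup of $G$ containing $N$, and such a subgroup is either $G$ or has order $0$ (as $G$ is almost-simple and its proper normal subgroups have order $0$); if it equals $G$ the corresponding subgroup of $H$ is all of $H$, and if it has order $0$ then the quotient subgroup of $H$ has order $0$ too. So $H$ is an almost-simple $\s$-closed subgroup of $\Gm$ of order $e$. By Corollary~\ref{cor: is one-relator}, $H$ is a one-relator subgroup, $H = \{g \in \Gm \mid g^\alpha = 1\}$ for some irreducible $\alpha \in \ZZ[\s]$ of degree $e$ (Lemma~\ref{lemma: ex of onerelator subgroup}).

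To finish by the induction principle (Proposition~\ref{prop:InductionPrinciple}(i)) I need $k$ to be $H$-trivial. If $e = 1$ this is Lemma~\ref{lemma: proof for almostsimple diagonalizable order one} again. For $e \geq 2$ I would pass to a power of $\s$ to drop the order: choose a nontrivial proper $\s$-closed subgroup $H' \leq H$ (possible since $\ord(H) = e \geq 1$ means $H$ is infinite, and $\Gm$ certainly contains proper $\s$-closed subgroups such as $\{g : \s(g) = g\}$ intersected appropriately—more concretely, $H' = \{g \in \Gm \mid g^\beta = 1\}$ for a proper divisor or factor $\beta$ of $\alpha$; since $\alpha$ is irreducible of degree $e\geq 2$, take $\beta = \alpha$ itself won't work, so instead I use the following standard device). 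Actually the cleanest route: apply the explicit torsor description. Every $H$-torsor is $X = \{x \in \Gm \mid x^\alpha = a\}$ for some $a \in k^\times$. Writing $\alpha = \alpha_0 + \alpha_1\s + \dots + \alpha_e\s^e$ with $\alpha_e \neq 0$, this is exactly an equation of the form appearing in \ptweak{}(c), so by hypothesis there is an $n \geq 1$ with $X(\kn) \neq \emptyset$. Hence $k$ is $H$-trivial directly, and if \pt{}(c) holds then $n$ is uniform in $a$, so $k$ is uniformly $H$-trivial.

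Combining: $k$ is (uniformly) strongly $N$-trivial and (uniformly) $H$-trivial, so by the induction principle Proposition~\ref{prop:InductionPrinciple}(i) applied to $1 \to N \to G \to H \to 1$, $k$ is (uniformly) $G$-trivial, as claimed.

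The main obstacle I anticipate is the verification that $H$ (or more precisely the group one lands in after Lemma~\ref{lemma: get into Gm}) is again almost-simple, so that the induction on order is legitimate; this requires carefully tracking normal subgroups through the quotient using Theorem~5.9 of \cite{Wibmer:almostsimple} and the additivity of order under quotients. A secondary subtlety is making sure the reduction to $\Gm$ via Lemma~\ref{lemma: get into Gm} does not silently lower the order below what \ptweak{}(c) can handle—but since $\ord(N) = 0$ forces $\ord(H) = \ord(G)$, the order is preserved and \ptweak{}(c) applies directly to the one-relator torsor $x^\alpha = a$. In fact, once one observes that an almost-simple diagonalizable group of order $\geq 1$ maps onto a one-relator subgroup of $\Gm$ with kernel of order $0$, the proof of $G$-triviality is immediate from \ptweak{}(c) and Proposition~\ref{prop: order zero} via one application of the induction principle, without needing a separate induction on the order at all.
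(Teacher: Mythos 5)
Your final argument is correct and is essentially the paper's proof: pass to an exact sequence $1\to N\to G\to H\to 1$ via Lemma~\ref{lemma: get into Gm}, note that $\ord(N)=0$ (since $G$ is almost-simple) so Proposition~\ref{prop: order zero} gives strong $N$-triviality, check that $H\leq\Gm$ is again almost-simple of the same order, identify $H$ as a one-relator subgroup via Corollary~\ref{cor: is one-relator}, read off its torsors as $x^\alpha=a$, and apply \ptweak{}(c) together with the induction principle. The only difference is that you explicitly verify that $H=G/N$ is again almost-simple (a point the paper leaves implicit when it invokes Corollary~\ref{cor: is one-relator}), and you correctly observe at the end that no induction on the order is actually needed, despite having set one up initially.
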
	
\begin{proof}
	By Lemma \ref{lemma: get into Gm}, there exists an exact sequence $1\to N\to G\to H\to 1$ with $H$ a non-trivial $\s$-closed subgroup of $\Gm$. As $N$ has order zero, $k$ is (uniformly) strongly $N$-trivial (Proposition~\ref{prop: order zero}). By the induction principle (Proposition \ref{prop:InductionPrinciple} (i) in this case), we can thus assume that $G$ is a $\s$-closed subgroup of $\Gm$. By Lemma \ref{cor: is one-relator} every almost-simple $\s$-closed subgroup of $\Gm$ is a one-relator subgroup. So $G$ is of the form $G=\{g\in\Gm|\ g^\alpha=1\}$ for some $\alpha\in \ZZ[\s]$. By  \cite[Example 4.2]{BachmayrWibmer:TorsorsForDifferenceAlgebraicGroups} every $G$-torsor is of the form $X=\{x\in\Gm|\ x^\alpha=a\}$ for some $a\in k^\times$. So $k$ is (uniformly) $G$-trivial by \ptweak{} (c) (or \pt{} (c)). 
\end{proof}

\begin{lemma} \label{lem: almost simple diagonalizable}
	Let $G$ be an almost-simple diagonalizable $\s$-algebraic group of finite positive order. If $k$ satisfies \ptweak{} (or \pt{}), then $k$ is (uniformly) strongly $G$-trivial.
\end{lemma}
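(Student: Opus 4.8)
The statement upgrades the $G$-triviality of Lemma \ref{lemma: almost-simple diagonalizable} to \emph{strong} $G$-triviality, i.e., we must show that $(k,\s^m)$ is (uniformly) $G_m$-trivial for every $m\geq 1$. The natural approach is to induct on $\ord(G)$, using the order-one case (Lemma \ref{lemma: proof for almostsimple diagonalizable order one}) as the base and the induction principle (Proposition \ref{prop:InductionPrinciple}(ii)) together with the Jordan--H\"older type decomposition to climb up. First I would invoke Lemma \ref{lemma: Gn diagonalizable} (and the identification $(G_m)_{m'}=G_{mm'}$) to reduce the task to showing directly that a $\s$-field satisfying \ptweak{} (or \pt{}) is (uniformly) $G$-trivial for \emph{every} diagonalizable $\s$-algebraic group $G$ of $\s$-dimension zero; indeed, since each $G_m$ is again diagonalizable of $\s$-dimension zero over $(k,\s^m)$, and $(k,\s^m)$ satisfies \ptweak{} (resp. \pt{}) by Lemma \ref{lem: pt for powers of s}, strong triviality for $G$ follows once plain triviality is known for all such groups over all $(k,\s^m)$.

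So the heart of the matter is: if $G$ is diagonalizable with $\sdim(G)=0$ and $k$ satisfies \ptweak{} (or \pt{}), then $k$ is (uniformly) $G$-trivial. I would prove this by induction on $\ord(G)$. If $\ord(G)=0$, then $G$ is finite (Proposition \ref{prop: all about diagonalizable}(v) gives $\rank_\ZZ(M)=0$, so $k[M]$ is finite-dimensional) and Lemma \ref{lemma: finite case} applies. For the inductive step with $\ord(G)>0$: if $G$ is almost-simple we are done by Lemma \ref{lemma: almost-simple diagonalizable}; otherwise there is a proper normal $\s$-closed subgroup $N\unlhd G$ with $\ord(N)>0$. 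Taking $N$ of maximal order among such and passing to $N^o$ (which is normal in $G$ since it is characteristic in $N$, cf. the proof of Theorem \ref{theo: Jordan Hoelder for sreduced}), the quotient $G/N^o$ is almost-simple of positive order $\leq\ord(G)$, while $N^o$ is diagonalizable (Proposition \ref{prop: all about diagonalizable}(iv)) of order strictly less than $\ord(G)$ (Lemma \ref{lemma: order drops}, noting $N^o$ is connected). Now $N^o$ is strongly $G$-trivial by the induction hypothesis (applied to $N^o$ and to all $(N^o)_m$, each diagonalizable of smaller order), and $G/N^o$ is strongly $G/N^o$-trivial — but this is exactly the statement we are proving for almost-simple groups, so I should organize the induction so that the almost-simple case of a given order is handled before the general case of that order. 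Concretely: at order $r$, first prove strong $G$-triviality for almost-simple diagonalizable $G$ of order $r$ by applying Lemma \ref{lemma: get into Gm} to get $1\to N\to G\to H\to1$ with $H\leq\Gm$ (then $H$ is one-relator by Corollary \ref{cor: is one-relator}, $N$ has smaller order, and strong triviality of $N$ comes from the induction hypothesis while strong triviality of $H=\{g\in\Gm\mid g^\alpha=1\}$ is handled by a separate sublemma analogous to Lemma \ref{lemma: special short group}); then prove strong $G$-triviality for arbitrary diagonalizable $G$ of order $r$ via the $N^o$ decomposition above. Throughout, Proposition \ref{prop:InductionPrinciple}(ii) and Lemma \ref{lemma: exactness preserved} guarantee that strong triviality passes through exact sequences.

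The main obstacle I anticipate is the strong $G$-triviality of a one-relator subgroup $G=\{g\in\Gm\mid g^\alpha=1\}$ of arbitrary positive order, since $G_n$ is then the $\s^n$-closed subgroup of $\Gm^n$ cut out by the "cyclic" system $y_1^\alpha=y_2^{-1},\dots$ coming from writing $\alpha$ in the shifted variables, and one must deduce solvability of this multi-variable multiplicative system over some $(k,\s^n)^{[d]}$ from condition \ptweak{} (c), which a priori only gives one equation in one variable. This is the same difficulty flagged in the text before the diagonalizable subsection ("we need to reduce the general case to the case of one-relator subgroups of $\Gm$"), and I expect it to require an elimination argument in $\ZZ[\s]$ in the spirit of Lemma \ref{lemma: special long group}: solve a single derived equation of the form $y_1^{\alpha^*}\s^n(y_1)^{\beta^*}=(\text{product of the }a_i\text{'s})$ over $(k,\s^n)^{[d]}$ using \ptweak{} (c) for $(k,\s^n)$, then use algebraic closedness to extract the remaining coordinates, adjusting roots of unity coordinatewise exactly as in the proof of Lemma \ref{lemma: special long group}. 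Once this one-relator-of-higher-order ingredient is in hand, the rest is a bookkeeping induction on $\ord(G)$ combining the pieces via the induction principle, and the uniform variant follows by tracking that every "there exists $d$" above can be chosen independently of the torsor when \pt{} is assumed.
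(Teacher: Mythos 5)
Your overall strategy is sound in outline: reduce strong triviality for all diagonalizable groups of $\s$\=/dimension zero to plain triviality over every $(k,\s^m)$ via Lemma \ref{lem: pt for powers of s}, then induct on $\ord(G)$ using a Jordan--H\"{o}lder decomposition. However, there is a genuine misstep in the middle that creates the appearance of a hard sublemma that is not actually needed. When you handle an almost-simple $G$ of order $r$ via an exact sequence $1\to N\to G\to H\to 1$ with $\ord(N)=0$ and $H$ a one-relator subgroup of $\Gm$ of order $r$, you claim to need \emph{strong} $H$-triviality, which is why you propose a generalization of Lemma \ref{lemma: special long group} to one-relator groups of arbitrary order. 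But $H$ is the quotient, not the kernel: Proposition \ref{prop:InductionPrinciple}(i) yields plain $G$-triviality from strong $N$-triviality (an order-zero input, Proposition \ref{prop: order zero}) and merely \emph{plain} $H$-triviality, and plain triviality of a one-relator subgroup of $\Gm$ of any order follows at once from \ptweak{}(c) together with the description of its torsors in \cite[Example 4.2]{BachmayrWibmer:TorsorsForDifferenceAlgebraicGroups} --- this is precisely Lemma \ref{lemma: almost-simple diagonalizable}. Since your own outer reduction converts plain triviality over all $(k,\s^m)$ into strong triviality, the sublemma you flag as ``the main obstacle'' is a detour you never need to take, and the multi-variable elimination you sketch for $G_n$ of a higher-order one-relator group does not have to be carried out.

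The paper's proof is organized differently and more economically. It inducts on $\ord(G)$ with an induction hypothesis restricted to \emph{almost-simple} diagonalizable groups of strictly smaller order, and must show that $(k,\s^n)$ is $G_n$-trivial for each $n$. The key observation is a dichotomy: either $G_n$ is still almost-simple over $(k,\s^n)$ --- then one simply invokes the already-established plain-triviality result Lemma \ref{lemma: almost-simple diagonalizable}, which holds for any finite positive order, with no new input --- or $G_n$ fails to be almost-simple, in which case its Jordan--H\"{o}lder quotients from Theorem \ref{theo: Jordan Hoelder for sreduced} all have strictly smaller order and the induction hypothesis, together with Corollary \ref{cor: strong induction for tower}, finishes. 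This sidesteps the stronger statement Proposition \ref{prop: proof for diagonalizable} entirely (the paper derives that afterwards as a corollary). Two smaller remarks: before invoking Theorem \ref{theo: Jordan Hoelder for sreduced} one needs connectedness, which for an almost-simple group is automatic but which your more general ``phase two'' would have to secure by first passing to $G^o$ via Corollary \ref{cor: reduce to connected}; and in the almost-simple case $\ord(G/N^o)$ is strictly less than $\ord(G)$, not merely $\leq\ord(G)$, since $\ord(N^o)=\ord(N)>0$.
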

\begin{proof}
	We will prove this by induction on the order of $G$. To be precise, our induction hypothesis is that every difference field satisfying \ptweak{} (or \pt{}) is (uniformly) strongly $H$-trivial for every almost-simple diagonalizable $\s$-algebraic group $H$ with $1\leq\ord(H)<\ord(G)$. The base case $\ord(G)=1$ is Lemma~\ref{lemma: proof for almostsimple diagonalizable order one}.

	For the induction step we have to show that $(k,\s^n)$ is (uniformly) $G_n$-trivial for every $n\geq 1$. Note that $G_n$ is a diagonalizable $\s^n$-algebraic group (Lemma \ref{lemma: Gn diagonalizable}) with $\ord(G_n)=\ord(G)$ (Lemma~ \ref{lem: sdim and order preserved}). Moreover, $(k,\s^n)$ satisfies \ptweak{} (or \pt{}) by Lemma \ref{lem: pt for powers of s}. So if $G_n$ is almost-simple, then the claim follows from Lemma \ref{lemma: almost-simple diagonalizable}. We can therefore assume that $G_n$ is not almost-simple. Note that $G_n$ is connected, because $G$ is connected (Lemma \ref{lem: connectedness preserved}). By Theorem \ref{theo: Jordan Hoelder for sreduced} there exists a subnormal series $G_n=H_0\supseteq H_1\supseteq \ldots\supseteq H_m=1$ with $H_i$ connected and $H_i/H_{i+1}$ almost-simple for $i=0,\ldots, m-1$. Because the quotients $H_i/H_{i+1}$ are diagonalizable (Proposition~\ref{prop: all about diagonalizable} (iv)) and $\ord(H_i/H_{i+1})<\ord(G_n)=\ord(G)$, it follows from the induction hypothesis that $(k,\s^n)$ is (uniformly) strongly $H_i/H_{i+1}$-trivial for $i=0,\ldots,m-1$. Therefore Corollary \ref{cor: strong induction for tower} shows that $(k,\s^n)$ is (uniformly) strongly $G_n$-trivial.
\end{proof}

\begin{prop} \label{prop: proof for diagonalizable}
	Let $G$ be a diagonalizable $\s$-algebraic group of $\s$-dimension zero. If $k$ satisfies \ptweak{} (or \pt{}), then $k$ is (uniformly) strongly $G$-trivial.
\end{prop}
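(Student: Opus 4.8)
The plan is to reduce the general diagonalizable case to the almost-simple case already settled in Lemma \ref{lem: almost simple diagonalizable}, using the Jordan--Hölder type decomposition of Theorem \ref{theo: Jordan Hoelder for sreduced} together with the induction principle. The one technical point to handle first is that $G$ need not be connected: since $G$ is diagonalizable, $G^o$ is again diagonalizable (Proposition \ref{prop: all about diagonalizable} (iv)) and of $\s$-dimension zero, and by Corollary \ref{cor: reduce to connected} it suffices to prove that $k$ is (uniformly) strongly $G^o$-trivial. So we may assume $G$ is connected.

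Next I would invoke the fact that strong $G$-triviality is really a statement about all the $G_n$: by definition we must show that $(k,\s^n)$ is (uniformly) $G_n$-trivial for every $n\geq 1$. By Lemma \ref{lemma: Gn diagonalizable}, $G_n$ is a diagonalizable $\s^n$-algebraic group; by Lemma \ref{lem: sdim and order preserved} it still has $\s^n$-dimension zero (equivalently, finite order); by Lemma \ref{lem: connectedness preserved} it is connected; and by Lemma \ref{lem: pt for powers of s}, $(k,\s^n)$ still satisfies \ptweak{} (or \pt{}). If $\ord(G)=0$, then $G$ has order zero and Proposition \ref{prop: order zero} gives uniform strong $G$-triviality directly, so assume $\ord(G)>0$. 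Now apply Theorem \ref{theo: Jordan Hoelder for sreduced} to the connected $\s^n$-algebraic group $G_n$ of finite positive order: there is a subnormal series
$$G_n=H_0\supseteq H_1\supseteq\ldots\supseteq H_m=1$$
with each $H_i$ connected and each $H_i/H_{i+1}$ almost-simple. Since quotients of diagonalizable $\s^n$-algebraic groups are diagonalizable (Proposition \ref{prop: all about diagonalizable} (iv)), each $H_i/H_{i+1}$ is an almost-simple diagonalizable $\s^n$-algebraic group of finite positive order. By Lemma \ref{lem: almost simple diagonalizable} (applied over $(k,\s^n)$, which satisfies \ptweak{} or \pt{}), $(k,\s^n)$ is (uniformly) strongly $H_i/H_{i+1}$-trivial for each $i=0,\ldots,m-1$. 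Corollary \ref{cor: strong induction for tower} then yields that $(k,\s^n)$ is (uniformly) strongly $G_n$-trivial; in particular $(k,\s^n)$ is (uniformly) $G_n$-trivial. As $n\geq 1$ was arbitrary, $k$ is (uniformly) strongly $G$-trivial.

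There is no real obstacle here beyond bookkeeping, since all the hard analytic content has been front-loaded into Lemma \ref{lem: almost simple diagonalizable} and the induction/decomposition machinery. The one spot requiring a moment's care is the interplay between "strongly $G$-trivial" and passing to $G_n$: one must remember that establishing "$(k,\s^n)$ is $G_n$-trivial for all $n$" is exactly the definition of "$k$ strongly $G$-trivial", and that in the argument above we invoke Theorem \ref{theo: Jordan Hoelder for sreduced} and Corollary \ref{cor: strong induction for tower} in the category of $\s^n$-algebraic groups rather than $\s$-algebraic groups — which is legitimate because both results hold over an arbitrary difference field, here $(k,\s^n)$. In the uniform case, one should note that the integer produced by Corollary \ref{cor: strong induction for tower} depends only on the series $H_0\supseteq\ldots\supseteq H_m$ and the uniform bounds from Lemma \ref{lem: almost simple diagonalizable}, hence ultimately only on $G$ and $n$, which is exactly what uniform strong $G$-triviality requires.
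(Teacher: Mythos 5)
Your proposal is correct and follows essentially the same route as the paper: reduce to the connected case via Corollary \ref{cor: reduce to connected}, dispose of $\ord(G)=0$ via Proposition \ref{prop: order zero}, and then combine Theorem \ref{theo: Jordan Hoelder for sreduced}, Lemma \ref{lem: almost simple diagonalizable}, and Corollary \ref{cor: strong induction for tower}. One small simplification worth noting: you needn't unwind strong $G$-triviality to $G_n$-triviality and then re-run the Jordan--Hölder argument over $(k,\s^n)$ for each $n$ separately — since Lemma \ref{lem: almost simple diagonalizable} already yields \emph{strong} triviality of the almost-simple quotients of $G$ itself and Corollary \ref{cor: strong induction for tower} directly produces strong $G$-triviality, you can apply the decomposition once to $G$ over $(k,\s)$ and be done.
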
	
\begin{proof}
If $\ord(G)=0$, the claim follows from Proposition \ref{prop: order zero}. So we may assume $\ord(G)>0$. Furthermore, we can assume that $G$ is connected (Corollary \ref{cor: reduce to connected}). Now the claim follows by combining Theorem~\ref{theo: Jordan Hoelder for sreduced}, Lemma \ref{lem: almost simple diagonalizable} and Corollary \ref{cor: strong induction for tower}.
\end{proof}

\subsection{The proof for $\s$-algebraic vector groups of $\s$-dimension zero}
	
	\label{subsection: abelian}

In this subsection we show that a $\s$-field satisfying \ptweak{} (or \pt{}) is (uniformly) strongly $G$-trivial for every $\s$-closed subgroup $G$ of some $\Ga^n$ of $\s$-dimension zero.

\begin{defi}
A $\s$-algebraic group is a \emph{$\s$-algebraic vector group} if is is isomorphic to a $\s$-closed subgroup of $\Ga^n$ for some $n\geq 1$. 	
\end{defi}


In the sequel it will often be helpful to be able to assume that $G$ is $\s$-reduced. If $X$ is a $\s$-variety, there exists a unique largest $\s$-closed $\s$-subvariety $X_\sred$ of $X$ that is $\s$-reduced. It satisfies $k\{X_{\sred}\}=k\{X\}_{\sred}$. If $G$ is a $\s$-algebraic group and $k$ is inversive, then $G_\sred$ is a $\s$-closed subgroup of $G$ (\cite[Cor. 2.9]{Wibmer:almostsimple}).

\begin{lemma} \label{lemma: reduce to sreduced}
	Let $k$ be an inversive $\s$-field and let $G$ be a $\s$-algebraic group over $k$. If $k$ is (uniformly) $G_\sred$-trivial, then $k$ is (uniformly) $G$-trivial.
\end{lemma}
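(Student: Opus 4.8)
The plan is to reduce every $G$-torsor to a $G_\sred$-torsor by passing to maximal $\s$-reduced $\s$-closed $\s$-subvarieties, exploiting that the rings $\kn$ are themselves $\s$-reduced.

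The first, purely formal, ingredient is that $\kn$ is $\s$-reduced for every $n\geq 1$, since the shift $\s\colon\kn\to\kn$, $(a_1,\ldots,a_n)\mapsto(\s^n(a_n),a_1,\ldots,a_{n-1})$, is injective (because $\s\colon k\to k$ is). From this I would deduce that for any \ks-variety $Y$ and any $n\geq 1$ one has $Y(\kn)=Y_\sred(\kn)$: a morphism $k\{Y\}\to\kn$ of \ks-algebras annihilates the reflexive closure $(0)^*$ of the zero ideal of $k\{Y\}$ (if $\s^m(f)=0$, then its image $\bar f$ satisfies $\s^m(\bar f)=0$, hence $\bar f=0$ by injectivity of $\s$ on $\kn$), and therefore factors through $k\{Y_\sred\}=k\{Y\}_\sred$.

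The main point is to show that if $X$ is a $G$-torsor then $X_\sred$ is a $G_\sred$-torsor. Here I would use that, $k$ being inversive, $G_\sred$ is a $\s$-closed subgroup of $G$ by \cite[Cor. 2.9]{Wibmer:almostsimple}, together with the fact underlying that statement, namely that over an inversive $\s$-field a tensor product of $\s$-reduced \ks-algebras is again $\s$-reduced; combined with the description of the reflexive nilradical of such a tensor product this yields $(k\{X\}\otimes_k k\{X\})_\sred=k\{X_\sred\}\otimes_k k\{X_\sred\}$ and $(k\{G\}\otimes_k k\{X\})_\sred=k\{G_\sred\}\otimes_k k\{X_\sred\}$. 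The action $G\times X\to X$ dualizes to $a^*\colon k\{X\}\to k\{G\}\otimes_k k\{X\}$; composing with the projection onto the $\s$-reduced quotient and arguing exactly as in the first step, $a^*$ descends to a morphism $k\{X_\sred\}\to k\{G_\sred\}\otimes_k k\{X_\sred\}$, i.e. to an action of $G_\sred$ on $X_\sred$, which is a group action by functoriality of $(-)_\sred$ applied to the associativity and unit diagrams. Applying $(-)_\sred$ to the isomorphism $G\times X\xrightarrow{\sim}X\times X$, $(g,x)\mapsto(gx,x)$ (the torsor condition) and invoking the two identifications above produces an isomorphism $G_\sred\times X_\sred\xrightarrow{\sim}X_\sred\times X_\sred$ of the same shape, so the action of $G_\sred$ on $X_\sred$ is simply transitive; moreover $k\{X_\sred\}=k\{X\}_\sred\neq 0$ since $k\{X\}\neq 0$. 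Hence $X_\sred$ is a $G_\sred$-torsor.

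The lemma then follows at once: given a $G$-torsor $X$, the $G_\sred$-torsor $X_\sred$ satisfies $X_\sred(\kn)\neq\emptyset$ for some $n\geq 1$ by $G_\sred$-triviality, whence $X(\kn)=X_\sred(\kn)\neq\emptyset$ by the first step; in the uniform case the $n$ that works for all $G_\sred$-torsors works in particular for all those of the form $X_\sred$, hence uniformly for all $G$-torsors $X$. I expect the only genuinely non-routine point to be the passage from $X$ to $X_\sred$ in the middle step — the preservation of $\s$-reducedness under the relevant tensor products over the inversive base, the same fact that makes $G_\sred$ a $\s$-closed subgroup — while everything else is bookkeeping with the functor $(-)_\sred$.
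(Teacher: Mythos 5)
Your proof is correct and follows essentially the same route as the paper: show that $X_\sred$ is a $G_\sred$-torsor (the paper simply cites \cite[Cor.~2.8~(ii)]{Wibmer:almostsimple} for $(-)_\sred$ commuting with products, while you unpack the underlying fact about tensor products of $\s$-reduced algebras over an inversive base) and then pass a $\kn$-point of $X_\sred$ to one of $X$ by composing with $k\{X\}\to k\{X_\sred\}$. Your additional observation that $\kn$ is $\s$-reduced, giving the equality $X(\kn)=X_\sred(\kn)$, is true but unnecessary for the implication actually used.
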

\begin{proof}
	By \cite[Cor. 2.8 (ii)]{Wibmer:almostsimple} the functor $X\to X_\sred$ commutes with products. Thus $X_\sred$ is a $G_\sred$-torsor. A morphism $k\{X_\sred\}\to\kn$ yields a morphism $k\{X\}\to \kn$ by composing with $k\{X\}\to k\{X_\sred\}$.
\end{proof}

\begin{lemma} \label{lemma: reduce to sredued for strongly}
	Let $k$ be an inversive $\s$-field and let $G$ be a $\s$-algebraic group over $k$. If $k$ is (uniformly) strongly $G_{\sred}$-trivial, then $k$ is (uniformly) strongly $G$-trivial.
\end{lemma}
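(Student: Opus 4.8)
The plan is to deduce this from Lemma \ref{lemma: reduce to sreduced} applied over the difference field $(k,\s^n)$ for each $n \geq 1$, once we know that forming the $\s$-reduced subgroup commutes with the functor $(-)_n$.

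First I would record two preliminary facts. Since $k$ is inversive, $\s$ is a bijective endomorphism of the field $k$, hence so is $\s^n$; thus $(k,\s^n)$ is again an inversive $\s^n$-field, and \cite[Cor. 2.9]{Wibmer:almostsimple} guarantees that for any $\s^n$-algebraic group $H$ over $k$ the reduced subgroup $H_\sred$ is a $\s^n$-closed subgroup. In particular Lemma \ref{lemma: reduce to sreduced} (and its uniform variant) is applicable with $(k,\s^n)$ as base field.

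The main point is the identity $(G_n)_\sred = (G_\sred)_n$ of $\s^n$-closed subgroups of $G_n$. Dually, $k\{(G_\sred)_n\} = (k\{G\}_\sred,\,\s^n)$ by Lemma \ref{lem: X1 Xn}(i), while $k\{(G_n)_\sred\} = (k\{G\},\s^n)_\sred$, since the $\s^n$-coordinate ring of $G_n$ is $(k\{G\},\s^n)$. So it suffices to observe that the reflexive closure $(0)^*$ of the zero ideal of a $\s$-ring $R$ does not change when $\s$ is replaced by $\s^n$: if $\s^m(f) = 0$ then $\s^{nm}(f) = 0$, and conversely if $\s^{nm}(f) = 0$ then already $\s^{m'}(f) = 0$ with $m' = nm$. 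Hence $(k\{G\}_\sred,\s^n)$ and $(k\{G\},\s^n)_\sred$ coincide as quotients of $(k\{G\},\s^n)$, which yields the claimed identity; along the way one checks that the $\s^n$-closed embedding into $G_n$ obtained by applying $(-)_n$ to $G_\sred \hookrightarrow G$ is the canonical one.

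Finally I would assemble the argument. By hypothesis $k$ is (uniformly) strongly $G_\sred$-trivial, i.e., $(k,\s^n)$ is (uniformly) $(G_\sred)_n$-trivial for every $n \geq 1$. By the identity just established, $(k,\s^n)$ is (uniformly) $(G_n)_\sred$-trivial for every $n$. Applying Lemma \ref{lemma: reduce to sreduced} over the inversive difference field $(k,\s^n)$ to the $\s^n$-algebraic group $G_n$, we conclude that $(k,\s^n)$ is (uniformly) $G_n$-trivial for every $n \geq 1$; that is, $k$ is (uniformly) strongly $G$-trivial. I expect the only mild subtlety to be the bookkeeping behind the commutation $(G_n)_\sred = (G_\sred)_n$; everything else is immediate from the cited results.
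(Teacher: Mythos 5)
Your proof is correct and takes essentially the same approach as the paper: the crux in both cases is the identity $(G_n)_{\s^n\text{-}\operatorname{red}}=(G_\sred)_n$, which the paper asserts with a bare ``Clearly'' and which you justify in detail via the reflexive closure of the zero ideal being unchanged when $\s$ is replaced by $\s^n$. The only stylistic difference is that you route the conclusion through an application of Lemma~\ref{lemma: reduce to sreduced} over the base $(k,\s^n)$, whereas the paper directly repeats the composition-of-coordinate-ring-morphisms argument; your packaging is slightly cleaner since it reuses the existing lemma, and your explicit verification of the commutation $(G_n)_{\s^n\text{-}\operatorname{red}}=(G_\sred)_n$ is a welcome expansion of what the paper leaves implicit.
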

\begin{proof}
	Let $n\geq 1$. By assumption, for every $(G_{\sred})_n$-torsor $Y$, there exists a $d\geq 1$ such that $Y_d(k,\s^{nd})\neq\emptyset$. Moreover, $d$ does not depend on the choice of $Y$ in the uniform case.
	
	Let $X$ be a $G_n$-torsor. By \cite[Cor. 2.9 (ii)]{Wibmer:almostsimple} the functor $X\to X_{\s^n\text{-}\operatorname{red}}$ commutes with products. Thus $X_{\s^n\text{-}\operatorname{red}}$ is a $(G_n)_{\s^n\text{-}\operatorname{red}}$-torsor. Clearly, $\I((G_n)_{\s^n\text{-}\operatorname{red}})=\I((G_{\sred})_n)$. So $(G_n)_{\s^n\text{-}\operatorname{red}}=(G_{\sred})_n$ and by assumption  $(X_{\s^n\text{-}\operatorname{red}})_d(k,\s^{nd})\neq\emptyset$. So there exists a morphism of $k$-$\s^{nd}$-algebras $k\{X_{\s^n\text{-}\operatorname{red}}\}\to k$ which we can compose with $k\{X\}\to k\{X_{\s^n\text{-}\operatorname{red}}\}$ to obtain a morphism $k\{X\}\to k$ of $k$-$\s^{nd}$-algebras. So $X_d(k,\s^{nd})\neq\emptyset$.
\end{proof}

Before tackling general $\s$-algebraic vectors groups of $\s$-dimension zero, we treat the $\s$-closed subgroups of $\Ga$.

	\begin{lemma} \label{lemma: subgroups of Ga}
 Let $G$ be a $\s$-closed subgroup of $\Ga$. 
		If $k$ satisfies \ptweak{} (or \pt{}), then $k$ is (uniformly) $G$-trivial.
	\end{lemma}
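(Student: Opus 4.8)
The plan is to put $G$ into a concrete normal form, identify its torsors, turn the relevant inhomogeneous linear difference equation into a homogeneous first‑order system, and then invoke \ptweak{}(b).

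First I would reduce to the $\s$‑reduced case: since $k$ is inversive (Lemma \ref{lemma: inversive}), Lemma \ref{lemma: reduce to sreduced} lets us replace $G$ by $G_\sred$, so assume $G$ is $\s$‑reduced. In characteristic zero a $\s$‑closed subgroup of $\Ga=[\s]_k\Ga$ is cut out by $k$‑$\s$‑linear equations, hence is of the form $G=\ker(L\colon\Ga\to\Ga)$ for a twisted polynomial $L=\s^m+c_{m-1}\s^{m-1}+\cdots+c_0\in k[\s]$ (after normalising the leading coefficient), where $L$ acts on points by $g\mapsto \s^m(g)+c_{m-1}\s^{m-1}(g)+\cdots+c_0g$. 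If $m=0$ then $G$ is trivial, and if $L=0$ then $G=[\s]_k\Ga$, which is handled by Lemma \ref{lemma: algebraic case}; so assume $m\geq 1$. If $c_0=0$, then $\s$ is not injective on $k\{G\}=k\{y\}/[L(y)]$ (peel off a left factor $\s$ from $L$, using inversiveness), contradicting $\s$‑reducedness; hence $c_0\neq 0$.

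Next, $G=\ker(L)$ is a proper $\s$‑closed subgroup of the $\s$‑integral group $\Ga$, so $\sdim(G)=0$, and therefore $\sdim(L(\Ga))=\sdim(\Ga)-\sdim(G)=1$, which forces $L(\Ga)=\Ga$. Thus we obtain an exact sequence $0\to G\to\Ga\xrightarrow{\,L\,}\Ga\to 0$ of $\s$‑algebraic groups. Since $H^1_\s(k,\Ga)=H^1(k,\Ga)=0$ (Example \ref{ex: direct limit}, together with the vanishing of additive torsors over a field), the exact cohomology sequence \cite[Prop.~5.6]{BachmayrWibmer:TorsorsForDifferenceAlgebraicGroups} identifies $H^1_\s(k,G)$ with $k/L(k)$, the $G$‑torsor attached to $b\in k$ being $X_b=\{x\in\Ga\mid L(x)=b\}$ with $G$ acting by translation. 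So it suffices to show that for every $b\in k$ there is a $d\geq 1$ and $a\in k^{[d]}$ with $L(a)=b$ — and, in the \pt{} case, with $d$ independent of $b$.

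Finally I would linearise. Given $b\in k$, let $A_b\in\Gl_{m+1}(k)$ be the matrix whose $i$‑th row ($1\le i\le m-1$) has a $1$ in column $i+1$ and zeros elsewhere, whose $m$‑th row is $(-c_0,-c_1,\dots,-c_{m-1},b)$, and whose last row is $(0,\dots,0,1)$; expanding along the last row gives $\det A_b=\pm c_0\neq 0$. By \ptweak{}(b) applied in size $m+1$ there is an $n\geq 1$ and $Y\in\Gl_{m+1}(k^{[n]})$ with $\s(Y)=A_bY$, and by \pt{}(b) the integer $n$ depends only on $m+1$, hence is uniform in $b$. The last row of the equation $\s(Y)=A_bY$ shows that every entry of the last row of $Y$ is a $\s$‑constant of $k^{[n]}$, hence is $0$ or a unit; since $Y$ is invertible, some entry $c$ of that last row is a unit, and $c^{-1}$ is again a $\s$‑constant, so the normalised column $w=c^{-1}Y_{\bullet,j_0}$ satisfies $\s(w)=A_bw$ with $w_{m+1}=1$. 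Unwinding the companion shape yields $w_i=\s^{i-1}(a)$ for $a:=w_1\in k^{[n]}$ and $\s^m(a)+c_{m-1}\s^{m-1}(a)+\cdots+c_0a=b$, i.e.\ $a\in X_b(k^{[n]})$. Hence $k$ is (uniformly) $G$‑trivial. The main obstacle is the two structural inputs at the start — that in characteristic zero a $\s$‑closed subgroup of $\Ga$ is defined by linear difference operators, and that $\s$‑reducedness forces $c_0\neq 0$ — after which everything is a routine passage from an inhomogeneous linear difference equation to a homogeneous first‑order system, plus bookkeeping with $\s$‑constants of $k^{[n]}$.
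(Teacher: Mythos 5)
Your proof is correct and follows essentially the same route as the paper: reduce to the $\s$-reduced case using inversivity, observe that $G$ is then cut out by a linear $\s$-operator with non-vanishing constant coefficient, realize any $G$-torsor as an inhomogeneous linear difference equation $\mathcal{L}(x)=a$, encode this as the homogeneous first-order system with the same $(m+1)\times(m+1)$ companion-type matrix (the inhomogeneity absorbed into the extra coordinate), invoke \ptweak{}(b), and extract a solution by normalizing a column so that its last entry equals $1$. The only cosmetic divergence is that you identify the set of torsors with $k/L(k)$ via the exact cohomology sequence for $0\to G\to\Ga\xrightarrow{L}\Ga\to 0$ and the vanishing of $H^1_\s(k,\Ga)$, whereas the paper cites \cite[Example 5.4]{BachmayrWibmer:TorsorsForDifferenceAlgebraicGroups} directly for the classification of $G$-torsors; both are valid and both land in the same place.
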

	\begin{proof}
		Since $k$ is uniformly $\Ga$-trivial (Lemma \ref{lemma: algebraic case}) we may assume that $G$ is a proper $\s$-closed subgroup of $\Ga$. Moreover, by Lemma \ref{lemma: reduce to sreduced} we may assume that $G$ is $\s$-reduced. The proper $\s$-closed subgroups of $\Ga$ are all defined by a linear difference equation (\cite[Cor. A.3]{DiVizioHardouinWibmer:DifferenceAlgebraicRelations}.) I.e.,  there exists a linear $\s$-equation $\mathcal{L}=\s^m(y)+\lambda_{m-1}\s^{m-1}(y)+\ldots+\lambda_0 y$ over $k$ such that $G=\{g\in \Ga|\ \mathcal{L}(g)=0\}$. Because $G$ is $\s$-reduced (and using that $k$ is inversive (Lemma \ref{lemma: inversive})) we see that $\lambda_0\neq 0$.

		By \cite[Example 5.4 ]{BachmayrWibmer:TorsorsForDifferenceAlgebraicGroups} every $G$-torsor is isomorphic to a $G$-torsor of the form $X=\{x\in \A^1|\ \mathcal{L}(x)=a\}$ for some $a\in k$. Consider the linear $\s$-equation $\s(Y)=AY$ where 
		$$A=\left[\begin{array}{ccccc}
		0 & 1 & 0 & \cdots & 0 \\
		0 & 0 & 1 & \cdots & 0 \\
		\vdots & &  &   & \vdots \\
		0 & 0 & \cdots & 1 & 0 \\	 
		-\lambda_0 & -\lambda_1 & \cdots & -\lambda_{m-1} & a \\
		0 & 0 & \cdots & 0 & 1 	
		\end{array}\right]\in\Gl_{m+1}(k).
		$$ 	
		By assumption \ptweak{} (b) there exists $n\geq 1$ and $Y\in\Gl_{m+1}(\kn)$ with $\s(Y)=AY$.
		  
		Since $Y\in\Gl_{m+1}(\kn)$ there exists at least one column
		$$y=\left[\begin{array}{c}
		y_0 \\
		\vdots \\
		y_m
		\end{array}
		\right]\in (\kn)^{m+1}
		$$
		of $Y$
		with $y_m\neq 0$. As $\s(y_m)=y_m$, and the solutions are a $(\kn)^\s$-vector space, we can assume $y_m=1$. For such a $y$ we have $\mathcal{L}(y_0)=a$, i.e., $y_0\in X(\kn)$. Moreover, if the stronger condition \pt{} (b) holds, then $n$ does not depend on $A$ and thus it does not depend on the choice of $X$. 		
\end{proof}

	\begin{prop} \label{prop: vector groups of sdim 0 are strongly trivial}
 Let $G$ be a $\s$-algebraic vector group of $\s$-dimension zero. 
		If $k$ satisfies  \ptweak{} (or \pt{}), then $k$ is (uniformly) strongly $G$-trivial.
	\end{prop}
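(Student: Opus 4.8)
The plan is to argue by induction on the order of $G$ (which is finite since $\sdim(G)=0$), reducing in each step to the two building blocks already established: the order-zero case (Proposition~\ref{prop: order zero}) and $\s$-closed subgroups of $\Ga$ (Lemma~\ref{lemma: subgroups of Ga}). Concretely, for $e\geq 0$ I would introduce the auxiliary statement $\Pi(e)$: \emph{every $\s$-field satisfying \ptweak{} (resp.\ \pt{}) is (uniformly) $G$-trivial for every $\s$-algebraic vector group $G$ of $\s$-dimension zero with $\ord(G)\leq e$}, and prove $\Pi(e)$ for all $e$. The proposition then follows by a bootstrapping argument: (uniform) strong $G$-triviality means $(k,\s^n)$ is (uniformly) $G_n$-trivial for all $n$, and $G_n$ is again a $\s^n$-algebraic vector group of $\s^n$-dimension zero with $\ord(G_n)=\ord(G)$ (Lemma~\ref{lem: sdim and order preserved}, Example~\ref{ex: varieties and Xn}), while $(k,\s^n)$ still satisfies \ptweak{} (resp.\ \pt{}) by Lemma~\ref{lem: pt for powers of s}; so $\Pi(\ord(G))$ applied to $(k,\s^n)$ does the job. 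The key structural point that makes this work is that passing to powers of $\s$ preserves the order, so the bootstrap never increases the quantity we induct on (it does, however, blow up the embedding dimension, which is why the induction must be on the order and not on $n$).

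For the base case $e=0$, Proposition~\ref{prop: order zero} gives even uniform strong $G$-triviality over the algebraically closed $\s$-field $k$. For the inductive step, assuming $\Pi(e-1)$, I would first record --- by exactly the bootstrapping just described, now with $\Pi(e-1)$ --- that every $\s$-field satisfying \ptweak{} (resp.\ \pt{}) is (uniformly) strongly $G'$-trivial for every $\s$-algebraic vector group $G'$ of $\s$-dimension zero with $\ord(G')\leq e-1$. Then, given $G\leq\Ga^n$ of $\s$-dimension zero with $\ord(G)=e$ and $G\neq 1$, I would project onto a coordinate $\pi_n\colon\Ga^n\to\Ga$ whose restriction to $G$ has non-trivial image (possible since the kernels of the coordinate projections intersect trivially), obtaining an exact sequence $1\to G'\to G\to H\to 1$ with $G'=\ker(\pi_n|_G)$ a $\s$-closed subgroup of $\Ga^{n-1}$ (hence a $\s$-algebraic vector group) and $H$ a non-trivial $\s$-closed subgroup of $\Ga$. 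Additivity of $\sdim$ and $\ord$ on exact sequences gives $\sdim(G')=\sdim(H)=0$ and $\ord(G)=\ord(G')+\ord(H)$. The crucial observation is that a non-trivial $\s$-closed subgroup of $\Ga$ of $\s$-dimension zero has order at least $1$: if its order were zero, its Zariski closures would be $0$-dimensional closed subgroup schemes of vector groups, hence trivial in characteristic zero, forcing the group itself to be trivial. Consequently $\ord(G')=\ord(G)-\ord(H)\leq e-1$, so $k$ is (uniformly) strongly $G'$-trivial by the observation above, while $k$ is (uniformly) $H$-trivial by Lemma~\ref{lemma: subgroups of Ga}. Applying the induction principle (Proposition~\ref{prop:InductionPrinciple}(i)) to the exact sequence then yields that $k$ is (uniformly) $G$-trivial, which proves $\Pi(e)$.

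The main obstacle I anticipate is organising the induction correctly: because the notion of being (uniformly) \emph{strongly} $G$-trivial quantifies over all powers $\s^n$, and $G_n$ lives inside a $\Ga$-power of much larger dimension, one cannot simply induct on the embedding dimension. Routing everything through the order --- which is stable under $\s\mapsto\s^n$ and strictly drops on the kernel $G'$ thanks to $\ord(H)\geq 1$ --- is what resolves this, and the little fact about non-trivial $\s$-subgroups of $\Ga$ having positive order is the one genuinely new input; the remaining steps are routine applications of the induction principle, the order-zero case, the $\Ga$-case, and the good behaviour of $\sdim$, $\ord$ and \ptweak{}/\pt{} under passage to powers of $\s$.
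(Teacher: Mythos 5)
Your proposal is correct and follows essentially the same plan as the paper: induction on the order, projection onto a coordinate of $\Ga^m$ to obtain an exact sequence $1\to N\to G\to H\to 1$ with $H\leq\Ga$, handling $N$ via the induction hypothesis and $H$ via Lemma~\ref{lemma: subgroups of Ga}, and closing with the induction principle. The one genuine difference is how the order drop on the kernel is established. The paper first reduces to $G$ connected via Corollary~\ref{cor: reduce to connected}, then cites Lemma~\ref{lemma: order drops} (a proper $\s$-closed subgroup of a connected group of finite positive order has strictly smaller order) to get $\ord(N)<\ord(G)$. You instead use additivity of order along the exact sequence together with the direct observation that a non-trivial $\s$-closed subgroup of $\Ga$ of $\s$-dimension zero has order at least one (since a finite subgroup scheme of a vector group is trivial in characteristic zero, so order zero forces all Zariski closures, hence the group itself, to vanish). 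This bypasses the connectedness reduction and is a mild simplification; it is correct. The remaining difference is organisational: the paper runs the induction directly on strong $G$-triviality and decomposes $G_n$ for each $n$, while you run it on plain $G$-triviality ($\Pi(e)$), decompose $G$ once, and recover strong triviality by the bootstrap via $G\rightsquigarrow G_n$ and Lemmas~\ref{lem: sdim and order preserved} and~\ref{lem: pt for powers of s}. Both bookkeeping choices are sound; the paper's is slightly more compact, while yours isolates the bootstrap mechanism, which is a reasonable trade-off.
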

	\begin{proof}
		We proceed by induction on the order of $G$. The case $\ord(G)=0$ is handled by Proposition~\ref{prop: order zero}. So let us assume that $\ord(G)>0$. By Corollary \ref{cor: reduce to connected} we may assume that $G$ is connected. Let $n\geq 1$. Then $G_n$ is a $\s^n$-algebraic vector group (cf. Example \ref{ex: varieties and Xn}). Say $G_n\leq \Ga^m$ (as $\s^n$\=/algebraic groups). As $\ord(G_n)=\ord(G)>0$ (Lemma~ \ref{lem: sdim and order preserved}), at least one of the projections $G_n\to \Ga$ has to be non-trivial. 
%
%
		 So we have an exact sequence $1\to N\to G_n\to H\to 1$ of $\s^n$-algebraic groups with $H$ a non-trivial $\s^n$-closed subgroup of $\Ga$.
		 Thus $N$ is a proper $\s^n$-closed subgroup of $G_n$. As $G$ is connected, also $G_n$ is connected (Lemma \ref{lem: connectedness preserved}) and therefore $\ord(N)<\ord(G_n)=\ord(G)$ (Lemma \ref{lemma: order drops}).
		  So, by the induction hypothesis and Lemma \ref{lem: pt for powers of s} it follows that $(k,\s^n)$ is (uniformly) strongly $N$-trivial. Moreover, $(k,\s^n)$ is (uniformly) $H$-trivial by Lemma \ref{lemma: subgroups of Ga} (and Lemma \ref{lem: pt for powers of s}). So it follows from Proposition \ref{prop:InductionPrinciple}(i) that $(k,\s^n)$ is (uniformly) $G_n$-trivial as desired. \end{proof}

\subsection{The proof for Zariski-dense $\s$-closed subgroups of simple algebraic groups}

\label{subsec: Zariski dense in simple}

Note that \ptweak{} (b) (or \pt{} (b)) means that $k$ is (uniformly) $\Gl_m^\s$-trivial for all $m\geq 1$. The key to showing that this implies that $k$ is (uniformly) $G$-trivial for any Zariski-dense $\s$-closed subgroup $G$ of a simple algebraic group, is to first show that it implies that $k$ is (uniformly) $\G^\s$-trivial for any closed subgroup $\G\leq\Gl_m$ defined over $k^\s$.

Recall that a $\s$-ring $R$ is called \emph{$\s$-simple} if $\{0\}$ and $R$ are the only $\s$-ideals of $R$.

\begin{lemma} \label{lemma: structure of R}
	Assume that $k$ is algebraically closed and let $R$ be a $\s$-simple \ks-algebra such that $R$ is finitely generated as a $k$-algebra and integral over $k$. Then there exists an $r\geq 1$ such that $R$ is 
	isomorphic as a \ks-algebra to $k^r$ equipped with $\s\colon k^r\to k^r, (a_1,\dots,a_r)\mapsto (\s(a_r),\s(a_1),\dots,\s(a_{r-1}))$ and $k\to k^r,\ \lambda\mapsto (\lambda,\ldots,\lambda)$.
\end{lemma}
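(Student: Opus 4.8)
The plan is to analyze the structure of $R$ as a ring first, and then use $\sigma$-simplicity to pin down how $\sigma$ permutes its factors. Since $R$ is finitely generated as a $k$-algebra and integral over $k$, it is a finite-dimensional $k$-algebra; as $k$ is algebraically closed (hence perfect) and we are in characteristic zero, $R$ is a product of local Artinian $k$-algebras, and the reduced quotient $R_{\mathrm{red}} = R/\operatorname{nil}(R)$ is a finite product of copies of $k$, say $R_{\mathrm{red}} \simeq k^r$. The nilradical $\operatorname{nil}(R)$ is a $\sigma$-ideal (it is even stable under any ring endomorphism), so if it were nonzero it would contradict $\sigma$-simplicity of $R$ — here one must be slightly careful, because $\sigma$-simplicity forbids nonzero proper $\sigma$-ideals, and $\operatorname{nil}(R)$ is proper since $R$ has at least one maximal ideal. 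Hence $\operatorname{nil}(R) = 0$ and $R \simeq k^r$ as a $k$-algebra.

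Next I would write $R = k e_1 \times \cdots \times k e_r$ where $e_1, \dots, e_r$ are the primitive idempotents. Since $\sigma \colon R \to R$ is a ring endomorphism, it permutes the primitive idempotents (a ring endomorphism of $k^r$ sends idempotents to idempotents, and since $1 = \sum e_i$ and the $e_i$ are primitive and orthogonal, $\sigma(e_i)$ must again be a primitive idempotent — here one uses that $\sigma$ is not a priori injective, but $\sigma(e_i)$ is a nonzero idempotent because $\sigma(e_i)^2 = \sigma(e_i)$ and if $\sigma(e_i) = 0$ then, summing, $\sigma(1) = 1 \ne 0$ forces not all $\sigma(e_i)$ to vanish; a cleaner route is to note $\sigma$ induces an endomorphism of the finite set $\operatorname{Spec}(R)$ of $r$ points). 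So $\sigma$ induces a map $\pi \colon \{1, \dots, r\} \to \{1, \dots, r\}$ with $\sigma(e_i) = e_{\pi(i)}$ (adopting the convention that identifies the combinatorics correctly; I would fix the direction by working with the dual map on $\operatorname{Spec}$). The $\sigma$-ideals of $R$ are exactly the ideals of the form $\prod_{i \in S} k e_i$ with $S$ a subset of $\{1,\dots,r\}$ that is stable under $\pi$ (in the appropriate direction). By $\sigma$-simplicity, the only $\pi$-stable subsets are $\emptyset$ and $\{1, \dots, r\}$, which means $\pi$ is a cyclic permutation of $\{1, \dots, r\}$.

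Having established that $\sigma$ cyclically permutes the $r$ factors, I would relabel the idempotents so that $\sigma$ sends the $i$-th factor to the $(i+1)$-st (cyclically). Then for $a = (a_1, \dots, a_r) \in k^r$, the endomorphism $\sigma$ has the form $\sigma(a_1, \dots, a_r) = (\tau(a_r), \tau(a_1), \dots, \tau(a_{r-1}))$ for some map $\tau$ on the "last" coordinate, and because $R$ is a $k$-$\sigma$-algebra with structure map $k \to R$ a morphism of $\sigma$-rings, and the structure map must be compatible with $\sigma$ — here I would trace through what "$k \to R$ is a $\sigma$-morphism" forces: writing the structure map as $\lambda \mapsto (\phi_1(\lambda), \dots, \phi_r(\lambda))$ where each $\phi_j \colon k \to k$ is a ring homomorphism, compatibility $\sigma \circ (\text{str}) = (\text{str}) \circ \sigma$ together with the cyclic shift gives $\phi_{i+1} = \tau \circ \phi_i$ and ultimately that $\tau$ on the identified copy of $k$ is $\sigma$ itself and all $\phi_j$ coincide, giving the diagonal structure map $\lambda \mapsto (\lambda, \dots, \lambda)$ after a final identification. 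The main obstacle I expect is precisely this bookkeeping at the end — getting the conventions straight so that the structure map comes out diagonal and the shift comes out with $\sigma$ applied exactly in the stated spot — rather than any deep difficulty; the substantive content (reducedness via $\sigma$-simplicity, $\sigma$ cyclically permuting the factors) is short. One should also double-check the characteristic-zero / algebraically-closed hypotheses are genuinely used (they are: algebraic closedness for $R_{\mathrm{red}} \simeq k^r$ with no nontrivial field factors, and characteristic zero — or at least perfection — to get reducedness is automatic here since $\operatorname{nil}(R)=0$ comes from $\sigma$-simplicity, so in fact only algebraic closedness is essential for this particular lemma).
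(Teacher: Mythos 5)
Your proposal is correct and follows essentially the same route as the paper: $\sigma$-simplicity kills the nilradical, algebraic closedness gives $R\simeq k^r$ as a $k$-algebra, and $\sigma$-simplicity again forces $\sigma$ to cyclically permute the primitive idempotents. One small warning: your parenthetical argument for why each $\sigma(e_i)$ is nonzero (``if $\sigma(e_i)=0$ then summing gives $\sigma(1)=1\ne 0$, so not all vanish'') does not rule out a \emph{single} $\sigma(e_i)$ vanishing; as written it proves nothing. The paper handles this step more cleanly than either of your suggestions by noting that $\ker(\sigma)$ is itself a $\sigma$-ideal, hence zero, so $\sigma\colon k^r\to k^r$ is injective (and thus bijective), after which the permutation of primitive idempotents is immediate. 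Your alternative via the self-map of $\operatorname{Spec}(R)$ also closes the gap, provided you add the observation that the image of that map is a nonempty stable subset and hence all of $\operatorname{Spec}(R)$, forcing a bijection before the single-cycle argument. Finally, the endgame bookkeeping you were uneasy about is automatic: once $R\simeq k^r$ \emph{as a $k$-algebra} (so the structure map is the diagonal) and $\sigma(e_i)=e_{i+1}$, the compatibility $\sigma_R\circ\iota=\iota\circ\sigma_k$ gives $\sigma\bigl(\sum a_ie_i\bigr)=\sum\sigma(a_i)e_{i+1}$ directly, which is exactly the stated formula.
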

\begin{proof}
	Because $R$ is finitely generated as a $k$-algebra and integral over $k$, we see that $R$ is a finite dimensional $k$-vector space. Furthermore, because the nilradical of $R$ is a $\s$-ideal, it must be trivial. So $R$ is reduced. Since $k$ is algebraically closed, it follows that $R$ is isomorphic as a $k$-algebra to $k^r$ for some $r\geq 1$. So we identify $R$ with $k^r$. Because the kernel of $\s\colon k^r\to k^r$ is a $\s$-ideal, it must be trivial. So $\s\colon k^r\to k^r$ is injective and therefore permutes the primitive idempotent elements $e_1,\ldots,e_r$ of $k^r$. Any subset of $e_1,\ldots,e_r$ stable under $\s$ generates a $\s$-ideal. Therefore $\s$ must permute $e_1,\ldots,e_r$ as cycle of length $r$. Without loss of generality we can assume that $\s(e_1)=e_2,\ \s(e_2)=e_3,\ldots,\s(e_n)=e_1$. This shows that $\s\colon k^r\to k^r$ has the required form.
\end{proof}

As we will use some Galois cohomology in the upcoming proofs, we briefly recall the definitions. (See \cite[Section 3, k]{Milne:AlgebraicGroups} or \cite{Serre:GaloisCohomology} for a fuller treatment). Let $k'/k$ be a Galois field extension with Galois group $\Gamma_{k'/k}$ and let $\G$ be an algebraic group over $k$.  A continuous map $\chi\colon \Gamma_{k'/k}\to \G(k')$, where $\G(k')$ has the discrete topology, is a \emph{cocycle} if $\chi(\gamma_1\gamma_2)=\chi(\gamma_1)\gamma_1(\chi(\gamma_2))$ for all $\gamma_1,\gamma_2\in \Gamma_{k'/k}$. Two cocycles $\chi_1,\chi_2$ are equivalent if there exists a $g\in\G(k')$ such that $\chi_1(\gamma)=g^{-1}\chi_2(\gamma)\gamma(g)$ for all $\gamma\in\Gamma$.
The Galois cohomology set $H^1(k'/k,\G)$ is the set of equivalence classes of cocycles. It has a distinguished element, the equivalence class consisting of the \emph{principal cocycles}, i.e., the cocycles of the form $\chi(\gamma)=g^{-1}\gamma(g)$ for some $g\in\G(k')$. Let $\Sigma_{k'/k}(\G)$ denote the set of isomorphism classes of right $\G$-torsors $\X$ with $\X(k')\neq\emptyset$. For $x\in \X(k')$ and $\gamma\in \Gamma_{k'/k}$ there exists a unique $\chi(\gamma)\in\G(k')$ such that $\gamma(x)=x\chi(\gamma)$. Then $\chi\colon\Gamma_{k'/k}\to\G(k')$ is a cocycle and its equivalence class does not depend on the choice of $x\in \X(k')$. This yields a well-defined map $\Sigma_{k'/k}(\G)\to H^1(k'/k,\G)$ which turns out to be a bijection under which the isomorphism class of the trivial $\G$-torsor corresponds to the distinguished element of $H^1(k'/k,\G)$.

\begin{lemma}\label{lemma: H1 trivial}
  Assume that $k$ is algebraically closed and let $\G$ be an algebraic group over $k^\s$. Then there exists an integer $s\geq 1$ such that for every cocycle
   $\Gamma_{\overline{k^\s}/k^\s}\to \G(\overline{k^\s})$
   the 
   induced cocycle $\Gamma_{\overline{k^\s}/k^{\s^s}}\to \Gamma_{\overline{k^\s}/k^\s}\to \G(\overline{k^\s})$
    is principal.
\end{lemma}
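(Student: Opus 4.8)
The plan is to reduce the statement to a finiteness property of Galois cohomology over the field of constants together with the structure of its absolute Galois group. Write $C=k^\s$ and, since $k$ is algebraically closed, identify $\overline{C}=\overline{k^\s}$ with the relative algebraic closure of $C$ inside $k$; then $\s$ restricts to an automorphism $\overline\s$ of $\overline{C}$ fixing $C$ pointwise, so $\overline\s\in\Gamma:=\Gamma_{\overline{C}/C}$. First I would record two elementary facts. (1) For every finite Galois subextension $C\subseteq L\subseteq\overline{C}$, the automorphism $\overline\s$ stabilizes $L$ and its fixed field in $L$ is $L\cap C=C$; hence $\gal(L/C)$ is cyclic, generated by $\overline\s|_L$. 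In particular $\Gamma$ is procyclic (topologically generated by $\overline\s$), so $C$ has at most one subextension of each finite degree, i.e.\ $C$ is perfect and bounded. (2) For any $a\in k^{\s^s}$ the elementary symmetric functions of $a,\s(a),\dots,\s^{s-1}(a)$ are $\s$-invariant, so $a$ is algebraic over $C$ of degree $\le s$; thus $k^{\s^s}$ is a finite subextension of $\overline{C}/C$, and whenever $d\mid s$ one has $k^{\s^d}\subseteq k^{\s^s}$ (directly from $\s^d(a)=a\Rightarrow\s^s(a)=a$).

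Next, because $C$ is perfect and bounded, the set $H^1(C,\G)$ is finite --- this is the general fact recalled in the introduction. Using the identification of $H^1$-classes with isomorphism classes of $\G$-torsors recalled just before the lemma, choose $\G$-torsors $\X_1,\dots,\X_r$ over $C$ representing the finitely many classes. Each $\X_i$ is a nonempty $C$-variety of finite type, so it acquires a point over $\overline{C}$, and that point is defined over a finite subextension $F_i$ of $\overline{C}/C$. Let $\widetilde F_i$ be the Galois closure of $F_i$ over $C$ and set $n_i=[\widetilde F_i:C]$; by fact (1), $\overline\s^{\,n_i}$ acts trivially on $\widetilde F_i$, so $F_i\subseteq\widetilde F_i\subseteq k^{\s^{n_i}}$, and hence $c_i:=[\X_i]$ maps to the distinguished element under the restriction $H^1(C,\G)\to H^1(k^{\s^{n_i}},\G)$.

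Now put $s=\lcm(n_1,\dots,n_r)$. By fact (2), $k^{\s^{n_i}}\subseteq k^{\s^s}$ for every $i$, so for each $i$ the restriction $H^1(C,\G)\to H^1(k^{\s^s},\G)$ factors through $H^1(k^{\s^{n_i}},\G)$, and therefore every class in $H^1(C,\G)$ --- being one of the $c_i$ --- dies in $H^1(k^{\s^s},\G)$. Finally I would translate this back to cocycles: the chain $C\subseteq k^{\s^s}\subseteq\overline{C}$ gives an inclusion $\Gamma_{\overline{k^\s}/k^{\s^s}}\hookrightarrow\Gamma_{\overline{k^\s}/k^\s}$, and (since $\overline{C}$ is the separable closure of both $C$ and $k^{\s^s}$ in characteristic zero) restriction of cocycles along this inclusion is exactly the map $H^1(C,\G)\to H^1(k^{\s^s},\G)$ considered above. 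As this map is trivial, every cocycle $\Gamma_{\overline{k^\s}/k^\s}\to\G(\overline{k^\s})$ restricts to a principal cocycle on $\Gamma_{\overline{k^\s}/k^{\s^s}}$, which proves the lemma with this $s$.

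The genuinely non-formal ingredient is the finiteness of $H^1(C,\G)$; everything else is bookkeeping with the procyclic group $\Gamma$ and with the observation that passing from $\s$ to $\s^s$ corresponds to climbing the (essentially unique) tower of finite subextensions of $\overline{k^\s}/k^\s$. I therefore expect the main obstacle to lie in the proper setup of that finiteness statement --- in particular in establishing that the constants form a perfect bounded field --- rather than in the cohomological manipulation itself.
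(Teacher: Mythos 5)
Your proof is correct and takes essentially the same route as the paper: both reduce the statement to the finiteness of $H^1(k^\s,\G)$ (you via the perfect-and-bounded fact from the introduction, the paper by invoking Serre's result for fields of type (F), which is the same underlying ingredient) and then observe that, since $\Gamma_{\overline{k^\s}/k^\s}$ is topologically generated by $\s$, all the finitely many classes die after restricting to a suitable $k^{\s^s}$. Your $\lcm$-of-Galois-closures bookkeeping just spells out in detail the step the paper compresses into ``there exists an extension $k^{\s^s}/k^\s$ such that $\X(k^{\s^s})\neq\emptyset$ for every $\G$-torsor $\X$.''
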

\begin{proof}
It is well known (Theorem \cite[Theorem 2.1.12]{Levin:difference}) that the algebraic closure $\overline{k^\s}$ of $k^\s$ in $k$ coincides with the subfield of all periodic elements of $k$. It follows that the absolute Galois group $\Gamma_{\overline{k^\s}/k^\s}$ of $k^\s$ is topologically generated by $\s\colon \overline{k^\s}\to \overline{k^\s}$.  
Therefore $k^\s$ is a field of type (F) in the sense of \cite[Chapter III, Section 4.2]{Serre:GaloisCohomology} and it follows from \cite[Chapter III, Section 4.3, Theorem 4]{Serre:GaloisCohomology} that $H^1(\overline{k^\s}/k^\s,\G)$ is finite. Thus the set of isomorphism classes of $\G$-torsors is finite and so there exists an extension $k^{\s^s}/k^\s$ such that $\X(k^{\s^s})\neq\emptyset$ for every $\G$-torsor $\X$.
The diagram
$$ 
\xymatrix{
	\Sigma_{\overline{k^\s}/k^\s}(\G) \ar[r] \ar[d] & \Sigma_{\overline{k^\s}/k^{\s^s}}(\G_{k^{\s^s}}) \ar[d] \\
	H^1(\overline{k^\s}/k^\s,\G) \ar[r] & H^1(\overline{k^\s}/k^{\s^s},\G_{k^{\s^s}})	
}
$$
where the top horizontal arrow is given by base change from $k^\s$ to $k^{\s^s}$ and the bottom horizontal arrow is induced by composing a cocycle with $\Gamma_{\overline{k^\s}/k^{\s^s}}\to \Gamma_{\overline{k^\s}/k^\s}$ is commutative. By choice of $s$, the top horizontal map is trivial. Therefore also the bottom horizontal map is trivial. This implies the claim of the lemma.
\end{proof}


With these two lemmata at hand, we can strengthen conditions \ptweak{}(b) and \pt{}(b) from solutions in $\Gl_m$ to solutions in closed subgroups $\G\leq \Gl_m$ defined over $k^\s$:

\begin{prop} \label{prop: stronger P3b}
  If $k$ satisfies \ptweak{} then the following stronger version of \ptweak{}(b) also holds:
    For every pair of integers $m,d \geq 1$, every closed subgroup $\G\leq\Gl_m$ defined over $k^\s$ and every $A\in\G(k)$, there exists an integer $n\geq 1$ such that the equation $\s^d(Y)=AY$ has a solution in $\G(\kn)$. Moreover, if $k$ satisfies \pt{}, then $n$ can be chosen uniformly for all $A \in \G(k)$.  
\end{prop}
\begin{proof}
Let $m,d \geq 1$ be integers and $\G\leq \Gl_m$ a closed subgroup defined over $k^\s$. If \pt{}(b) holds, then we use \pt{}(b$_d$') from Lemma \ref{lemma: on pt (b) was remark} and obtain an $l\in \mathbb{N}$ such that for all $A\in \G(k)$ there exists a $Y\in \Gl_m(k)$ with $\s^{dl}(Y)=\s^{(l-1)d}(A)\ldots\s^d(A)AY$. If instead \ptweak{} holds, then such an $l$ also exists but it may depend on $A$. 
 We fix an $A\in \G(k)$ and abbreviate $B=\s^{(l-1)d}(A)\ldots\s^d(A)A$. Note that $B \in \G(k)$ because $\G\leq\Gl_m$ is defined over $k^\s$. 
 Let $s\geq 1$ be as in the statement of Lemma \ref{lemma: H1 trivial} but applied to the difference field $(k,\s^{dl})$.
 We claim that there exists a $\hat Y\in \G(k)$ with $\s^{sdl}(\hat Y)=\s^{(s-1)dl}(B)\ldots\s^{dl}(B)B\hat Y$. Note that this implies the claim of the proposition by Lemma~\ref{lemma: on pt (b)} because $\s^{(s-1)dl}(B)\ldots\s^{dl}(B)B=\s^{(ls-1)d}(A)\ldots\s^d(A)A$ and $ls$ is uniform in $A$ if \pt{} holds.
 
%
%

Define ${\tilde \s}=\s^{dl}$ and let $T=(T_{ij})$ be the matrix of coordinate functions on $\Gl_m$. 
Consider the coordinate ring $k[\Gl_m]=k[T,1/\det(T)]$ of $\Gl_m$ over $k$ as a $k$-${\tilde \s}$-algebra via ${\tilde \s}(T)=BT$. 
Let $\mathbb{I}(\G)\subseteq k[\Gl_m]$ be the defining ideal of $\G$ and let $f=f(T)\in \mathbb{I}(\G)$ have constant coefficients. Then $\tilde \s(f(T))(g)=f(\tilde\s(T))(g)=f(BT)(g)=f(Bg)=0$ for all $g\in \G(k)$ because $Bg\in\G(k)$. So $\tilde \s(f)\in \mathbb{I}(\G)$. Because $\G$ is defined over $k^\s$ this shows that $\mathbb{I}(\G)\subseteq k[\Gl_m]$ is a ${\tilde \s}$-ideal.

%

Thus we can find a maximal ${\tilde \s}$-ideal $\mathfrak{m}\subseteq k[\Gl_m]$ that contains $\mathbb{I}(\G)$. The quotient $R=k[\Gl_m]/\mathfrak{m}$ is then a $\tilde \s$-simple $k$-${\tilde \s}$-algebra. By construction, the image $\tilde Y$ in $\Gl_m(R)$ of $T$ is contained in $\G(R)$ and satisfies $\tilde \s(\tilde Y)=B\tilde Y$. Recall, from the first paragraph, that we have a $Y\in \Gl_m(k)$ with $\tilde \s(Y)=BY$. Since two solutions matrices only differ by a constant matrix, there exists a $C\in \Gl_m(R^{\tilde \s})$ with $\tilde Y=YC$ and we conclude that $R=k[\tilde Y, 1/\det(\tilde Y)]=k[C,1/\det(C)]$ is generated as a $k$-algebra by the elements in $R^{\tilde \s}$. As $R$ is $\tilde \s$-simple, $R^{\tilde \s}$ is a field. Moreover, it is an algebraic extension of $k^{\tilde \s}$. (This is well-known in case $k^{\tilde \s}$ is algebraically closed \cite[Lemma 1.8]{SingerPut:difference} and follows similarly in the general case. It also follows from \cite[Prop. 2.11]{Wibmer:Chevalley}). Note, however, that $R^{\tilde \s}$ need not be contained in $k$ because $R$ need not be an integral domain. In any case, $R$ is generated as a $k$-algebra by elements that are integral over $k$ and therefore $R$ itself is integral over $k$. By Lemma \ref{lemma: structure of R} we have 
$R= k^r$ with $\tilde \s\colon k^r\to k^r, (a_1,\dots,a_r)\mapsto (\tilde \s(a_r),\tilde \s(a_1),\dots,\tilde \s(a_{r-1}))$ for some $r\geq 1$. Then $R^{\tilde \s}=\{(a,\tilde\s(a),\dots, \tilde \s^{r-1}(a) ) \mid a \in k^{{\tilde \s}^r}\}$. So there exists a matrix $D \in\Gl_m(k^{{\tilde \s}^r})$ with $C=(D,\tilde \s(D),\dots,\tilde \s^{r-1}(D))$. Thus $\tilde Y \in \G(R)$ can be rewritten as $\tilde Y=YC=(YD,Y\tilde \s(D),\dots,Y\tilde \s^{r-1}(D))$. Because $\tilde Y\in\G(R)$, we conclude that $Y\tilde \s^i(D) \in \G(k)$ for $i=0,\ldots,{r-1}$. This implies $D^{-1}\tilde \s^i(D)=(YD)^{-1}Y\tilde \s^i(D) \in \G(k^{{\tilde \s}^r})$.

The Galois group 
$\Gamma_{k^{{\tilde\s}^r}/k^{\tilde\s}}$ of $k^{{\tilde \s}^r}$ over $ k^{\tilde \s}$ is generated by $\tilde{\s}\colon k^{{\tilde \s}^r}\to k^{{\tilde \s}^r}$. (However, the order of $\tilde{\s}\in \Gamma_{k^{{\tilde\s}^r}/k^{\tilde\s}}$ could be less than $r$). In any case, we can define a map $\chi\colon \Gamma_{k^{{\tilde\s}^r}/k^{\tilde\s}}\to \G( k^{\tilde \s^r})$ by $\chi(\tilde \s^i)=D^{-1}\tilde \s^i(D)$ for $i=0,\ldots,{r-1}$. It is straight forward to check that $\chi$ is a cocycle. Note that $\chi$ may not be principal because we don't know if $D$ lies in $\G( k^{\tilde \s^r})$. By the choice of $s$, the induced cocycle $\overline{\chi}\colon \Gamma_{\overline{k^{{\tilde\s}}}/k^{\tilde\s}}\to \Gamma_{k^{{\tilde\s}^r}/k^{\tilde\s}}\xrightarrow{\chi} \G( k^{\tilde \s^r})\to  \G(\overline{k^{\tilde \s}})$ becomes principal, when restricted to $\Gamma_{\overline{k^{{\tilde\s}}}/k^{\tilde\s^s}}\leq\Gamma_{\overline{k^{{\tilde\s}}}/k^{\tilde\s}}$. In particular, there exists a matrix $E\in \G(\overline{k^{\tilde \s}})$ such that $E^{-1}\tilde\s^s(E)=D^{-1}\tilde\s^s(D)$. But then $\tilde\s^s(DE^{-1})=DE^{-1}$ and so $DE^{-1}\in\Gl_m(k^{\tilde \s^s})$.
Set $\hat Y=YDE^{-1}\in \Gl_m(k)$. Because $YD\in\G(k)$ and $E\in\G(k)$ we have $\hat Y\in \G(k)$. Since $\tilde\s(Y)=BY$, we have $\tilde\s^s(Y)=\tilde\s^{(s-1)}(B)\ldots\tilde\s(B)BY$ and because  $DE^{-1}\in\Gl_m(k^{\tilde \s^s})$, $\hat Y$ must satisfy the same linear $\tilde\s^s$-equation. So $\hat Y\in\G(k)$ and $\tilde\s^s(\hat Y)=\tilde\s^{(s-1)}(B)\ldots\tilde\s(B)B\hat Y$ as desired.
%
%
\end{proof}

\begin{lemma} \label{lemma: group of constants}
	Let $\G\leq \Gl_m$ be a closed subgroup defined over $k^\s$ and consider the $\s$-algebraic group $G=\{g\in\G|\ \s(g)=g\}$. If $k$ satisfies \ptweak{} (or \pt{}) then $k$ is (uniformly) strongly $G$-trivial.
\end{lemma}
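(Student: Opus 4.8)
The plan is to first reduce to the non-strong assertion, namely: \emph{if $k$ satisfies \ptweak{} (resp.\ \pt{}), then $k$ is (uniformly) $G$-trivial.} This reduction is clean: a direct computation with the functor of points shows that for every $n\geq 1$ the $\s^n$-algebraic group $G_n$ is isomorphic, over $(k,\s^n)$, to $\{g\in\G\mid\s^n(g)=g\}$. Indeed, for a $k$-$\s^n$-algebra $S$ one has $G_n(S)=G(S_1)$, and since $\G$ is defined over $k^\s\subseteq k^{\s^n}$, restriction of scalars along $\s^i\colon k\to k$ does not affect $\G$, so $\G(S_1)=\G(S)^n$; writing $g=(g_1,\dots,g_n)$ and imposing $\s(g)=g$ forces $g_1=\dots=g_n=:h$ and $\s^n(h)=h$. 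Thus $G_n$ is again a ``group of constants'' of a closed subgroup of some $\Gl_m$ defined over $k^{\s^n}$, with $\s$ replaced by $\s^n$. Since $(k,\s^n)$ satisfies \ptweak{} (resp.\ \pt{}) by Lemma~\ref{lem: pt for powers of s}, the non-strong assertion applied to $(k,\s^n)$ gives that $(k,\s^n)$ is (uniformly) $G_n$-trivial for every $n\geq 1$, which by Definition~\ref{defi: Gtrivial} is exactly (uniform) strong $G$-triviality. So it remains to prove the non-strong assertion.

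The second step is to classify the $G$-torsors. Let $\mathcal{L}\colon[\s]_k\G\to[\s]_k\G$ be the $\s$-morphism $g\mapsto\s(g)g^{-1}$. For $A\in\G(k)$ the fibre $\mathcal{L}^{-1}(A)=\{x\in\G\mid\s(x)=Ax\}$ is a $\s$-closed $\s$-subvariety of $[\s]_k\G$, and it is a $G$-torsor under $(g,x)\mapsto xg^{-1}$: if $\s(x)=Ax$ and $\s(g)=g$ then $\s(xg^{-1})=\s(x)\s(g)^{-1}=Axg^{-1}$, and given $x,x'$ the element $g=(x')^{-1}x$ satisfies $\s(g)=g$ and $xg^{-1}=x'$. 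Conversely, I would show that \emph{every} $G$-torsor $X$ is isomorphic to some $\mathcal{L}^{-1}(A)$. Since \ptweak{}(a) makes $k$ algebraically closed and we are in characteristic zero, every $\G$-torsor over $k$ is trivial, so $H^1(k,\G)=0$; as $H^1_\s(k,[\s]_k\G)=H^1(k,\G)$ (\cite[Prop.~3.5]{BachmayrWibmer:TorsorsForDifferenceAlgebraicGroups}), the image of $X$ in $H^1_\s(k,[\s]_k\G)$ is trivial. Hence the push-out $\s$-variety $X\times^{G}[\s]_k\G$ is a trivial $[\s]_k\G$-torsor, and fixing a trivialization identifies $X$ with a $\s$-closed $\s$-subvariety $X'$ of $[\s]_k\G$ which is stable under right translation by $G$ and is a $G$-torsor for this action. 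Because $\mathcal{L}$ is invariant under right translation by $G$ (for $h\in G$, $\mathcal{L}(gh)=\s(g)\s(h)h^{-1}g^{-1}=\s(g)g^{-1}=\mathcal{L}(g)$) and its fibres are precisely the right $G$-cosets, the restriction $\mathcal{L}|_{X'}$ factors through the quotient of the $G$-torsor $X'$ by $G$, which is $\operatorname{Spec}(k\{X'\}^{G})=\operatorname{Spec}(k)$; hence $\mathcal{L}|_{X'}$ is constant, equal to some $A\in([\s]_k\G)(k)=\G(k)$, so $X'\subseteq\mathcal{L}^{-1}(A)$, and as both are $G$-torsors under the same action they coincide. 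Thus $X\cong\mathcal{L}^{-1}(A)=\{x\in\G\mid\s(x)=Ax\}$ for some $A\in\G(k)$.

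The third step is to conclude. By Proposition~\ref{prop: stronger P3b} applied with $d=1$, there exists an integer $n\geq 1$ such that $\s(Y)=AY$ has a solution in $\G(\kn)$; equivalently $X(\kn)=\mathcal{L}^{-1}(A)(\kn)\neq\emptyset$. Hence $k$ is $G$-trivial. If moreover $k$ satisfies \pt{}, then Proposition~\ref{prop: stronger P3b} provides $n$ uniformly in $A\in\G(k)$, hence uniformly over all $G$-torsors $X$, so $k$ is uniformly $G$-trivial. Combined with the reduction of the first paragraph, this yields that $k$ is (uniformly) strongly $G$-trivial, as claimed. The main obstacle is the torsor classification in the second step: since $G=\G^\s$ is not normal in $[\s]_k\G$, one cannot simply invoke the exact cohomology sequence for a group quotient, and one must argue via the twisting construction and the Lang-type map $\mathcal{L}$, checking carefully that its fibres are exactly the $G$-cosets and that $k\{X'\}^{G}=k$ for a $G$-torsor $X'$.
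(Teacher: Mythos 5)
Your proof is correct and follows the same overall architecture as the paper's: reduce the strong assertion to the plain one via the observation that $G_n\cong\{g\in\G\mid\s^n(g)=g\}$ together with Lemma~\ref{lem: pt for powers of s}, classify $G$-torsors as $\{x\in\G\mid\s(x)=Ax\}$ for some $A\in\G(k)$, and conclude with Proposition~\ref{prop: stronger P3b}. The one place you diverge is the classification step: the paper disposes of it in one line by citing \cite[Cor.~6.3]{BachmayrWibmer:TorsorsForDifferenceAlgebraicGroups} (with $d=1$, $\psi=\id$), whereas you re-derive it via the twisting construction (pushing $X$ along $G\hookrightarrow[\s]_k\G$ and using $H^1_\s(k,[\s]_k\G)=H^1(k,\G)=0$ since $k$ is algebraically closed of characteristic zero) combined with the Lang-type map $\mathcal{L}(g)=\s(g)g^{-1}$. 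That is essentially the content of the cited corollary, so this makes the argument self-contained rather than charting a genuinely different route. The detail you flag, that $k\{X'\}^G=k$, does hold: under the torsor isomorphism $k\{X'\}\otimes_k k\{X'\}\simeq k\{G\}\otimes_k k\{X'\}$, $a\otimes 1$ is sent to the coaction $\rho(a)$, so $a\in k\{X'\}^G$ (i.e.\ $\rho(a)=1\otimes a$) forces $a\otimes 1=1\otimes a$ in $k\{X'\}\otimes_k k\{X'\}$, and hence $a\in k$ since $k$ is a field.
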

\begin{proof}
	For $n\geq 1$ the $\s^n$-algebraic group $G_n=\{g\in\G|\ \s^n(g)=g\}$ is of the same form as $G$. So, using Lemma \ref{lem: pt for powers of s}, it suffices to show that $k$ is $G$-trivial. 
	
	According to \cite[Cor. 6.3]{BachmayrWibmer:TorsorsForDifferenceAlgebraicGroups} (with $d=1$ and $\psi$ the identity map) every $G$-torsor is isomorphic to a $G$-torsor of the form 
	$X'=\{x'\in\G|\ \s(x')=x'A'\}$
	for some $A'\in\G(k)$, where $G$ is acting by $(g,x)\mapsto gx$. Let $X=\{x\in\G|\ \s(x)=Ax\}$ where $A=A'^{-1}$ and $G$ is acting on $X$ by $(g,x)\mapsto xg^{-1}$. Then $X'\to X,\ x'\mapsto x'^{-1}$ is an isomorphism of $G$-torsors. The stronger versions of assumption \ptweak{} (b) and \pt{} (b) in Proposition \ref{prop: stronger P3b} (with $d=1$) imply $X(\kn)\neq\emptyset$ for some $n\geq 1$. If $k$ satisfies \pt{}, then $n$ can be chosen uniformly for all $G$-torsors.
\end{proof}

Recall that a non-commutative algebraic group of positive dimension (over an algebraically closed field of characteristic zero) is called \emph{(almost-)simple} if every proper normal closed subgroup is trivial (respectively finite). The simple algebraic groups are sometimes also referred to as the almost-simple algebraic groups of adjoint type. 

\begin{prop} \label{prop: Zariski dense in simple}
Let $G$ be a proper Zariski-dense $\s$-reduced $\s$-closed subgroup of a simple algebraic group $\G$. If $k$ satisfies \ptweak{} (or \pt{}) then $k$ is (uniformly) strongly $G$-trivial.
\end{prop}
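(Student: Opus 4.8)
The plan is to use the structure theory of Zariski\=/dense $\s$\=/closed subgroups of simple algebraic groups to reduce, after passing to a power of $\s$, to the situation already settled in Lemma \ref{lemma: group of constants}.

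First I would dispose of the easy reductions. Since $k$ satisfies \ptweak{}, it is algebraically closed (\ptweak{} (a)) and inversive (Lemma \ref{lemma: inversive}). By Corollary \ref{cor: reduce to connected} it suffices to show that $k$ is (uniformly) strongly $G^o$\=/trivial, and $G^o$ is again a proper Zariski\=/dense $\s$\=/reduced $\s$\=/closed subgroup of $\G$ (the Zariski closure of $G^o$ is a closed, normal, finite\=/index subgroup of $\G$, hence equals $\G$ because $\G$ is connected); so we may assume $G$ is connected. Moreover, by Lemma \ref{lem Gn trivial implies G trivial} together with Lemma \ref{lem: pt for powers of s}, it is enough to produce a single integer $n\geq 1$ for which $(k,\s^n)$ is (uniformly) strongly $G_n$\=/trivial, which gives us the freedom to replace $\s$ by a power at will.

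Next I would invoke the structure theorem for connected Zariski\=/dense $\s$\=/closed subgroups of simple algebraic groups in characteristic zero (as developed in \cite{Wibmer:almostsimple}). Because $\G$ is simple, all the Zariski closures $G[i]\to\G$ are isogenies onto $\G$, so $\sdim(G)=0$ and $\ord(G)=\dim\G$; and, after replacing $\s$ by a suitable power $\s^n$ and conjugating $G_n$ by an element of the ambient algebraic group (which only changes $G_n$ up to isomorphism of $\s^n$\=/algebraic groups, hence does not affect (strong) triviality), one may assume
$$G_n=\{g\in\mathcal H\mid \s^n(g)=\f(g)\},$$
where $\mathcal H$ is a semisimple algebraic group defined over $k^\s$ and $\f\colon\mathcal H\to\mathcal H$ is an automorphism of finite order $e$ defined over $k^\s$ (so that $\f$ commutes with $\s$). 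Then I would check that $(G_n)_e=G_{ne}$ is $\s^{ne}$\=/isomorphic to the $\s^{ne}$\=/algebraic group $\mathcal H^{\s^{ne}}=\{h\in\mathcal H\mid \s^{ne}(h)=h\}$: a point of $(G_n)_e$ is a tuple $(g,\f(g),\dots,\f^{e-1}(g))$, and unwinding the shift description of the right adjoint (Subsection \ref{subsection: From s to sn}) shows that the induced $\s^{ne}$\=/action sends the first coordinate $g$ to $\f^e(g)=g$; equivalently, iterating $\s^n(g)=\f(g)$ exactly $e$ times yields $\s^{ne}(g)=g$. Since $(k,\s^{ne})$ again satisfies \ptweak{} (resp. \pt{}) by Lemma \ref{lem: pt for powers of s} and $\mathcal H$ is defined over $k^\s\subseteq k^{\s^{ne}}$, Lemma \ref{lemma: group of constants}, applied over $(k,\s^{ne})$, shows that $(k,\s^{ne})$ is (uniformly) strongly $\mathcal H^{\s^{ne}}$\=/trivial, hence (uniformly) strongly $G_{ne}$\=/trivial, and therefore $k$ is (uniformly) strongly $G$\=/trivial by Lemma \ref{lem Gn trivial implies G trivial}.

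The step I expect to be the main obstacle is the structure theorem, and specifically bringing $G_n$ into the above normal form with $\f$ of \emph{finite order}: a priori the connecting isogeny is only an automorphism of $\mathcal H$ up to inner automorphisms, and absorbing its inner part into a conjugation amounts to solving a twisted equation which, after iterating over the finite order of the outer part, reduces to a linear equation $\s^d(Y)=BY$ handled by Proposition \ref{prop: stronger P3b}. As a variant of the last paragraph one can also bypass the constants group entirely: once $\f$ has finite order $e$, every $G_n$\=/torsor is isomorphic to $X=\{x\in\mathcal H\mid\s^n(x)=\f(x)A\}$ for some $A\in\mathcal H(k)$ (as in \cite[Cor. 6.3]{BachmayrWibmer:TorsorsForDifferenceAlgebraicGroups}), and iterating the defining equation $e$ times gives $\s^{ne}(x)=xB$, equivalently $\s^{ne}(Y)=B^{-1}Y$ with $Y=x^{-1}$, which Proposition \ref{prop: stronger P3b} solves over some $k^{[N]}$ with $N$ uniform in $A$ under \pt{}. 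Everything beyond the structure theorem is bookkeeping with the induction principle and the lemmas already established.
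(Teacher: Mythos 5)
Your proposal follows the same route as the paper: invoke the structure theory to write $G$ as $\{g\in\G\mid\s^n(g)=\varphi(g)\}$ (the paper cites \cite[Prop.~A.19]{DiVizioHardouinWibmer:DifferenceAlgebraicRelations}), pass to a power of $\s$ so that the automorphism becomes inner (\cite[Thm.~A.20]{DiVizioHardouinWibmer:DifferenceAlgebraicRelations}), absorb it by conjugating with a solution of a twisted equation $\s^{ne}(Y)=hY$ — which is precisely where Proposition~\ref{prop: stronger P3b} combined with Lemma~\ref{lemma: on pt (b)} enters — and conclude that some $G_d$ is isomorphic to a constants group $\{h\in\G^n\mid\s^d(h)=h\}$ handled by Lemma~\ref{lemma: group of constants}, after which Lemma~\ref{lem Gn trivial implies G trivial} finishes. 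You correctly identify the conjugation step as the crux. The only cosmetic differences are that you present the normal form in two stages (first an automorphism of finite order, then absorb it to reach the constants group) whereas the paper collapses this into one conjugation producing $\s^{nel}(u^{-1}gu)=u^{-1}gu$ directly, and that you offer an alternative endgame via the explicit torsor description $\{x\in\mathcal H\mid\s^n(x)=\f(x)A\}$, which the paper does not need here but is in the same spirit as Lemma~\ref{lemma: subgroups of Ga}.
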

\begin{proof}
	The idea of the proof is to show that for some $d\geq 1$ the difference algebraic group $G_d$ is isomorphic to one of the difference algebraic groups treated in Lemma \ref{lemma: group of constants}.
	
	By \cite[Prop. A.19]{DiVizioHardouinWibmer:DifferenceAlgebraicRelations} (Cf. the proof of  \cite[Prop. 7.10)]{ChatzidakiHrushovskiPeterzil:ModelTheoryofDifferenceFieldsIIPeriodicIdelas}.) we know that 
	$$G=\{g\in\G|\ \s^n(g)=\varphi(g)\}$$
	for some $n\geq 1$ and some isomorphism $\varphi\colon\G\to {^{\s^n}\!\G}$ of algebraic groups over $k$.
	Let us fix an embedding of $\G$ into $\Gl_m$ such that $\G$ is defined over $\mathbb{Q}$ inside $\Gl_m$. (This is possible because the simple algebraic groups are defined over $\mathbb{Q}$.) 
	As in the proof of \cite[Theorem A.20]{DiVizioHardouinWibmer:DifferenceAlgebraicRelations} it follows that there exists a matrix $h\in\G(k)$ and an integer $e\geq 1$ such that
	$\s^{ne}(g)=hgh^{-1}$ for all $g\in G$. 
	
	Consider the equation $\s^{ne}(Y)=hY$. The stronger versions of \ptweak{}(b) and \pt{}(b) obtained in Proposition \ref{prop: stronger P3b} (applied with $d=ne$) together with Lemma \ref{lemma: on pt (b)} both imply that there exists an integer $l\geq 1$ such that the equation $\s^{nel}(Y)=\s^{(l-1)ne}(h)\ldots\s^{ne}(h)hY$ has a solution $u\in\G(k)$. From $\s^{ne}(g)=hgh^{-1}$  it follows that $\s^{nel}(g)=\s^{(l-1)ne}(h)\ldots h g (\s^{(l-1)ne}(h)\ldots h)^{-1}$. Therefore 
	$\s^{nel}(u^{-1}gu)=u^{-1}gu$ for all $g\in G$. Now
	$g\mapsto u^{-1}gu$ is an automorphism of $\G$ that maps $G$ isomorphically to a $\s$-closed subgroup $G'$ of $\G$. Indeed,
	$$G'=\{g\in \G|\ \s^n(g)=v\varphi(g)v^{-1}\},$$
	where $v=\s^n(u^{-1})\varphi(u)\in\G(k)$.

	Let $T=(T_{ij})$ denote the $m\times m$-matrix of coordinate functions on $\G$. Then the $\s$-coordinate ring $k\{G'\}$ of $G'$ equals $$k\{G'\}=k[\G^n]=k[T,1/\det(T),\s(T),1/\det(\s(T)),\ldots,\s^{n-1}(T),1/\det(\s^{n-1}(T))]$$ with $\s$ determined by $\s(\s^{n-1}(T))=v\varphi(T)v^{-1}$.  
	As $\s^{nel}(g)=g$ for all $g\in G'$ we have $\s^{nel}(T)=T$. Thus $\s^{nel}(\s^i(T))=\s^i(T)$ for $i=0,\ldots,n-1$. This shows that for $d=nel$ the $\s^d$-algebraic group $G'_d$ is isomorphic to
	the $\s^d$-algebraic group $H=\{h\in\G^n|\ \s^d(h)=h\}$.
	Therefore, also $G_d$ is isomorphic to $H$. As $(k,\s^d)$ satisfies \ptweak{} or \pt{}, respectively, by Lemma \ref{lem: pt for powers of s}, we know from Lemma \ref{lemma: group of constants} that $H$ is (uniformly) strongly $(k,\s^d)$-trivial. So $G_d$ is (uniformly) strongly $(k,\s^d)$-trivial, and therefore $G$ is (uniformly) strongly $k$-trivial (Lemma \ref{lem Gn trivial implies G trivial}).		
\end{proof}

\subsection{The proof for $\s$-algebraic groups of $\s$-dimension zero}	

In this subsection we show that if $k$ satisfies \ptweak{} (or \pt{}) then $k$ is (uniformly) strongly $G$\=/trivial for every $\s$-algebraic group $G$ with $\sdim(G)=0$. The idea for the proof is to reduce to the three special cases treated in the three previous subsections (diagonalizable $\s$-algebraic groups, $\s$-algebraic vector groups and Zariski-dense $\s$-reduced $\s$-closed subgroups of simple algebraic groups).

\begin{lemma} \label{lemma: embedd into Ga Gm or simple}
	Assume that $k$ is algebraically closed. Let $G$ be a connected $\s$-algebraic group of positive order. Then there exists a proper normal $\s$-closed subgroup $N$ of $G$ such that $G/N$ is isomorphic to a Zariski-dense $\s$-closed subgroup of $\Ga$, $\Gm$ or a simple algebraic group.	
\end{lemma}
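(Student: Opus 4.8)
The plan is to transfer the classical structure theory of connected algebraic groups through the functor $[\s]_k(-)$. First I would fix a $\s$-closed embedding of $G$ into some $\Gl_m$ and set $\G:=G[0]$, the Zariski closure, so that $G$ is a Zariski-dense $\s$-closed subgroup of $[\s]_k\G$. Since $G$ is connected, $\G$ is a connected algebraic group (\cite[Lemma 6.12]{Wibmer:almostsimple}). I then claim $\dim\G\geq 1$: a trivial $\s$-algebraic group has order zero, and a Zariski-dense $\s$-closed subgroup of $[\s]_k\G$ is trivial whenever $\G$ is; as $\ord(G)>0$ the group $G$ is nontrivial, so $\G\neq\{1\}$, and being connected in characteristic zero it has $\dim\G\geq 1$.

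Next I would apply the structure theory of connected algebraic groups over the algebraically closed characteristic-zero field $k$ to produce a proper normal closed subgroup $\mathcal{N}\unlhd\G$ with $\G/\mathcal{N}$ isomorphic to $\Ga$, $\Gm$, or a simple algebraic group. If $\G$ is solvable, then $[\G,\G]$ is a proper closed normal subgroup and $\G/[\G,\G]$ is a connected commutative linear algebraic group, hence isomorphic to $\Ga^r\times\Gm^s$ with $r+s\geq 1$ (here characteristic zero is used, to split the unipotent part into a vector group); composing the quotient map with a coordinate projection gives $\G\twoheadrightarrow\Ga$ or $\G\twoheadrightarrow\Gm$. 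If $\G$ is not solvable, then $\G/\rad(\G)$ is a nontrivial semisimple group, so its adjoint group $(\G/\rad(\G))^{\mathrm{ad}}$ is a nontrivial direct product of simple adjoint algebraic groups; composing $\G\to\G/\rad(\G)\to(\G/\rad(\G))^{\mathrm{ad}}$ with a projection onto one factor gives $\G\twoheadrightarrow\mathcal{Q}$ with $\mathcal{Q}$ simple. In either case $\mathcal{N}$ is the kernel of the resulting quotient map $\G\to\mathcal{Q}$, and it is proper because $\dim\mathcal{Q}\geq 1$.

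Finally I would push this down to $G$. The quotient map $\G\to\mathcal{Q}:=\G/\mathcal{N}$ induces a morphism $[\s]_k\G\to[\s]_k\mathcal{Q}$ of $\s$-algebraic groups, and restricting along $G\hookrightarrow[\s]_k\G$ gives $\f\colon G\to[\s]_k\mathcal{Q}$. Put $N:=\ker(\f)$, a normal $\s$-closed subgroup of $G$, and factor $\f$ via \cite[Theorem 4.14]{Wibmer:almostsimple} as a quotient map $G\to G/N$ followed by a $\s$-closed embedding $G/N\hookrightarrow[\s]_k\mathcal{Q}$ whose image is $G':=\f(G)$, so $G/N\cong G'$. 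It remains to check $G'$ is Zariski-dense in $\mathcal{Q}$ (which also forces $N\subsetneq G$, as then $G'\neq\{1\}$): the restriction of $\f^*$ to the subring $k[\mathcal{Q}]\subseteq k\{[\s]_k\mathcal{Q}\}$ of coordinate functions of $\mathcal{Q}$ is the composite $k[\mathcal{Q}]\hookrightarrow k[\G]\hookrightarrow k\{G\}$, injective because $\G\to\mathcal{Q}$ is a quotient map and because $G$ is Zariski-dense in $\G$; hence $\I(G')\cap k[\mathcal{Q}]=0$, i.e. $G'[0]=\mathcal{Q}$. Thus $N$ is a proper normal $\s$-closed subgroup of $G$ with $G/N\cong G'$ a Zariski-dense $\s$-closed subgroup of $\Ga$, $\Gm$, or a simple algebraic group.

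The algebraic-group input in the second paragraph is standard and presents no real difficulty. The only point demanding care is the bookkeeping in the last paragraph: one must make sure that passing from $G$ to its $\s$-algebraic image in $[\s]_k\mathcal{Q}$ does not shrink its Zariski closure below $\mathcal{Q}$. This is exactly what the injectivity of the order-zero part of $\f^*$ guarantees, so I expect this to be the main (and rather mild) obstacle.
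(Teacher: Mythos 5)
Your proof is correct and follows essentially the same route as the paper's: reduce to a Zariski closure $\G$, produce a proper normal closed $\N\unlhd\G$ with $\G/\N$ isomorphic to $\Ga$, $\Gm$ or a simple group, and push forward to $G$ via the kernel of the induced map into $[\s]_k(\G/\N)$, checking Zariski density of the image. The only (immaterial) difference is in the purely algebraic step: the paper takes a proper normal closed subgroup of maximal dimension and branches on whether the quotient is abelian or almost-simple, whereas you branch on solvability and use $[\G,\G]$, the radical and the adjoint group; both are standard and yield the same conclusion.
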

\begin{proof}
	First of all, note that if $\G$ is a connected algebraic group of positive dimension, then there exists a proper normal closed subgroup $\N$ of $\G$ such that $\G/\N$ is isomorphic to $\Ga$, $\Gm$ or a simple algebraic group. To see this, simply choose a proper normal closed subgroup $\N'$ of maximal dimension. If $\G/\N'$ is abelian, then it follows from the structure theory of abelian algebraic groups 
	\cite[Cor. 16.15]{Milne:AlgebraicGroups}
	that $\G/\N'$ is isomorphic to $\Ga$ or $\Gm$. If $\G/\N'$ is non-commutative, then $\G'=\G/\N'$ is almost-simple and therefore $\G'/Z(\G')$ is simple.
	
	Now let $\G$ be an algebraic group such that $G$ embeds into $\G$ as a Zariski-dense $\s$-closed subgroup. Then $\G$ is connected and has positive dimension. As argued above, there exists a proper normal closed subgroup $\N$ of $\G$ such that $\G/\N$ is isomorphic to a simple algebraic group, $\Ga$ or $\Gm$.
	
	Now $N=\N\cap G$ is a normal $\s$-closed subgroup of $G$. Suppose $\N\cap G=G$, then $G\subseteq\N$. So $\N=\G$, because $\G$ is Zariski-dense in $\G$. Therefore $N$ is properly contained in $G$ and we have an embedding $G/N\to\G/\N$. As $G$ is Zariski-dense in $\G$, we see that $G/N$ is Zariski-dense in $\G/\N$.
\end{proof}

\begin{prop} \label{prop: sdim zero}
	Let $G$ be a $\s$-algebraic group of $\s$-dimension zero. If $k$ satisfies \ptweak{} (or \pt{}) then $k$ is (uniformly) strongly $G$-trivial.
\end{prop}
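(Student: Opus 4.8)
The plan is to reduce the general $\s$-dimension zero case to the three special classes already treated --- diagonalizable $\s$-algebraic groups (Proposition \ref{prop: proof for diagonalizable}), $\s$-algebraic vector groups (Proposition \ref{prop: vector groups of sdim 0 are strongly trivial}), and Zariski-dense $\s$-reduced $\s$-closed subgroups of simple algebraic groups (Proposition \ref{prop: Zariski dense in simple}) --- by an induction on $\ord(G)$, using the induction principle (Corollary \ref{cor: strong induction for tower} and Proposition \ref{prop:InductionPrinciple}) to glue. The base case $\ord(G)=0$ is exactly Proposition \ref{prop: order zero}. So assume $\ord(G)>0$, and suppose inductively that every difference field satisfying \ptweak{} (or \pt{}) is (uniformly) strongly $H$-trivial for every $\s$-algebraic group $H$ of $\s$-dimension zero with $\ord(H)<\ord(G)$.

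First I would reduce to the connected case: by Corollary \ref{cor: reduce to connected} it suffices to treat $G^o$, and $\ord(G^o)=\ord(G)$ (\cite[Cor. 6.13]{Wibmer:almostsimple}), so we may assume $G$ is connected. Since $k$ is algebraically closed (from \ptweak{}(a)), Lemma \ref{lemma: embedd into Ga Gm or simple} provides a proper normal $\s$-closed subgroup $N$ of $G$ such that $G/N$ is isomorphic to a Zariski-dense $\s$-closed subgroup of $\Ga$, $\Gm$, or a simple algebraic group. Because $G$ is connected and $N$ is a proper $\s$-closed subgroup, Lemma \ref{lemma: order drops} gives $\ord(N)<\ord(G)$, so $\sdim(N)=0$ and the induction hypothesis applies to $N$: $k$ is (uniformly) strongly $N$-trivial. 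By Proposition \ref{prop:InductionPrinciple}(ii), it therefore suffices to show that $k$ is (uniformly) strongly $(G/N)$-trivial; that is, we may assume $G$ itself is a Zariski-dense $\s$-closed subgroup of $\Ga$, $\Gm$, or a simple algebraic group.

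Now dispatch the three cases. If $G$ is Zariski-dense in $\Gm$, then $G$ is diagonalizable (Proposition \ref{prop: all about diagonalizable}(iii)) of $\s$-dimension zero, so Proposition \ref{prop: proof for diagonalizable} applies. If $G$ is Zariski-dense in $\Ga$, then $G$ is a $\s$-algebraic vector group of $\s$-dimension zero, so Proposition \ref{prop: vector groups of sdim 0 are strongly trivial} applies. If $G$ is Zariski-dense in a simple algebraic group $\G$: when $G=\G$ this is the algebraic case, handled by Lemma \ref{lemma: algebraic case}; otherwise, since $k$ is inversive (Lemma \ref{lemma: inversive}) we may pass to $G_\sred$ by Lemma \ref{lemma: reduce to sredued for strongly}, and $G_\sred$ is still a proper Zariski-dense $\s$-closed subgroup of $\G$ (passing to $\sred$ does not change the Zariski closures, since those are already reduced in characteristic zero), and $G_\sred$ is $\s$-reduced by construction, so Proposition \ref{prop: Zariski dense in simple} applies. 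In all three cases $k$ is (uniformly) strongly $G$-trivial, completing the induction step and hence the proof.

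The main obstacle is not in the argument above, which is a fairly mechanical assembly, but in making sure the bookkeeping of the induction is clean: one must check that each reduction strictly decreases the order (or is already a base case), and that the reduction to $G_\sred$ in the simple-group case genuinely preserves both Zariski-density and properness --- the latter because $G_\sred = G$ would force $G$ to already be $\s$-reduced, in which case there is nothing to reduce. A second point requiring care is that in the case $G$ is Zariski-dense in a simple $\G$ but $G$ need not be reduced, we should invoke Lemma \ref{lemma: reduce to sredued for strongly} rather than Lemma \ref{lemma: reduce to sreduced}, since we need the \emph{strong} triviality conclusion for the induction to carry forward.
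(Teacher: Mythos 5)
The overall architecture of your proof matches the paper's: induction on $\ord(G)$, base case from Proposition \ref{prop: order zero}, reduction to connected via Corollary \ref{cor: reduce to connected}, application of Lemma \ref{lemma: embedd into Ga Gm or simple} together with Proposition \ref{prop:InductionPrinciple}(ii), and disposal of the $\Gm$ and $\Ga$ cases via Propositions \ref{prop: proof for diagonalizable} and \ref{prop: vector groups of sdim 0 are strongly trivial}.

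The gap is in your handling of the simple-group case. You pass to $G_\sred$ and assert that it ``is still a proper Zariski-dense $\s$-closed subgroup of $\G$ (passing to $\sred$ does not change the Zariski closures, since those are already reduced in characteristic zero).'' This conflates $\s$-reduction (forcing $\s$ to be injective on $k\{G\}$) with ordinary reduction (killing the nilradical). In characteristic zero $k\{G\}$ is automatically reduced as a ring, but $\s\colon k\{G\}\to k\{G\}$ can still fail to be injective, and that failure can collapse the Zariski closure. Concretely: take $\G$ a simple algebraic group and $G=\{g\in\G\mid\s(g)=1\}$. Then $G$ is a connected Zariski-dense $\s$-closed subgroup of $\G$ with $\sdim(G)=0$ and $\ord(G)=\dim(\G)>0$, but $\s(T_{ij}-\delta_{ij})=\s(T_{ij})-\delta_{ij}\in\I(G)$, so $T_{ij}-\delta_{ij}\in\I(G_\sred)$ and $G_\sred$ is the trivial group --- certainly not Zariski-dense in $\G$. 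So Proposition \ref{prop: Zariski dense in simple} does not apply to $G_\sred$, and your argument breaks at exactly this step.

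The paper avoids this by never claiming $G_\sred$ is Zariski-dense. Instead it splits on whether $G_\sred\subsetneq G$: if so, $\ord(G_\sred)<\ord(G)$ by Lemma \ref{lemma: order drops} (using connectedness of $G$), so the \emph{induction hypothesis} applies to $G_\sred$ and Lemma \ref{lemma: reduce to sredued for strongly} transfers the conclusion to $G$; if instead $G=G_\sred$, then $G$ is already $\s$-reduced, Zariski-dense, and automatically proper in $\G$ (since $\sdim(G)=0<\sdim([\s]_k\G)$ --- which also makes your separate ``$G=\G$'' case vacuous), so Proposition \ref{prop: Zariski dense in simple} applies directly. Your argument would be correct with this case split in place of the unjustified density claim.
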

\begin{proof}
	We will prove this by induction on $\ord(G)$.
	The case when $G$ has order zero is treated in Proposition \ref{prop: order zero}. So we can assume $\ord(G)>0$.
	By Corollary \ref{cor: reduce to connected} we may assume that $G$ is connected. 
	According to Lemma \ref{lemma: embedd into Ga Gm or simple} there exists a proper normal $\s$-closed subgroup $N$ of $G$ such that $G/N$ is isomorphic to a Zariski-dense $\s$-closed subgroup of $\Ga$, $\Gm$ or a simple algebraic group. 
	As $\ord(N)<\ord(G)$, we can use the induction hypothesis together with Proposition \ref{prop:InductionPrinciple} (ii) to reduce to the case that $G$ is a Zariski-dense $\s$-closed subgroup of $\Ga$, $\Gm$ or a simple algebraic group. The case when $G$ is a $\s$-closed subgroup of $\Ga$ follows from Proposition \ref{prop: vector groups of sdim 0 are strongly trivial} and the case when $G$ is a $\s$-closed subgroup of $\Gm$ follows from Proposition \ref{prop: proof for diagonalizable}. So it only remains to treat the case when $G$ is a Zariski-dense $\s$-closed subgroup of a simple algebraic group. If $G_\sred$ is a proper $\s$-closed subgroup of $G$, then we can use the induction hypothesis and Lemma \ref{lemma: reduce to sredued for strongly} to conclude. Thus we can assume that $G=G_\sred$, i.e., $G$ is a proper $\s$-reduced $\s$-closed subgroup of a simple algebraic group. This case is treated in Proposition \ref{prop: Zariski dense in simple}.	
\end{proof}

\subsection{Conclusion}

\label{sec: positive sdim}

In this subsection we complete the proof of \ptweak{}$\Rightarrow$\poweak{} by showing that a $\s$-field satisfying \ptweak{} is strongly $G$-trivial for every $\s$-algebraic group $G$.

Recall that we defined the meaning of almost-simple for $\s$-algebraic groups of $\s$-dimension zero and positive order in Definition \ref{defi: almost-simple sdim zero}. Following \cite[Def. 7.10]{Wibmer:almostsimple} we extend the definition to $\s$-algebraic groups of positive $\s$-dimension as follows. Assume that $k$ is algebraically closed and inversive. A $\s$-algebraic group over $k$ of positive $\s$-dimension is \emph{almost-simple} if it is perfectly $\s$-reduced and every proper normal $\s$-closed subgroup has $\s$-dimension zero.

\begin{lemma} \label{lemma: proof for almost simple pos dim}
	Let $G$ be an almost-simple $\s$-algebraic group with $\sdim(G)>0$. If $k$ satisfies \ptweak{} (or \pt{}), then $k$ is (uniformly) strongly $G$-trivial.
\end{lemma}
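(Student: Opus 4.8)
The plan is to follow the same pattern as the $\s$-dimension zero cases: use a structure theorem to reduce an almost-simple $\s$-algebraic group of positive $\s$-dimension to a more explicitly describable group, and then verify strong $G$-triviality directly. Since $G$ is perfectly $\s$-reduced with every proper normal $\s$-closed subgroup of $\s$-dimension zero, the relevant classification from \cite{Wibmer:almostsimple} (the almost-simple classification, cf. \cite[Section 7]{Wibmer:almostsimple}) should tell us that, up to passing to a higher power of $\s$, $G$ is either Zariski-dense in a simple algebraic group $\G$ (so $G=[\s]_k\G$ itself, or more precisely $G$ of $\s$-dimension equal to $\dim\G$ forces $G=[\s]_k\G$ since $G[i]=\G\times\cdots\times{^{\s^i}\!\G}$ for all $i$), or $G$ is (isomorphic to) a $\s$-closed subgroup of $[\s]_k\G$ built from a simple $\G$ together with an isogeny-type datum. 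The first step, then, is to invoke this classification to reduce to the case $G=[\s]_k\G$ with $\G$ simple.

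Once we are in the case $G=[\s]_k\G$ for $\G$ a simple algebraic group (defined over $\mathbb{Q}$, hence over $k^\s$), strong $G$-triviality follows from Lemma~\ref{lemma: algebraic case}: every $G$-torsor is $[\s]_k\X$ for a $\G$-torsor $\X$, and $\X(k)\neq\emptyset$ because $k$ is algebraically closed (condition \ptweak{}(a)); moreover $G_n$ again comes from an algebraic group, so uniform strong $G$-triviality holds. The general almost-simple case is then reduced to this via the induction principle (Proposition~\ref{prop:InductionPrinciple}) applied to $1\to N\to G\to G/N\to 1$, where we would take $N$ to be a suitable proper normal $\s$-closed subgroup: by definition of almost-simple, $\sdim(N)=0$, so $k$ is (uniformly) strongly $N$-trivial by Proposition~\ref{prop: sdim zero}; and $G/N$ should be arranged (again using the classification, after possibly replacing $\s$ by $\s^d$ and using Lemma~\ref{lem: pt for powers of s} and Lemma~\ref{lem Gn trivial implies G trivial}) to be either of $\s$-dimension zero or of the form $[\s]_k\G$ with $\G$ simple. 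Combining these with Proposition~\ref{prop:InductionPrinciple}(ii) gives strong $G$-triviality.

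The main obstacle I expect is the bookkeeping around the classification of almost-simple $\s$-algebraic groups of positive $\s$-dimension: precisely identifying, possibly after passing from $(k,\s)$ to $(k,\s^d)$, a normal series whose successive quotients are either groups of $\s$-dimension zero (handled by Proposition~\ref{prop: sdim zero}) or groups of the form $[\s]_k\G$ with $\G$ simple (handled by Lemma~\ref{lemma: algebraic case}). One has to be careful that perfect $\s$-reducedness is preserved appropriately, that the reduction to $[\s]_k\G$ genuinely uses that $\sdim(G)$ equals the dimension of the ambient simple group (forcing Zariski density at every level, hence $G=[\s]_k\G$ rather than a proper $\s$-closed subgroup), and that the passage to $\s^d$ is legitimate via Lemma~\ref{lem: pt for powers of s} and Lemma~\ref{lem Gn trivial implies G trivial}. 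The cohomological pieces themselves are routine once the group-theoretic decomposition is in hand.
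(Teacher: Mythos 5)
Your approach is essentially the paper's: the key structural input is \cite[Cor.\ 8.12]{Wibmer:almostsimple}, which directly yields a normal $\s$-closed subgroup $N\unlhd G$ with $\sdim(N)=0$ and $G/N\cong[\s]_k\G$ for an algebraic group $\G$ (not necessarily simple), and then Proposition~\ref{prop: sdim zero}, Lemma~\ref{lemma: algebraic case}, and Proposition~\ref{prop:InductionPrinciple}(ii) combine exactly as you describe. The extra precautions you contemplate---passing to $\s^d$ and forcing $\G$ to be simple or forcing $G=[\s]_k\G$---are unnecessary once that corollary is in hand.
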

\begin{proof}
	 By \cite[Cor. 8.12]{Wibmer:almostsimple}, there exists a normal $\s$-closed subgroup $N$ of $G$ with $\sdim(N)=0$ and an algebraic group $\G$ such that $G/N\cong [\s]_k\G$. 
	As $k$ is (uniformly) strongly $N$-trivial by Proposition \ref{prop: sdim zero} and also uniformly strongly $[\s]_k\G$-trivial by Lemma \ref{lemma: algebraic case}, it follows that $k$ is (uniformly) strongly $G$-trivial by Proposition \ref{prop:InductionPrinciple}(ii).
\end{proof}

\begin{lemma} \label{lemma: Gsred is connnected}
	If $G$ is a connected $\s$-algebraic group, then also $G_\sred$ is connected.
\end{lemma}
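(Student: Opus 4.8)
The plan is to deduce connectedness of $G_\sred$ from the corresponding statement for the Zariski closures $G[i]$ together with the compatibility of the $\s$-reduction with taking $i$-th order Zariski closures. First I would recall from \cite[Lemma 6.12]{Wibmer:almostsimple} that a $\s$-algebraic group $H$ is connected if and only if all its Zariski closures $H[i]$ (with respect to some fixed $\s$-closed embedding into an algebraic group $\G$) are connected algebraic groups. So I fix an embedding $G\hookrightarrow [\s]_k\G$ and consider the chain of Zariski closures $G[i]\leq \G\times\ldots\times{}^{\s^i}\!\G$. Since $G$ is connected, each $G[i]$ is connected.

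The key step is to identify the Zariski closures of $G_\sred$. Since $k$ is inversive (this is part of the standing hypotheses where $G_\sred$ is considered, cf. \cite[Cor. 2.9]{Wibmer:almostsimple}), $G_\sred$ is a $\s$-closed subgroup of $G$, defined by the reflexive closure $(0)^*$ of the zero ideal of $k\{G\}$. I would argue that $(G_\sred)[i]$ is, for $i$ large enough, the reduced subscheme $(G[i])_{\red}$ of $G[i]$; more precisely one needs that $\I(G_\sred)\cap k[\G][i]$ eventually coincides with the nilradical of the ideal cutting out $G[i]$. In characteristic zero, however, every algebraic group $G[i]$ is already reduced, so $(G[i])_{\red}=G[i]$, and hence the Zariski closures of $G_\sred$ agree with those of $G$ for $i\gg 0$ — or at worst are connected because each is a closed subgroup of the connected group $G[i]$ with the same dimension for large $i$, forcing equality by the argument in Lemma \ref{lemma: order drops}. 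Since all these Zariski closures are connected, \cite[Lemma 6.12]{Wibmer:almostsimple} gives that $G_\sred$ is connected.

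An alternative, perhaps cleaner route: by \cite[Lemma 6.7]{Wibmer:almostsimple}, in characteristic zero connectedness of a $\s$-algebraic group $H$ is equivalent to $k\{H\}$ being an integral domain, equivalently $\pi_0(k\{H\})=k$. Now $k\{G_\sred\}=k\{G\}_\sred=k\{G\}/(0)^*$, so it suffices to show that $\pi_0$ of this quotient is still $k$. Since $(0)^*$ is a $\s$-ideal contained in the nilradical (indeed $k\{G\}$ is already reduced in characteristic zero, so $(0)^*=0$ unless $\s$ fails to be injective — but even then $(0)^*$ is contained in no nontrivial idempotent's complement), the étale subalgebra $\pi_0(k\{G_\sred\})$ lifts to an étale subalgebra of $k\{G\}$, which is $k$ by connectedness of $G$. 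Hence $G_\sred$ is connected. I would write up whichever of these two is shortest given the exact form of the cited lemmas.

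The main obstacle I anticipate is the bookkeeping around the $\s$-reduction versus plain reduction: one must be careful that $(0)^*$ — the set of elements killed by some power of $\s$ — behaves well under the passage to $i$-th order Zariski closures, and that the characteristic zero hypothesis is invoked correctly to guarantee that the relevant algebraic groups are reduced. Everything else is a routine translation between $\s$-algebraic groups and their coordinate rings using the cited structural lemmas from \cite{Wibmer:almostsimple}.
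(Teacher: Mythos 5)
Both of your routes have real problems, although your second route is close in spirit to the paper's argument.

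For Route 1, the claim that the $i$-th order Zariski closures of $G_\sred$ eventually coincide with those of $G$ is false. Take $G=\{g\in\Ga\mid\s(g)=0\}$, so $k\{G\}=k[y]$ with $\s(y)=0$. Here $G$ is connected with $\ord(G)=1$, but $(0)^*=(y)$ and $G_\sred$ is trivial, hence $\ord(G_\sred)=0<\ord(G)$. So the appeal to the dimension/order argument in Lemma~\ref{lemma: order drops} does not apply: one would need $\ord(G_\sred)=\ord(G)$, which can fail. (Note also that Lemma~\ref{lemma: order drops} requires an inequality to yield a \emph{contradiction}, not equality; here there is nothing to contradict.)

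For Route 2, the ideal $(0)^*=\bigcup_n\s^{-n}(\{0\})$ is \emph{not} contained in the nilradical in general; in the example above $k\{G\}$ is reduced but $(0)^*=(y)\neq 0$. Consequently the assertion that $\pi_0(k\{G_\sred\})$ ``lifts'' to an \'etale subalgebra of $k\{G\}$ is unjustified: \'etale subalgebras do not lift along arbitrary surjections (e.g.\ $\mathbb{Q}[x]\twoheadrightarrow\mathbb{Q}[x]/(x^2-2)=\mathbb{Q}(\sqrt2)$). The paper's argument avoids all of this and is much shorter: since $G$ is connected (and we are in characteristic zero), $k\{G\}$ is an integral domain; if $fg\in(0)^*$ then $\s^n(f)\s^n(g)=\s^n(fg)=0$ for some $n$, so $\s^n(f)=0$ or $\s^n(g)=0$, i.e.\ $(0)^*$ is a prime ideal; hence $k\{G_\sred\}=k\{G\}/(0)^*$ is a domain and $G_\sred$ is connected. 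The underlying idea of your Route 2 — reduce to a statement about $k\{G_\sred\}$ via the domain/$\pi_0$ characterization — is the right one, but the intermediate reasoning needs to be replaced by the primality argument above.
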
 
\begin{proof}
	As $G$ is connected, the zero ideal of $k\{G\}$ is prime. Therefore $\I(G_\sred)=\bigcup_{n\geq 1}\s^{-n}(\{0\})$ is a prime ideal. Thus $G_\sred$ is connected.
\end{proof}

Every $\s$-algebraic group $G$ with $\sdim(G)>0$ contains a unique smallest $\s$-closed subgroup $G^{so}$ with $\sdim(G^{so})=\sdim(G)$ (\cite[Lemma 6.18]{Wibmer:almostsimple}), the \emph{strong identity component} of $G$. The $\s$-algebraic group $G$ is \emph{strongly connected} if $G=G^{so}$, i.e., for every proper $\s$-closed subgroup $H$ of $G$ we have $\sdim(H)<\sdim(G)$. 
Note that $G^{so}$ is strongly connected.

\begin{theo}\label{thm main}
Let $G$ be a $\s$-algebraic group. If $k$ satisfies \ptweak{} (or \pt{}) then $k$ is (uniformly) strongly $G$-trivial.
\end{theo}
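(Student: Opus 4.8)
The plan is to reduce the general case to the three classes of $\s$-algebraic groups already handled---diagonalizable, vector groups, and Zariski-dense $\s$-closed subgroups of simple algebraic groups---together with the order-zero and algebraic cases, using the induction principle (Corollary \ref{cor: strong induction for tower} and Proposition \ref{prop:InductionPrinciple}) as the glue. The induction will be on $\sdim(G)$, with the base case $\sdim(G)=0$ being precisely Proposition \ref{prop: sdim zero}. So assume $\sdim(G)>0$ and that the theorem holds for all $\s$-algebraic groups of strictly smaller $\s$-dimension.

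First I would reduce to the connected case via Corollary \ref{cor: reduce to connected}, which tells us it suffices to treat $G^o$; note $\sdim(G^o)=\sdim(G)$ by \cite[Cor. 6.13]{Wibmer:almostsimple}. Next, since $k$ satisfies \ptweak{}, it is inversive (Lemma \ref{lemma: inversive}), so $G_\sred$ is a $\s$-closed subgroup of $G$ and by Lemma \ref{lemma: reduce to sredued for strongly} it suffices to treat $G_\sred$; by Lemma \ref{lemma: Gsred is connnected} this is still connected, and it has the same $\s$-dimension as $G$ since passing to $G_\sred$ only kills a reflexive-closure of zero (one should check $\sdim$ is preserved, which follows as the Zariski closures $G[i]$ and $(G_\sred)[i]$ eventually have the same dimension). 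I would then pass to the strong identity component $G^{so}$: for the exact sequence $1\to G^{so}\to G\to G/G^{so}\to 1$ the quotient $G/G^{so}$ has $\sdim(G/G^{so})=0$ by \cite[Cor. 3.13]{Wibmer:almostsimple}, so $k$ is strongly $(G/G^{so})$-trivial by Proposition \ref{prop: sdim zero}, and Proposition \ref{prop:InductionPrinciple}(ii) reduces us to $G^{so}$, which is strongly connected. Finally, $G^{so}$ is $\s$-reduced (being a $\s$-closed subgroup of the $\s$-reduced $G_\sred$, or one reduces to $G^{so}_\sred=(G^{so})_\sred$ which is still strongly connected by the same $\s$-dimension argument). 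Now one needs a decomposition of a perfectly $\s$-reduced, strongly connected $\s$-algebraic group into an almost-simple piece (in the positive-$\s$-dimension sense of Lemma \ref{lemma: proof for almost simple pos dim}) and a piece of smaller $\s$-dimension. Concretely: choose a proper normal $\s$-closed subgroup $N \unlhd G^{so}$ of maximal $\s$-dimension among those with $\sdim(N)<\sdim(G^{so})$; passing to $N^{so}$ (normal in $G^{so}$ since it is characteristic in $N$) one arranges $N$ strongly connected, and a quotient argument as in Theorem \ref{theo: Jordan Hoelder for sreduced} shows $G^{so}/N$ is almost-simple of positive $\s$-dimension. Then apply Lemma \ref{lemma: proof for almost simple pos dim} to $G^{so}/N$, the induction hypothesis to $N$ (since $\sdim(N)<\sdim(G^{so})=\sdim(G)$), and Proposition \ref{prop:InductionPrinciple}(ii) to conclude that $k$ is (uniformly) strongly $G^{so}$-trivial, hence strongly $G$-trivial.

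The main obstacle I anticipate is the bookkeeping around reductions: one must keep simultaneous control of connectedness, $\s$-reducedness (or perfect $\s$-reducedness), and strong connectedness while passing through $G^o$, $G_\sred$, $G^{so}$, and the normal subgroups produced by the Jordan--Hölder-type argument, verifying at each stage that the $\s$-dimension is either preserved (so the induction does not stall) or genuinely drops (so the induction hypothesis applies to the normal subgroup). In particular one needs that $G^{so}$ can be taken $\s$-reduced---indeed perfectly $\s$-reduced, to match the definition of almost-simple in positive $\s$-dimension---and that forming $G^{so}$ of a $\s$-reduced group stays $\s$-reduced; the cleanest route is probably to invoke a structure/decomposition theorem for strongly connected $\s$-algebraic groups from \cite{Wibmer:almostsimple} directly, analogous to Theorem \ref{theo: Jordan Hoelder for sreduced}, that yields a subnormal series with almost-simple successive quotients (of order zero or of positive $\s$-dimension), and then feed each quotient into Proposition \ref{prop: sdim zero} or Lemma \ref{lemma: proof for almost simple pos dim} and conclude with Corollary \ref{cor: strong induction for tower}. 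Everything else is a routine assembly of the pieces established in the previous subsections.
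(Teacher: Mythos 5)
Your proposal is correct and follows essentially the same route as the paper: reduce to connected, then $\s$-reduced, then strongly connected via the exact sequence $1\to G^{so}\to G\to G/G^{so}\to 1$ (with $\sdim(G/G^{so})=0$ handled by Proposition \ref{prop: sdim zero}), and finally feed an almost-simple decomposition into Lemma \ref{lemma: proof for almost simple pos dim} and Corollary \ref{cor: strong induction for tower}. The only cosmetic difference is that you wrap this in an induction on $\sdim(G)$ and sketch the positive-$\s$-dimension Jordan--H\"{o}lder step by hand, whereas the paper simply invokes \cite[Theorem 7.13]{Wibmer:almostsimple} for the subnormal series with almost-simple quotients --- precisely the ``cleanest route'' you yourself recommend at the end.
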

\begin{proof}	
	By Lemma \ref{cor: reduce to connected} we can assume that $G$ is connected. By Lemmas \ref{lemma: reduce to sredued for strongly} and \ref{lemma: Gsred is connnected} we can also assume that $G$ is $\s$-reduced. So $G$ is $\s$-integral and in particular perfectly $\s$-reduced. Furthermore, by Proposition \ref{prop: sdim zero} we may assume that $\sdim(G)>0$. As $G$ is perfectly $\s$-reduced, it follows from \cite[Cor. 6.26]{Wibmer:almostsimple} that $G^{so}$ is normal in $G$ and we can consider the exact sequence $1\to G^{so}\to G\to G/G^{so}\to 1$. Because $\sdim(G/G^{so})=0$, we know from Proposition \ref{prop: sdim zero} that $k$ is (uniformly) strongly $G/G^{so}$-trivial. By Proposition \ref{prop:InductionPrinciple} (ii) we can thus assume that $G$ is strongly connected.
	Then by \cite[Theorem 7.13]{Wibmer:almostsimple}, there exists a subnormal series $$G=G_0\supseteq G_1\supseteq \dots \supseteq G_n=1 $$ of strongly connected $\s$-closed subgroups such that the quotients $G_i/G_{i+1}$ are almost-simple for $i=0,\ldots,n-1$. So it follows from Lemma \ref{lemma: proof for almost simple pos dim} and Corollary \ref{cor: strong induction for tower} that $k$ is (uniformly) strongly $G$-trivial.
%
%
%
%
%
%
%
\end{proof}

We are finally in a position to complete the proof of our main results (Theorems \ref{theo: main}) and \ref{theo: main 2nd version}) from the introduction.

\medskip

\begin{proof}[Proof of Theorems \ref{theo: main} and \ref{theo: main 2nd version}]
Let $k$ be a $\s$-field (of characteristic zero). The equivalences \poweak{}$\Leftrightarrow$\pwweak{} and \po{}$\Leftrightarrow$\pw{} are in Proposition \ref{prop: P2 implies P1}. The implications \poweak{}$\Rightarrow$\ptweak{} and \po{}$\Rightarrow$\pt{} are in Proposition~\ref{prop: po implies pt}. The implications \ptweak{}$\Rightarrow$\ptweak{} and \pt{}$\Rightarrow$\po{} follow from Theorem~\ref{thm main}.
\end{proof}

%
%

\section{Torsors over $\s$-closed $\s$-fields}
\label{sec: Torsors over sclosed sfields}

In this short section, we show that $\s$-closed $\s$-fields satisfy condition (P1$^u$).

Recall that a $\s$-field $k$ is \emph{$\s$-closed} if every system of algebraic difference equations over $k$ that has a solution in a $\s$-field extension of $k$, already has a solution in $k$. Note that, due to the basis theorem (\cite[Theorem 2.5.11]{Levin:difference}), the system of algebraic difference equations in this definition can be assumed to be finite. The model theory of $\s$-closed $\s$-field has been studied extensively (see e.g., \cite{ChatzidakisHrushovski:ModelTheoryOfDifferenceFields}). In the model theoretic context, $\s$-closed $\s$-fields are usually referred to as models of ACFA. The following simple lemma translates the definition of $\s$-closed $\s$-fields into the language of \ks-algebras.

\begin{lemma} \label{lemma: sclosed}
	A $\s$-field $k$ is $\s$-closed if and only if for every finitely $\s$-generated \ks-algebra $R$ such that there exists a $\s$-ideal of $R$ that is prime, there exists a morphism $R\to k$ of \ks-algebras.
\end{lemma}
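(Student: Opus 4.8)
The plan is to exploit the anti-equivalence between finitely $\s$-generated \ks-algebras and affine \ks-varieties recalled in Section \ref{sec: Basic definition}: a morphism $R\to S$ of \ks-algebras is the same thing as a point with coordinates in $S$ of the \ks-variety represented by $R$, i.e.\ a solution in $S$ of the system of algebraic difference equations cut out by $R$. Under this dictionary the asserted equivalence is essentially a translation, and the only genuine point is to match \emph{``$R$ admits a prime $\s$-ideal''} with \emph{``the defining system of $R$ is solvable in a $\s$-field extension of $k$''}.

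For the implication $(\Leftarrow)$ I would argue as follows. By the basis theorem recalled before the lemma it suffices to show that every \emph{finite} system $F\subseteq k\{y_1,\dots,y_n\}$ having a solution $a$ in some $\s$-field extension $K/k$ already has a solution in $k$. Put $R=k\{y_1,\dots,y_n\}/[F]$, a finitely $\s$-generated \ks-algebra. The solution $a$ corresponds to a morphism $\f\colon R\to K$ of \ks-algebras; its kernel $\p$ is a $\s$-ideal of $R$, and since $R/\p$ embeds into the domain $K$ it is even a prime $\s$-ideal. The hypothesis then yields a morphism $R\to k$ of \ks-algebras, equivalently a solution of $F$ in $k$, as desired.

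For the implication $(\Rightarrow)$, let $R$ be a finitely $\s$-generated \ks-algebra, write $R=k\{y_1,\dots,y_n\}/I$, and suppose $\p\subseteq R$ is a prime $\s$-ideal; I must produce a morphism $R\to k$ of \ks-algebras. First pass to the $\s$-domain $R/\p$ and then to its $\s$-reduction $(R/\p)_\sred$, which is again a domain and on which $\s$ is injective; hence $L:=\operatorname{Frac}\bigl((R/\p)_\sred\bigr)$ is a $\s$-field, and it is a $\s$-field extension of $k$ because the structure map $k\to L$ is a nonzero ring homomorphism out of a field. The composite $R\to R/\p\to (R/\p)_\sred\hookrightarrow L$ is a morphism of \ks-algebras, so the images of $y_1,\dots,y_n$ constitute a solution in $L$ of the system $I$, equivalently of a finite system $F$ with $\{F\}=\{I\}$. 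Since $k$ is $\s$-closed, this finite system has a solution in $k$; the ideal of that solution is a reflexive prime — hence perfect — $\s$-ideal which contains $F$ and therefore contains $\{F\}=\{I\}\supseteq I$, so the evaluation map $k\{y\}\to k$ descends to the required morphism $R=k\{y\}/I\to k$ of \ks-algebras.

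I expect the only step needing care — and it is exactly the content of the parenthetical remark preceding the lemma — to be the passage between an arbitrary (possibly non-finitely-generated) defining system $I$ and a finite one when one tests solvability over a $\s$-field: for a tuple $a$ with coordinates in a $\s$-field the ideal $\{f\mid f(a)=0\}$ is reflexive and prime, hence a perfect $\s$-ideal, so $a$ solves $I$ if and only if it solves any finite $F$ with $\{F\}=\{I\}$, and the Ritt--Raudenbush basis theorem guarantees such an $F$ exists. Everything else is formal bookkeeping with the representability of \ks-varieties and with the definition of the $\s$-reduction, so no serious obstacle remains.
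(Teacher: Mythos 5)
Your proof is correct and takes essentially the same route as the paper's: the $(\Leftarrow)$ direction is identical, and in $(\Rightarrow)$ your passage to $(R/\p)_\sred$ and its fraction field $L$ is exactly the paper's construction of the reflexive prime $\p^*=\{r\mid\exists n:\s^n(r)\in\p\}$ and the residue field $k(\p^*)$. The only difference is your (harmless but unnecessary) detour through a finite basis $F$ with $\{F\}=\{I\}$; the paper's definition of $\s$-closed already allows arbitrary systems, so solvability of $I$ itself in $L$ directly gives solvability of $I$ in $k$.
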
	
\begin{proof}
	First assume that $k$ is $\s$-closed. Let $R$ be a finitely $\s$-generated \ks-algebra $R$ such that there exists a prime $\s$-ideal $\p$ of $R$. Then $\p^*=\{r\in R|\ \exists\ n\geq 1:\ \s^n(r)\in \p\}$ is a prime ideal of $R$ with $\s^{-1}(\p)=\p$. Therefore $R/\p^*$ is a $\s$-ring with $\s\colon R/\p^*\to R/\p^*$ injective. Thus the residue field $k(\p^*)$ of $\p^*$ is naturally a $\s$-field and the canonical map $R\to k(\p)$ is a morphism of \ks-algebras.
	Writing $R=k\{y_1,\ldots,y_n\}/I$ for some $\s$-ideal $I$ of $k\{y_1,\ldots,y_n\}$, we see that the morphism $R\to k(\p^*)$ corresponds to a solution of $I$ in $k(\p^*)$. As $k$ is $\s$-closed, this shows that there exists a solution of $I$ in $k$. This solution defines a morphism $R=k\{y_1,\ldots,y_n\}/I\to k$ of \ks-algebras.
	
	Conversely, let $F\subseteq k\{y_1,\ldots,y_n\}$ define a system of algebraic difference equations over $k$ that has a solution in some $\s$-field extension $k'$ of $k$. Then $R=k\{y_1,\ldots,y_n\}/[F]$ is a finitely $\s$-generated \ks-algebra and the solution of $F=0$ in $k'$ defines a morphism $R=k\{y_1,\ldots,y_n\}/[F]\to k'$ of \ks-algebras. The kernel of this morphism is a prime $\s$-ideal. Thus, by assumption, there exists a morphism $R=k\{y_1,\ldots,y_n\}/[F]\to k$ of \ks-algebras. The image of the $\s$-generators defines a solution of $F=0$ in $k$. Therefore, $k$ is $\s$-closed.
	%
%
\end{proof}

%
As not every finitely $\s$-generated \ks-algebra contains a prime $\s$-ideal, it is crucial that the assumption about the existence of a prime $\s$-ideal is included in Lemma \ref{lemma: sclosed}.

\begin{lemma} \label{lemma: sclosed implies snclosed}
	If $(k,\s)$ is a $\s$-closed $\s$-field, then $(k,\s^n)$ is a $\s^n$-closed $\s^n$-field for every $n\geq 1$.
\end{lemma}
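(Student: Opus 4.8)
The plan is to use the characterization of $\s$-closedness from Lemma \ref{lemma: sclosed}: $(k,\s)$ is $\s$-closed if and only if every finitely $\s$-generated \ks-algebra $R$ that admits a prime $\s$-ideal admits a morphism $R\to k$ of \ks-algebras. So I would start with a finitely $\s^n$-generated $k$-$\s^n$-algebra $S$ that admits a prime $\s^n$-ideal $\q$, and I must produce a morphism $S\to k$ of $k$-$\s^n$-algebras. The natural move is to pass to the left adjoint from Section \ref{subsection: From s to sn}: form the \ks-algebra $R={_1S}=S\otimes_k{\hs S}\otimes_k\ldots\otimes_k{^{\s^{n-1}\!}S}$, which is finitely $\s$-generated since $S$ is finitely $\s^n$-generated (Lemma \ref{lem: X1 Xn}(ii), or rather its algebra incarnation). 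By the adjunction $\Hom_{k,\s^n}(S,k)\simeq\Hom_{k,\s}({_1S},k)$, it suffices to produce a morphism $R\to k$ of \ks-algebras, and for that, by Lemma \ref{lemma: sclosed} applied to $(k,\s)$, it suffices to show that $R={_1S}$ admits a prime $\s$-ideal.

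The key step is therefore: given a prime $\s^n$-ideal $\q$ of $S$, construct a prime $\s$-ideal of ${_1S}$. First I would replace $\q$ by its reflexive closure $\q^*=\{s\in S\mid \exists\, m\geq 1:\ \s^{nm}(s)\in\q\}$, which is still prime and now satisfies $(\s^n)^{-1}(\q^*)=\q^*$, so $S/\q^*$ is a $k$-$\s^n$-algebra on which $\s^n$ acts injectively; passing to the fraction field, $K=\operatorname{Frac}(S/\q^*)$ is a $\s^n$-field extension of $(k,\s^n)$ equipped with an extension of $\s^n$, and we get a morphism $S\to K$ of $k$-$\s^n$-algebras, i.e. an element of $\Hom_{k,\s^n}(S,K)\simeq\Hom_{k,\s}({_1S},K)$. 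This gives a morphism ${_1S}\to K$ of \ks-algebras — but here is the subtlety: $K$ carries $\s$ only as $(\s^n)$, not as a genuine $\s$ commuting compatibly with the $k$-structure in the way the adjunction needs. Let me instead argue more directly at the level of spectra. A prime $\s$-ideal of ${_1S}$ is the same as a $\s$-fixed point of $\operatorname{Spec}$ in the appropriate sense; concretely, $\operatorname{Spec}({_1S})$, with its endomorphism induced by $\s\colon{_1S}\to{_1S}$, and I want a point whose closure is $\s$-stable with the right primality. The cleanest route: choose a prime $\s^n$-ideal $\q$ of $S$ with $(\s^n)^{-1}\q=\q$ as above; then the ideal of ${_1S}=S\otimes{\hs S}\otimes\cdots$ generated by $\q$ in the last tensor factor $S$ together with its images under $\s,\s^2,\dots,\s^{n-1}$ — equivalently $\q\otimes {\hs S}\otimes\cdots + S\otimes \s(\q)\otimes\cdots+\cdots$ — need not be prime, so instead I would take the prime ideal of ${_1S}$ corresponding, under the identification of $\operatorname{Spec}({_1S})$ with a product-like space, to the "diagonal" copy of $\operatorname{Spec}(S/\q)$; one checks it is $\s$-stable because $\s$ cyclically permutes the tensor factors while applying $\s^n$ (the generator of the stabilizer of $\q$) on the last one. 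Its quotient is a domain because it maps isomorphically (as a ring) to $S/\q^*$, hence it is a prime $\s$-ideal.

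So the key steps, in order, are: (i) reduce, via Lemma \ref{lemma: sclosed} for $(k,\s^n)$, to the statement that for a finitely $\s^n$-generated $S$ with a prime $\s^n$-ideal there is a $k$-$\s^n$-morphism $S\to k$; (ii) via the left adjoint and the adjunction isomorphism $\Hom_{k,\s^n}(S,k)\simeq\Hom_{k,\s}({_1S},k)$, reduce to finding a $k$-$\s$-morphism ${_1S}\to k$; (iii) apply Lemma \ref{lemma: sclosed} for $(k,\s)$, reducing to: ${_1S}$ admits a prime $\s$-ideal; (iv) construct such an ideal from the given prime $\s^n$-ideal $\q$ of $S$ (after replacing $\q$ by $\q^*$ so that it is reflexive for $\s^n$), by taking the kernel of the composite ${_1S}\to {_1(S/\q^*)}\to S/\q^*$, where the second map is the counit of the adjunction (projection onto the last factor after a suitable identification), and verifying it is a $\s$-ideal (automatic, being a kernel of a $k$-$\s$-morphism) and prime (since $S/\q^*$ is a domain). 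I expect step (iv) to be the main obstacle, specifically getting the bookkeeping right for why the kernel of ${_1S}\to S/\q^*$ is a genuine prime $\s$-ideal and why $S/\q^*$ — a $k$-$\s^n$-algebra — can legitimately receive a $k$-$\s$-morphism from ${_1S}$; the resolution is precisely that ${_1(-)}$ is left adjoint to restriction along $(R,\s)\rightsquigarrow(R,\s^n)$, so the counit ${_1(S/\q^*)}\to S/\q^*$ viewed in $k$-$\s^n$-algebras is a $k$-$\s$-morphism to $S/\q^*$ regarded via $\s^n$, and its kernel in ${_1S}$ is what we want; once the prime $\s$-ideal is produced, Lemma \ref{lemma: sclosed} gives ${_1S}\to k$ and we are done.
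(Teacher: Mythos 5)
Your steps (i)--(iii) are exactly the paper's reductions, but step (iv) has a genuine gap. The map you call ``the counit of the adjunction'', ${_1(S/\q^*)}\to S/\q^*$, does not exist as a morphism of $k$-$\s$-algebras: the counit of the adjunction $_1(-)\dashv U$ has the form ${_1(U R)}\to R$ with $R$ a \emph{$k$-$\s$-algebra}, and $S/\q^*$ is only a $k$-$\s^n$-algebra, not of the form $U R$. In fact there isn't even a canonical \emph{ring} map from the tensor product $(S/\q^*)\otimes_k{\hs(S/\q^*)}\otimes\cdots$ to its first or last factor without choosing augmentations, and no choice will be compatible with the cyclic $\s$-action on the tensor factors. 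So the kernel you propose is neither well-defined nor a $\s$-ideal, and the claim that it is ``automatic'' does not hold.

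What is missing is precisely the observation the paper uses: your abandoned first idea was on the right track. Form $k'=\operatorname{Frac}(S/\q^*)$, a $\s^n$-field extension of $(k,\s^n)$, so $S\to k'$ is a $k$-$\s^n$-morphism. Now do \emph{not} try to apply the adjunction to the target $k'$; instead apply the functor $_1(-)$ to the morphism $S\to k'$ itself, obtaining a genuine $k$-$\s$-morphism ${_1S}\to{_1(k')}$. The key point is that ${_1(k')}=k'\otimes_k{\hs k'}\otimes\cdots\otimes{^{\s^{n-1}}\!k'}$ is an integral domain, because $k$ is algebraically closed (which follows from $k$ being $\s$-closed) and tensor products of field extensions over an algebraically closed field are domains. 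Hence $\ker({_1S}\to{_1(k')})$ is a prime $\s$-ideal of ${_1S}$, Lemma~\ref{lemma: sclosed} gives a $k$-$\s$-morphism ${_1S}\to k$, and composing with the unit $S\to{_1S}$ (a $k$-$\s^n$-morphism) yields the desired $k$-$\s^n$-morphism $S\to k$. This is exactly what the paper does; the subtlety you flagged is resolved not by projecting onto one factor but by keeping the whole tensor product and using that it is a domain.
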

\begin{proof}
	This is well-known (Corollary to Lemma 1.12 in \cite{ChatzidakisHrushovski:ModelTheoryOfDifferenceFields}) and can also quickly be seen using the constructions from Section \ref{subsection: From s to sn}: Let $R$ be a finitely $\s^n$-generated $k$-$\s^n$-algebra such that there exists a prime $\s^n$-ideal of $R$. As in the proof of Lemma \ref{lemma: sclosed}, the prime $\s^n$-ideal defines a morphism $R\to k'$ of $k$-$\s^n$-algebras into a $\s^n$-field extension $k'$ of $k$. Then ${_1R}\to{_1(k')}$ is a morphism of \ks-algebras and because ${_1(k')}$ is an integral domain (as $\s$-closed $\s$-fields are algebraically closed), we see that the kernel is a prime $\s$-ideal of ${_1R}$. Thus, by Lemma \ref{lemma: sclosed}, there exists a morphism ${_1R}\to k$ of \ks-algebras. The composition $R\to {_1R}\to k$ then is a morphism of $k$-$\s^n$-algebras. So $(k,\s^n)$ is $\s^n$-closed by Lemma \ref{lemma: sclosed}.
\end{proof}	

\begin{prop} \label{prop: sclosed implies P1}
	Let $k$ be a $\s$-closed $\s$-field and $G$ a $\s$-algebraic group over $k$. Then there exists an $n\geq 1$ such that $X(\kn)\neq\emptyset$ for every $G$-torsor $X$. In other words, a $\s$-closed $\s$-field satisfies the equivalent conditions of Theorem \ref{theo: main}.
\end{prop}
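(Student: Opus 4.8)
The plan is to verify that $k$ satisfies property \pt{}; the proposition then follows from Theorem~\ref{theo: main}. A $\s$-closed $\s$-field is algebraically closed and inversive (both standard; inversivity also follows from Lemma~\ref{lemma: sclosed} applied to $\s(y)=a$, whose solution ring $k\{y\}/[\s(y)-a]\cong k[y]$ is a domain and which has a solution in the inversive closure of $k$). So \pt{}\,(a) holds. In fact I will show the equations in \pt{}\,(b) and \pt{}\,(c) are solvable already in $k$ itself, so that $n=1$ works.

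For \pt{}\,(b), fix $m\geq 1$ and $A\in\Gl_m(k)$ and make the coordinate ring $R=k[\Gl_m]=k[T,1/\det(T)]$ into a \ks\=/algebra by $\s(T)=AT$ (legitimate since $\s(\det(T))=\det(A)\det(T)$ is a unit). Then $R$ is a finitely $\s$-generated \ks\=/algebra and, being the coordinate ring of the irreducible variety $\Gl_m$, an integral domain, so its zero ideal is a prime $\s$-ideal. By Lemma~\ref{lemma: sclosed} there is a morphism $R\to k$ of \ks\=/algebras, i.e. a matrix $Y\in\Gl_m(k)$ with $\s(Y)=AY$.

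For \pt{}\,(c), fix $\alpha_0,\dots,\alpha_m\in\ZZ$ with $\alpha_m\neq 0$ and $a\in k^\times$, write $\alpha=\alpha_0+\alpha_1\s+\dots+\alpha_m\s^m\in\ZZ[\s]$ and $X=\{x\in\Gm|\ x^\alpha=a\}$. Since $k$ is inversive we may assume $\alpha_0\neq0$ (if $\alpha=\s^r\beta$ with $\beta_0\neq0$, then $x^\alpha=a$ is solvable in $k$ as soon as $z^\beta=a$ is, via $x=\s^{-r}(z)$). The heart of the argument is to exhibit a $\s$-field extension $K$ of $k$ with $X(K)\neq\emptyset$. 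Start with the purely transcendental extension $K_0=k(t_0,\dots,t_{m-1})$ and build an increasing tower of fields $K_j=K_{j-1}(t_{m+j-1})$ by adjoining, in a fixed algebraic closure, elements subject to
\[
t_{m+j}^{\,\alpha_m}=\s^j(a)\,t_j^{-\alpha_0}t_{j+1}^{-\alpha_1}\cdots t_{j+m-1}^{-\alpha_{m-1}}\qquad(j\geq0),
\]
with $K=\bigcup_{j}K_j$. Because $\alpha_0\neq0$, a transcendence\-/degree count shows $\trdeg(K/k)=m$ and that $t_1,\dots,t_m$ are again algebraically independent over $k$, so $\s(t_i)=t_{i+1}$ ($i\geq0$) together with $\s|_k$ gives a well-defined field embedding $K_0\hookrightarrow K$; extending it stepwise up the tower (at each stage choosing $t_{m+j+1}$ to be a root of $\s$ applied to the minimal polynomial of $t_{m+j}$, which is automatically a solution of the next defining relation since $\s$ carries $T^{\alpha_m}-u_j$ to $T^{\alpha_m}-u_{j+1}$) yields a field endomorphism $\s$ of $K$. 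Then $(K,\s)$ is a $\s$-field extension of $k$ in which $x:=t_0$ satisfies $x^\alpha=t_0^{\alpha_0}t_1^{\alpha_1}\cdots t_m^{\alpha_m}=a$. Hence $k\{X\}$ has a prime $\s$-ideal (the kernel of $k\{X\}\to K$), and Lemma~\ref{lemma: sclosed} gives $X(k)\neq\emptyset$, i.e. a solution of $y^\alpha=a$ in $k$. Thus \pt{}\,(c) holds with $n=1$, and combining (a), (b), (c) we conclude $k$ satisfies \pt{}, hence \po{} and \pw{} by Theorem~\ref{theo: main}.

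The only genuinely delicate step is the construction in \pt{}\,(c): one must choose the radicals $t_{m+j}$ \emph{compatibly} with the shift so that $\s$ is well defined on the whole of $K$, which forces the stagewise choice of roots described above, and one must keep track of transcendence degrees. Everything else reduces immediately to the criterion of Lemma~\ref{lemma: sclosed}, and $\s$-closedness is used only through that lemma.
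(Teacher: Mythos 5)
Your proof is correct, but it takes a different route from the paper's own argument. The paper proves Proposition~\ref{prop: sclosed implies P1} \emph{directly}, without passing through Theorem~\ref{theo: main} at all: for a connected $G$ and a $G_n$-torsor $X$, the isomorphism $k\{X\}\otimes_k k\{X\}\simeq k\{G\}\otimes_k k\{X\}$ together with the facts that $k\{G\}$ is a domain (connected, char.~$0$) and $k$ is algebraically closed forces $k\{X\}$ to be a domain, so the zero ideal is a prime $\s^n$-ideal and $\s^n$-closedness (Lemma~\ref{lemma: sclosed implies snclosed}) yields $X(k)\neq\emptyset$; the general case is then reduced to the connected one via Corollary~\ref{cor: reduce to connected}. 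This costs only a small slice of the machinery (the order-zero case and the induction principle). Your route — checking \pt{} holds with $n=1$ and then invoking Theorem~\ref{theo: main} — is logically sound but leans on the full weight of the hard implication \pt{}$\Rightarrow$\po{}, which is the bulk of the paper. In fact, the paper explicitly sketches your route in the closing paragraph of Section~\ref{sec: Torsors over sclosed sfields}; there it handles condition (b) exactly as you do (endowing $k(\Gl_m)$, or equivalently its coordinate ring, with $\s(T)=AT$), and for (c) it simply invokes the classical existence theorem of difference algebra \cite[Theorem 7.2.1]{Levin:difference} instead of your explicit tower construction. Your tower construction for (c) is correct once read carefully — the stagewise choice of roots and the transcendence-degree count (using $\alpha_0\neq 0$ after the $\s^{-r}$ reduction, which needs inversivity) are the delicate points you rightly flag — but it is considerably longer than citing the existence theorem, and both are longer than the paper's main proof.
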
	
\begin{proof}
	We first show that $k$ is uniformly strongly $G$-trivial for every connected $\s$-algebraic group $G$. So let $n\geq 1$ and let $X$ be a $G_n$-torsor. We then have, in particular, an isomorphism $k\{X\}\otimes_k k\{X\}\simeq k\{G\}\otimes_kk\{X\}$ of $k\{X\}$-algebras. Fixing a morphism $k\{X\}\to k'$ of $k$-algebras into some field extension $k'$ of $k$, we can base change this isomorphism to obtain an isomorphism $k\{X\}\otimes_k k'\simeq k\{G\}\otimes_kk'$. Because $G$ is connected (and we are in characteristic zero), $k\{G\}$ is an integral domain (\cite[Lemma 6.7]{Wibmer:almostsimple}). Moreover, $k\{G\}\otimes_kk'$ is an integral domain (e.g., using that $k$ is algebraically closed) and therefore $k\{X\}$ is an integral domain. So the zero ideal of $k\{X\}$ is a prime $\s^n$-ideal of $k\{X\}$. Because $k$ is $\s^n$-closed (Lemma \ref{lemma: sclosed implies snclosed}), there exists a morphism $k\{X\}\to k$ of $k$-$\s^n$-algebras. Thus $X$ is trivial and $k$ is uniformly strongly $G$-trivial as claimed.	
	
	Because $k$ is algebraically closed and inversive, it follows from Corollary \ref{cor: reduce to connected} that $k$ is uniformly strongly $G$-trivial for every $\s$-algebraic group $G$ over $k$. In particular, $k$ is uniformly $G$-trivial for every $\s$-algebraic group $G$ over $k$ as claimed.  	
	%
	%
\end{proof}	

While Proposition \ref{prop: sclosed implies P1} is a statement about $\s$-closed $\s$-fields, we can nevertheless deduce from it a statement about arbitrary $\s$-fields (of characteristic zero). Note that if $K$ is a $\s$-field extension of $k$ and $n\geq 1$, then $\kn=(k,\s^n)_1$ is a \ks-subalgebra of $K_1=(K,\s^n)_1$ and $K_1$ is an $n$-fold product of field extension of $k$.

\begin{cor}
	Let $G$ be a $\s$-algebraic group. Then there exists a $\s$-field extension $K$ of $k$ and an integer $n\geq 1$ such that  $X(K_1)\neq\emptyset$ for every $G$-torsor $X$. 
\end{cor}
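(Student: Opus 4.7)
The plan is to embed $k$ into a $\s$-closed $\s$-field and then apply Proposition \ref{prop: sclosed implies P1} there. More precisely, I would first invoke the standard fact (see e.g., \cite{ChatzidakisHrushovski:ModelTheoryOfDifferenceFields}) that every $\s$-field of characteristic zero embeds into a $\s$-closed $\s$-field. Choose such an extension $K$ of $k$.

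Next, consider the base change $G_K$ of $G$ from $k$ to $K$. This is a $\s$\=/algebraic group over $K$, so by Proposition \ref{prop: sclosed implies P1} applied to $G_K$ over the $\s$-closed $\s$-field $K$, there exists an integer $n\geq 1$ (depending on $G_K$ and $K$) such that $Y(K^{[n]})\neq\emptyset$ for every $G_K$-torsor $Y$ over $K$.

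To conclude, let $X$ be an arbitrary $G$-torsor over $k$. Then its base change $X_K$ is a $G_K$-torsor over $K$, so by the previous step $X_K(K^{[n]})\neq\emptyset$. Since base change preserves points in the sense that $X_K(R)=X(R)$ for every $K$-$\s$-algebra $R$, and $K_1=(K,\s^n)_1=K^{[n]}$, we obtain $X(K_1)=X_K(K^{[n]})\neq\emptyset$, as required.

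The argument is straightforward once one has the existence of a $\s$-closed $\s$-field extension; the only subtlety is to check that the integer $n$ supplied by Proposition \ref{prop: sclosed implies P1} is uniform over all $G_K$-torsors, which is exactly the content of that proposition (it establishes uniform strong $G_K$-triviality, not merely $G_K$-triviality). No further structural input about $G$ is needed since all the work has already been done in Theorem \ref{thm main} and Proposition \ref{prop: sclosed implies P1}.
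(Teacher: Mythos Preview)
Your proof is correct and follows essentially the same approach as the paper: embed $k$ into a $\s$-closed $\s$-field $K$, apply Proposition \ref{prop: sclosed implies P1} to $G_K$ to obtain a uniform $n$, and then observe that $X(K_1)=X_K(K^{[n]})\neq\emptyset$ for every $G$-torsor $X$. The paper's proof is the same, only more terse.
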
	
\begin{proof}
	Every $\s$-field can be embedded into a $\s$-closed $\s$-field (\cite[Theorem 1.1]{ChatzidakisHrushovski:ModelTheoryOfDifferenceFields}). So let $K$ be a $\s$-closed $\s$-field containing $k$. By Proposition \ref{prop: sclosed implies P1} there exists an $n\geq 1$ such that $Y(K^{[n]})\neq\emptyset$ for every $G_K$-torsor $Y$. As $X_K$ is a $G_K$-torsor, we have in particular, $X(K_1)=X_K(K^{[n]})\neq\emptyset$.
\end{proof}

One can also show directly that a $\s$-closed $\s$-field satisfies \pt{}, which, by Theorem \ref{theo: main}, is equivalent to \po{}. In fact, a $\s$-closed $\s$-field satisfies
 \pt{} (b) and \pt{} (c) with $n=1$. To see this, it suffices to show that the relevant difference equations have a solution in a $\s$-field extension of $k$. For (b), if $A\in\Gl_n(k)$ and $k(T_{ij})=k(\Gl_n)$ is the function field of $\Gl_n$, we can simply define $\s\colon k(\Gl_n)\to k(\Gl_n)$ by $\s(T)=AT$. Then clearly $T$ is a solution of $\s(Y)=AY$ in some $\s$-field extension of $k$. For (c) we can invoke the classical existence theorem from difference algebra (\cite[Theorem 7.2.1]{Levin:difference}) which implies that any non-constant difference polynomial over $k$ has a solution in some $\s$-field extension of $k$. 

\section{An application to Picard-Vessiot theory}
\label{sec: Applicationt to PV}

Difference algebraic groups arise naturally as Galois groups 
of linear differential equations depending on a discrete parameter. The solution rings in this context are called $\s$-Picard-Vessiot rings. They play a role similar to the splitting field in classical Galois theory. Torsors for difference algebraic groups arise when studying isomorphisms between two $\s$-Picard-Vessiot rings for the same linear differential equation. In this section we apply our main results (Theorems \ref{theo: main} and \ref{theo: main 2nd version} to improve the known uniqueness results for $\s$-Picard-Vessiot rings.

We briefly recall the setup from \cite{DiVizioHardouinWibmer:DifferenceGaloisTheoryOfLinearDifferentialEquations}. A \emph{$\ds$-ring} is a commutative ring $R$ together with a derivation $\de\colon R\to R$ and a ring endomorphism $\s\colon R\to R$ such that $\de(\s(r))=\hslash\s(\de(r))$ for all $r\in R$ for some fixed unit $\hslash\in R^\de$. If moreover $R$ is a field, it is called a $\ds$-field. If $K$ is a $\ds$-field, then a $K$-algebra $R$ with the structure of a $\ds$-ring such that $K\to R$ is compatible with $\de$ and $\s$ is called a \emph{$K$-$\ds$-algebra}. A \emph{$\de$-ideal} is an ideal $\ida\subseteq R$ such that $\de(\ida)\subseteq \ida$. If $\{0\}$ and $R$ are the only $\de$-ideals of $R$, the $R$ is called \emph{$\de$-simple}.

The \emph{$\de$-constants} of a $\ds$-ring $R$ are defined as $R^{\de}=\{r\in R \mid \de(r)=0\}$. Note that $R^{\de}$ is a $\s$-ring, and in particular, if $K$ is a $\ds$-field, then $K^{\de}$ is a $\s$-field. 

In what follows we work over a $\ds$-field $K$ of characteristic zero and let $k=K^\de$ be the $\s$-field of $\de$-constants. We consider a linear differential equation $\de(y)=Ay$ for some matrix $A\in K^{m\times m}$.

\begin{defi} \label{def: sPVring}
	A \emph{$\s$-Picard-Vessiot ring}
	for $\de(y)=A y$ is a $K$-$\ds$-algebra $R$ such that
	\begin{itemize}
		\item there exists a matrix $Y\in\Gl_m(R)$ with $\de(Y)=AY$ and $R=K\{Y,1/\det(Y)\}$,
		\item $R$ is $\de$-simple,
		\item $R^\de=k$. 
	\end{itemize} 
\end{defi}
The $\s$-Galois group $G$ of $R/K$ is defined as the functor from the category of \ks-algebras to the category of groups given by 
$$G(S)=\Aut^{\ds}(R\otimes_k S/K\otimes_k S)$$
for any \ks-algebra $S$, i.e., we consider automorphisms of $R\otimes_k S$ that are the identity on $K\otimes_k S$ and commute with both $\s$ and $\delta$, where $R\otimes_k S$ is considered as a $\ds$-ring with $\de$ being the trivial derivation on $S$, i.e., $\de(s)=0$ for $s\in S$. 
It is a $\s$-algebraic group over $k$ (\cite[Prop. 2.5]{DiVizioHardouinWibmer:DifferenceGaloisTheoryOfLinearDifferentialEquations}).
%

In general, a $\s$-Picard-Vessiot ring for a given differential equation $\de(y)=Ay$ is not unique (up to isomorphism). Under the assumption that the $\s$-field $k=K^\de$ is $\s$-closed, it is known that a $\s$-Picard Vessiot ring is unique up to powers of $\s$, i.e., if $R_1$ and $R_2$ are $\s$-Picard Vessiot rings for the same differential equation $\de(y)=Ay$, then there exists an integer $n\geq 1$ (possibly depending on $R_1$ and $R_2$) such that $(R_1,\de,\s^n)$ and $(R_2,\de,\s^n)$ are isomorphic as $k$-$\de\s^n$-algebras (\cite[Cor. 1.17]{DiVizioHardouinWibmer:DifferenceGaloisTheoryOfLinearDifferentialEquations}). We improve this as follows:
%

\begin{theo}
	Let $K$ be a $\delta\sigma$-field of characteristic zero and let $A\in K^{m\times m}$.
	\begin{enumerate}
		\item If $k=K^\de$ satifies the equivalent conditions of Theorem \ref{theo: main 2nd version}, then for any two $\s$-Picard-Vessiot rings $R$ and $R'$ for $\delta(y)=Ay$, there exists an $n\geq 1$ such that $R$ and $R'$ are isomorphic as $K$-$\delta\sigma^n$-algebras.
		\item If $k=K^\de$ satifies the equivalent conditions of Theorem \ref{theo: main} (e.g., $k$ is $\s$-closed), then there exists an $n\geq 1$ such that any two $\s$-Picard-Vessiot rings for $\delta(y)=Ay$ are isomorphic as $K$-$\delta\sigma^n$-algebras.
	\end{enumerate}	
%
%
\end{theo}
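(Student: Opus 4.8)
The plan is to deduce the statement from properties (P1) and (P1$^u$) of our main theorems by attaching to a pair of $\s$-Picard-Vessiot rings a torsor for a $\s$-algebraic group, and then reading off isomorphisms over $K$-$\ds^n$-algebras from the $\kn$-points of that torsor. Fix one $\s$-Picard-Vessiot ring $R'$ for $\de(y)=Ay$ and let $G$ be its $\s$-Galois group, a $\s$-algebraic group over $k=K^\de$ (\cite[Prop. 2.5]{DiVizioHardouinWibmer:DifferenceGaloisTheoryOfLinearDifferentialEquations}). For any $\s$-Picard-Vessiot ring $R$ for the same equation, consider the functor $X=X_R$ on \ks-algebras with $X(S)=\operatorname{Isom}^{\ds}_{K\otimes_k S}(R\otimes_k S,R'\otimes_k S)$. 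As in the classical Picard-Vessiot theory, and as already used in the proof of \cite[Cor. 1.17]{DiVizioHardouinWibmer:DifferenceGaloisTheoryOfLinearDifferentialEquations} building on \cite{BachmayrWibmer:TorsorsForDifferenceAlgebraicGroups}, the functor $X_R$ is a $G$-torsor---with the left $G$-action given by post-composition with $\operatorname{Aut}^{\ds}_{K\otimes_k S}(R'\otimes_k S)=G(S)$, and local non-emptiness coming from the fact that $R$ and $R'$ are $\s$-Picard-Vessiot rings for the same equation---and $R\cong R'$ as $K$-$\ds$-algebras if and only if $X_R(k)\neq\emptyset$.

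The first thing I would establish is the following refinement of the last equivalence: for every $n\geq 1$, the rings $R$ and $R'$ are isomorphic as $K$-$\ds^n$-algebras if and only if $X_R(\kn)\neq\emptyset$. By Remark \ref{rem: points k vs kn} one has $X_R(\kn)=(X_R)_n(k)$, and unwinding the construction $X\rightsquigarrow X_n$ from Section \ref{section: The induction principle} together with the defining adjunction shows that the $\s^n$-variety $(X_R)_n$ is naturally the $\operatorname{Isom}$-functor of $(R,\de,\s^n)$ and $(R',\de,\s^n)$, viewed as $K$-$\ds^n$-algebras; concretely, since $\kn$ is a product of $n$ copies of $k$ cyclically permuted by $\s$ (up to a $\s^n$-twist), a $\ds$-automorphism of $R'\otimes_k\kn$ over $K\otimes_k\kn$ is the same datum as a single $\ds^n$-automorphism of $R'$ over $K$, and likewise for $\operatorname{Isom}$. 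Hence $X_R(\kn)\neq\emptyset$ if and only if there is a $\ds^n$-isomorphism $R\to R'$ over $K$. (In this picture $(R,\de,\s^n)$ is a $\s^n$-Picard-Vessiot ring for a suitable differential equation over $(K,\de,\s^n)$, the $\s^n$-Galois group of $(R',\de,\s^n)$ is $G_n$, and $(X_R)_n$ is the associated torsor; but only the isomorphism criterion is needed below.)

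Granting this dictionary, both assertions follow formally. For (i): apply property (P1) of Theorem \ref{theo: main 2nd version} to the $\s$-algebraic group $G$ and its torsor $X_R$ to get an $n\geq 1$ with $X_R(\kn)\neq\emptyset$; then $R$ and $R'$ are isomorphic as $K$-$\ds^n$-algebras. (One could equally use property (P2) together with Lemma \ref{lemma: Gtriviality and H1}.) For (ii): property (P1$^u$) of Theorem \ref{theo: main} yields an $n\geq 1$, depending only on $G$, such that $X(\kn)\neq\emptyset$ for every $G$-torsor $X$; applying this to $X_R$ for each $\s$-Picard-Vessiot ring $R$ shows that every such $R$ is isomorphic to $R'$ as a $K$-$\ds^n$-algebra, and transitivity gives the isomorphism for an arbitrary pair of $\s$-Picard-Vessiot rings.

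The one genuinely non-formal point is the refinement in the second paragraph: the compatibility of the passage $\s\rightsquigarrow\s^n$ with the formation of the $\s$-Galois group and of the $\operatorname{Isom}$-torsor. I expect this to be the main obstacle in a full write-up, although it is either explicit in or a routine extension of the constructions in \cite{DiVizioHardouinWibmer:DifferenceGaloisTheoryOfLinearDifferentialEquations}; everything else is a direct consequence of Theorems \ref{theo: main} and \ref{theo: main 2nd version} and of the torsor formalism already developed in the preceding sections and in \cite{BachmayrWibmer:TorsorsForDifferenceAlgebraicGroups}.
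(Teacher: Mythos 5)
Your proof is correct and follows essentially the same route as the paper: associate the $\operatorname{Isom}$-torsor $X$ to the pair of $\s$-Picard-Vessiot rings, invoke \poweak{} or \po{}, pass from a $\kn$-point to a $K$-$\de\s^n$-isomorphism, and use transitivity for the uniform statement. The step you flag as the likely ``main obstacle''---establishing the full equivalence between $X(\kn)$-points and $K$-$\de\s^n$-isomorphisms---is more than what is needed, and the paper sidesteps it: one only uses that $X(\kn)\neq\emptyset$ yields a $K$-$\de\s^n$-isomorphism $R\to R'$, and this follows at once by base changing the resulting isomorphism $R\otimes_k\kn\to R'\otimes_k\kn$ of $K\otimes_k\kn$-$\de\s$-algebras along the projection $\kn\to k$ onto the last factor, which is a morphism of $k$-$\s^n$-algebras.
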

\begin{proof}  
	Let $R$ and $R'$ be $\s$-Picard-Vessiot rings for $\de(y)=Ay$ and let $G$ be the $\s$-Galois group of $R/K$. Consider the functor $X$ from the category of \ks-algebras to the category of sets given by
	$X(S)=\operatorname{Isom}^{\delta\sigma}_{K\otimes_k S}(R\otimes_k S,R'\otimes_k S)$ for every \ks-algebra $S$. So $X(S)$ is the set of $K\otimes_k S$\=/$\delta\s$\=/isomorphisms from $R\otimes_k S$ to  $R'\otimes_k S$, where $S$ is considered as a constant $\de$-ring.
	In the first step of the proof of 	
	 \cite[Theorem 6.4]{BachmayrWibmer:TorsorsForDifferenceAlgebraicGroups} is is shown that $X$ is a $G$-torsor.
	 Thus, if \poweak{} holds, then there exists an $n\geq 1$ such that $X(\kn)\neq\emptyset$, i.e., there exists an isomorphism 
	 \begin{equation} \label{eq: isom}
	 R\otimes_k\kn\to R'\otimes_k\kn
	 \end{equation}
	 of $K\otimes_k\kn$-$\delta\s$-algebras. The projection $\kn\to k$ onto the last factor is a morphism of $k$\=/$\s^n$\=/algebras. Thus if we base change (\ref{eq: isom}) via $\kn\to k$ we obtain an isomorphism $R\to R'$ of $K$-$\de\s^n$-algebras. This proves (i).
	 
	 If $k$ satisfies \po{}, then we can find an $n$ that is uniform in $X$, i.e., independent of $R'$. The dependence of $n$ on $R$ can then easily be eliminated: Let $R_1$ and $R_2$ be two $\s$-Picard-Vessiot rings for $\de(y)=Ay$. Then $R_1$ is isomorphic to $R$ as a $K$-$\de\s^n$-algebra and $R_2$ is isomorphic to $R$ as a $K$-$\de\s^n$-algebra. Thus $R_1$ and $R_2$ are isomorphic as $K$-$\de\s^n$-algebras.
\end{proof}


\bibliographystyle{alpha}
 \bibliography{bibdata}
\end{document}